\newtheorem{thm}{Theorem}[section]
\newtheorem{prp}[thm]{Proposition}
\newtheorem{lem}[thm]{Lemma}
\newtheorem{cor}[thm]{Corollary}
\newtheorem{con}{Conjecture}
\newtheorem*{con*}{Conjecture 2}
\theoremstyle{definition}
\newtheorem{dfn}[thm]{Definition}
\newtheorem{rmk}[thm]{Remark}
\newtheorem{exa}[thm]{Example}
\numberwithin{equation}{section}
\newcommand{\T}{{\mathbb T}}
\newcommand{\Z}{{\mathbb Z}}
\newcommand{\R}{{\mathbb R}}
\newcommand{\C}{{\mathbb C}}
\newcommand{\inj}{{\mathrm {inj}}}
\newcommand{\op}[1]{\operatorname{#1}}
\newcommand{\ZZ}{{\mathcal Z}}
\newcommand{\pt}{{\mathrm{pt}}}
\newcommand{\veee}{{\text{\tiny$\vee$}}}
\newcommand{\dist}{{\mathrm{dist}}}
\newcommand{\om}{\mathrm{\omega}}
\newcommand{\vol}{\mathrm{vol}}
\newcommand{\Vol}{\mathrm{Vol}}
\newcommand{\Fix}{\mathrm{Fix\,}}
\newcommand{\Crit}{\mathrm{Crit\,}}
\newcommand{\CAL}{\mathrm{CAL}}
\newcommand{\id}{\mathrm{id}}
\newcommand{\di}{{\mathrm d}}
\newcommand{\Fvol}{{\mathfrak{Vol}}}
\newcommand{\fvol}{{\mathfrak{vol}}}
\newcommand{\ta}{{\mathrm T}}
\newcommand{\dR}{{\mathrm{dR}}}
\newcommand{\ev}{{\mathrm{ev}}}
\newcommand{\p}{\partial}
\newcommand{\into}{\hookrightarrow}
\newcommand{\x}{\times}
\newcommand{\wh}{\widehat}
\newcommand{\beq}{\begin{equation}}
\newcommand{\beqn}{\begin{equation}\nonumber}
\newcommand{\eeq}{\end{equation}}
\newcommand{\bea}{\begin{equation}\begin{aligned}}
\newcommand{\bean}{\begin{equation}\begin{aligned}\nonumber}
\newcommand{\eea}{\end{aligned}\end{equation}}
\noindent\textsc{Seoul National University, Department of Mathematical Sciences, Research Institute in Mathematics, Gwanak-Gu, 
	Seoul 08826, South Korea} \par  
\title{On a local systolic inequality for odd-symplectic forms}
\author{Gabriele Benedetti and Jungsoo Kang}
\begin{document}
\maketitle
\begin{abstract}

The aim of this paper is to formulate a local systolic inequality for odd-symplectic forms (also known as Hamiltonian structures) and to establish it in some basic cases.

Let $\Omega$ be an odd-symplectic form on an oriented closed manifold $\Sigma$ of odd dimension. We say that $\Omega$ is Zoll if the trajectories of the flow given by $\Omega$ are the orbits of a free $S^1$-action. After defining the volume of $\Omega$ and the action of its periodic orbits, we prove that the volume and the action satisfy a polynomial equation, provided $\Omega$ is Zoll. This builds the equality case of a conjectural systolic inequality for odd-symplectic forms close to a Zoll one. We prove the conjecture when the $S^1$-action yields a flat $S^1$-bundle or $\Omega$ is quasi-autonomous.  In particular the conjecture is established in dimension three.

This new inequality recovers the contact systolic inequality as well as the inequality between the minimal action and the Calabi invariant for Hamiltonian isotopies $C^1$-close to the identity on a closed symplectic manifold. Applications to the study of periodic magnetic geodesics on closed orientable surfaces is given in the companion paper \cite{BK19b}.\\[-1.5ex]

\end{abstract}
\tableofcontents
\section{Introduction}
In this article we formulate a local systolic-diastolic inequality for odd-symplectic forms and establish it in some situations.  Before discussing the precise set-up, we recall two inequalities which motivate us to study the local systolic-diastolic inequality for odd-symplectic forms and which arise as extreme cases of this inequality. Throughout the paper, we normalise the circle $S^1$ to have length one, namely $S^1:=\R/\Z$.
\subsection{Two known inequalities}
\subsubsection*{Contact systolic-diastolic inequality}
Let $\Sigma$ be a connected closed manifold of dimension $2n+1$. Let $\alpha$ be a contact form on $\Sigma$, namely $\alpha$ is a one-form such that $\alpha\wedge(\di\alpha)^n$ is nowhere vanishing. Orienting $\Sigma$ so that $\alpha\wedge(\di\alpha)^n$ is positive, we define the total volume of $\Sigma$ with respect to $\alpha$ by
\begin{equation}\label{eq:contact_volume}
\mathrm{Volume}(\alpha):=\int_{\Sigma}\alpha\wedge(\di\alpha)^n>0.
\end{equation}
We consider the Reeb vector field $R_\alpha$ on $\Sigma$ characterised by the relations $\di\alpha(R_\alpha,\cdot)=0$ and $\alpha(R_\alpha)=1$ and denote by $\Phi_{\alpha}$ the generated flow.

A contact form $\alpha_*$ on $\Sigma$ is called \textit{Zoll} if all orbits of $\Phi_{\alpha_*}$ are periodic with the same prime period $T(\alpha_*)$. This is equivalent to saying that $\Phi_{\alpha_*}$ induces a free circle action on $\Sigma$ with period $T(\alpha_*)$. In this case, the quotient is a closed symplectic manifold $(M,\omega)$ and the quotient map $\mathfrak p:\Sigma\to M$ is a non-trivial circle bundle. More precisely, we have $\mathfrak p^*\omega=\di\alpha_*$ so that $-\tfrac{1}{T(\alpha_*)}\omega$ represent the Euler class of the bundle. Vice versa, for every closed symplectic manifold $(M,\omega)$ such that $\frac1T\omega$ represents an integer class for some $T>0$, there exists a manifold $\Sigma$ and a Zoll contact form with period $T$ yielding $(M,\omega)$ according to the construction above, see \cite{BW58}.

By \cite{Bot80}, if $\mathfrak h$ denotes the free homotopy class of prime periodic orbits of $\Phi_{\alpha_*}$, then $\Phi_\alpha$ carries a periodic orbit in the class $\mathfrak h$, if $\alpha$ is sufficiently close to $\alpha_*$ .
We denote by $T_{\min}(\alpha,\mathfrak h)$, respectively $T_{\max}(\alpha,\mathfrak h)$ the minimal, respectively maximal, period of prime periodic orbits of $\Phi_\alpha$ in the class $\mathfrak h$. If $\Sigma$ has dimension three, $T_{\min}(\alpha,\mathfrak h)$ and $T_{\max}(\alpha,\mathfrak h)$ satisfy the following contact systolic-diastolic inequality, which was originally conjectured by \'Alvarez-Paiva and Balacheff \cite{APB14}.

\begin{thm}\label{thm:contact_sys}\cite[Theorem 1.4]{BK19a}
Let $\Sigma$ be a closed manifold of dimension three, and let $|H_1^\mathrm{tor}(\Sigma;\Z)|$ be the order of the torsion part of $H_1(\Sigma,\Z)$. If $\alpha_*$ is a Zoll contact form on $\Sigma$, there exists a $C^2$-neighbourhood $\mathcal U$ of $\di\alpha_*$ such that for every contact form $\alpha$ on $\Sigma$ with $\di\alpha\in\mathcal U$, there holds
\[
T_{\min}(\alpha,\mathfrak h)^2\leq \frac{1}{|H_1^\mathrm{tor}(\Sigma;\Z)|}\mathrm{Volume}(\alpha)\leq T_{\max}(\alpha,\mathfrak h)^2,
\]
where any of the two equalities holds if and only if $\alpha$ is Zoll.
\end{thm}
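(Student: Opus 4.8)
The plan is to reduce the three‑dimensional contact problem to a two‑dimensional one on a global surface of section, and then to invoke the second of the two motivating inequalities, namely the comparison between the Calabi invariant and the actions of the fixed points of a Hamiltonian diffeomorphism $C^1$‑close to the identity. Write $k:=|H_1^{\mathrm{tor}}(\Sigma;\Z)|$. Since $\alpha_*$ is Zoll, its Reeb flow is the free $S^1$‑action of a circle bundle $\mathfrak p\colon\Sigma\to M$ over a closed surface with $\mathfrak p^*\omega=\di\alpha_*$, and $k$ equals the modulus of the Euler number of this bundle. First I would show that for $\alpha$ with $\di\alpha$ in a small enough $C^2$‑neighbourhood of $\di\alpha_*$ the Reeb flow $\Phi_\alpha$ admits a global surface of section $\Sigma_0$ whose binding is a simple closed orbit $\gamma_0$ in the class $\mathfrak h$; this is the genuinely dynamical input and relies on the fact that $\Phi_\alpha$ is a small perturbation of the Seifert fibration, so that a Birkhoff section of the Zoll flow persists. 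The first‑return map $\psi\colon\Sigma_0\to\Sigma_0$ preserves the area form $\di\alpha|_{\Sigma_0}$ and, being $C^1$‑close to the identity, is the time‑one map of a (flux‑free) Hamiltonian $F$, normalised to vanish along $\gamma_0$.

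Next I would set up the dictionary between the contact quantities and the symplectic data of $(\Sigma_0,\di\alpha|_{\Sigma_0},\psi)$. Writing $\tau$ for the return time and $P_0:=\int_{\gamma_0}\alpha$ for the period of the binding, the suspension description of the mapping torus gives the volume identity $\mathrm{Volume}(\alpha)=\int_{\Sigma_0}\tau\,\di\alpha$, while Stokes' theorem relates the total area of $\Sigma_0$ to the action of the binding. Writing $\tau=P_0+F$ up to the action correction built into the suspension, and tracking the multiplicity $k$ with which $\Sigma_0$ meets $\gamma_0$, I would obtain the normalised splitting
\[
\tfrac1k\,\mathrm{Volume}(\alpha)=P_0^{2}+\mathrm{Cal}(\psi),\qquad \mathrm{Cal}(\psi):=\tfrac1k\int_{\Sigma_0}F\,\di\alpha,
\]
where $\mathrm{Cal}(\psi)$ is the Calabi invariant taken with respect to the normalised area, whose total mass is $P_0$. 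On the dynamical side, the simple closed orbits of $\Phi_\alpha$ in the class $\mathfrak h$ correspond to the fixed points of $\psi$, i.e.\ to the critical points of $F$, the period at a fixed point $p$ being $P_0+F(p)$; hence $T_{\min}(\alpha,\mathfrak h)=P_0+\min_{p\in\Crit F}F(p)$ and $T_{\max}(\alpha,\mathfrak h)=P_0+\max_{p\in\Crit F}F(p)$, and since $\gamma_0$ itself lies in $\mathfrak h$ one has $T_{\min}(\alpha,\mathfrak h)\le P_0\le T_{\max}(\alpha,\mathfrak h)$.

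With this dictionary the inequality follows from the Calabi comparison $P_0\min_{p\in\Crit F}F(p)\le\mathrm{Cal}(\psi)\le P_0\max_{p\in\Crit F}F(p)$, valid for $\psi$ close to the identity, combined with $T_{\min}(\alpha,\mathfrak h)\le P_0\le T_{\max}(\alpha,\mathfrak h)$. Indeed,
\[
\tfrac1k\,\mathrm{Volume}(\alpha)=P_0^{2}+\mathrm{Cal}(\psi)\le P_0^{2}+P_0\max_{p\in\Crit F}F(p)=P_0\,T_{\max}(\alpha,\mathfrak h)\le T_{\max}(\alpha,\mathfrak h)^{2},
\]
and symmetrically $\tfrac1k\mathrm{Volume}(\alpha)\ge P_0\,T_{\min}(\alpha,\mathfrak h)\ge T_{\min}(\alpha,\mathfrak h)^{2}$; the square is produced exactly by the product of the two factors $P_0$ and the averaged return time. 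For the rigidity statement, equality in the upper bound forces both $P_0=T_{\max}(\alpha,\mathfrak h)$ and equality in the Calabi inequality; the latter, for a Hamiltonian diffeomorphism $C^1$‑close to the identity, holds only when $F$ is constant, whence $\psi=\id$, every Reeb orbit is closed of period $P_0$, and $\alpha$ is Zoll. The lower‑bound case is identical.

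The principal obstacle is the first step together with the bookkeeping in the second: producing the global surface of section for the perturbed flow, establishing the volume--Calabi identity, and tracking the normalisation by $k$. Proving the persistence of a Birkhoff section under the $C^2$‑small perturbation, controlling the binding orbit and the regularity of $\tau$ near it, and verifying that $\psi$ is genuinely Hamiltonian are where the real work lies; by contrast, the Calabi comparison and its equality case are precisely the known two‑dimensional input recalled as the second of the two motivating inequalities.
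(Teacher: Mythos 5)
This theorem is not proved in the present paper: it is quoted from \cite[Theorem 1.4]{BK19a}, and the only indication given here of its proof is the remark after Proposition \ref{p:hamsys}, namely that a global surface of section reduces it to \emph{a generalised version of Proposition \ref{p:hamsys} for surfaces with boundary, where $\omega$ is symplectic in the interior and vanishes of order one at the boundary}. Your strategy is exactly this one, so at the level of architecture you have reproduced the intended argument. However, there is a genuine gap in the step you dismiss as ``precisely the known two-dimensional input''. The surface of section $\Sigma_0$ has boundary equal to the binding orbit $\gamma_0$, and since $\dot\gamma_0$ spans $\ker\di\alpha$ along $\gamma_0$, the two-form $\di\alpha|_{\Sigma_0}$ is \emph{not} an area form: it degenerates (vanishes to order one) along $\partial\Sigma_0$. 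Consequently Proposition \ref{p:hamsys}, which is stated for a closed symplectic manifold, does not apply to $(\Sigma_0,\di\alpha|_{\Sigma_0},\psi)$. The statement actually needed --- the systolic-diastolic inequality for the Calabi invariant and the fixed-point actions of an area-preserving map of a surface with boundary whose area form degenerates at the boundary, including the rigidity case --- is a different theorem, and proving it (detecting fixed points and controlling their actions near a boundary where the form vanishes, where the return map is a rotation rather than a perturbation of the identity in any uniform sense) is the main content of \cite{BK19a} and, for $S^3$, of \cite{ABHS15}. Your proof therefore assumes the hard part.

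Two further points need care even granting that input. First, when $k=|H_1^{\mathrm{tor}}(\Sigma;\Z)|>1$ the Birkhoff section's boundary wraps $k$ times around the binding (equivalently, $\Sigma_0$ meets $\gamma_0$ with multiplicity $k$), so the correspondence between $\Fix\psi$ and \emph{simple} closed orbits in the class $\mathfrak h$, the formula for the period of an orbit through a fixed point, and the precise form of the volume identity $\tfrac1k\mathrm{Volume}(\alpha)=P_0^2+\mathrm{Cal}(\psi)$ all require verification rather than assertion; the genus of $\Sigma_0$ is also positive in general, so ``$\psi$ is flux-free, hence Hamiltonian'' is a claim that must be proved, not a consequence of $C^1$-closeness to the identity alone. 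Second, in your rigidity argument you use $T_{\max}(\alpha,\mathfrak h)=P_0+\max_{p\in\Crit F}F(p)$; since $F$ vanishes on $\partial\Sigma_0$ while its interior critical values may all be negative (or all positive), the maximum over $\Crit F$ need not dominate $0$, and the chain $P_0^2+P_0\max_{\Crit F}F=P_0T_{\max}$ can fail as an equality of the intermediate terms. The inequality survives, but the bookkeeping in the equality case has to account for the binding as a competitor orbit separately from the interior fixed points.
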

\begin{rmk}
When $\Sigma=S^3$ the result above is due to Abbondandolo, Bramham, Hryniewicz and Salom\~ao, see \cite{ABHS15}. In that paper, forms $\alpha$ are also constructed, which are $C^0$-close to $\alpha_*$ and violate the systolic inequality. Furthermore, the same authors showed in \cite{ABHSnew} that, if $(\Sigma,\xi)$ is an arbitrary contact closed three-manifold, then, for every $C>0$, there exists a contact form $\alpha_C$ with $\ker\alpha_C=\xi$ such that $T_{\min}(\alpha_C)^2>C\cdot\mathrm{Volume}(\alpha_C)$.
\end{rmk}

\subsubsection*{Symplectic systolic-diastolic inequality}
Let $(M,\om)$ be a connected closed symplectic manifold of dimension $2n$, and let $\varphi:M\to M$ be a Hamiltonian diffeomorphism generated by a Hamiltonian $H:M\times[0,1]\to\R$. We orient $M$ so that $\om^n$ is positive. One can define the Hamiltonian action $\mathcal A_H:\Lambda_{\mathrm{short}}(M)\to\R$ on the space of ``short'' one-periodic curves $q:S^1\to M$  by
\begin{equation}\label{e:hamact}
\mathcal A_H(\gamma):=\int_{D^2}{\widehat q}^{\hspace{2pt}*}\om+\int_0^1 H(q(t),t)\di t,
\end{equation}
where $\widehat q:D^2\to M$ is a ``small'' capping disc for $q\in\Lambda_{\mathrm{short}}(M)$ (see Section \ref{sec:conjsys} for a precise definition). The minimal and maximal Hamiltonian actions of fixed points of $\varphi$, whose associated curve is short, are given by 
\[
\min\mathcal A_H:=\inf_{\gamma\in\Crit\mathcal A_H}\mathcal A_H(\gamma),\qquad \max\mathcal A_H:=\sup_{\gamma\in\Crit\mathcal A_H}\mathcal A_H(\gamma).
\]
The Calabi invariant of $H$ with respect to $\om$ is defined by
\begin{equation}\label{eq:calabi_intro}
\CAL_{\om}(H):=\int_{M\times[0,1]}H\,\omega^n\wedge\di t.
\end{equation}
One can easily see that the minimal/maximal actions and the Calabi invariant are related in the following way. It is one particular case of Proposition \ref{p:qa1}.
\begin{prp}\label{p:hamsys}
Assume that a Hamiltonian diffeomorphism $\varphi:(M,\om)\to (M,\om)$ is generated by a quasi-autonomous Hamiltonian $H:M\times[0,1]\to\R$. For instance, $\varphi$ is the time-one map of a Hamiltonian isotopy $C^1$-close to the identity. Then, there holds
\begin{equation*}
\min\mathcal A_H\leq \frac{\CAL_\om(H)}{\int_M\om^n}\leq\max\mathcal A_H,
\end{equation*}
and any of the two equalities holds if and only if $H(q,t)=h(t)$ for all $(q,t)\in M\times[0,1]$ and some function $h:[0,1]\to\R$.\qed
\end{prp}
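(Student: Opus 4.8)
The plan is to locate two distinguished critical points of $\mathcal A_H$ coming from the quasi-autonomous hypothesis, evaluate their actions, and sandwich the normalised Calabi invariant between them. Recall that $H$ being quasi-autonomous means there exist $x_-,x_+\in M$ with $H(x_-,t)=\min_M H(\cdot,t)$ and $H(x_+,t)=\max_M H(\cdot,t)$ for every $t\in[0,1]$. Since $M$ is closed, $x_\pm$ are critical points of $H(\cdot,t)$ for each $t$, so the Hamiltonian vector field vanishes there and the constant loops $q\equiv x_\pm$ are one-periodic orbits; being capped by constant discs, they are short and hence lie in $\Crit\mathcal A_H$. Because a constant capping disc has zero symplectic area, I would compute
\[
\mathcal A_H(x_-)=\int_0^1\min_M H(\cdot,t)\,\di t,\qquad \mathcal A_H(x_+)=\int_0^1\max_M H(\cdot,t)\,\di t.
\]

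Next I would rewrite the middle term. Writing $\bar H(t):=\big(\int_M H(\cdot,t)\,\om^n\big)\big/\int_M\om^n$ for the spatial average of $H$ at time $t$ and applying Fubini, one gets $\CAL_\om(H)/\int_M\om^n=\int_0^1\bar H(t)\,\di t$. The elementary pointwise bounds $\min_M H(\cdot,t)\le\bar H(t)\le\max_M H(\cdot,t)$, valid because $\om^n$ is a positive measure, integrate in $t$ to give
\[
\mathcal A_H(x_-)\le\frac{\CAL_\om(H)}{\int_M\om^n}\le\mathcal A_H(x_+).
\]
Combining this with $\min\mathcal A_H\le\mathcal A_H(x_-)$ and $\mathcal A_H(x_+)\le\max\mathcal A_H$, which hold since $x_\pm\in\Crit\mathcal A_H$, yields the two desired inequalities.

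For the equality case I would argue as follows. If, say, the left inequality of the Proposition is an equality, then every inequality in the chain $\min\mathcal A_H\le\mathcal A_H(x_-)\le\int_0^1\bar H\,\di t$ collapses, so $\int_0^1\big(\bar H(t)-\min_M H(\cdot,t)\big)\,\di t=0$. The integrand is continuous and nonnegative, hence vanishes identically: $\bar H(t)=\min_M H(\cdot,t)$ for all $t$. Since $\bar H(t)$ is the $\om^n$-average of the continuous function $H(\cdot,t)$ on the connected manifold $M$, equality of its average with its minimum forces $H(\cdot,t)$ to be constant for each $t$, that is $H(q,t)=h(t)$ with $h(t)=\min_M H(\cdot,t)$. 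The converse is immediate: if $H(q,t)=h(t)$ then $H$ is independent of $q$, so its Hamiltonian vector field vanishes identically, every point is fixed, each constant loop has action $\int_0^1 h\,\di t$, and $\bar H\equiv h$, whence all three quantities coincide and both equalities hold. The maximum case is symmetric.

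The computations are routine; the step demanding the most care is the equality analysis. Turning the vanishing of an integral into pointwise vanishing needs the continuity of $t\mapsto\min_M H(\cdot,t)$, which follows from the uniform continuity of $H$ on the compact set $M\times[0,1]$, while the rigidity statement ``average $=$ minimum $\Rightarrow$ constant'' relies on $\om^n$ being a strictly positive volume form and on $M$ being connected. I also want to stress that the only genuine use of the quasi-autonomous hypothesis is to guarantee that the spatial extrema of $H(\cdot,t)$ are attained along single orbits, so that $x_\pm$ are honest critical points of $\mathcal A_H$; without this assumption the bounding actions need not be realised by periodic orbits at all.
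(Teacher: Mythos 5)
Your proof is correct, and it is in essence the paper's own argument: the paper deduces Proposition \ref{p:hamsys} as the trivial-bundle, vanishing-Euler-class case of Proposition \ref{p:qa1}, whose proof likewise produces the closed characteristic over the quasi-autonomous minimiser (your constant loop at $x_-$), bounds its action by the normalised Calabi invariant via fibrewise integration (your Fubini/spatial-average step, cf.\ Lemma \ref{l:volqa}), and extracts the equality case from the strict monotonicity of the integrand (your ``average equals minimum forces constancy'' rigidity). You have simply unwound the general machinery (Zoll polynomial, volume functional, normalised defining Hamiltonians) into the direct elementary computation, which is exactly what the specialisation amounts to.
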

\begin{rmk}
Proposition \ref{p:hamsys} is false for general Hamiltonian diffeomorphisms. Indeed, adapting \cite[Proposition 2.28]{ABHS15}, one can construct a time-one map $\varphi$ of a Hamiltonian isotopy $C^0$-close to the identity not satisfying the inequality above. A proper way to go beyond the quasi-autonomous case might be to take into account not only the fixed points but all the periodic points of $\varphi$ as explored by Hutchings in \cite[Theorem~1.2]{Hut16}.
\end{rmk}
 
\begin{rmk}
In \cite{BK19a} we saw how the use of a global surface of section reduces Theorem \ref{thm:contact_sys} to a generalised version of Proposition \ref{p:hamsys} for surfaces with boundary, where $\omega$ is symplectic in the interior and vanishes of order one at the boundary.
\end{rmk}

To give a unified model for the above two situations, we notice that the Hamiltonian diffeomorphism $\varphi:M\to M$ can be seen as the return map of a flow $\Phi^X$ on the manifold $M\times S^1$. If $H:M\times S^1\to\R$ is a Hamiltonian one-periodic in time generating $\varphi$ as time one-map, then $X(q,t):=\p_t+X_{H_t}(q)$, for all $(q,t)\in M\times S^1$. Here, $X_{H_t}$ is the Hamiltonian vector field tangent to $M\times\{t\}$ associated with the function $H_t:=H(\cdot,t)$. Thus, we view Proposition \ref{p:hamsys} as giving a systolic-diastolic inequality for those flows on the trivial $S^1$-bundle $\mathfrak p:M\times S^1\to M$, which are obtained by lifting a Hamiltonian isotopy of $M$. On the other hand, Theorem \ref{thm:contact_sys} yields a systolic-diastolic inequality for Reeb flows on the non-trivial $S^1$-bundles obtained from the Boothby-Wang construction. We will show that these two types of flows are particular incarnations of the flow induced by an odd-symplectic structure (also known as Hamiltonian structure, see \cite{CM05}) on an oriented odd-dimensional closed manifold which is the total space of an \emph{arbitrary} oriented circle bundle over a symplectic manifold. Moreover, we will formulate a conjectural systolic-diastolic inequality in this setting which recovers the contact (Theorem \ref{thm:contact_sys}) and the symplectic one (Proposition \ref{p:hamsys}) as two extreme cases.

\subsection{Odd-symplectic systolic geometry}

Let $(\Sigma,\mathfrak o_\Sigma)$ be a connected oriented closed manifold of dimension $2n+1$, and let $C\in H^2_{\mathrm{dR}}(\Sigma)$ be a class in its de Rham cohomology. We write $\Omega^1(\Sigma)$ for the set of one-forms on $\Sigma$ and $\Xi^2_C(\Sigma)$ for the set of closed two-forms on $\Sigma$ representing the class $C$. We pick an auxiliary element $\Omega_0\in \Xi^2_C(\Sigma)$ and get a surjective map
\begin{equation}\label{e:omega1xi2}
\Omega^1(\Sigma)\to \Xi^2_C(\Sigma),\qquad \alpha\mapsto \Omega_\alpha:=\Omega_0+\di\alpha,
\end{equation}
whose kernel is the space of closed one-forms on $\Sigma$. Using this map, we can associate to $\alpha\in\Omega^1(\Sigma)$ a real number $\Vol(\alpha)$, which generalises both the contact volume \eqref{eq:contact_volume} and the Calabi invariant \eqref{eq:calabi_intro}. Namely, we define a functional $\Vol:\Omega^1(\Sigma)\to \R$ by
\[
\Vol(0)=0,\qquad \di_{\alpha}\Vol\cdot \beta=\int_\Sigma\beta\wedge\Omega_\alpha^{n},\quad \forall\,\beta\in\Omega^1(\Sigma).
\]
For instance, when $n=1$, we have
\begin{equation*}
\Vol(\alpha)=\frac12\int_\Sigma \alpha\wedge\di\alpha+\int_\Sigma\alpha\wedge\Omega_0.
\end{equation*}
We also remark that, when $C=0$ and $\Omega_0=0$, the function $\Vol$ recovers the Chern-Simons action for principal $S^1$-bundles over $\Sigma$ (see \cite{CS74} and Remark \ref{r:cs}). For $n=1$, $\Omega_0=0$ and $\di\alpha=\iota_F\mu$ for some vector field $F$ on $\Sigma$, whose flow preserves a global volume form $\mu$ on $\Sigma$, $\Vol(\alpha)$ recovers the helicity of $F$, see \cite{Pat09}.

If $C^{n}=0$, then we can use the map \eqref{e:omega1xi2} to push forward $\Vol$ to a \textbf{volume functional}
\[
\Fvol:\Xi^2_C(\Sigma)\to\R.
\]
If $C^{n}\neq 0$, this procedure does not work, since there exists $\tau\in H^1_\dR(\Sigma)$ such that $\tau\cup C^n\neq 0$. In this case, we say that $\alpha\in\Omega^1(\Sigma)$ is \textbf{normalised}, if $\Vol(\alpha)=0$. For every $\Omega\in\Xi^2_C(\Sigma)$, there exists $\alpha\in\Omega^1(\Sigma)$ normalised such that $\Omega=\Omega_\alpha$. Therefore, in this situation we can just work with normalised forms and declare $\Fvol:\Xi^2_C(\Sigma)\to\R$ to be identically zero. In both cases, the volume functional is invariant under diffeomorphisms $\Psi:\Sigma\to\Sigma$ isotopic to the identity (see Proposition \ref{prp:vol}):
\[
\Fvol(\Psi^*\Omega)=\Fvol(\Omega).
\]

Having identified what the volume should be, we want to introduce the set $\mathcal X(\Omega)$ of closed characteristics of $\Omega$ generalising periodic Reeb and Hamiltonian orbits. To this purpose, we consider the possibly singular distribution $\ker\Omega\to \Sigma$ called the \textbf{characteristic distribution} and define
\[
\mathcal X(\Omega):=\Big\{\gamma:S^1\hookrightarrow \Sigma\ \Big|\ \dot\gamma\in \ker\Omega\Big\}\Big/\!\sim\,,
\]
where $\gamma_1\sim\gamma_2$ if and only if $\gamma_1$ and $\gamma_2$ coincide up to an orientation-preserving reparametrisation of $S^1$. The distribution $\ker\Omega$ is co-oriented by $\Omega^n$ and we orient it using the given orientation $\mathfrak o_\Sigma$ on $\Sigma$.

We single out the forms in $\Xi^2_C(\Sigma)$, whose characteristic distribution is one-dimensional. They are of special importance, as their closed characteristics are the periodic orbits of a flow. 
\begin{dfn}
A two-form $\Omega$ on $\Sigma$ is said to be \textbf{odd-symplectic}, if it is closed and maximally non-degenerate, namely if its characteristic distribution is an (oriented) real line bundle over $\Sigma$. We write $\mathcal S_C(\Sigma)$ for the subset of all odd-symplectic forms in $\Xi^2_C(\Sigma)$.  
\end{dfn}
In $\mathcal S_C(\Sigma)$ we hope to find elements whose characteristic distribution is as simple as possible. To this purpose we introduce the set of oriented $S^1$-bundles with total space $\Sigma$:
\[
\mathfrak P(\Sigma):=\big\{\mathfrak p:\Sigma\to M\ \big|\ \mathfrak p\ \text{is an oriented $S^1$-bundle}\big\}.
\]
\begin{dfn}
An odd-symplectic form $\Omega$ is said to be \textbf{Zoll} if the oriented leaves of its characteristic distribution are the fibres of some $\mathfrak p_\Omega:\Sigma\to M_\Omega$ in $\mathfrak P(\Sigma)$. We write $\ZZ_C(\Sigma)$ for the set of Zoll forms in $\mathcal S_C(\Sigma)$.
\end{dfn}
\begin{exa}
Let $(N_1,\sigma_1)$ and $(N_2,\sigma_2)$ be two connected closed symplectic manifolds and suppose that $[\sigma_1]\in H^2_\dR(N_1)$ is an integral cohomology class. We build the product symplectic manifold $(N_1\times N_2,\sigma_1\oplus\sigma_2)$ and consider the oriented $S^1$-bundle $\mathfrak p:\Sigma\to N_1\times N_2$ whose Euler class is $-[\sigma_1\oplus 0]$. Then, $\Omega:=\mathfrak p^*(\sigma_1\oplus\sigma_2)$ is a Zoll form with cohomology class $C:=\mathfrak p^*[0\oplus\sigma_2]$.
\end{exa}

Odd-symplectic Zoll forms $\Omega$ with vanishing cohomology class are exactly the differentials of Zoll contact forms $\alpha$. We can use this observation to extend the classification of Zoll contact forms on three-manifolds contained in \cite[Proposition 1.2]{BK19a} to Zoll odd-symplectic forms.
\begin{prp}\label{p:cla}
Let $\Sigma$ be a connected oriented closed three-manifold. There is a Zoll odd-symplectic form on $\Sigma$ if and only if $\Sigma$ is the total space of an oriented $S^1$-bundle over a connected oriented closed surface $M$. When the bundle is non-trivial, then every Zoll odd-symplectic form is the differential of a Zoll contact form. When the bundle is trivial, then the group $H_1(\Sigma;\Z)$ is free with rank equal to $2\, \mathrm{genus}(M)+1$, and if $\Omega$ and $\Omega'$ are Zoll odd-symplectic forms on $\Sigma$, there exists a real number $T>0$ and a diffeomorphism $\Psi:\Sigma\to\Sigma$ such that $\Psi^*\Omega'=T\Omega$. Moreover,
\begin{itemize}
	\item if $\Sigma=S^2\times S^1$, the set $\ZZ_C(\Sigma)$ has exactly two connected components, $\forall\, C\in H^2_{\dR}(\Sigma)\setminus 0$;
	\item if $\Sigma=\T^2\times S^1$, the set $\ZZ_C(\Sigma)$ is non-empty and connected, $\forall\, C\in H^2_{\dR}(\Sigma)\setminus 0$;
	\item if $\Sigma=M\times S^1$ with $\chi(M)<0$, there is a one-dimensional subspace $L\subset H^2_{\dR}(\Sigma)$ such that $\ZZ_C(\Sigma)$ is non-empty if and only if $C\in L\setminus 0$; in this case the set $\ZZ_C(\Sigma)$ is connected.
\end{itemize}
\end{prp}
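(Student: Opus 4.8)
The plan is to set up a dictionary between Zoll forms and pairs consisting of a circle fibration together with an area form on its base, and then read off all the assertions from this dictionary. Given a Zoll form $\Omega$, its characteristic leaves are by definition the fibres of some $\mathfrak p_\Omega\in\mathfrak P(\Sigma)$, so that $\ker\Omega=\ker\di\mathfrak p_\Omega$ is the vertical distribution. The key observation is that $\Omega$ is then \emph{basic}: it is horizontal by construction, and it is invariant because for any vertical vector field $V$ one has $\mathcal{L}_V\Omega=\di\iota_V\Omega+\iota_V\di\Omega=0$, using $\iota_V\Omega=0$ and $\di\Omega=0$. Hence $\Omega=\mathfrak p_\Omega^*\omega$ for a closed two-form $\omega$ on the closed oriented surface $M_\Omega$, and $\omega$ is nowhere zero (otherwise $\ker\Omega$ would jump to rank three along a fibre), i.e. an area form, with $C=\mathfrak p_\Omega^*[\omega]$. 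Conversely, $\mathfrak p^*\omega$ is Zoll for any $\mathfrak p\in\mathfrak P(\Sigma)$ and any area form $\omega$. This already proves the first assertion: $\Sigma$ admits a Zoll form if and only if it is an oriented $S^1$-bundle over a closed oriented surface.

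For the non-trivial bundle I would feed this description into the Gysin sequence of $\mathfrak p_\Omega$. Since $M_\Omega$ is a surface, $\cup\, e\colon H^0(M_\Omega)\to H^2(M_\Omega)$ is multiplication by the Euler number, hence an isomorphism precisely when the bundle is non-trivial; by exactness $\mathfrak p_\Omega^*$ vanishes on $H^2$, so $C=\mathfrak p_\Omega^*[\omega]=0$ and $\Omega$ is exact. Positivity of the area then forces $[\omega]=-Te$ with $T>0$, and the Boothby--Wang construction yields a connection one-form $\alpha$ with $\di\alpha=\mathfrak p_\Omega^*\omega=\Omega$ that is non-zero on the fibres, hence a Zoll contact form. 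This reduces the non-trivial case to the classification of Zoll contact three-manifolds in \cite[Proposition 1.2]{BK19a}.

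In the trivial case $\Sigma\cong M\times S^1$, the rank and freeness of $H_1$ follow from K\"unneth. For uniqueness up to scaling and diffeomorphism I would argue in two moves. First, any two Zoll foliations of $\Sigma$ are intertwined by an ambient diffeomorphism: the characteristic circle represents a primitive central class in $\pi_1(M\times S^1)=\pi_1(M)\times\Z$, which forces the quotient to be $M$, and the free circle actions are classified up to conjugacy by this class together with the (here vanishing) Euler number. Second, after arranging a common fibration I would rescale one base area form by a positive $T$ to match total areas and apply Moser's theorem on $M$ to produce a base diffeomorphism carrying one area form to $T$ times the other; lifting it to $\Sigma$ finishes the argument.

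It remains to count the components of $\ZZ_C$ for fixed $C$. Here I would first contract away the area form: the area forms in a fixed cohomology class on a surface form a convex, hence contractible, set, so a parametric Moser argument shows that $\ZZ_C$ deformation retracts onto the space $\mathfrak F_C$ of admissible Zoll foliations and $\pi_0(\ZZ_C)=\pi_0(\mathfrak F_C)$. The admissible fibre classes are the primitive central classes $v$ with $C$ lying on the corresponding line $\mathfrak p^*H^2(M_\Omega)$: for $\chi(M)<0$ the centre is $\{1\}\times\Z$, giving a single line $L=\mathrm{pr}_M^*H^2(M)$ and a fibre direction uniquely determined by each $C\in L\setminus 0$; for $M=\T^2$ the centre is all of $\pi_1$, so every primitive integral direction occurs while $C$ still fixes the direction up to sign; for $M=S^2$ the direction is again unique up to sign. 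I would then identify $\pi_0(\mathfrak F_C)$ with the component group of the base diffeomorphism group acting by loops along the fibre, which is trivial for $\T^2$ and, by Earle--Eells, for $\chi(M)<0$ where $\mathrm{Diff}_0$ is contractible, giving connectedness, whereas for $S^2$ it is $\pi_1(\mathrm{Diff}^+(S^2))=\pi_1(SO(3))=\Z/2$, realised concretely by the Gluck twist, giving exactly two components. I expect this last identification to be the main obstacle: one must show that a homotopy of foliations can be realised through genuine Zoll foliations, and that the Gluck twist truly separates components rather than being absorbed by a fibre-preserving isotopy. This is precisely where the low-dimensional topology of surface diffeomorphism groups is decisive.
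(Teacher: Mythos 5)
Your first three steps track the paper's proof closely: the basic-form dictionary ($\iota_V\Omega=0$ and $\di\Omega=0$ give $\Omega=\mathfrak p_\Omega^*\om$ with $\om$ an area form), the Gysin/Boothby--Wang reduction of the non-trivial case to the Zoll contact classification of \cite{BK19a}, K\"unneth for $H_1$, Moser on the base for $\Psi^*\Omega'=T\Omega$, and the reduction of $\pi_0(\ZZ_C)$ to $\pi_0$ of the space of admissible fibrations via contractibility of the fibres of $\Omega\mapsto\mathfrak p_\Omega$. All of that is sound and is essentially how the paper argues. The problem is the last step, which is where the actual content of the three bulleted claims lives, and which you yourself flag as "the main obstacle": you do not prove it, and the mechanism you propose for it is not correct as stated.

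Concretely, the identification of $\pi_0$ of the space of fibrations with fixed oriented fibre class with ``the component group of the base diffeomorphism group acting by loops along the fibre'', i.e.\ with $\pi_1(\mathrm{Diff}^+(M))$, fails already for $M=\T^2$: there $\mathrm{Diff}_0(\T^2)\simeq\T^2$, so $\pi_1(\mathrm{Diff}_0(\T^2))=\Z^2\neq 0$, yet $\ZZ_C(\T^3)$ is connected. The point is that a loop of base diffeomorphisms shears the fibre by its class in $H_1(M;\Z)$, hence generically changes the fibre homology class and must be excluded from the count; only for $S^2$ (where $H_1=0$) does all of $\pi_1(\mathrm{Diff}^+(M))$ act. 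Making this precise, and — more importantly — showing that the surviving twist (the Gluck twist for $S^2$) genuinely produces a bundle not isotopic to $\mathfrak p_+$, requires knowing $\pi_0$ and $\pi_1$ of $\mathrm{Diff}(\Sigma)$ itself, not of $\mathrm{Diff}(M)$. This is exactly the input the paper supplies case by case: the computation $\mathrm{MCG}(S^2\times S^1)\cong\tfrac{\Z}{2\Z}\oplus\tfrac{\Z}{2\Z}$ together with the observation that every orientation- and fibre-preserving diffeomorphism of $S^2\times S^1$ is isotopic to the identity (using $\pi_2(\mathrm{Diff}_+(S^1))=0$), the isomorphism $\mathrm{MCG}(\T^3)\cong\mathrm{SL}(3;\Z)$, and Waldhausen's theorem \cite[Satz (5.5)]{Wal67} for $\chi(M)<0$ (Earle--Eells alone, being a statement about $\mathrm{Diff}(M)$, does not yield the needed statement about $\mathrm{Diff}(M\times S^1)$). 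Without these low-dimensional inputs the component count — in particular the claim that $\ZZ_C(S^2\times S^1)$ has exactly two components rather than one — is not established.
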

\begin{rmk}
The classification of Zoll odd-symplectic forms up to diffeomorphism on a three-manifold is equivalent to the classification of bundles in the set
$\{\mathfrak p_\Omega\ |\ \Omega\in\ZZ(\Sigma)\}$ up to isomorphism. Analogously, the connected components of $\ZZ_C(\Sigma)$ on a three-manifold are in bijection with the connected components of $\{\mathfrak p_\Omega\ |\ \Omega\in\ZZ_C(\Sigma)\}$. This is due to the fact that the map $\ZZ_C(\Sigma)\to\mathfrak P(\Sigma)$, given by $\Omega\mapsto\mathfrak p_\Omega$, has contractible fibres.
\end{rmk}
Let us assume that $\ZZ_C(\Sigma)$ is not empty and take $\Omega_*\in\ZZ_C(\Sigma)$ with associated $S^1$-bundle 
\[
\mathfrak p_{\Omega_*}:\Sigma\to M_*.
\] 
This implies that there exists a positive symplectic form $\om_*$ on $M_*$ such that $\Omega_*=\mathfrak p_{\Omega_*}^*\om_*$. We set $c_*:=[\om_*]\in H^2_\dR(M_*)$. We write $\mathfrak h\in[S^1,\Sigma]$ for the free-homotopy class of the oriented $\mathfrak p_{\Omega_*}$-fibres and $e_*\in H^2_\dR(M_*)$ for minus the real Euler class of $\mathfrak p_{\Omega_*}$. Let $\mathfrak P^0(\Sigma)$ be the connected component of $\mathfrak p_{\Omega_*}$ inside $\mathfrak P(\Sigma)$. As we did for the volume, the action will be computed with respect to some reference object, which we now define.
\begin{dfn}
A \textbf{weakly Zoll pair} is a couple $(\mathfrak p,c)$, where $\mathfrak p:\Sigma\to M$ is an element in $\mathfrak P(\Sigma)$ and $c\in H^2_\dR(M)$ is a cohomology class. We write $\mathfrak Z(\Sigma)$ for the set of weakly Zoll pairs and $\mathfrak Z_C(\Sigma)$ for the subset of those pairs $(\mathfrak p,c)$ such that $C=\mathfrak p^*c\in H_\dR^2(\Sigma)$. We denote by $\mathfrak Z_C^0(\Sigma)$ the set of those $(\mathfrak p,c)\in\mathfrak Z_C(\Sigma)$ such that $\mathfrak p\in\mathfrak P^0(\Sigma)$.
\end{dfn}
\begin{rmk}
If $\Omega\in\mathcal S_C(\Sigma)$ is Zoll, then $(\mathfrak p_\Omega,[\om_\Omega])\in\mathfrak Z_C(\Sigma)$ is a weakly Zoll pair, where $\om_\Omega$ is the closed two-form on $M_\Omega$ such that $\Omega=\mathfrak p_\Omega^*\om_\Omega$. Conversely, if $(\mathfrak p,c)\in\mathfrak Z_C(\Sigma)$ is a weakly Zoll pair, we can consider any closed two-form $\om$ on $M$ such that $c=[\om]$ and build the two-form $\mathfrak p^*\om\in\Xi^2_C(\Sigma)$, which is Zoll exactly when $\om$ is symplectic. 
\end{rmk}
Let us now fix a {\bf reference} weakly Zoll pair 
\[
(\mathfrak p_0,c_0)\in\mathfrak Z_C^0(\Sigma),\qquad \mathfrak p_0:\Sigma\to M_0.
\] 
We write $e_0\in H^2_{\dR}(M_0)$ for minus the real Euler class of $\mathfrak p_0$ and we have the equivalence
\[
C^n\neq0\qquad \Longleftrightarrow\qquad e_0=0.
\]
Let $\Lambda_{\mathfrak h}(\Sigma)$ be the space of one-periodic curves in the class $\mathfrak h$, and let $\widetilde{\Lambda}_{\mathfrak h}(\Sigma)$ be the space of homotopies of paths $\{\gamma_r\}_{r\in[0,1]}$ inside $\Lambda_{\mathfrak h}(\Sigma)$ such that $\gamma_0$ is some oriented $\mathfrak p_0$-fibre. The admissible homotopies are allowed to move $\gamma_0$ inside the set of oriented $\mathfrak p_0$-fibres but have to fix the periodic curve $\gamma_1$, so that the natural projection $\widetilde{\Lambda}_{\mathfrak h}(\Sigma)\to \Lambda_{\mathfrak h}(\Sigma)$, $[\gamma_r]\mapsto \gamma_1$ is a covering map.

We pick a closed two-form $\om_0\in\Xi^2_{c_0}(M_0)$ and choose $\Omega_0:=\mathfrak p_0^*\om_0\in\Xi^2_C(\Sigma)$ as our auxiliary element for the computation of the volume. One sees that $\Fvol:\Xi^2_C(\Sigma)\to\R$ depends only on $(\mathfrak p_0,c_0)$ and not on $\om_0$. We associate to $\Omega=\Omega_0+\di\alpha$ the \textbf{action functional} 
\begin{equation*}
\widetilde {\mathcal A}_{\Omega}:\widetilde\Lambda_{\mathfrak h}(\Sigma)\to\R,\qquad\widetilde{\mathcal A}_{\Omega}\big([\gamma_r]\big):=\int_{[0,1]\times S^1}\Gamma^*\Omega + \int_{S^1}\gamma_0^*\alpha,
\end{equation*}
where $\alpha$ is chosen to be normalised if $C^n\neq0$, and $\Gamma:[0,1]\times S^1$ is the cylinder traced by the path $\{\gamma_r\}$. Like the volume, the functional $\widetilde{\mathcal A}_{\Omega}$ depends only on $(\mathfrak p_0,c_0)$ and not on $\om_0$. If $[\gamma_r]$ is a critical point of $\widetilde{\mathcal A}_{\Omega}$, then $\gamma_1\in\mathcal X(\Omega)$, provided $\gamma_1$ is embedded.
Furthermore, the action is invariant under isotopies $\{\Psi_r:\Sigma\to\Sigma\}$ starting at the identity (Proposition \ref{prp:action_inv_isotopy}):
\[
\widetilde {\mathcal A}_{\Psi_1^*\Omega}\big([\Psi^{-1}_r\circ\gamma_r]\big)=\widetilde{\mathcal A}_{\Omega}\big([\gamma_r]\big),\qquad\forall\,[\gamma_r]\in\widetilde\Lambda_{\mathfrak h}(\Sigma).
\]
In general, $\widetilde{\mathcal  A}_\Omega$ does not descend to a functional on $\Lambda_{\mathfrak h}(\Sigma)$. More precisely, by Lemma \ref{l:exact} this happens if and only if $c_0|_{\pi_2(M_0)}=a e_0|_{\pi_2(M_0)}$ for some $a\in\R$. However, as we see now, we can define an action on the set of Zoll forms $\Omega$ with $(\mathfrak p_\Omega,[\om_\Omega])\in\mathfrak Z_C^0(\Sigma)$. In this case, the volume of $\Omega$ can be expressed as a polynomial function of the action. 
\begin{dfn}
The \textbf{Zoll polynomial} $P:\R\to\R$ associated with $(\mathfrak p_0,c_0)$ is given by 
\[
P(0)=0,\qquad \frac{\di P}{\di A}(A)=\langle (Ae_0+c_0)^{n},[M_0]\rangle.
\]
For instance, when $n=1$, the polynomial reads
\begin{equation*}
P(A)=\frac12 \langle e_0,[M_0]\rangle A^2+\langle c_0,[M_0]\rangle A.
\end{equation*}
\end{dfn}
\begin{thm}\label{t:oszoll}
There is a well-defined volume function
\[
\Fvol:\mathfrak Z_C(\Sigma)\to\R,\qquad \Fvol(\mathfrak p,c):=\Fvol(\mathfrak p^*\om)
\]
and well-defined action functional
\[
\mathcal A:\mathfrak Z_C^0(\Sigma)\to\R,\qquad \mathcal A(\mathfrak p,c):=\widetilde{\mathcal A}_{\mathfrak p^*\om}\big([\delta_r]\big).
\]
Here, $\om$ is any closed two-form on $M$ in the class $c$, $\{\delta_r\}$ is any path of periodic curves such that $\delta_r$ is an oriented $\mathfrak p_r$-fibre, where $\{\mathfrak p_r\}$ is any path of oriented $S^1$-bundles from $\mathfrak p_0$ to $\mathfrak p_1=\mathfrak p$.
Moreover, there holds
\begin{equation*}
P\big(\mathcal A(\mathfrak p,c)\big)=\Fvol(\mathfrak p,c),\qquad\forall\,(\mathfrak p,c)\in\mathfrak Z_C^0(\Sigma). 
\end{equation*}
If $A_*:=\mathcal A(\mathfrak p_{\Omega_*},c_*)$, then $\tfrac{\di P}{\di A}(A_*)>0$. In particular, the polynomial $P$ is non-zero.
\end{thm}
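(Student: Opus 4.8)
The plan is to reduce every weakly Zoll pair to the one-parameter reference family $A\mapsto(\mathfrak p_0,c_0+Ae_0)$, on which both functionals are computed by hand, and then to read off the relation. The recurring computational device is the dimensional vanishing of pulled-back forms: for a form $\beta$ of degree $2n$ on $M$ and a one-form $\lambda$, one has $\mathfrak p^*(\lambda\wedge\beta)=0$ because $\lambda\wedge\beta$ is a $(2n+1)$-form on the $2n$-manifold $M$. This already settles $\Fvol$: when $C^n\neq 0$ it is identically $0$ by convention, and when $C^n=0$ the derivative of $\Fvol$ along $s\mapsto\mathfrak p^*(\om+s\di\lambda)$ equals $\int_\Sigma\mathfrak p^*(\lambda\wedge\om_s^n)=0$, so $\Fvol(\mathfrak p^*\om)$ is independent of the representative $\om$ of $c$ and $\Fvol:\mathfrak Z_C(\Sigma)\to\R$ is well defined.

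The step I expect to be the main obstacle is the well-definedness of $\mathcal A$, namely its independence of $\om$, of the fibre path $\{\delta_r\}$ and of the bundle path $\{\mathfrak p_r\}$, since $\widetilde{\mathcal A}_\Omega$ only descends to the cover $\widetilde\Lambda_{\mathfrak h}(\Sigma)$. Rewriting the action via Stokes as $\widetilde{\mathcal A}_\Omega([\gamma_r])=\int_\Gamma\Omega_0+\int_{\gamma_1}\alpha$, a direct check shows that sliding $\gamma_0$ among oriented $\mathfrak p_0$-fibres changes it by $\int_{\Gamma_0}\Omega_0$ over a fibred cylinder $\Gamma_0=\mathfrak p_0^{-1}(\text{arc})$, and sliding $\gamma_1$ among oriented $\mathfrak p$-fibres changes it by $\int_{\Gamma_1}\Omega$ over $\Gamma_1=\mathfrak p^{-1}(\text{arc})$; both vanish because the restriction of a pulled-back two-form to a fibred surface is zero. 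Independence of $\om$ is again the dimensional vanishing, supplemented (when $C^n\neq 0$) by the fact that any normalisation ambiguity is a closed one-form whose integral over the fibre is forced to vanish. The residual dependence on the homotopy class of $\{\mathfrak p_r\}$ is the monodromy in $\widetilde\Lambda_{\mathfrak h}(\Sigma)$, which I would kill by realising a smooth path of circle foliations with compact leaves by an ambient isotopy $\{\Psi_r\}$, $\Psi_0=\id$, sending oriented $\mathfrak p_0$-fibres to oriented $\mathfrak p_r$-fibres, and invoking the stated isotopy-invariance of $\mathcal A$.

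This same isotopy performs the reduction to the reference family: $\mathfrak p\circ\Psi_1=\bar\Psi_1\circ\mathfrak p_0$ for an orientation-preserving $\bar\Psi_1:M_0\to M$, so $\Psi_1^*(\mathfrak p^*\om)=\mathfrak p_0^*(\bar\Psi_1^*\om)$ and, since $\Psi_1^*C=C$, the pair $(\mathfrak p_0,c')$ with $c':=\bar\Psi_1^*c$ again lies in $\mathfrak Z_C^0(\Sigma)$ and has the same volume and action as $(\mathfrak p,c)$. As $\mathfrak p_0^*(c'-c_0)=C-C=0$ and the Gysin sequence gives $\ker(\mathfrak p_0^*:H^2_\dR(M_0)\to H^2_\dR(\Sigma))=\R\,e_0$, I obtain $c'=c_0+Ae_0$ for a unique $A$. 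Choosing a connection $\theta$ with $\int_{\text{fibre}}\theta=1$ and $\di\theta=\mathfrak p_0^*\bar e_0$, $[\bar e_0]=e_0$, the family is represented by $\alpha=A\theta$; the constant bundle and fibre paths give $\mathcal A(\mathfrak p_0,c_0+Ae_0)=\int_{\delta_0}A\theta=A$, while integrating the defining variation of $\Vol$ along $s\mapsto sA\theta$ and using fibre integration yields
\[
\Fvol(\mathfrak p_0,c_0+Ae_0)=\int_0^1\!\!\int_\Sigma A\theta\wedge\mathfrak p_0^*(\om_{sA}^n)\,\di s=\int_0^A\langle(c_0+ue_0)^n,[M_0]\rangle\,\di u=P(A),
\]
with $\om_{sA}=\om_0+sA\bar e_0$. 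Combined with the reduction this gives $P(\mathcal A(\mathfrak p,c))=\Fvol(\mathfrak p,c)$ on all of $\mathfrak Z_C^0(\Sigma)$.

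Finally, applying the reduction to $(\mathfrak p_{\Omega_*},c_*)$ produces an orientation-preserving $\bar\Psi_1:M_0\to M_*$ with $\bar\Psi_1^*c_*=c_0+A_*e_0$ and $\bar\Psi_1^*\om_*$ symplectic, so
\[
\frac{\di P}{\di A}(A_*)=\langle(c_0+A_*e_0)^n,[M_0]\rangle=\langle c_*^n,[M_*]\rangle=\int_{M_*}\om_*^n>0,
\]
the positivity coming from $\om_*$ being symplectic for the orientation of $M_*$ induced by $\mathfrak o_\Sigma$; in particular $P$ is non-zero.
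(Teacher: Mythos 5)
Your treatment of $\Fvol$ (the dimensional vanishing of $\mathfrak p^*(\lambda\wedge\beta)$), the reduction of $(\mathfrak p,c)$ to a pair $(\mathfrak p_0,Ae_0+c_0)$ by an ambient isotopy realising the path of bundles, the fibre-integration computation giving $P(A)$, and the positivity of $\tfrac{\di P}{\di A}(A_*)$ from $\om_*$ being symplectic all match the paper's argument (Lemma \ref{l:weakzollvol} and Steps 1--3 of the proof of Theorem \ref{thm:mainaction}, plus Proposition \ref{p:changepol}). The gap is exactly at the point you identify as the main obstacle and then dispose of in one clause: the independence of $\mathcal A(\mathfrak p,c)$ from the homotopy class of $\{\mathfrak p_r\}$ does \emph{not} follow from isotopy invariance. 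Two paths from $\mathfrak p_0$ to $\mathfrak p$ are realised by isotopies with different time-one maps $\Psi_1\neq\Psi_1'$, and Proposition \ref{prp:action_inv_isotopy} only converts the two action values into numbers $A$ and $A'$ determined by $\psi_1^*c=Ae_0+c_0$ and $(\psi_1')^*c=A'e_0+c_0$; nothing you have said forces $A=A'$. Equivalently, a loop of bundles based at $\mathfrak p_0$ yields a bundle automorphism $\Psi_1$ of $\mathfrak p_0$ isotopic to $\id_\Sigma$ but not necessarily through bundle maps, whose quotient can a priori satisfy $\psi_1^*c_0=A_1e_0+c_0$ with $A_1\neq 0$; the resulting monodromy of the action form takes values in $\langle c_0,\ker e_0|_{\pi_2(M_0)}\rangle$, which Lemma \ref{l:exact} does not exclude.

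The paper closes this gap in Step 4 of the proof of Theorem \ref{thm:mainaction} with an argument that has no counterpart in your proposal: iterating $\psi_1$ gives $(\psi_1^m)^*c_0=mA_1e_0+c_0$ for all $m\in\Z$, and since $\psi_1$ is an orientation-preserving diffeomorphism this yields $Q(mA_1)=Q(0)$ for every integer $m$. If $A_1\neq 0$, the polynomial $Q$ would be a non-zero constant $Q_0$ (non-zero by the non-degeneracy of $\mathfrak Z_C^0(\Sigma)$, which holds because it contains the Zoll pair of $\Omega_*$), whence $P(A)=Q_0A$ and $0=\Fvol(\mathfrak p_0,c_0)=P(A_1)=Q_0A_1$ gives a contradiction. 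Note that this is where the non-degeneracy hypothesis and the identity $P\circ\mathcal A=\Fvol$ are used in an essential, intertwined way; your proposal never invokes non-degeneracy at this stage, which is a symptom of the missing step. Without it, $\mathcal A$ is only defined modulo a possibly non-trivial subgroup of $\R$ and the displayed identities do not yet make sense.
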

From this result, we can generalise the equality cases in Theorem \ref{thm:contact_sys} and Proposition \ref{p:hamsys}.
\begin{cor}\label{c:zollequality}
Let $\Omega\in\ZZ_C(\Sigma)$ be a Zoll odd-symplectic form such that $\mathfrak p_\Omega\in\mathfrak P^0(\Sigma)$. If we set $\mathcal A(\Omega):=\mathcal A(\mathfrak p_\Omega,[\om_\Omega])$, then
\[
P(\mathcal A(\Omega))=\Fvol(\Omega).\eqno\qed
\]
\end{cor}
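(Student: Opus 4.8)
The plan is to deduce the corollary directly from Theorem \ref{t:oszoll} by realising the Zoll form $\Omega$ as the form associated with an explicit weakly Zoll pair and then matching the two notions of volume. No genuinely new argument is needed; the content is entirely a matter of unwinding the definitions and invoking the remark relating Zoll forms to weakly Zoll pairs.

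First I would produce the pair. Since $\Omega\in\ZZ_C(\Sigma)$ is Zoll with associated bundle $\mathfrak p_\Omega:\Sigma\to M_\Omega$, there is a positive symplectic form $\om_\Omega$ on $M_\Omega$ with $\Omega=\mathfrak p_\Omega^*\om_\Omega$; this is the observation recorded in the remark following the definition of weakly Zoll pairs. Setting $c:=[\om_\Omega]\in H^2_\dR(M_\Omega)$, the pair $(\mathfrak p_\Omega,c)$ is a weakly Zoll pair, and because $\Omega$ represents $C$ we have $\mathfrak p_\Omega^*c=[\Omega]=C$, so $(\mathfrak p_\Omega,c)\in\mathfrak Z_C(\Sigma)$. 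The hypothesis $\mathfrak p_\Omega\in\mathfrak P^0(\Sigma)$ is exactly what places this pair in $\mathfrak Z_C^0(\Sigma)$, the domain on which both the action $\mathcal A$ and the polynomial identity of Theorem \ref{t:oszoll} are available.

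Next I would match the two volumes. By the defining convention in the corollary, $\mathcal A(\Omega)=\mathcal A(\mathfrak p_\Omega,c)$, so it remains only to check $\Fvol(\Omega)=\Fvol(\mathfrak p_\Omega,c)$. Here the left-hand side is the value of the volume functional $\Fvol:\Xi^2_C(\Sigma)\to\R$ on the form $\Omega$, whereas the right-hand side is $\Fvol(\mathfrak p_\Omega^*\om)$ for any closed two-form $\om$ representing $c$. Choosing $\om=\om_\Omega$ gives $\mathfrak p_\Omega^*\om_\Omega=\Omega$, so the two expressions literally coincide; the independence of $\Fvol(\mathfrak p_\Omega,c)$ from the chosen representative $\om$ is precisely the well-definedness statement supplied by Theorem \ref{t:oszoll}.

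Finally, applying the polynomial identity of Theorem \ref{t:oszoll} to the pair $(\mathfrak p_\Omega,c)\in\mathfrak Z_C^0(\Sigma)$ yields $P(\mathcal A(\mathfrak p_\Omega,c))=\Fvol(\mathfrak p_\Omega,c)$, and substituting the two identifications above gives $P(\mathcal A(\Omega))=\Fvol(\Omega)$, as claimed. The only point demanding any care is verifying that $\Omega$ itself is an admissible representative in the definition of the pair's volume, which is immediate from $\Omega=\mathfrak p_\Omega^*\om_\Omega$; in the degenerate regime $C^n\neq0$ both sides are governed by the normalisation conventions already built into $\Fvol$ and $\mathcal A$, so no separate treatment is required.
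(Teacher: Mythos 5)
Your proposal is correct and follows exactly the route the paper intends: the corollary is stated with a \qed precisely because it is the polynomial identity of Theorem \ref{t:oszoll} applied to the weakly Zoll pair $(\mathfrak p_\Omega,[\om_\Omega])$, combined with the tautological identification $\Fvol(\mathfrak p_\Omega,[\om_\Omega])=\Fvol(\mathfrak p_\Omega^*\om_\Omega)=\Fvol(\Omega)$. Your unwinding of the definitions, including the observation that the hypothesis $\mathfrak p_\Omega\in\mathfrak P^0(\Sigma)$ is what places the pair in the domain $\mathfrak Z_C^0(\Sigma)$ of $\mathcal A$, matches the paper's argument.
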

In what follows, we describe a conjectural systolic-diastolic inequality for odd-symplectic forms close to $\Omega_*$ and with class $C\in H^2_{\dR}(\Sigma)$. To this end, we fix a finite open covering $\{B_i\}$ of $M_*$ by balls so that all their pairwise intersections are also contractible. Let $\Lambda(\mathfrak p_{\Omega_*})$ be the space of periodic curves $\gamma\in\Lambda_{\mathfrak h}(\Sigma)$ with the property that $\mathfrak p_{\Omega_*}(\gamma)$ is contained in some $B_i$ and there is a path $\{\gamma_r^\mathrm{short}\}\in\widetilde\Lambda_{\mathfrak h}(\Sigma)$ entirely contained in $\mathfrak p_{\Omega_*}^{-1}(B_i)$ with $\gamma^\mathrm{short}_1=\gamma$. If $\Omega\in\Xi^2_C(\Sigma)$, we set
\begin{equation*}
\mathcal A_{\Omega}:\Lambda(\mathfrak p_{\Omega_*})\to\R,\qquad\mathcal A_\Omega(\gamma):=\widetilde{\mathcal A}_{\Omega}\big([\delta_r\#\gamma^\mathrm{short}_r]\big),\qquad \gamma^\mathrm{short}_1=\gamma.
\end{equation*}
Here, the symbol $\#$ denotes the concatenation of paths, $\{\delta_r\}$ is any path of periodic curves such that $\delta_1=\gamma_0^\mathrm{short}$, and for every $r\in[0,1]$, $\delta_r$ is an oriented $\mathfrak p_r$-fibre, where $\{\mathfrak p_r\}$ is a path of oriented $S^1$-bundles connecting $\mathfrak p_0$ with $\mathfrak p_1=\mathfrak p_{\Omega_*}$. We define
\[
\mathcal A_{\min}(\Omega):=\inf_{\gamma\in\mathcal X(\Omega)\cap\Lambda(\mathfrak p_{\Omega_*})}\mathcal A_{\Omega}(\gamma),\qquad \mathcal A_{\max}(\Omega):=\sup_{\gamma\in\mathcal X(\Omega)\cap\Lambda(\mathfrak p_{\Omega_*})}\mathcal A_{\Omega}(\gamma).
\]
By \cite[Section III]{Gin87} or \cite[Section 3.2]{APB14}, if $\Omega\in \mathcal S_C(\Sigma)$ is $C^1$-close to $\Omega_*$, the set $\mathcal X(\Omega)\cap\Lambda(\mathfrak p_{\Omega_*})$ is compact and non-empty. Furthermore, the numbers $\mathcal A_{\min}(\Omega)$ and $\mathcal A_{\max}(\Omega)$ are finite and vary $C^1$-continuously with $\Omega$.

\begin{con}[Local systolic-diastolic inequality for odd-symplectic forms]\label{con:lsios}
Let $\Omega_*$ be a Zoll odd-symplectic form with cohomology class $C\in H^2_\dR(\Sigma)$. There is a $C^{k-1}$-neighbourhood $\mathcal U$ of $\Omega_*$ in $\mathcal S_C(\Sigma)$ with $k\geq 2$ such that
\begin{equation*}
P(\mathcal A_{\min}(\Omega))\leq\Fvol(\Omega)\leq P(\mathcal A_{\max}(\Omega)),\qquad \forall\,\Omega\in \mathcal U.
\end{equation*}
The equality holds in any of the two inequalities above, if and only if $\Omega$ is Zoll.
\end{con}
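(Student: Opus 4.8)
The plan is to reduce the two inequalities to a single mean-value statement for a function on the base $M_*$. Since $\tfrac{\di P}{\di A}(A_*)>0$ by Theorem \ref{t:oszoll}, the Zoll polynomial $P$ is strictly increasing on a neighbourhood of $A_*$, and for $\Omega$ sufficiently $C^{k-1}$-close to $\Omega_*$ all the numbers $\mathcal A_{\min}(\Omega)$, $\mathcal A_{\max}(\Omega)$ and $P^{-1}(\Fvol(\Omega))$ lie in this neighbourhood. Hence the chain $P(\mathcal A_{\min}(\Omega))\leq\Fvol(\Omega)\leq P(\mathcal A_{\max}(\Omega))$ is equivalent to
\[
\mathcal A_{\min}(\Omega)\leq P^{-1}\big(\Fvol(\Omega)\big)\leq\mathcal A_{\max}(\Omega),
\]
with $P^{-1}$ the local inverse at $A_*$. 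It therefore suffices to realise $P^{-1}(\Fvol(\Omega))$ as an average of the actions of the closed characteristics in $\mathcal X(\Omega)\cap\Lambda(\mathfrak p_{\Omega_*})$.

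First I would set up a finite-dimensional reduction. Fixing a connection on $\mathfrak p_{\Omega_*}$ and writing $\Omega=\Omega_*+\di\alpha$ with $\alpha$ small, the characteristic foliation of $\Omega$ is a perturbation of the fibration, and the Poincar\'e return map along the fibres is a diffeomorphism of $M_*$ that is $C^1$-close to the identity. Equivalently, the action functional $\widetilde{\mathcal A}_{\Omega_*}$ is Morse--Bott with critical set the family of oriented fibres $\cong M_*$, nondegenerate transversally because $\om_*$ is symplectic; the compactness and nondegeneracy needed here are exactly those quoted from \cite{Gin87,APB14}. A Lyapunov--Schmidt reduction then yields, for $\Omega$ close to $\Omega_*$, a smooth function $\mathcal K_\Omega\colon M_*\to\R$ whose critical points correspond bijectively to the elements of $\mathcal X(\Omega)\cap\Lambda(\mathfrak p_{\Omega_*})$ and satisfy $\mathcal A_\Omega(\gamma_x)=\mathcal K_\Omega(x)$. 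Because the global extrema of a function on the closed manifold $M_*$ are attained at critical points, this identifies $\mathcal A_{\min}(\Omega)=\min_{M_*}\mathcal K_\Omega$ and $\mathcal A_{\max}(\Omega)=\max_{M_*}\mathcal K_\Omega$.

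The core of the argument is then to rewrite the volume through the reduction. The goal is an identity of the form
\[
\Fvol(\Omega)=\int_{M_*}P\big(\mathcal K_\Omega(x)\big)\,\di\mu(x),
\]
where $\mu$ is a probability measure on $M_*$ (the normalised reduced volume density, which in the flat/linear case $e_0=0$ equals $\om_*^n\big/\!\int_{M_*}\om_*^n$). Granting it, strict monotonicity of $P$ on the relevant interval gives $P(\mathcal A_{\min}(\Omega))\leq P(\mathcal K_\Omega(x))\leq P(\mathcal A_{\max}(\Omega))$ pointwise, and integrating against $\mu$ yields the two inequalities. For the rigidity statement, assume equality in the lower bound; then $P(\mathcal K_\Omega)\equiv P(\mathcal A_{\min}(\Omega))$ holds $\mu$-almost everywhere, and since $\mu$ has full support and $P$ is injective near $A_*$, the function $\mathcal K_\Omega$ is constant. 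Hence $\di\mathcal K_\Omega\equiv 0$, so every point of $M_*$ is critical, i.e.\ every nearby fibre is a closed characteristic; this forces $\ker\Omega$ to be the vertical distribution of an $S^1$-bundle, so $\Omega$ is Zoll. The upper bound is symmetric, and the converse implication is precisely Corollary \ref{c:zollequality}.

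The hardest step is establishing the reduced volume identity in general. The reduction and the identification $\mathcal A_\Omega(\gamma_x)=\mathcal K_\Omega(x)$ are soft and local, but expressing the \emph{global} functional $\Fvol$ through the \emph{locally defined} reduced action requires controlling how the base data $(e_0,c_0)$ couples to the perturbation, and this is exactly where the monodromy obstruction detected by Lemma \ref{l:exact} can appear. I expect the identity to hold cleanly in the two settings where the reduction globalises: when $\mathfrak p_{\Omega_*}$ is a flat $S^1$-bundle, so that the holonomy of the characteristic flow acts by rotations and the reduced dynamics trivialises; and when $\Omega$ is quasi-autonomous, so that the extremal closed characteristics are themselves fibres and $\mathcal K_\Omega$ is computed directly as in Proposition \ref{p:hamsys}. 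In dimension three the classification in Proposition \ref{p:cla} places every case into one of these two, which is why the conjecture can be settled there.
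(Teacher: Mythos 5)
Keep in mind that the statement you are addressing is stated in the paper as a \emph{conjecture}: the paper itself proves it only when the real Euler class of the associated bundle vanishes (Theorem \ref{t:triv}), for quasi-autonomous H-forms (Proposition \ref{p:qaos}), and in dimension three (Corollary \ref{c:mainsys}). Your proposal honestly restricts to roughly the same scope, and its skeleton -- monotonicity of $P$ near $A_*$ from \eqref{e:pmonotone}, compactness and finiteness of $\mathcal A_{\min},\mathcal A_{\max}$ from \cite{Gin87}, and rigidity via ``every nearby fibre is a closed characteristic, hence $\Omega$ is Zoll'' -- agrees with the paper. There are, however, two concrete gaps.

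First, your central identity $\Fvol(\Omega)=\int_{M_*}P(\mathcal K_\Omega)\,\di\mu$ is not well-posed as stated and is not what the paper establishes. A Lyapunov--Schmidt reduction determines $\mathcal K_\Omega$ only up to reparametrisations preserving critical points and critical values; its integral against a fixed measure is not an invariant of the reduction, so the identity could only hold for one specifically constructed normal form, and producing that normal form is the entire analytic content of the problem. The paper takes a different route: it uses Moser stability (Proposition \ref{p:Moser}, Corollary \ref{c:Moser2}) to bring $\Omega$ into the form $\Omega_0+\di(H\eta)$, proves the exact formula $\Vol(H\eta)=\int_{M_0}\mathfrak u_*\big(\mathcal P(\mathfrak p_0,H)\big)\,\om_*^{n}$ for the \emph{defining Hamiltonian} $H$ (Lemma \ref{l:volqa}, with $\mathcal P$ a pointwise version of $P$), and then uses quasi-autonomy to compare $\min H$ and $\max H$ with the actions of the fibres over $q_{\min}$ and $q_{\max}$, which are genuine closed characteristics; the inequality follows from monotonicity of $\mathcal P(q,\cdot)$ on $\mathcal I$, not from a mean-value identity for the reduced action. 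When $e_0=0$, the passage from a general $H$ to a quasi-autonomous one is exactly where the generating-function theory of \cite{LM95}, \cite{MS98}, \cite{Sch00} enters; the remark after Proposition \ref{p:hamsys} shows the inequality already fails for $C^0$-small perturbations, so any argument must use $C^1$-closeness in an essential, quantitative way that your ``soft and local'' reduction does not isolate.

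Second, your closing claim about dimension three is incorrect. By Proposition \ref{p:cla}, on a three-manifold either the associated bundle is trivial (so $e_0=0$, covered by Theorem \ref{t:triv}) or $\Omega_*=\di\alpha_*$ for a Zoll \emph{contact} form. The latter case is neither flat nor quasi-autonomous in general; the paper settles it by invoking the contact systolic-diastolic inequality of \cite[Theorem 1.4]{BK19a}, proved by entirely different means (global surfaces of section). So the three-dimensional case does not reduce to your two settings, and your strategy as written leaves the non-trivial-bundle case in dimension three unproved.
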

\begin{rmk}
If the real Euler class of the bundle associated with $\Omega_*$ vanishes, then the inequality in the conjecture is equivalent to
\[
\mathcal A_{\min}(\Omega)\leq0\leq \mathcal A_{\max}(\Omega),\qquad \forall\,\Omega\in \mathcal U,
\]
with any of the equalities holding if and only if $\Omega$ is Zoll.
\end{rmk}
\begin{rmk}
The volume, the action and the Zoll polynomial depend on the choice of reference pair $(\mathfrak p_0,c_0)\in\mathfrak Z_C^0(\Sigma)$. However, thanks to Theorem \ref{t:oszoll} we will observe in Remark \ref{r:indepconj} that the functional
\[
\big(P\circ \mathcal A_\Omega-\Fvol(\Omega)\big):\Lambda(\mathfrak p_{\Omega_*})\to\R
\]
is independent of such a choice. Hence, Conjecture \ref{con:lsios} remains the same when we use another reference pair in $\mathfrak Z_C^0(\Sigma)$.
\end{rmk}
\begin{rmk}
As mentioned at the beginning of this subsection, Conjecture \ref{con:lsios} recovers the contact and symplectic systolic-diastolic inequality. In the {\it contact} case, we have $\Omega_*=\di \alpha_*$ for a contact form $\alpha_*$ so that $C=0$. Up to multiplication by a positive constant, we can assume that $T(\alpha_*)=1$. We take $\mathfrak o_\Sigma=\mathfrak o_{\alpha_*}$ and $c_0=0$. There holds
\[
P(A)=\frac{1}{n+1}\langle e_0^{n},[M_0]\rangle A^{n+1},
\]
where $\langle e_0^{n},[M_0]\rangle=\langle [\om_*^n],[M_*]\rangle>0$. When $n=1$, we have $\langle e_0,[M_0]\rangle=|H_1^\mathrm{tor}(\Sigma;\Z)|$. If $\Omega=\di\alpha$ is $C^{0}$-close to $\Omega_*$, we can take $\alpha$ to be a contact form. We have
\[
\Fvol(\di \alpha)=\frac1n\mathrm{Volume}(\alpha),\qquad \mathcal A_{\di\alpha}(\gamma)=\int_{S^1}\gamma^*\alpha=T(\gamma),\quad \forall\,\gamma\in\Lambda_{\mathfrak h}(\Sigma).
\]
Thus, there holds $T_{\min}(\alpha,\mathfrak h)\leq \mathcal A_{\min}(\di\alpha)\leq \mathcal A_{\max}(\di\alpha)\leq T_{\max}(\alpha,\mathfrak h)$, and Conjecture \ref{con:lsios} implies
\[
T_{\min}(\alpha,\mathfrak h)^n\leq \frac{1}{\langle e_0^{n},[M_0]\rangle}\mathrm{Volume}(\alpha)\leq T_{\max}(\alpha,\mathfrak h)^n
\]
with equality cases exactly when the contact form $\alpha$ is Zoll.

In the {\it symplectic} case, $\mathfrak p_{\Omega_*}$ is trivial so that $\Sigma=M_*\times S^1$ and we call $t\in S^1$ the global angular coordinate. We take $\Omega=\Omega_*+\di(H\di t)$ for some function $H:M_*\times S^1\to \R$. The characteristic distribution of $\Omega$ is generated by the vector field $\partial_t+X_{H_t}$, where $X_{H_t}$ is the $\omega_*$-Hamiltonian vector field of $H_t:=H(\cdot,t)$ tangent to $M_*\times\{t\}$. Then, $\mathcal A_\Omega$ is the Hamiltonian action defined in \eqref{e:hamact}, $\langle c_0^n,[M_0]\rangle=\langle [\om_*^n],[M_*]\rangle>0$, and
\[
P(A)=\langle c_0^n,[M_0]\rangle A,\qquad \Vol(H\di t)=\int_{M_*\times S^1} (H\di t)\wedge\omega_*^{n}=\CAL_{\om_*}(H).
\] 
Conjecture \ref{con:lsios} generalises Proposition \ref{p:hamsys}, since $\mathcal A_{\min}(\Omega)=\min\mathcal A_H$ and $\mathcal A_{\max}(\Omega)=\max\mathcal A_H$, if we take the Hamiltonian $H$ to be normalised, namely $\CAL_{\om_*}(H)=0$.
\end{rmk}
Inspired by the Hamiltonian case, we will look first at a special class of $\Omega\in\Xi^2_C(\Sigma)$ when studying Conjecture \ref{con:lsios}. Here, we assume without loss of generality that $\mathfrak p_0=\mathfrak p_{\Omega_*}$. We fix an $S^1$-connection form $\eta$ for the bundle $\mathfrak p_0$. This means that $\eta$ restricts to the angular form on each fibre and $\di\eta=\mathfrak p_0^*\kappa$ for some $\kappa\in\Xi^2_{e_0}(M_0)$. We consequently choose the form $\om_0$ so that $\om_*=\om_0+A_*\kappa$, where $A_*=\mathcal A(\Omega_*)$. We say that a form $\Omega\in\Xi^2_C(\Sigma)$ is an \textbf{H-form}, if there exists a function $H:\Sigma\to\R$ such that 
\[
\Omega=\Omega_0+\di(H\eta).
\] 
We call $H$ a defining Hamiltonian for $\Omega$. If $H$ is $C^0$-close to $A_*$, then $\Omega$ is odd-symplectic, and if $H$ is $C^k$-close to $A_*$, then $\Omega$ is $C^{k-1}$-close to $\Omega_*$. An H-form is called \textbf{quasi-autonomous} if there exist $q_{\min},q_{\max}\in M_0$ and defining Hamiltonians $H_{\min}$ and $H_{\max}$ such that
\[
\min_{\Sigma} H_{\min}=H_{\min}(z),\quad\forall\,z\in\mathfrak p^{-1}_0(q_{\min}),\qquad \max_{\Sigma} H_{\max}=H_{\max}(z),\quad\forall\,z\in\mathfrak p^{-1}_0(q_{\max}).
\]
We say that $H$ is quasi-autonomous, if the corresponding $\Omega$ is quasi-autonomous. We establish Conjecture \ref{con:lsios} for quasi-autonomous forms close to a Zoll odd-symplectic form.
\begin{prp}\label{p:qaos}
There exists a $C^2$-neighbourhood $\mathcal H$ of the constant $A_*$ in the space of quasi-autonomous functions over $\Sigma$ such that if $H\in\mathcal H$ and $\Omega=\Omega_*+\di(H\eta)$ then
\[
P\big(\mathcal A_{\min}(\Omega)\big)\leq\Fvol(\Omega)\leq P\big(\mathcal A_{\max}(\Omega)\big),
\]
and any of the two equalities holds if and only if $\Omega$ is Zoll. In the Zoll case, $H$ is invariant under the holonomy of $\eta$. In particular, it is constant if $e_0\neq0$. 
\end{prp}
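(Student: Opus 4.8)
The plan is to carry over the proof of the symplectic quasi-autonomous inequality (Proposition \ref{p:hamsys}) to the bundle $\mathfrak p_0=\mathfrak p_{\Omega_*}$, using as comparison objects the \emph{frozen Zoll forms}
\[
\Omega^A:=\Omega_0+\di(A\eta)=\mathfrak p_0^*(\om_0+A\kappa),\qquad A\in\R,
\]
which are Zoll whenever $\om_0+A\kappa$ is symplectic, hence for all $A$ near $A_*$ since $\om_*=\om_0+A_*\kappa$. Writing $\Omega=\Omega_0+\di(H\eta)$ for the H-form with defining Hamiltonian $H$ near the constant $A_*$, I would first record that, by Theorem \ref{t:oszoll} and Corollary \ref{c:zollequality}, $\mathcal A(\Omega^A)=A$ and $\Fvol(\Omega^A)=P(A)$.

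Next I would locate the relevant closed characteristics and compute their actions. Let $V$ be the vertical field with $\eta(V)=1$. A direct contraction gives $\iota_V\Omega=(VH)\eta-\di H$; since $q_{\min}$ is a global minimum of a defining Hamiltonian $H=H_{\min}$, we have $\di H=0$ on the fibre $\gamma_{\min}:=\mathfrak p_0^{-1}(q_{\min})$, so $V\in\ker\Omega$ there and $\gamma_{\min}\in\mathcal X(\Omega)$; likewise for $\gamma_{\max}$ using $H_{\max}$. To evaluate $\mathcal A_\Omega(\gamma_{\min})$ I would sweep a cylinder $\Delta$ by a path of $\mathfrak p_0$-fibres from the reference fibre to $\gamma_{\min}$: the term $\int_\Delta\mathfrak p_0^*\om_0$ vanishes because $\mathfrak p_0\circ\Delta$ factors through the one-dimensional base path, while Stokes applied to $\di(H\eta)$ cancels the reference-fibre contribution, leaving $\mathcal A_\Omega(\gamma_{\min})=\int_{\gamma_{\min}}H\eta=\min H$ (the integrand being constant on $\gamma_{\min}$); symmetrically $\mathcal A_\Omega(\gamma_{\max})=\max H$.

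The key step is a monotonicity identity for the volume, valid when $e_0\neq0$ (so that $C^n=0$ and $\Fvol$ is the honest push-forward of $\Vol$). Interpolating $\alpha_s:=(A+s(H-A))\eta$ and differentiating $\Vol$, the $\di H\wedge\eta$ contributions die against the external $\eta$ and one finds
\[
\Fvol(\Omega)-P(A)=\int_0^1\!\!\int_\Sigma (H-A)\,\eta\wedge\mathfrak p_0^*(\om_0+H_s\kappa)^n\,\di s,\qquad H_s:=A+s(H-A).
\]
For $H$ in a small enough $C^0$-neighbourhood of $A_*$ the form $\om_0+H_s\kappa$ remains symplectic and positively oriented, so $\eta\wedge\mathfrak p_0^*(\om_0+H_s\kappa)^n$ is a positive volume form; this positivity is what fixes the size of $\mathcal H$. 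Taking $A=\min H$ makes $H-A\ge0$, whence $\Fvol(\Omega)\ge P(\min H)=P(\mathcal A_\Omega(\gamma_{\min}))$, and since $P$ is strictly increasing near $A_*$ (as $P'(A_*)>0$ by Theorem \ref{t:oszoll}) and $\mathcal A_{\min}(\Omega)\le\mathcal A_\Omega(\gamma_{\min})$, this yields $P(\mathcal A_{\min}(\Omega))\le\Fvol(\Omega)$; the upper bound is symmetric with $A=\max H$. Equality forces the sign-definite integrand to vanish, hence $H\equiv\min H$ is constant and $\Omega=\Omega^{A}$ is Zoll, with the converse provided by Corollary \ref{c:zollequality}; the identity $\di H=(VH)\eta$ characterising Zoll H-forms shows $H$ is then holonomy-invariant, and constant when $e_0\neq0$.

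The main obstacle is the complementary case $e_0=0$ ($C^n\neq0$), where $\Fvol\equiv0$, $P$ is linear with positive slope $\langle c_0^n,[M_0]\rangle$, and the primitive $\alpha$ must be normalised, so the frozen-form comparison degenerates. Here the claim reduces, as in the remark after Conjecture \ref{con:lsios}, to $\mathcal A_{\min}(\Omega)\le0\le\mathcal A_{\max}(\Omega)$; after normalisation $\mathcal A_\Omega(\gamma_{\min})=\min H-\avg(H)$, and one must show $\min H\le\avg(H)\le\max H$ for the suitably weighted average. Choosing a flat connection $\eta$ (possible since $e_0=0$) puts $\Omega$ into the form treated by the symplectic quasi-autonomous Proposition \ref{p:hamsys}, to which the argument is then reduced---possibly passing to a finite cover or using the developing map when the flat bundle is non-trivial---with the equality case producing a holonomy-invariant, generally non-constant, Hamiltonian. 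Throughout, that $\mathcal A_{\min}(\Omega)\le\mathcal A_\Omega(\gamma_{\min})$ is meaningful relies on the non-emptiness and compactness of $\mathcal X(\Omega)\cap\Lambda(\mathfrak p_{\Omega_*})$ for $C^1$-close $\Omega$ recalled before Conjecture \ref{con:lsios}.
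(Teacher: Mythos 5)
Your proposal is correct and follows essentially the same route as the paper's proof (Proposition \ref{p:qa1}): the fibres over $q_{\min},q_{\max}$ are closed characteristics with actions $\min H_{\min}$ and $\max H_{\max}$, the volume is computed by fibre integration, and positivity of $(A\kappa+\om_0)^n$ for $A$ near $A_*$ gives the sign-definite comparison; your interpolation from $A\eta$ to $H\eta$ is the same computation as the paper's pointwise primitive $\mathcal P(q,A)$ with $\partial_A\mathcal P=\mathcal Q$ in Lemma \ref{l:volqa}.

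Two remarks. First, your detour for $e_0=0$ through a finite cover and Proposition \ref{p:hamsys} is unnecessary: with a flat connection the same chain of inequalities runs verbatim, since $\mathcal Q\equiv 1$, $\mathcal I=\R$, and after normalising $H_{\min}$ the inequality reduces to $\min H_{\min}\leq \avg(H_{\min})=0$, exactly as you note before invoking the reduction; if you do pass to a cover you must additionally invoke Propositions \ref{p:volinvcov}, \ref{p:pbinvact} and \ref{p:pbinvactloc} (and the argument of Proposition \ref{p:covercon} for the equality case), none of which is needed in the direct argument. Second, for the direction ``$\Omega$ Zoll $\Rightarrow$ equality'', Corollary \ref{c:zollequality} alone only gives $P(\mathcal A(\Omega))=\Fvol(\Omega)$; you also need that every closed characteristic of the nearby Zoll form lies in $\Lambda(\mathfrak p_{\Omega_*})$ and has action $\mathcal A(\Omega)$, i.e.\ $\mathcal A_{\min}(\Omega)=\mathcal A(\Omega)=\mathcal A_{\max}(\Omega)$, which is Proposition \ref{p:ginsys}.(iii) and is precisely why the paper makes the implication (b)$\Rightarrow$(c) conditional on $\Omega$ lying in the neighbourhood $\mathcal V$ (or on the action-gap condition). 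Since Proposition \ref{p:qaos} already restricts to a small $C^2$-neighbourhood of $A_*$, this is available, but it should be cited rather than folded into Corollary \ref{c:zollequality}.
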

Let $\mathfrak p_0:M_0\times S^1\to M_0$ be the trivial bundle and take $\eta=\di t$, where $t \in S^1$ is the angular coordinate. In this case, we show in Proposition \ref{p:Moser} that if $\Omega\in\Xi^2_C(\Sigma)$ is $C^2$-close to $\Omega_*$, then $\Omega$ is an H-form with a $C^2$-small defining Hamiltonian $H$, after applying a diffeomorphism of $\Sigma$ isotopic to the identity. Therefore, in this setting the conjecture follows from Proposition \ref{p:hamsys} (or Proposition \ref{p:qaos}) and the invariance of the volume and the action under diffeomorphisms. From this result and the topological Lemma \ref{l:3state1}, Conjecture \ref{con:lsios} for bundles with $e_0=0$ can readily be proven.
\begin{thm}\label{t:triv}
The local systolic-diastolic inequality for odd-symplectic forms, whose associated bundle has vanishing real Euler class, holds true in the $C^2$-topology, .
\end{thm}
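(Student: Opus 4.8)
The plan is to deduce Theorem \ref{t:triv} from the trivial-bundle case, which is already established in the paragraph preceding the statement, together with a covering reduction furnished by the topological Lemma \ref{l:3state1}. First I would record the simplifications forced by the hypothesis $e_0=0$. Since $e_0=0$ is equivalent to $C^n\neq0$, the volume functional $\Fvol$ vanishes identically and the Zoll polynomial collapses to the linear map $P(A)=\langle c_0^n,[M_0]\rangle\,A$ with positive slope $\langle c_0^n,[M_0]\rangle>0$. As observed in the Remark following Conjecture \ref{con:lsios}, the desired chain $P(\mathcal A_{\min}(\Omega))\leq\Fvol(\Omega)\leq P(\mathcal A_{\max}(\Omega))$ is then equivalent to
\[
\mathcal A_{\min}(\Omega)\leq 0\leq \mathcal A_{\max}(\Omega),
\]
with equality in either bound if and only if $\Omega$ is Zoll. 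Hence it suffices to exhibit one short closed characteristic of non-positive action and one of non-negative action, and to identify the equality case.

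Next I would reduce an arbitrary bundle with vanishing real Euler class to the trivial one. Because $e_0=0$, the integral Euler class of $\mathfrak p_{\Omega_*}$ is torsion, so by Lemma \ref{l:3state1} there is a finite covering $\pi\colon\hat\Sigma\to\Sigma$ that is itself a trivial oriented $S^1$-bundle $\hat{\mathfrak p}_0\colon\hat\Sigma=\hat M_0\times S^1\to\hat M_0$ over a finite cover $\hat M_0\to M_0$, with $\pi$ restricting to an orientation-preserving diffeomorphism on each fibre. Concretely, one pulls back along the $\Z/N$-cover of $M_0$ dual to a class $\bar c\in H^1(M_0;\Z/N)$ whose Bockstein is the torsion Euler class; this kills the class and trivialises the bundle. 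I would then lift: set $\hat\Omega:=\pi^*\Omega$, which is $C^2$-close to $\hat\Omega_*:=\pi^*\Omega_*=\hat{\mathfrak p}_0^*\om_*$ and is Zoll on the trivial bundle $\hat\Sigma$. By the trivial-bundle case of the conjecture (Proposition \ref{p:Moser}, together with Proposition \ref{p:hamsys} or Proposition \ref{p:qaos} and the diffeomorphism invariance of Propositions \ref{prp:vol} and \ref{prp:action_inv_isotopy}), the inequality holds for $\hat\Omega$ upstairs.

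It then remains to transport the conclusion back down through $\pi$. Since $\pi$ is a covering that preserves the characteristic foliation and is a diffeomorphism on each fibre, every closed characteristic $\gamma$ of $\Omega$ in the class $\mathfrak h$ lifts to finitely many closed characteristics of $\hat\Omega$ in the corresponding class, all of the same period, and conversely every short closed characteristic of $\hat\Omega$ projects to one of $\Omega$. Because the integrals defining $\widetilde{\mathcal A}$ pull back verbatim under a cover that is fibrewise-diffeomorphic, the action value is preserved along this correspondence, whence $\mathcal A_{\min}(\hat\Omega)=\mathcal A_{\min}(\Omega)$ and $\mathcal A_{\max}(\hat\Omega)=\mathcal A_{\max}(\Omega)$, and the inequality for $\hat\Omega$ descends to $\Omega$. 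For the rigidity statement I would use that the characteristic distributions are $\pi$-related, so $\Omega$ is Zoll on $\Sigma$ exactly when $\hat\Omega$ is Zoll on $\hat\Sigma$; combined with the sharp equality statement upstairs, this yields the equality case downstairs.

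I expect the principal obstacle to be the bookkeeping of the covering reduction rather than any hard analysis. The delicate points are: checking that the short-loop space $\Lambda(\mathfrak p_{\Omega_*})$ and the free-homotopy class $\mathfrak h$ correspond correctly under $\pi$, so that $\mathcal A_{\min}$ and $\mathcal A_{\max}$ match exactly and not merely up to the number of sheets; verifying that the chosen normalisation of $\alpha$, hence the additive constant in $\widetilde{\mathcal A}$, is compatible with the pullback; and confirming that the Zoll characterisation is genuinely invariant under the finite cover. A secondary point to confirm is that the trivial-bundle input applies to $\hat\Omega$ even when the defining Hamiltonian produced by Proposition \ref{p:Moser} fails to be quasi-autonomous, i.e.\ that $C^2$-smallness alone secures $\mathcal A_{\min}\leq0\leq\mathcal A_{\max}$; this is precisely where the local nature of the statement and the short-orbit constraint enter.
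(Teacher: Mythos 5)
Your overall strategy coincides with the paper's: use Lemma \ref{l:3state1} to pull the bundle back to a trivial one by a finite cover, invoke the pull-back invariance of volume, action, systole and diastole (this is exactly Proposition \ref{p:covercon} in the paper, which also handles the descent of the equality case), and then settle the trivial-bundle case. The reduction step is fine, modulo one small point you gloss over: to descend the Zoll property from $\hat\Omega=\Pi^*\Omega$ to $\Omega$ one needs to know that $\Pi$ is injective on each closed characteristic upstairs, which the paper extracts from the short-orbit localization of Proposition \ref{p:ginsys}; ``the characteristic distributions are $\pi$-related'' alone does not rule out a fibre upstairs multiply covering a leaf downstairs.

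The genuine gap is in the trivial-bundle case itself, which you treat as already established and only flag at the very end as ``a secondary point to confirm.'' It is not secondary: Corollary \ref{c:Moser2} produces, after an isotopy, an H-form $\Omega_{H\di\phi}$ with $H$ merely $C^2$-small and normalised, and Proposition \ref{p:hamsys} (equivalently Proposition \ref{p:qa1}) applies only to \emph{quasi-autonomous} Hamiltonians. A $C^2$-small $H$ need not be quasi-autonomous, so the inequality $\mathcal A_{\min}\leq 0\leq \mathcal A_{\max}$ does not follow directly. The paper closes this gap with two nontrivial inputs: (i) the theory of generating functions on closed symplectic manifolds (\cite[Proposition 5.11]{LM95}, \cite[Proposition 9.31]{MS98}), which replaces the $C^1$-small Hamiltonian isotopy generated by $H$ with one that is homotopic with fixed endpoints and generated by a quasi-autonomous normalised $H'$; and (ii) a Schwarz-type argument giving an \emph{action-preserving} bijection $\mathcal X(\Omega_{H\di\phi};\mathfrak p_0)\cong\Fix\varphi_1=\Fix\varphi_1'\cong\mathcal X(\Omega_{H'\di\phi};\mathfrak p_0)$, so that systole and diastole are literally unchanged when passing from $H$ to $H'$. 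Without (i) and (ii) your argument proves the theorem only for those $\Omega$ whose normal form happens to be quasi-autonomous, which is strictly weaker than the claim; supplying these two steps is the actual content of the proof of Theorem \ref{t:triv} beyond the covering reduction.
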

If $\Omega_*$ is any Zoll odd-symplectic form on a closed three-manifold $\Sigma$, then either the Euler class of its associated bundle vanishes or $\Omega_*=\di\alpha_*$ for some Zoll contact form $\alpha_*$. Hence, Theorem \ref{t:triv} together with \cite[Theorem 1.4]{BK19a} establish Conjecture \ref{con:lsios} in dimension three.
\begin{cor}\label{c:mainsys}
The local systolic-diastolic inequality for odd-symplectic forms holds true in the $C^2$-topology on closed three-manifolds.\qed
\end{cor}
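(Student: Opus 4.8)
The plan is to verify Conjecture~\ref{con:lsios} for an arbitrary Zoll odd-symplectic form $\Omega_*$ on a closed oriented three-manifold $\Sigma$ by a dichotomy on the real Euler class $e_0$ of the associated bundle $\mathfrak p_{\Omega_*}\colon\Sigma\to M_*$, reducing one case to Theorem~\ref{t:triv} and the other to Theorem~\ref{thm:contact_sys}. Since $\dim\Sigma=3$ we have $n=1$, and by Proposition~\ref{p:cla} the base $M_*$ is a connected oriented closed surface, whose second integral cohomology $H^2(M_*;\Z)\cong\Z$ is torsion-free. Hence the map $H^2(M_*;\Z)\to H^2_\dR(M_*)$ is injective and $e_0=0$ precisely when the integral Euler class vanishes, i.e. precisely when $\mathfrak p_{\Omega_*}$ is trivial. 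This yields the clean alternative: either $e_0=0$ and the bundle is trivial, or $e_0\neq0$ and the bundle is non-trivial.

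In the first case there is nothing to prove beyond quoting Theorem~\ref{t:triv}, which states exactly that the local systolic--diastolic inequality holds in the $C^2$-topology for Zoll odd-symplectic forms whose associated bundle has vanishing real Euler class.

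In the second case I would invoke Proposition~\ref{p:cla} once more: because $\mathfrak p_{\Omega_*}$ is non-trivial, every Zoll odd-symplectic form on $\Sigma$ is the differential of a Zoll contact form, so $\Omega_*=\di\alpha_*$ and in particular $C=[\Omega_*]=0$. It then remains to recognise Conjecture~\ref{con:lsios} for $\Omega_*$ as the contact systolic--diastolic inequality. Any $\Omega\in\mathcal S_0(\Sigma)$ that is $C^2$-close to $\Omega_*=\di\alpha_*$ is exact and, being in particular $C^0$-close, admits a primitive $\alpha$ that is a contact form with $\di\alpha=\Omega$; thus $\di\alpha$ lies in the $C^2$-neighbourhood $\mathcal U$ of $\di\alpha_*$ furnished by Theorem~\ref{thm:contact_sys}. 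Under the identifications recorded in the contact-case discussion following Conjecture~\ref{con:lsios} (with $c_0=0$ and $\mathfrak p_0=\mathfrak p_{\Omega_*}$), one has $P(A)=\tfrac12\langle e_0,[M_*]\rangle A^2$ with $\langle e_0,[M_*]\rangle=|H_1^{\mathrm{tor}}(\Sigma;\Z)|>0$, the volume $\Fvol(\di\alpha)$ is a fixed positive multiple of $\mathrm{Volume}(\alpha)$, and the closed characteristics of $\di\alpha$ lying in $\Lambda(\mathfrak p_{\Omega_*})$ are exactly the prime Reeb orbits of $\alpha$ in the class $\mathfrak h$, on which $\mathcal A_{\di\alpha}$ computes the period; for $\Omega$ near the Zoll form these orbits all lie near the Zoll foliation, so $\mathcal A_{\min}(\Omega)=T_{\min}(\alpha,\mathfrak h)$ and $\mathcal A_{\max}(\Omega)=T_{\max}(\alpha,\mathfrak h)$. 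With these identifications the inequality $P(\mathcal A_{\min}(\Omega))\leq\Fvol(\Omega)\leq P(\mathcal A_{\max}(\Omega))$ is exactly the quadratic inequality of Theorem~\ref{thm:contact_sys}, and its equality cases match because $\Omega=\di\alpha$ is Zoll if and only if $\alpha$ is a Zoll contact form.

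Since $\Omega_*$ was arbitrary, the two cases together establish Conjecture~\ref{con:lsios} on every closed oriented three-manifold in the $C^2$-topology. I expect the only genuine work to sit in the second case, in matching the normalisations of $\Fvol$, $\mathcal A$ and $P$ to the contact volume and the Reeb periods and, above all, in checking that the local extremal actions $\mathcal A_{\min}(\Omega)$, $\mathcal A_{\max}(\Omega)$ coincide with $T_{\min}(\alpha,\mathfrak h)$, $T_{\max}(\alpha,\mathfrak h)$ so that the two extremal inequalities transfer in both directions; by the contact-case discussion this is bookkeeping built on the already established correspondence rather than new analysis.
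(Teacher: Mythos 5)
Your proposal is correct and follows exactly the paper's route: the paper proves the corollary by the same dichotomy on the real Euler class of $\mathfrak p_{\Omega_*}$, invoking Theorem~\ref{t:triv} when $e_0=0$ and, when $e_0\neq0$, using that $\Omega_*=\di\alpha_*$ for a Zoll contact form (Proposition~\ref{p:cla}, Remark~\ref{r:ZolldiffZoll}) so that Conjecture~\ref{con:lsios} reduces to Theorem~\ref{thm:contact_sys} via the identifications of $P$, $\Fvol$ and $\mathcal A_{\di\alpha}$ recorded in the contact-case remark. You also correctly isolate the one point the paper leaves implicit, namely matching $\mathcal A_{\min},\mathcal A_{\max}$ over $\Lambda(\mathfrak p_{\Omega_*})$ with $T_{\min},T_{\max}$ so the inequalities transfer in the needed direction.
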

\begin{rmk} 
More generally, one could try to formulate a systolic-diastolic inequality in a neighbourhood of an odd-symplectic form $\Omega_*$, whose closed characteristics are tangent to a (not necessarily free) $S^1$-action on $\Sigma$, cf.~\cite{Tho76} and \cite{BR94}.
\end{rmk}
\subsubsection*{The example of magnetic geodesics}

Our main motivation to study the systolic-diastolic inequality for odd-symplectic forms comes from twisted cotangent bundles, where the magnetic form is symplectic.
	Let $(N,\sigma)$ be a connected closed symplectic manifold of dimension $2m$. Let $\mathfrak p_{\ta^*N}:\ta^*N\to N$ be the cotangent bundle map and consider the twisted symplectic form
	\[
	\di\lambda_\mathrm{can}+\mathfrak p_{\ta^*N}^*\sigma\in\Omega^2(\ta^*N),
	\]
	where $\lambda_\mathrm{can}$ is the canonical one-form on $\ta^*N$. We fix a $\sigma$-compatible almost complex structure $J$ on $N$ with associated metric $g_J$. The structure $J$ turns $\mathfrak p_{\ta^*N}$ into a complex vector bundle and we denote by $\mathfrak p_J:\mathbb P_\C(\ta^*N)\to N$ its projectivisation. For all Riemannian metrics $g$ on $N$ in the same conformal class of $g_J$, let $S_g^*N$ be the unit co-sphere bundle of $g$, and write $\mathfrak p_g:S^*_gN\to N$ and $\mathfrak i_g:S_g^*N\hookrightarrow\ta^*N$ for the natural projection and inclusion. The one-parameter group $t\mapsto e^{tJ}$ acts fibrewise on $\ta^*N$ and yields a free $S^1$-action on $S^*_gN$, since $J$ is $g$-orthogonal. The quotient is naturally identified with $\mathbb P_\C(\ta^*N)$, so that we have an oriented $S^1$-bundle $\mathfrak p$ making the following diagram commute
	\[
	\xymatrix{
		\ta^*N\ar[dr]_{\mathfrak p_{\ta^*N}}&S^*_gN\ar[r]^-{\mathfrak p}\ar[d]^{\mathfrak p_g}\ar[l]_{\ \, \mathfrak i_g}&\mathbb P_\C(\ta^*N)\ar[dl]^{\mathfrak p_J}.\\
		&N&}
	\]
	Therefore, $\Omega_{g,\sigma}:=\mathfrak i_g^*(\di\lambda_\mathrm{can}+\mathfrak p_{\ta^*N}^*\sigma)$ is an odd-symplectic two-form on $S^*_gN$ with cohomology class $C:=\mathfrak p_g^*[\sigma]$. The class $C$ vanishes if and only if $N$ is a surface different from the two-torus. In general, it is well-known that there exists a Zoll form $\mathfrak p^*\om_\sigma$ in the class $C$. More specifically, $\om_\sigma$ is the symplectic form on $\mathbb P_\C(\ta^*N)$ defined as
	\[
	\om_\sigma:=a\,\om_\mathrm{FS}+ \mathfrak p_J^*\sigma,
	\]
	for some $a>0$ small enough (see \cite[Proposition 3.18]{Voi} when $(N,\sigma,J)$ is K\"ahler). Here, $\mathfrak p^*\om_\mathrm{FS}=\di\eta$, where $\eta$ is a connection form for $\mathfrak p$, which, for all $x\in N$, restricts on the fibre $\mathfrak p_g^{-1}(x)\cong S^{2m-1}$ to the standard contact form on the sphere. The $\mathfrak p$-fibres are almost tangent to the characteristic distribution of $\Omega_{g,\sigma}$, if $\sigma$ is very big. However, the form $\Omega_{g,\sigma}$ and $\mathfrak p^*\om_\sigma$ are remarkably not close to each other, if $m>1$.
	
	The relevance of this example stems from the fact that the characteristics of $\Omega_{g,\sigma}$ are the tangent lifts of the magnetic geodesics of the pair $(g,\sigma)$. A curve $c:\R\to N$ is called a magnetic geodesic if $g(\dot c,\dot c)\equiv 1$ and there holds
	\[
	\nabla^g_{\dot c}\dot c=-fJ\dot c,
	\]
	where $\nabla^g$ is the Levi-Civita connection of $g$ and $f:N\to(0,\infty)$ is the conformal factor given by $f\cdot g=g_J$. We refer to the work of Kerman in \cite{Ker99} for results on the existence of closed magnetic geodesics in this setting. 
	
In the companion paper \cite{BK19b}, we use Corollary \ref{c:mainsys} and Proposition \ref{p:cla} to establish a systolic-diastolic inequality for magnetic geodesics when $N$ is a surface, namely $m=1$.
\subsection{Structure of the paper}
\begin{itemize}
\item In Section \ref{c:vol}, we introduce the volume of a closed two-form on an odd-dimensional oriented closed manifold and explore its invariance properties.
\item We define odd-symplectic forms and H-forms in Section \ref{c:odd}. Under certain conditions, we prove a stability result for H-forms in the set of odd-symplectic forms.

\item We devote Section \ref{sec:oriented} to review some basic facts about oriented $S^1$-bundles, which will be crucially used in the following discussion.

\item In Section \ref{sec:weak_Zoll}, we define weakly Zoll pairs and Zoll odd-symplectic forms. We also prove Proposition \ref{p:cla}, which classifies Zoll odd-symplectic forms on oriented three-manifolds.

\item In Section \ref{s:acfun}, we introduce the action of a closed two-form on an odd-dimensional manifold which is the total space of some oriented $S^1$-bundle. We prove Theorem \ref{thm:mainaction}, which is just a reformulation of Theorem \ref{t:oszoll} showing that the action and the volume of a weakly Zoll pair are related through the Zoll polynomial.

\item We formulate the local systolic-diastolic inequality for odd-symplectic forms in Section \ref{sec:conjsys}. We prove it for quasi-autonomous H-forms (Proposition \ref{p:qa1} corresponding to Proposition \ref{p:qaos}) or when the real Euler class of the bundle vanishes (end of Subsection \ref{ss:triv} corresponding to Theorem \ref{t:triv} and Proposition \ref{p:hamsys}).
\end{itemize}

\bigskip

\noindent\textbf{Acknowledgements.} This work is part of a project in the Collaborative Research Center \textit{TRR 191 - Symplectic Structures in Geometry, Algebra and Dynamics} funded by the DFG. It was initiated when the authors worked together at the University of M\"unster and partially carried out while J.K.~was affiliated with the Ruhr-University Bochum. We thank Peter Albers, Kai Zehmisch, and the University of M\"unster for having provided an inspiring academic environment. We are also grateful to Michael Kapovich for the characterisation contained in Lemma \ref{l:3state1}. G.B.~would like to express his gratitude to Hans-Bert Rademacher and the whole Differential Geometry group at the University of Leipzig. J.K.~was supported by Samsung Science and Technology Foundation under Project Number SSTF-BA1801-01.

\subsection*{Notations}
 In the following discussion $(\Sigma,\mathfrak o_\Sigma)$ is a connected oriented closed manifold of dimension $2n+1$. We endow $\Sigma$  with an auxiliary Riemannian metric in order to measure distances and norms of various objects defined on $\Sigma$. We denote by $r$ a real number in $[0,1]$. We will often consider paths $r\mapsto\mathbf o_r$ with values in some target space $\mathbf O$. We use the short-hand $\{\mathbf o_r\}\subset\mathbf O$ for such a path and $[\mathbf o_r]$ for a homotopy class of paths with conveniently chosen boundary conditions. If $\{\mathbf o_r\}$ and $\{\mathbf o'_r\}$ are two paths with $\mathbf o_1=\mathbf o'_0$, we write $\{\mathbf o_r\}\#\{\mathbf o_r'\}$ for the concatenated path. Finally, if a mathematical object is expressed by a certain symbol, we add a \emph{prime} to it in order to denote another object of the same kind.

\section{The volume of a closed two-form}\label{c:vol}

\subsection{Definition of the volume}
Let $\Xi^k(\Sigma)$ be  the set of closed $k$-forms on $\Sigma$ and $\Xi_C^2(\Sigma)$ the set of elements in $\Xi^2(\Sigma)$ representing the de Rham cohomology class $C\in H_\dR^2(\Sigma)$. The set $\Xi^2_C(\Sigma)$ is affine with underlying vector space
\begin{equation*}\label{def:baromega}
\bar{\Omega}^1(\Sigma):=\frac{\Omega^1(\Sigma)}{\Xi^1(\Sigma)}.
\end{equation*}
Indeed, for every $\Omega\in\Xi^2_C(\Sigma)$, we have the surjective map
\[\label{eq:B_Omega}
B_{\Omega}:\Omega^1(\Sigma)\longrightarrow\Xi^2_C(\Sigma),\qquad \alpha\mapsto\Omega+\di\alpha.
\]
This map passes to the quotient under $\Xi^1(\Sigma)$ and yields a bijection 
\begin{equation*}
\bar{B}_{\Omega}:\bar{\Omega}^1(\Sigma)\longrightarrow\Xi^2_C(\Sigma)
\end{equation*}
so that we have an identification
\[
\mathrm T_\Omega\Xi^2_C(\Sigma)=\bar{\Omega}^1(\Sigma).
\]
We define now an exact one-form $\vol\in\Omega^1(\Omega^1(\Sigma))$. To this purpose, we fix a {\bf reference two-form} $\Omega_0\in\Xi^2_C(\Sigma)$, and for every $\alpha\in\Omega^1(\Sigma)$, we use  the short-hand 
\[
\Omega_\alpha:=B_{\Omega_0}(\alpha)=\Omega_0+\di\alpha.
\]
For every $\beta\in \Omega^1(\Sigma)\cong\ta_\alpha\Omega^1(\Sigma)$, we set
\[\label{def:vol}
\vol_{\alpha}\cdot\beta:=\int_\Sigma\beta\wedge \Omega_\alpha^{n}.
\]
In the lemma below, we show that the one-form $\vol$ admits the primitive functional
\begin{equation}\label{e:defvol}
\Vol:\Omega^1(\Sigma) \to \R\,,\qquad \Vol(\alpha):= \int_0^1\left(\int_\Sigma\alpha\wedge\Omega_{r\alpha}^{n}\right)\di r.
\end{equation}
\begin{rmk}\label{r:cs}
Exchanging the order of integration, we can rewrite
\[
\Vol(\alpha)=\int_\Sigma \mathrm{CS}(\alpha),\qquad \mathrm{CS}(\alpha):=\int_0^1\alpha\wedge \Omega^n_{r\alpha}\,\di r.
\]
When $C=0$ and $\Omega_0=0$, the form $(n+1)\mathrm{CS}(\alpha)$ reduces to the Chern-Simons form for principal $S^1$-bundles with invariant polynomial $P(x_1,\ldots,x_{n+1})=x_1\cdot\ldots\cdot x_{n+1}$ \cite[equation (3.1)]{CS74}. Furthermore, $(n+1)\Vol(\alpha)$ corresponds to the cohomology class in \cite[Theorem 3.9, case $2l-1=n$]{CS74} (where $M$ therein is our $\Sigma$ and $l$ is our $n+1$).
\end{rmk}
\begin{lem}\label{l:volform}
The functional $\Vol$ associated with $\Omega_0$ is uniquely characterised by the properties
\begin{equation*}
\Vol(0)=0,\qquad \di \Vol=\vol.
\end{equation*}
The following formula holds (see also \cite[equation (3.5)]{CS74}): 
\begin{equation*}
\Vol(\alpha)\ =\ \sum_{j=0}^{n}\frac{1}{j+1}\binom{n}{j}\int_\Sigma\alpha\wedge (\di \alpha)^j\wedge\Omega_0^{n-j}.
\end{equation*}
\end{lem}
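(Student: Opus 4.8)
The plan is to treat the two assertions separately: first extract the closed polynomial expression for $\Vol$ directly from the integral definition \eqref{e:defvol}, and then verify that this expression is a primitive of $\vol$ with the normalisation $\Vol(0)=0$. Uniqueness will then be a soft consequence of the domain $\Omega^1(\Sigma)$ being a vector space.

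For the explicit formula I would expand the integrand of \eqref{e:defvol}. Since $\Omega_0$ and $\di\alpha$ are both two-forms, they commute under the wedge product, so the binomial theorem applies and $\Omega_{r\alpha}^n=(\Omega_0+r\,\di\alpha)^n=\sum_{j=0}^n\binom{n}{j}r^j(\di\alpha)^j\wedge\Omega_0^{n-j}$. Substituting this, pulling the finite sum out of both integrals, and noting that the only $r$-dependence is the scalar factor $r^j$ with $\int_0^1 r^j\,\di r=\tfrac{1}{j+1}$, one lands exactly on the stated coefficients $\tfrac{1}{j+1}\binom{n}{j}$. Every summand contains the factor $\alpha$, so $\Vol(0)=0$ is immediate, and it remains only to check $\di\Vol=\vol$.

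For the primitive property I would differentiate the polynomial formula at $\alpha$ in a direction $\beta$. Writing $\alpha\rightsquigarrow\alpha+s\beta$ and using that $(\di\alpha)^j$ has even degree (so its variation is $j(\di\alpha)^{j-1}\wedge\di\beta$), the $j$-th summand varies as $\int_\Sigma\beta\wedge(\di\alpha)^j\wedge\Omega_0^{n-j}+j\int_\Sigma\alpha\wedge(\di\alpha)^{j-1}\wedge\di\beta\wedge\Omega_0^{n-j}$. The crux is to re-express the second integral: applying Stokes' theorem to the $2n$-form $\alpha\wedge(\di\alpha)^{j-1}\wedge\beta\wedge\Omega_0^{n-j}$ on the closed manifold $\Sigma$, and using that $\di\alpha$ and $\Omega_0$ are closed so that the exterior derivative only hits $\alpha$ and $\beta$, I obtain, after tracking the sign $(-1)^{1+2(j-1)}=-1$, the identity $\int_\Sigma\alpha\wedge(\di\alpha)^{j-1}\wedge\di\beta\wedge\Omega_0^{n-j}=\int_\Sigma\beta\wedge(\di\alpha)^j\wedge\Omega_0^{n-j}$. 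Hence the $j$-th summand contributes $\tfrac{1}{j+1}\binom{n}{j}(1+j)\int_\Sigma\beta\wedge(\di\alpha)^j\wedge\Omega_0^{n-j}$, and the identity $\tfrac{1+j}{j+1}=1$ collapses the coefficient back to $\binom{n}{j}$. Summing over $j$ and reassembling the binomial series gives $\di_\alpha\Vol\cdot\beta=\int_\Sigma\beta\wedge(\Omega_0+\di\alpha)^n=\int_\Sigma\beta\wedge\Omega_\alpha^n=\vol_\alpha\cdot\beta$, which is precisely the required equality $\di\Vol=\vol$.

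Finally, uniqueness follows with no extra work: $\Omega^1(\Sigma)$ is a vector space and hence a convex, path-connected domain, so any two functionals sharing the differential $\vol$ differ by an additive constant, and imposing $\Vol(0)=0$ fixes that constant. I expect the only genuinely delicate point to be the Stokes/integration-by-parts step together with the sign and combinatorial bookkeeping that produces the cancellation $\tfrac{1+j}{j+1}=1$; the remaining manipulations are purely formal.
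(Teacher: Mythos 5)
Your argument is correct. It differs from the paper's in the order of operations: you first extract the closed formula $\Vol(\alpha)=\sum_j\tfrac{1}{j+1}\binom{n}{j}\int_\Sigma\alpha\wedge(\di\alpha)^j\wedge\Omega_0^{n-j}$ by binomially expanding $\Omega_{r\alpha}^n$ and integrating $r^j$ over $[0,1]$, and then verify $\di\Vol=\vol$ by differentiating that polynomial expression term by term, with one integration by parts per summand producing the cancellation $\tfrac{1+j}{j+1}=1$. The paper instead differentiates the integral definition \eqref{e:defvol} directly under the integral sign and uses a single application of Stokes' theorem to recognise the integrand as the total derivative $\tfrac{\di}{\di r}\bigl(\int_\Sigma r\beta\wedge\Omega_{r\alpha}^n\bigr)$, so that the fundamental theorem of calculus gives $\di_\alpha\Vol\cdot\beta=\vol_\alpha\cdot\beta$ without any combinatorial bookkeeping; the explicit formula is then obtained afterwards as a corollary by the same binomial expansion you use. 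The two proofs rest on the identical integration-by-parts identity $\int_\Sigma\alpha\wedge(\di\alpha)^{j-1}\wedge\di\beta\wedge\Omega_0^{n-j}=\int_\Sigma\beta\wedge(\di\alpha)^j\wedge\Omega_0^{n-j}$ (in the paper it appears in aggregated form as $\int_\Sigma\Omega_{r\alpha}^{n-1}\wedge\di(\alpha\wedge\beta)=0$); yours is slightly longer because it carries the binomial coefficients through the computation, while the paper's is cleaner but requires the small trick of guessing the antiderivative in $r$. Your explicit remark that uniqueness follows from connectedness of the affine domain plus the normalisation $\Vol(0)=0$ is a point the paper leaves implicit, and your sign computation $(-1)^{1+2(j-1)}=-1$ in the Stokes step is accurate.
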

\begin{proof}
Differentiating under the integral sign we write $\di_\alpha\Vol(\beta)=\int_0^1V(r)\di r$ and compute
\begin{align*}
V(r)&=\int_\Sigma\Big(\beta\wedge\Omega_{r\alpha}^{n}+\alpha\wedge \Omega_{r\alpha}^{n-1}\wedge nr\di\beta\Big)\\
&=\int_\Sigma\Big(\beta\wedge\Omega_{r\alpha}^{n}+nr \Omega_{r\alpha}^{n-1}\wedge\big(\beta\wedge \di\alpha-\di(\alpha\wedge\beta)\big)\Big)\\
&=\int_\Sigma\Big(\beta\wedge\Omega_{r\alpha}^{n}+r \beta\wedge\Omega_{r\alpha}^{n-1}\wedge\big(n\di\alpha\big)\Big)\\
&=\frac{\di }{\di r}\left(\int_\Sigma r\beta\wedge\Omega_{r\alpha}^{n}\right),
\end{align*}
where in the third equality we used Stokes' Theorem. We conclude that
\begin{equation*}
\di_\alpha\Vol\cdot\beta=\int_0^1\frac{\di}{\di r}\left(\int_\Sigma r\beta\wedge\Omega_{r\alpha}^{n}\right)\di r=\int_\Sigma 1\cdot\beta\wedge\Omega_{1\cdot\alpha}^{n}-\int_\Sigma 0\cdot\beta\wedge\Omega_{0\cdot\alpha}^{n}=\vol_{\alpha}\cdot\beta.
\end{equation*}
The formula for $\Vol$ follows by expanding the binomial $\Omega_{r\alpha}^{n}= \big(\Omega_0+r\di \alpha\big)^{n}$ in \eqref{e:defvol} and integrating each term.
\end{proof}
\begin{rmk}\label{r:vol3}
If $\dim\Sigma=3$, namely $n=1$, the formula for $\Vol$ turns into
\begin{equation*}
\Vol(\alpha)= \int_\Sigma\alpha\wedge\big(\Omega_0+\tfrac{1}{2}\di\alpha\big).
\end{equation*}
\end{rmk}
Let us specify the dependence of $\Vol$ on the reference form $\Omega_0$.

\begin{lem}\label{l:volcomp}
Let $\Vol':\Omega^1(\Sigma)\to\R$ be the volume functional associated with another reference two-form $\Omega_0'\in\Xi^2_C(\Sigma)$. If $\alpha'\in\Omega^1(\Sigma)$ is such that $\Omega_0'=\Omega_0+\di\alpha'$, then
\[
\Vol(\alpha)=\Vol(\alpha')+\Vol'(\alpha-\alpha'),\qquad\forall\,\alpha\in\Omega^1(\Sigma).
\]
\end{lem}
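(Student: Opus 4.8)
The plan is to avoid expanding the explicit polynomial formula of Lemma \ref{l:volform} and instead to exploit the fact that $\Vol$ and $\Vol'$ are primitives of their respective one-forms on the \emph{linear} space $\Omega^1(\Sigma)$. Concretely, I would introduce the auxiliary functional
\[
F:\Omega^1(\Sigma)\to\R,\qquad F(\alpha):=\Vol(\alpha')+\Vol'(\alpha-\alpha'),
\]
and prove that $F=\Vol$ by comparing differentials and then pinning down one value.

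First I would differentiate $F$. Since $\alpha'$ is fixed, differentiating in a direction $\beta\in\Omega^1(\Sigma)$ gives $\di_\alpha F\cdot\beta=\di_{\alpha-\alpha'}\Vol'\cdot\beta$, which by the defining property of $\Vol'$ in Lemma \ref{l:volform} equals $\int_\Sigma\beta\wedge\big(\Omega_0'+\di(\alpha-\alpha')\big)^{n}$. The key observation is the identity of two-forms
\[
\Omega_0'+\di(\alpha-\alpha')=\Omega_0+\di\alpha'+\di\alpha-\di\alpha'=\Omega_0+\di\alpha=\Omega_\alpha,
\]
which uses precisely the hypothesis $\Omega_0'=\Omega_0+\di\alpha'$. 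Hence $\di_\alpha F\cdot\beta=\int_\Sigma\beta\wedge\Omega_\alpha^{n}=\vol_\alpha\cdot\beta=\di_\alpha\Vol\cdot\beta$ for every $\beta$, so that $\di F=\di\Vol=\vol$.

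Since $\Omega^1(\Sigma)$ is a vector space, hence connected, two functionals with the same differential differ by a constant, so $F-\Vol$ is constant. To see that this constant vanishes I would evaluate at the convenient point $\alpha=\alpha'$: there $F(\alpha')=\Vol(\alpha')+\Vol'(0)=\Vol(\alpha')$, because $\Vol'(0)=0$ by construction in \eqref{e:defvol}. Thus $F(\alpha')=\Vol(\alpha')$, the constant is zero, and $F\equiv\Vol$, which is exactly the claimed identity.

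I do not expect a genuine obstacle here; the only point requiring care is to keep track of the two distinct reference forms together with their two distinct one-forms $\vol$ and $\vol'$, and to make sure the cancellation $\Omega_0'+\di(\alpha-\alpha')=\Omega_\alpha$ is used to convert $\vol'$ evaluated at $\alpha-\alpha'$ into $\vol$ evaluated at $\alpha$. A direct alternative would be to substitute the explicit formula of Lemma \ref{l:volform} into all three terms and match the binomial coefficients, but this is computationally heavier and obscures the underlying reason, namely that the volume is a primitive whose differential transforms in the evident way under an affine change of reference.
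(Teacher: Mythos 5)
Your proof is correct and follows essentially the same route as the paper: the heart of both arguments is the identity $\vol'_{\alpha-\alpha'}\cdot\beta=\int_\Sigma\beta\wedge(\Omega_0'+\di(\alpha-\alpha'))^{n}=\int_\Sigma\beta\wedge\Omega_\alpha^{n}=\vol_\alpha\cdot\beta$. The paper then integrates $\vol$ along the segment from $\alpha'$ to $\alpha$ via the fundamental theorem of calculus, whereas you phrase the same step as ``two functionals on the connected space $\Omega^1(\Sigma)$ with equal differentials and equal value at $\alpha'$ coincide''; these are interchangeable.
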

\begin{proof}
For every $\alpha,\beta\in\Omega^1(\Sigma)$, we have
\[
\vol'_{\alpha-\alpha_1}\cdot\beta=\int_\Sigma\beta\wedge (\Omega_0'+\di\alpha-\di\alpha')^{n}=\int_\Sigma\beta\wedge (\Omega_0+\di\alpha)^{n}=\vol_{\alpha}\cdot\beta,
\]
where $\vol':=\di\Vol'$. Therefore, from the fundamental theorem of calculus, we get
\begin{align*}
\Vol(\alpha)-\Vol(\alpha')=\int_0^1\vol_{r(\alpha-\alpha')+\alpha'}\cdot(\alpha-\alpha')\di r&=\int_0^1\vol'_{r(\alpha-\alpha')}\cdot(\alpha-\alpha')\di r\\
&=\Vol'(\alpha-\alpha').\qedhere
\end{align*}
\end{proof}

We now wish to use the map $B_{\Omega_0}$ to push the volume form and functional to the space of closed two-forms in the class $C$. To this purpose, we observe that $\Vol$ behaves linearly under the addition of closed one-forms. More precisely, for all $\alpha\in\Omega^1(\Sigma)$ we have
\begin{equation}\label{e:wd2}
\begin{split}
\Vol(\alpha+\tau)-\Vol(\alpha)=\Vol(\tau)=\langle[\tau]\cup C^{n},[\Sigma]\rangle,\quad \forall\,\tau\in\Xi^1(\Sigma),
\end{split}
\end{equation}
and similarly,
\begin{equation}\label{e:wd3}
\begin{split}
\vol_\alpha\cdot\tau=\langle[\tau]\cup C^{n},[\Sigma]\rangle,\quad \forall\,\tau\in\Xi^1(\Sigma).
\end{split}
\end{equation}
These formulae suggest to treat two separate cases.
\medskip

\subsubsection*{Case 1:~$C^{n}=0$.}

The volume functional $\Vol:\Omega^1(\Sigma)\to\R$ passes to the quotient by $\Xi^1(\Sigma)$ according to \eqref{e:wd2}:
\begin{equation}\label{e:defvolcn0}
\Fvol:\Xi^2_C(\Sigma)\to\R,\qquad\Fvol(\Omega_0)=0.
\end{equation}
Following Lemma \ref{l:volcomp}, if $\Fvol'$ denotes the functional obtained from $\Omega_0'$, then there holds
\[
\Fvol(\Omega)=\Fvol(\Omega_0')+\Fvol'(\Omega),\qquad\forall\,\Omega\in\Xi^2_C(\Sigma).
\]
This means that $\vol\in\Omega^1(\Omega^1(\Sigma))$ descends to a one-form
\begin{equation*}
\fvol\in \Omega^1(\Xi^2_C(\Sigma)),\qquad \di\Fvol=\fvol,
\end{equation*}
which is \emph{independent} of the reference two-form $\Omega_0$.

\subsubsection*{Case 2:~$C^{n}\neq0$.}

The functional $\Vol$ does not descend to $\Xi^2_C(\Sigma)$. Indeed, by Poincar\'e duality, there exists a form $\tau\in\Xi^1(\Sigma)$ such that 
\begin{equation}\label{eq:Poincare_tau}
[\tau]\cup C^{n}\neq0.
\end{equation}
However, we can use the volume function to define a distinguished class of one-forms.
\begin{dfn}\label{d:normalized}
We say that $\alpha\in\Omega^1(\Sigma)$ is a \textbf{normalised one-form}, if the implication
\[
C^{n}\neq 0\quad\Longrightarrow\quad \Vol(\alpha)=0
\]
holds true (in particular, when $C^{n}=0$ as in Case 1, all one-forms are normalised).
\end{dfn}
The inclusion $\Vol^{-1}(0)\subset \Omega^1(\Sigma)$ induces a bijection
\begin{equation}\label{e:normal1}
\frac{\Vol^{-1}(0)}{\sim}\longrightarrow \bar{\Omega}^1(\Sigma)\stackrel{\bar B_{\Omega_0}}{\cong} \Xi^2_C(\Sigma),
\end{equation}
where $\alpha'\sim\alpha''$ if and only if $\alpha''-\alpha'\in\Xi^1(\Sigma)$. Indeed, by \eqref{e:wd2} any class $\bar\alpha:=\alpha+\Xi^1(\Sigma)\in\bar\Omega^1(\Sigma)$ has a normalised representative $\alpha'=\alpha+s\tau$, for some $s\in\R$. We also observe that, if $\alpha''$ is another normalised representative of $\bar\alpha$, then, again by \eqref{e:wd2}, we have
\begin{equation}\label{e:vanishingcup}
[\alpha''-\alpha']\cup C^{n}=0.
\end{equation}
In view of \eqref{e:normal1}, we define the volume form and functional on $\Xi^2_C(\Sigma)$ trivially by setting
\[\label{e:defvolcnneq0}
\Fvol:\Xi^2_C(\Sigma)\to\R,\quad \Fvol:=0,\qquad\qquad \fvol\in\Omega^1(\Xi^2_C(\Sigma)),\quad \fvol:=0.
\]
If $\alpha$ is a normalised one-form representing $\bar\alpha\in\bar\Omega^1(\Sigma)$, then the inclusion $\ker\vol_\alpha\subset \Omega^1(\Sigma)$ induces the surjection $\ker\vol_\alpha\to\bar\Omega^1(\Sigma)  \cong \ta_{\bar\alpha}\bar\Omega^1(\Sigma)$ due to the existence of $\tau$ satisfying \eqref{eq:Poincare_tau}, and in turn the isomorphism
\[
\frac{\ker\vol_\alpha}{\ker \Big(\Xi^1(\Sigma)\xrightarrow{[\,\cdot\,]\cup C^{n}}H^{2n+1}_\dR(\Sigma)\Big)}\longrightarrow \ta_{\bar\alpha}\bar{\Omega}^1(\Sigma)\cong \ta_{\Omega_\alpha}\Xi^2_C(\Sigma),
\]
thanks to \eqref{e:wd3} and \eqref{e:vanishingcup}.

\begin{rmk}\label{rmk:contact_hamiltonian}
For contact forms and Hamiltonian systems the volume functional recovers the following well-known formulae.
\begin{itemize}
\item(Contact forms) This is an instance of Case 1. Let $\Omega_0=0$ and $\alpha\in\Omega^{1}(\Sigma)$ be a contact form. If we endow $\Sigma$ with the orientation $\mathfrak o_\alpha$, we recover the contact volume up to a constant factor:
\[
\Fvol(\di\alpha)=\Vol(\alpha)=\frac{1}{n+1}\int_\Sigma\alpha\wedge (\di\alpha)^{n}=\frac{1}{n+1}\mathrm{Volume}(\alpha).
\]
\item(Hamiltonian systems) This is an instance of Case 2. Let $\Sigma=M\x S^1$ and $\Omega_0=\mathfrak p^*\om_0$ for some symplectic form $\om_0$ on $M$, where $\mathfrak p:M\x S^1\to M$ is the projection on the first factor. Let $\alpha=H\di\phi$, where $H:M\x S^1\to \R$ and $\phi$ is the coordinate on $S^1$. Then, the volume functional recovers the {\bf Calabi invariant} of $H$
\[
\Vol(H\di\phi)=\int_{M\times S^1} (H\di\phi)\wedge\om_0^{n}=\CAL_{\om_0}(H)
\]
and $\alpha$ is normalised if and only if the Calabi invariant of $H$ vanishes.
\end{itemize}
\end{rmk}

\subsection{The volume is invariant under pull-back and isotopies}\label{s:voliso}
In this subsection, we prove two invariance results for the volume. Recall that $\Omega_0$ is the fixed reference form in $\Xi_C^2(\Sigma)$ for $C\in H^2_\dR(\Sigma)$.
\begin{prp}\label{p:volinvcov}
Let $\Sigma$ and $\Sigma^\veee$ be two closed oriented manifolds of dimension $2n+1$ and $\Pi:\Sigma^\veee\to\Sigma$ a map of degree $\deg \Pi\in\Z$. Let $C^\veee:=\Pi^*C\in H^2_\dR(\Sigma^\veee)$ and $\Omega_0^\veee:=\Pi^*\Omega_0\in \Xi^2_{C^\veee}(\Sigma^\veee)$. If $\Vol,\, \Fvol$ are the volumes associated with $\Omega_0$ and $\Vol^\veee,\, \Fvol^\veee$ the volumes associated with $\Omega^\veee_0$, there holds
\[
\Vol^\veee\circ \Pi^*=(\deg \Pi)\cdot\Vol,\qquad \Fvol^\veee\circ \Pi^*=(\deg \Pi)\cdot\Fvol.
\]
\end{prp}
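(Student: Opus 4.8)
The plan is to reduce the statement to the explicit integral formula \eqref{e:defvol} for the volume on one-forms, together with the elementary fact that pulling back a top-degree form along a map of closed oriented manifolds of the same dimension multiplies its integral by the degree: $\int_{\Sigma^\veee}\Pi^*\mu=(\deg\Pi)\int_\Sigma\mu$ for every $\mu\in\Omega^{2n+1}(\Sigma)$.

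First I would establish the identity for $\Vol$. Given $\alpha\in\Omega^1(\Sigma)$, write $\alpha^\veee:=\Pi^*\alpha$. Since $\Pi^*$ commutes with $\di$ and $\Omega_0^\veee=\Pi^*\Omega_0$, the interpolating forms are compatible: $\Omega_0^\veee+r\,\di\alpha^\veee=\Pi^*(\Omega_0+r\,\di\alpha)=\Pi^*\Omega_{r\alpha}$ for all $r\in[0,1]$. Hence the top-degree integrand of \eqref{e:defvol} satisfies
\[
\alpha^\veee\wedge\bigl(\Omega_0^\veee+r\,\di\alpha^\veee\bigr)^{n}=\Pi^*\bigl(\alpha\wedge\Omega_{r\alpha}^{n}\bigr),
\]
and the degree formula gives $\int_{\Sigma^\veee}\alpha^\veee\wedge(\Omega_0^\veee+r\,\di\alpha^\veee)^n=(\deg\Pi)\int_\Sigma\alpha\wedge\Omega_{r\alpha}^n$. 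Integrating over $r\in[0,1]$ and comparing with \eqref{e:defvol} for both volumes yields $\Vol^\veee(\Pi^*\alpha)=(\deg\Pi)\,\Vol(\alpha)$, which is the first claim.

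To pass to $\Fvol$, I would split into the two cases used to define it, noting that $(C^\veee)^n=(\Pi^*C)^n=\Pi^*(C^n)$. If $C^n=0$, then $(C^\veee)^n=0$ as well, so both $\Sigma$ and $\Sigma^\veee$ fall under Case 1; here $\Fvol$ and $\Fvol^\veee$ are the descents of $\Vol$ and $\Vol^\veee$ through \eqref{e:defvolcn0}, and since $\Pi^*\Omega_\alpha=\Omega_0^\veee+\di(\Pi^*\alpha)$, the identity $\Fvol^\veee(\Pi^*\Omega_\alpha)=\Vol^\veee(\Pi^*\alpha)=(\deg\Pi)\Vol(\alpha)=(\deg\Pi)\Fvol(\Omega_\alpha)$ follows from the first claim. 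If instead $C^n\neq0$, then $\Fvol\equiv0$, so the right-hand side vanishes and it remains to check that $\Fvol^\veee\circ\Pi^*$ vanishes too.

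The one point that requires care — and which I regard as the only genuine subtlety — is that $\Sigma$ and $\Sigma^\veee$ may fall into different cases. When $C^n\neq 0$, the classification of $\Sigma^\veee$ hinges on whether $(C^\veee)^n=\Pi^*(C^n)$ vanishes. I would argue as follows: pick $\tau\in\Xi^1(\Sigma)$ with $\langle[\tau]\cup C^n,[\Sigma]\rangle\neq0$ as in \eqref{eq:Poincare_tau}; then $\langle[\Pi^*\tau]\cup(C^\veee)^n,[\Sigma^\veee]\rangle=(\deg\Pi)\langle[\tau]\cup C^n,[\Sigma]\rangle$, so $(C^\veee)^n\neq0$ whenever $\deg\Pi\neq0$. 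Consequently, if $(C^\veee)^n\neq0$ then $\Sigma^\veee$ is also in Case 2 and $\Fvol^\veee\equiv0$; while if $(C^\veee)^n=0$ then necessarily $\deg\Pi=0$, and $\Fvol^\veee(\Pi^*\Omega_\alpha)=\Vol^\veee(\Pi^*\alpha)=(\deg\Pi)\Vol(\alpha)=0$ by the first claim. In either situation $\Fvol^\veee\circ\Pi^*=0=(\deg\Pi)\Fvol$, completing the argument.
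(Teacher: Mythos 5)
Your proposal is correct and follows essentially the same route as the paper, which derives the $\Vol$ identity from the definition \eqref{e:defvol} together with the degree formula $\int_{\Sigma^\veee}\Pi^*\mu=\deg\Pi\cdot\int_\Sigma\mu$ for top-degree forms. The only difference is that you spell out the case analysis for $\Fvol$ (in particular the observation that $C^n\neq0$ and $(C^\veee)^n=0$ force $\deg\Pi=0$), which the paper treats as immediate; this is a welcome clarification but not a different argument.
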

\begin{proof}
The statement follows immediately from the definition of the volume and the observation that for all top dimensional forms $\mu$ on $\Sigma$ there holds
\[
\int_{\Sigma^\veee}\Pi^*\mu=\deg \Pi\cdot\int_{\Sigma}\mu.\qedhere
\]
\end{proof}
For the second result, we need a little preparation. Let $\Psi:\Sigma\to\Sigma$ be a diffeomorphism isotopic to the identity $\id_\Sigma$. Since the pull-back $\Psi^*$ of $\Psi$ acts as the identity in cohomology, there is a normalised one-form $\theta\in\Omega^1(\Sigma)$ (determined up to a normalised closed one-form) such that
\begin{equation}\label{eq:alpha_F}
\di\theta=\Psi^*\Omega_0-\Omega_0.
\end{equation}
We define
\begin{equation*}\label{def:psihat}
\widehat \Psi^*_\theta:\Omega^1(\Sigma)\to\Omega^1(\Sigma),\qquad \widehat \Psi^*_\theta(\alpha):=\theta+\Psi^*\alpha,
\end{equation*}
so that the following diagram commutes:
\begin{equation}\label{eq:commutative_diagram1}
\xymatrix{ \Omega^1(\Sigma)\ar[r]^-{\widehat \Psi_{\theta}^*} \ar[d]_{B_{\Omega_0}}& \Omega^1(\Sigma)\ar[d]^{B_{\Omega_0}}\,.\\
 \Xi^2_C(\Sigma)\ar[r]^{\Psi^*} & \Xi^2_C(\Sigma)}
\end{equation}
Let $\{\Psi_r\}$ be an isotopy with $\Psi_0=\id_\Sigma$ and $\Psi_1=\Psi$. This gives rise to a smooth family of one-forms $\{\theta_r\}$ satisfying
\begin{equation*}
\theta_0=0,\qquad\di\theta_r=\Psi^*_r\Omega_0-\Omega_0,\quad \forall\, r\in[0,1].
\end{equation*}
To construct such a family, we just take the time-dependent vector field $X_r$ generating the isotopy $\{\Psi_r\}$ and let $\{\theta_r\}$ be the unique path such that
\begin{equation*}
\theta_0=0,\qquad\dot\theta_r=\Psi_r^*(\iota_{X_r}\Omega_0).
\end{equation*}
To ease the notation, if $\alpha\in\Omega^1(\Sigma)$, we use the short-hand
\begin{equation}\label{e:shorthand}
\wh \Psi^*_r(\alpha):=\wh \Psi_{{\theta_r}}^*(\alpha)=\theta_r+\Psi^*_r\alpha.
\end{equation}
Every $\theta_r$ is normalised, i.e.~$\Vol(\theta_r)=0$. Indeed, $\Vol(\theta_0)=\Vol(0)=0$ and
\[
\frac{\di}{\di r}\Vol(\theta_r)=\vol_{\theta_r}(\dot\theta_r)=\int_\Sigma \Psi_r^*(\iota_{X_r}\Omega_0)\wedge (\Psi^*_r\Omega_0)^{n}=\int_\Sigma (\iota_{X_r}\Omega_0)\wedge \Omega_0^{n}=0.
\]
In particular, the form $\tau_1:=\theta-\theta_1$ is closed and normalised and as observed in \eqref{e:vanishingcup}, there holds
\begin{equation*}
[\tau_1]\cup C^{n}=0
\end{equation*}
(note that this condition is automatically satisfied if $C^{n}=0$).
\begin{prp}\label{prp:vol}
If $\Psi$ is a diffeomorphism isotopic to $\id_\Sigma$ and $\theta$ is any normalised one-form satisfying \eqref{eq:alpha_F}, there holds
\[
\Vol\circ \widehat \Psi^*_\theta=\Vol.
\]
As a consequence, the set of normalised one-forms is $\wh \Psi_\theta^*$-invariant and we have
\[
\Fvol\circ \Psi^*=\Fvol.
\]
\end{prp}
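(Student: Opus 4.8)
The plan is to establish $\Vol\circ\wh\Psi^*_\theta=\Vol$ by differentiating along the chosen isotopy $\{\Psi_r\}$ together with its associated family $\{\theta_r\}$, and only afterwards to absorb the difference between the isotopy-generated primitive $\theta_1$ and the arbitrary normalised $\theta$ in the statement. Concretely, I would fix $\alpha\in\Omega^1(\Sigma)$, set $\alpha_r:=\wh\Psi^*_r(\alpha)=\theta_r+\Psi_r^*\alpha$ and study $f(r):=\Vol(\alpha_r)$. Since $\Psi_0=\id_\Sigma$ and $\theta_0=0$, we have $f(0)=\Vol(\alpha)$, so it suffices to prove $f'\equiv 0$; this yields $\Vol(\wh\Psi^*_{\theta_1}(\alpha))=f(1)=\Vol(\alpha)$.

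For the derivative I would first record the pointwise commutation $\Omega_{\alpha_r}=\Psi_r^*\Omega_\alpha$, which is the time-$r$ version of diagram \eqref{eq:commutative_diagram1}: expanding $\Omega_{\alpha_r}=\Omega_0+\di\theta_r+\Psi_r^*\di\alpha$ and using $\di\theta_r=\Psi_r^*\Omega_0-\Omega_0$ gives exactly $\Psi_r^*(\Omega_0+\di\alpha)$. Using $\di\Vol=\vol$, Cartan's formula $\tfrac{\di}{\di r}\Psi_r^*\alpha=\Psi_r^*\mathcal L_{X_r}\alpha$, and $\dot\theta_r=\Psi_r^*(\iota_{X_r}\Omega_0)$, I compute
\[
\dot\alpha_r=\Psi_r^*(\iota_{X_r}\Omega_\alpha)+\di\big(\Psi_r^*(\iota_{X_r}\alpha)\big),
\]
so that
\[
f'(r)=\int_\Sigma\dot\alpha_r\wedge\Omega_{\alpha_r}^{n}=\int_\Sigma\Psi_r^*\big(\iota_{X_r}\Omega_\alpha\wedge\Omega_\alpha^{n}\big)+\int_\Sigma\di\big(\Psi_r^*(\iota_{X_r}\alpha)\big)\wedge\Psi_r^*\Omega_\alpha^{n}.
\]
The first integrand vanishes identically because $\iota_{X_r}\Omega_\alpha\wedge\Omega_\alpha^{n}=\tfrac{1}{n+1}\iota_{X_r}(\Omega_\alpha^{n+1})=0$, as $\Omega_\alpha^{n+1}$ is a $(2n+2)$-form on the $(2n+1)$-manifold $\Sigma$; the second integral vanishes by Stokes' theorem on the closed manifold $\Sigma$ once one notes that $\Psi_r^*\Omega_\alpha^{n}$ is closed. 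Hence $f'\equiv 0$.

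To upgrade from $\theta_1$ to an arbitrary normalised $\theta$, I would use that $\tau_1:=\theta-\theta_1$ is closed with $[\tau_1]\cup C^{n}=0$ (recorded just before the statement) and write $\wh\Psi^*_\theta(\alpha)=\wh\Psi^*_{\theta_1}(\alpha)+\tau_1$. By \eqref{e:wd2}, adding a closed form changes the volume by $\langle[\tau_1]\cup C^{n},[\Sigma]\rangle=0$, so $\Vol(\wh\Psi^*_\theta(\alpha))=\Vol(\wh\Psi^*_{\theta_1}(\alpha))=\Vol(\alpha)$. The invariance of the normalised forms is then immediate: if $C^{n}=0$ every form is normalised, and if $C^{n}\neq 0$ then $\Vol(\wh\Psi^*_\theta(\alpha))=\Vol(\alpha)=0$ whenever $\alpha$ is normalised. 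Finally I would deduce $\Fvol\circ\Psi^*=\Fvol$ by cases: in Case~2 ($C^{n}\neq 0$) both sides vanish identically, while in Case~1 ($C^{n}=0$) the commutativity $B_{\Omega_0}\circ\wh\Psi^*_\theta=\Psi^*\circ B_{\Omega_0}$ together with $\Fvol\circ B_{\Omega_0}=\Vol$ gives $\Fvol(\Psi^*\Omega_\alpha)=\Vol(\wh\Psi^*_\theta(\alpha))=\Vol(\alpha)=\Fvol(\Omega_\alpha)$ for every $\alpha$, hence for every $\Omega\in\Xi^2_C(\Sigma)$.

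The derivative computation is routine once the commutation $\Omega_{\alpha_r}=\Psi_r^*\Omega_\alpha$ and the degree identity $\iota_{X_r}\Omega_\alpha\wedge\Omega_\alpha^{n}=0$ are in place. The point requiring genuine care is the final bookkeeping in Case~2: one must verify that the arbitrary normalised $\theta$ differs from the isotopy primitive $\theta_1$ only by a closed form whose cup product with $C^{n}$ vanishes. This is precisely where the normalisation hypothesis enters and is the only obstacle to a completely formal argument.
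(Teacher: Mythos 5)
Your proposal is correct and follows essentially the same route as the paper: differentiate $r\mapsto\Vol(\wh\Psi^*_r(\alpha))$ using $\dot\alpha_r=\Psi_r^*(\iota_{X_r}\Omega_\alpha)+\di(\alpha(X_r)\circ\Psi_r)$, kill the two resulting terms by the degree identity $\iota_{X_r}\Omega_\alpha\wedge\Omega_\alpha^{n}=0$ and Stokes, and then absorb $\theta-\theta_1$ via \eqref{e:wd2} and $[\theta-\theta_1]\cup C^{n}=0$. The only cosmetic differences are that you keep the pull-back inside the integral rather than changing variables and that you spell out the Case 1/Case 2 dichotomy for $\Fvol$, which the paper leaves implicit.
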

\begin{proof}
For any $\alpha\in\Omega^1(\Sigma)$, we have
\[
\frac{\di}{\di r}\widehat \Psi^*_{r}(\alpha)=\Psi_r^*(\iota_{X_r}\Omega_0)+\Psi_r^*\Big(\iota_{X_r}\di\alpha+\di\big(\alpha(X_r)\big)\Big)=\Psi_r^*\Big(\iota_{X_r}\Omega_\alpha+\di\big(\alpha(X_r)\big)\Big).
\]
Using this relation, we compute
\begin{align*}
\frac{\di}{\di r}\Vol\big(\widehat \Psi^*_r(\alpha)\big)&=\vol_{\widehat \Psi^*_{r}(\alpha)}\cdot\frac{\di}{\di r}\widehat \Psi^*_r(\alpha)\\
&=\int_\Sigma \Psi_r^*\Big(\iota_{X_r}\Omega_\alpha+\di\big(\alpha(X_r)\big)\Big)\wedge (\Psi_r^*\Omega_\alpha)^{n}\\
&= \int_\Sigma\iota_{X_r}\Omega_\alpha\wedge\Omega_\alpha^{n}+\int_\Sigma \di\big(\alpha(X_r)\big)\wedge \Omega_\alpha^{n}\\
&=0.
\end{align*}
We conclude from \eqref{e:wd2} and the relation $[\tau_1]\cup C^{n}=0$ that
\[
\Vol\big(\widehat \Psi^*_\theta(\alpha)\big)=\Vol\big(\widehat \Psi^*_1(\alpha)+\tau_1\big)=\Vol\big(\widehat \Psi^*_1(\alpha)\big)+\langle[\tau_1]\cup C^{n},[\Sigma]\rangle=\Vol\big(\widehat \Psi^*_0(\alpha)\big)=\Vol(\alpha).
\]
This proves the first identity. The second identity follows from the first one and the commutation relation $B_{\Omega_0}\circ\widehat \Psi_{\theta}^*=\Psi^*\circ B_{\Omega_0}$ in \eqref{eq:commutative_diagram1}.
\end{proof}

\section{Odd-symplectic forms}\label{c:odd}
\subsection{A couple of definitions}\label{s:odddef}
For a closed two-form $\Omega\in\Xi^2(\Sigma)$ on $\Sigma$, we consider the (possibly singular) distribution $\ker\Omega\to\Sigma$ defined by
\[
\ker\Omega:=\big\{(z,u)\in\ta\Sigma\ \big|\ \Omega_z(u,v)=0,\;\forall\, v\in \ta_z\Sigma\big\}.
\]
The distribution $\ker\Omega$ is naturally co-oriented by $\Omega^n$ and we orient it combining the orientation on $\Sigma$ with such a co-orientation.
\begin{dfn}\label{dfn:odd_symplectic}
We say that $\Omega\in\Xi^2(\Sigma)$ is {\bf odd-symplectic} if $\ker\Omega\to\Sigma$ is a one-dimensional distribution and denote the set of odd-symplectic forms by $\mathcal S(\Sigma)\subset\Xi^2(\Sigma)$. If $C\in H^2_\dR(\Sigma)$, we write $\mathcal S_C(\Sigma):=\mathcal S(\Sigma)\cap\Xi^2_C(\Sigma)$.
\end{dfn}
\begin{rmk}\label{rmk:equivalent_def_odd_symplectic}
Any of the following conditions gives an equivalent definition of $\Omega\in\Xi^2(\Sigma)$ being odd-symplectic:
\begin{itemize}
\item $\Omega^{n}$ is nowhere vanishing.
\item There exists a one-form $\sigma$ on $\Sigma$ such that $\sigma\wedge\Omega^{n}$ is a volume form.
\item  There exists a nowhere vanishing one-form $\sigma$ such that $\Omega|_{\ker\sigma}$ is a non-degenerate bilinear form on $\ker\sigma$.
\end{itemize} 
\end{rmk}
\begin{dfn}\label{dfn:H_forms}
Given a reference form $\Omega_0\in\Xi^2_C(\Sigma)$ and a pair of one-forms $\alpha_0,\sigma_0\in\Omega^1(\Sigma)$, we can build an affine map
\begin{align*}
C^{\infty}(\Sigma)&\longrightarrow \Xi^2_C(\Sigma),\\ 
H&\longmapsto \Omega_{\alpha_0+H\sigma_0}=\Omega_0+\di(\alpha_0+H\sigma_0).
\end{align*}
We refer to the image of this map, as the set of \textbf{H-forms} (with respect to $\Omega_0$, $\alpha_0$, and $\sigma_0$).
\end{dfn}
\subsection{A stability result}

As we will see in Section \ref{sec:conjsys}, given $\Omega_0$, $\alpha_0$ and $\sigma_0$ as in Definition \eqref{dfn:H_forms}, it is extremely helpful to see, if an odd-symplectic form $\Omega_1\in\mathcal S_C(\Sigma)$ admits a diffeomorphism $\Psi:\Sigma\to\Sigma$ isotopic to $\id_\Sigma$ such that $\Psi^*\Omega_1$ is an H-form. We provide a criterion for such a diffeomorphism to exist.

\begin{prp}\label{p:Moser}
Let $\{\Omega_r=\Omega_0+\di\alpha_r\}$ be a path in $\mathcal S_C(\Sigma)$. Let $\{\sigma_r\}$ be a path in $\Omega^{1}(\Sigma)$ such that $\sigma_r\wedge \Omega_r^n$ is a volume form on $\Sigma$, which exists according to Remark \ref{rmk:equivalent_def_odd_symplectic}. Let us denote by $I_r:(\ker\sigma_r)^*\to\ker\sigma_r$ the inverse of the map $v\mapsto\iota_v\Omega_{\alpha_r}|_{\ker\sigma_r}$. If
\begin{equation}\label{e:dotsigmar}
\big(\dot\sigma_r-\iota_{I_r(\dot\alpha_r)}\di\sigma_r\big)\big|_{\ker\sigma_r}=0,
\end{equation}
then there exist an isotopy $\{\Psi_r:\Sigma\to\Sigma\}$ with $\Psi_0=\id_\Sigma$ generated by a time-dependent vector field  $X_r\in\ker\sigma_r$ and a path of functions $\{H_r:\Sigma\to\R\}$ such that, for every $r\in[0,1]$,
\begin{equation*}
\wh{\Psi}_r^*\alpha_r=\alpha_0+H_r\sigma_0+\di K_r,\qquad K_r:=\int_0^r\alpha_{r'}(X_{r'})\circ \Psi_{r'}\di r'.
\end{equation*}
Here $\wh{\Psi}_r^*$ is the map in \eqref{e:shorthand}. In particular, we have
\begin{equation*}
\Psi^*_1\Omega_1=\Omega_0+\di(\alpha_0+H_1\sigma_0),\qquad \Vol(\alpha_1)=\Vol(\alpha_0+H_1\sigma_0).
\end{equation*}
\end{prp}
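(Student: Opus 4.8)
The plan is to run a Moser-type deformation argument in which the generating vector field $X_r$ is chosen so that the Moser equation becomes solvable pointwise, and to recognise the technical hypothesis \eqref{e:dotsigmar} as exactly the condition guaranteeing that the resulting flow keeps the characteristic direction aligned with the \emph{fixed} form $\sigma_0$.

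First I would pin down the vector field. Since $\sigma_r\wedge\Omega_r^n$ is a volume form, $\sigma_r$ is nowhere zero and $\Omega_r|_{\ker\sigma_r}$ is non-degenerate, so $v\mapsto\iota_v\Omega_r|_{\ker\sigma_r}$ is the isomorphism inverted by $I_r$. I define $X_r\in\ker\sigma_r$ as the unique solution of $(\iota_{X_r}\Omega_r+\dot\alpha_r)|_{\ker\sigma_r}=0$, i.e.\ $X_r=-I_r(\dot\alpha_r)$, and let $\{\Psi_r\}$ be its flow with $\Psi_0=\id_\Sigma$ (a global flow since $\Sigma$ is closed). Using the short-hand \eqref{e:shorthand}, the relation $\dot\theta_r=\Psi_r^*(\iota_{X_r}\Omega_0)$, and $\iota_{X_r}\Omega_0+\iota_{X_r}\di\alpha_r=\iota_{X_r}\Omega_r$, a direct computation gives
\[
\frac{\di}{\di r}\,\wh\Psi_r^*\alpha_r=\Psi_r^*\big(\iota_{X_r}\Omega_r+\dot\alpha_r+\di(\alpha_r(X_r))\big).
\]
Because $\dot K_r=\Psi_r^*(\alpha_r(X_r))$, the exact term will cancel against $\di\dot K_r$, so the whole statement reduces to showing that $\Psi_r^*(\iota_{X_r}\Omega_r+\dot\alpha_r)$ is a function multiple of $\sigma_0$.

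The heart of the matter, and the step I expect to be the main obstacle, is controlling how $\ker\sigma_r$ is transported by $\Psi_r$; this is where \eqref{e:dotsigmar} enters. Since $X_r\in\ker\sigma_r$ we have $\iota_{X_r}\sigma_r=0$, so Cartan's formula gives
\[
\frac{\di}{\di r}\,\Psi_r^*\sigma_r=\Psi_r^*\big(\mathcal L_{X_r}\sigma_r+\dot\sigma_r\big)=\Psi_r^*\big(\iota_{X_r}\di\sigma_r+\dot\sigma_r\big).
\]
Now $X_r=-I_r(\dot\alpha_r)$ turns \eqref{e:dotsigmar} into $(\dot\sigma_r+\iota_{X_r}\di\sigma_r)|_{\ker\sigma_r}=0$; as a one-form annihilating $\ker\sigma_r$ is a multiple of $\sigma_r$, this reads $\iota_{X_r}\di\sigma_r+\dot\sigma_r=h_r\sigma_r$ for some function $h_r$. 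Hence $\frac{\di}{\di r}\Psi_r^*\sigma_r=(h_r\circ\Psi_r)\,\Psi_r^*\sigma_r$, a pointwise linear ODE whose solution with initial value $\Psi_0^*\sigma_0=\sigma_0$ is $\Psi_r^*\sigma_r=\lambda_r\sigma_0$, where $\lambda_r=\exp\!\big(\int_0^r h_{r'}\!\circ\Psi_{r'}\,\di r'\big)>0$. Thus $\Psi_r$ keeps the characteristic direction pointwise proportional to $\sigma_0$.

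Finally I would close the loop. By the defining property of $X_r$, the form $\iota_{X_r}\Omega_r+\dot\alpha_r$ annihilates $\ker\sigma_r$, hence equals $g_r\sigma_r$ for some function $g_r$; combining with the previous step,
\[
\Psi_r^*(\iota_{X_r}\Omega_r+\dot\alpha_r)=(g_r\circ\Psi_r)\,\Psi_r^*\sigma_r=(g_r\circ\Psi_r)\lambda_r\,\sigma_0=:\dot H_r\,\sigma_0.
\]
Setting $H_r:=\int_0^r\dot H_{r'}\,\di r'$, the two sides of $\wh\Psi_r^*\alpha_r=\alpha_0+H_r\sigma_0+\di K_r$ have equal $r$-derivatives and agree at $r=0$, hence coincide for all $r$. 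The commutative diagram \eqref{eq:commutative_diagram1} then yields $\Psi_1^*\Omega_1=\Omega_0+\di(\alpha_0+H_1\sigma_0+\di K_1)=\Omega_0+\di(\alpha_0+H_1\sigma_0)$, an H-form; and since $\theta_1$ is normalised with $\di\theta_1=\Psi_1^*\Omega_0-\Omega_0$, Proposition \ref{prp:vol} together with the fact that $\di K_1$ is exact (so contributes nothing by \eqref{e:wd2}) give $\Vol(\alpha_1)=\Vol(\wh\Psi_1^*\alpha_1)=\Vol(\alpha_0+H_1\sigma_0)$.
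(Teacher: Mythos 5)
Your proposal is correct and follows essentially the same route as the paper: the same choice $X_r=-I_r(\dot\alpha_r)$, the same use of \eqref{e:dotsigmar} to get $\iota_{X_r}\di\sigma_r+\dot\sigma_r$ proportional to $\sigma_r$ and hence $\Psi_r^*\sigma_r=\lambda_r\sigma_0$, and the same integration of $\frac{\di}{\di r}\wh\Psi_r^*\alpha_r$. The only cosmetic difference is that the paper names the coefficient functions via the auxiliary vector field $V_r$ with $\iota_{V_r}\Omega_{\alpha_r}=0$, $\sigma_r(V_r)=1$, whereas you simply observe that a one-form annihilating $\ker\sigma_r$ is a multiple of $\sigma_r$.
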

\begin{proof}
The argument is a refinement of the Gray stability theorem, see e.g.~\cite[Theorem 2.2.2]{Gei08}. We introduce a vector field $V_r$ on $\Sigma$ uniquely determined by the relations
\[
\iota_{V_r}\Omega_{\alpha_r}=0,\qquad \sigma_r(V_r)=1.
\]
We define a path of vector fields $r\mapsto X_r$ on $\Sigma$ satisfying the relations
\begin{equation}\label{e:stab1}
X_r\in\ker\sigma_r,\qquad \big(\iota_{X_r}\Omega_{\alpha_r}+\dot\alpha_r\big)\big|_{\ker\sigma_r}=0,
\end{equation}
which exists and is unique by (i). Applying (ii), we also have
\begin{equation}\label{e:stabalpha}
\big(\iota_{X_r}\di\sigma_r+\dot\sigma_r\big)\big|_{\ker\sigma_r}=0.
\end{equation}
Let $r\mapsto \Psi_r$ be the isotopy on $\Sigma$ obtained by integrating $X_r$ and setting $\Psi_0=\id_\Sigma$. We construct an auxiliary path of functions $r\mapsto J_r$ through
\begin{equation*}
J_0=0,\qquad \dot J_r=\big(\di\sigma_r(X_r,V_r)+\dot\sigma_r(V_r)\big)\circ \Psi_r.
\end{equation*}
Combining the last equation with \eqref{e:stabalpha}, we arrive at
\[
\iota_{X_r}\di\sigma_r+\dot\sigma_r=(\dot J_r\circ \Psi_r^{-1})\sigma_r.
\]
With this piece of information, we can compute
\[
\frac{\di}{\di r}\big(\Psi_r^*\sigma_r\big)=\Psi_r^*\big(\iota_{X_r}\di\sigma_r+\dot\sigma_r\big)=\Psi_r^*\big(\dot J_r\circ \Psi_r^{-1}\sigma_r\big)=\dot J_r \big(\Psi_r^*\sigma_r\big).
\]
Since $r\mapsto e^{J_r}\sigma_0$ also satisfies the same ordinary differential equation and coincides with $\Psi_r^*\sigma_r$ at $r=0$, we see that
\begin{equation}\label{e:fralpha}
\Psi_r^*\sigma_r=e^{J_r}\sigma_0.
\end{equation}

Next we define the path of functions $\{H_r:\Sigma\to\R\}$ by
\begin{equation}\label{e:stab2}
H_0= 0,\qquad \dot H_r=\big(\dot\alpha_r(V_r)\circ \Psi_r\big) e^{J_r}.
\end{equation}
Conditions \eqref{e:stab1} and \eqref{e:stab2} together can be rephrased as
\begin{equation}\label{e:stab3}
\iota_{X_r}\Omega_{\alpha_r}+\dot\alpha_r=\big((\dot H_r e^{-J_r})\circ \Psi_r^{-1}\big)\sigma_r.
\end{equation}

We are ready to prove the formula for $\wh{\Psi}^*_r\alpha_r$ in the statement of the proposition. The identity clearly holds for $r=0$ and we just need to show that the $r$-derivatives of both sides coincide for every $r$:
\begin{align*}
\frac{\di}{\di r}\big(\wh{\Psi}_r^*\alpha_r\big)=\Psi_r^*\Big(\di\big(\alpha_r(X_r)\big)\iota_{X_r}\di\alpha_r+ \dot\alpha_r\Big)+\dot\theta_r&=\di\big(\alpha_r(X_r)\circ \Psi_r\big)+\Psi_r^*\big(\iota_{X_r}\Omega_{\alpha_r}+\dot\alpha_r\big)\\
&=\di\dot K_r+\dot H_re^{-J_r} \Psi_r^*\sigma_r\\
&=\di\dot K_r+\dot H_r\sigma_0,
\end{align*}
where we used that $\dot\theta_r=\Psi_r^*\iota_{X_r}\Omega_0$ and equations \eqref{e:fralpha} and  \eqref{e:stab3}. Finally, Proposition \ref{prp:vol} yields the identity $\Vol(\alpha_1)=\Vol(\alpha_0+H_1\sigma_0)$.
\end{proof}
\begin{rmk}\label{r:stabil}
The form $\sigma_r\wedge \Omega_r^n$ is a volume form and condition \eqref{e:dotsigmar} is satisfied in the following two extreme cases:
\begin{enumerate}[(a)]
\item The form $\Omega_0$ vanishes and $\alpha_r=T_r\sigma_r$ is a contact form for every $r$, where $\{T_r\}$ is some path of real numbers. In this case, $K_r$ and $\theta_r$ vanish so that we have the usual Gray stability theorem
\[
\Psi^*_r\alpha_r=(1+\tfrac{1}{T_0}H_r)\alpha_0.
\]
\item The form $\sigma_0$ is closed, there holds $\sigma_r=\sigma_0$ for all $r$, and $\Omega_r$ non-degenerate on $\ker\sigma_0$.   
\end{enumerate}
\end{rmk}
\begin{rmk}
It would be interesting to find a condition not involving $r$-derivatives implying \eqref{e:dotsigmar}. Such a condition can indeed be found in cases (a) or (b) in the remark above, where the odd-symplectic forms $\Omega_r$ are actually \textit{stable} according to \cite[Section 2.1]{CM05}.
\end{rmk}

\section{Oriented $S^1$-bundles}\label{sec:oriented}

\subsection{$S^1$-bundles and free $S^1$-actions}\label{ss:deffirst}
Let \label{def:mathfrakPsigma}$\mathfrak P(\Sigma)$ be the space of all oriented $S^1$-bundles having $\Sigma$ as total space, up to equivalence. Here we say that two bundles are {\bf equivalent} if they have the same oriented fibres. If $\mathfrak p:\Sigma\to M$ is an element of $\mathfrak P(\Sigma)$, then $M$ is a closed manifold of dimension $2n$. We write \label{def:xi2M}$\Xi^2(M)$ for the space of closed two-forms on $M$ and $\Xi^2_c(M)$ for the set of those $\om\in\Xi^2(M)$ with $[\om]=c$. We orient $M$ combining the orientation on $\Sigma$ with the orientation of the $\mathfrak p$-fibres. We denote by $-\mathfrak p$ the bundle obtained from $\mathfrak p$ by reversing the orientation.
\begin{dfn}
\label{def:p-1}If $\mathfrak p$ belongs to $\mathfrak P(\Sigma)$, then $\mathfrak{p}^{-1}(\pt)\subset\Sigma$ denotes an oriented $\mathfrak p$-fibre and $[\mathfrak{p}^{-1}(\pt)]_\Z\in H_1(\Sigma;\Z)$ its integral homology class.
\end{dfn}
\begin{dfn}\label{def:bundle}
A \textbf{bundle map} between oriented $S^1$-bundles $\mathfrak p:\Sigma\to M$ and $\mathfrak p^\veee:\Sigma^\veee\to M^\veee$ is a map $\Pi:\Sigma^\veee\to\Sigma$ diffeomorphically sending every oriented fibre of $\mathfrak p^\veee$ to an oriented fibre of $\mathfrak p$. In this case, we write \label{def:pullback}$\mathfrak p^\veee=\Pi^*\mathfrak p$. A bundle map yields a quotient map $\pi:M^\veee\to M$ between the base manifolds so that $\mathfrak p\circ\Pi=\pi\circ \mathfrak p^\veee$. If $\Pi$ is also a diffeomorphism, we say that $\Pi$ is a \textbf{bundle isomorphism}. When $\Sigma^\veee=\Sigma$, we write $\mathfrak p^\veee=\mathfrak p'$, where $\mathfrak p':\Sigma\to M'$, and denote by $\Psi:\Sigma\to\Sigma$ a bundle isomorphism between $\mathfrak p$ and $\mathfrak p'$ with quotient map $\psi:M'\to M$. We summarise the properties of $\Pi$ and $\Psi$ in two commutative diagrams.
\begin{equation}\label{eq:commutative_diagram2}
\xymatrix{
\Sigma^\veee\ar[d]_{\mathfrak p^\veee}\ar[r]^ \Pi&\Sigma\ar[d]^{\mathfrak p}\,,\\ M^\veee\ar[r]^{\pi}& M}\qquad\qquad \xymatrix{
\Sigma\ar[d]_{\mathfrak p'}\ar[r]^ \Psi_{\sim}&\Sigma\ar[d]^{\mathfrak p}\,.\\ M'\ar[r]^{\psi}_\sim& M}
\end{equation}
\end{dfn}
Let $\mathfrak U(\Sigma)$ be the space of free $S^1$-actions on $\Sigma$. For $\mathfrak{u}\in \mathfrak{U}(\Sigma)$, we denote by \label{def:V}$V=V_\mathfrak{u}$ the fundamental vector field on $\Sigma$ generated by $\mathfrak{u}$. We have a natural map 
\begin{equation}\label{eq:S1-action_bundle}
\mathfrak{U}(\Sigma)\to\mathfrak P(\Sigma),
\end{equation}
which associates to a free $S^1$-action the quotient map onto its orbit space. It is a classical fact that this map is surjective. Indeed, let us fix $\mathfrak p\in\mathfrak P(\Sigma)$. Using a partition of unity, we find a vector field $X$ on $\Sigma$, positively tangent to the $\mathfrak p$-fibres at every point. If $T(z)>0$ is the period of the periodic orbit of $X$ starting at $z\in\Sigma$, then the rescaled vector field $V:=TX$ yields the desired $S^1$-action. The set $\mathfrak{U}(\Sigma)$ carries a natural $C^1$-topology as a closed subset of the space $\mathrm{Maps}(S^1\times\Sigma,\Sigma)$ and we endow $\mathfrak P(\Sigma)$  with the quotient topology brought by the map \eqref{eq:S1-action_bundle}. We write $\mathfrak{U}(\mathfrak p)$ for the fibre above $\mathfrak p\in\mathfrak P(\Sigma)$. This is a convex space in the following sense. If $V_0$ and $V_1=T_1V_0$ are the fundamental vector fields of $\mathfrak u_0,\mathfrak u_1\in\mathfrak U(\mathfrak p)$, then
\[
V_r:=\frac{V_1}{r+(1-r)T_1},\qquad \forall r\in[0,1]
\]
is the fundamental vector field of some $\mathfrak u_r\in \mathfrak U(\mathfrak p)$.

Finally, if $\Lambda(\Sigma)$ denotes the space one-periodic curves in $\Sigma$, we have a map
\begin{equation}\label{e:jmathpsi}
\jmath_\mathfrak{p}:\Sigma\to\Lambda(\Sigma)
\end{equation}
associating to a point the $\mathfrak{u}$-orbit through it for some $\mathfrak u\in\mathfrak U(\mathfrak p)$. Up to an orientation-preserving change of parametrisation of the elements of $\Lambda(\Sigma)$, this map depends only on $\mathfrak p$.

\begin{dfn}
If $\mathfrak p:\Sigma\to M$ belongs to $\mathfrak P(\Sigma)$ and $\mathfrak u$ belongs to $\mathfrak U(\mathfrak p)$, we write
\[
e_\Z\in H^2(M;\Z)
\] for \textit{minus} the Euler class of $\mathfrak u$, as defined, for example in \cite{Ch77}. This class is independent of $\mathfrak u\in\mathfrak U(\mathfrak p)$, and therefore, we refer to it as \textbf{minus the Euler class} of $\mathfrak p$.
\end{dfn}
\begin{rmk}\label{r:psiee}
The bundle $\mathfrak p$ is trivial (namely admits a global section) if and only if $e_\Z=0$. If $\mathfrak p^\veee:\Sigma^\veee\to M^\veee$ is another bundle with minus Euler class $e^\veee_\Z\in H^2(M^\veee;\Z)$, then
\begin{equation*}
\mathfrak p^\veee=\Pi^*\mathfrak p\quad\Longrightarrow\quad e^\veee_\Z=\pi^*e_\Z,
\end{equation*}
where $\Pi$ is a bundle map as in \eqref{eq:commutative_diagram2}. 
\end{rmk}
The inclusion map $\Z\into\R$ induces a map on the level of homology and cohomology and we write $e$ and $[\mathfrak{p}^{-1}(\pt)]$ for the images of $e_\Z$ and $[\mathfrak{p}^{-1}(\pt)]_\Z$, respectively:  
\begin{equation*}\label{def:p-1r}
\begin{aligned}
H^2(M;\Z)&\longrightarrow H^2(M;\R)\cong H^2_\dR(M),\qquad & H_1(\Sigma;\Z)&\longrightarrow H_1(\Sigma;\R).\\
 e_\Z &\longmapsto e & [\mathfrak{p}^{-1}(\pt)]_\Z&\longmapsto [\mathfrak{p}^{-1}(\pt)]
\end{aligned}
\end{equation*}
From the piece of the Gysin exact sequence $H^{0}_{\op{dR}}(M)\xrightarrow{\cup e}  H_{\op{dR}}^{2}(M)\xrightarrow{\mathfrak{p}^*} H^{2}_{\op{dR}}(\Sigma)$,
we know that
\begin{equation}\label{e:gysin}
\R\cdot e=\ker \Big(\mathfrak p^*:H^2_{\op{dR}}(M)\to H^2_{\op{dR}}(\Sigma)\Big).
\end{equation}
Moreover, by the universal coefficient theorem, we get
\[
e_\Z \text{ is torsion}\ \ \iff\ \ e=0,\qquad\qquad [\mathfrak{p}^{-1}(\pt)]_\Z \text{ is torsion}\ \ \iff\ \ [\mathfrak{p}^{-1}(\pt)]=0.
\]
The Chern-Weil theory yields a convenient description of the real cohomology class $e$. Let $\mathcal K(\mathfrak{u})$ denote the space of connection one-forms associated with $\mathfrak{u}\in\mathfrak U(\mathfrak p)$:
\[\label{eq:connection_forms}
\mathcal K(\mathfrak{u}):=\big\{\eta\in\Omega^1(\Sigma)\ \big|\ \eta(V)=1,\;\di\eta=\mathfrak p^*\kappa_\eta \text{ for some }\kappa_\eta\in\Omega^2(M)\big\}.
\]
This space is non-empty, and for every $\eta\in\mathcal K(\mathfrak u)$, the form $\kappa_\eta$ is closed and we have
\[
e=[\kappa_\eta]\in H^2_{\dR}(M).
\] 
Conversely, for any $\kappa\in\Xi^2(M)$ with $e=[\kappa]$, there is $\eta_\kappa\in\mathcal K(\mathfrak{u})$ with $\kappa_{\eta_\kappa}=\kappa$. We denote by
\[
\mathcal K(\mathfrak p):=\bigcup_{\mathfrak u\in\mathfrak U(\mathfrak p)}\mathcal K(\mathfrak u)
\]
the set of \textbf{connection one-forms} for $\mathfrak p$.

\subsection{Flat $S^1$-bundles and the local structure of $\mathfrak P(\Sigma)$}
This subsection is devoted to the proof of three lemmas. In the first two, we study flat bundles, i.e.\ with $e=0$. In the last one, we prove a theorem of Weinstein \cite{Wei74} showing that the space $\mathfrak P(\Sigma)$ is locally trivial.
\begin{lem}\label{l:ecc}
Let $\mathfrak p:\Sigma\to M$ be an oriented $S^1$-bundle. We have an equivalence
\begin{equation}\label{e:eppt}
e=0\qquad\Longleftrightarrow\qquad[\mathfrak{p}^{-1}(\pt)]\neq 0.
\end{equation}
If $c\in H^2_{\mathrm{dR}}(M)$ and we define $C:=\mathfrak p^*c\in H^2_{\mathrm{dR}}(\Sigma)$, there holds
\begin{equation}\label{e:pd}
\mathrm{PD}(C^{n})=\langle c^{n},[M]\rangle\cdot [\mathfrak{p}^{-1}(\pt)]\,\in\, H_1(\Sigma;\R).
\end{equation}
In particular, if $c^{n}\neq 0$, then
\begin{equation}\label{e:cne}
\begin{aligned}
(i)&\ \ \ker \Big( H^1_{\op{dR}}(\Sigma)\xrightarrow{\langle\,\cdot\,,[\mathfrak{p}^{-1}(\pt)]\rangle}\R\Big)=\ker \Big(H^1_{\op{dR}}(\Sigma)\xrightarrow{(\,\cdot\,)\cup C^{n}}H^{2n+1}_\dR(\Sigma)\Big),\\
(ii)&\ \ C^{n}\neq 0\quad \Longleftrightarrow\quad e=0.
\end{aligned}
\end{equation}
\end{lem}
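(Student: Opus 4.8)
The plan is to derive the whole statement from two independent computations---the Gysin sequence of $\mathfrak p$ for the equivalence \eqref{e:eppt}, and a fibre-integration identity for the Poincar\'e dual \eqref{e:pd}---after which both assertions in \eqref{e:cne} follow formally. Throughout I write $\mathfrak p_*:H^k_\dR(\Sigma)\to H^{k-1}_\dR(M)$ for integration over the oriented fibre. On $H^1_\dR(\Sigma)$ it is exactly the pairing $\mathfrak p_*[\tau]=\langle[\tau],[\mathfrak p^{-1}(\pt)]\rangle\in H^0_\dR(M)=\R$: indeed $\mathfrak p_*\tau$ is a closed $0$-form on the connected manifold $M$, hence a constant, whose value is the fibre integral of $\tau$.

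For \eqref{e:eppt} I would invoke the segment
\[
H^1_\dR(M)\xrightarrow{\ \mathfrak p^*\ }H^1_\dR(\Sigma)\xrightarrow{\ \mathfrak p_*\ }H^0_\dR(M)\xrightarrow{\ \cup\, e\ }H^2_\dR(M)
\]
of the Gysin sequence (the next piece of the one already used for \eqref{e:gysin}). Both conditions turn out to be equivalent to the surjectivity of $\mathfrak p_*$ on $H^1_\dR(\Sigma)$. On the one hand, $[\mathfrak p^{-1}(\pt)]\neq0$ holds exactly when the functional $\langle\,\cdot\,,[\mathfrak p^{-1}(\pt)]\rangle=\mathfrak p_*$ is nonzero, i.e.\ surjective onto $\R$. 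On the other hand, exactness gives $\mathrm{im}\,\mathfrak p_*=\ker(\cup\,e:H^0_\dR(M)\to H^2_\dR(M))$, and since $\cup\,e$ sends $1\mapsto e$, this kernel is all of $H^0_\dR(M)$ precisely when $e=0$.

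For \eqref{e:pd} I would test the class $\mathrm{PD}(C^n)\in H_1(\Sigma;\R)$ against an arbitrary $[\tau]\in H^1_\dR(\Sigma)$ via the defining property of Poincar\'e duality. Choosing a closed representative $\om\in\Xi^2_c(M)$ so that $C^n=\mathfrak p^*(\om^n)$, the projection formula together with the constancy of $\mathfrak p_*\tau$ yields
\[
\langle[\tau]\cup C^n,[\Sigma]\rangle=\int_\Sigma\tau\wedge\mathfrak p^*(\om^n)=\int_M(\mathfrak p_*\tau)\,\om^n=\langle[\tau],[\mathfrak p^{-1}(\pt)]\rangle\,\langle c^n,[M]\rangle.
\]
Since this equals $\big\langle[\tau],\langle c^n,[M]\rangle\,[\mathfrak p^{-1}(\pt)]\big\rangle$ for every $[\tau]$, nondegeneracy of the pairing between $H^1_\dR(\Sigma)$ and $H_1(\Sigma;\R)$ gives \eqref{e:pd}.

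Finally, assume $c^n\neq0$, so $\langle c^n,[M]\rangle\neq0$ because $H^{2n}_\dR(M)\cong\R$. For (i), the top group $H^{2n+1}_\dR(\Sigma)\cong\R$ is detected by $\langle\,\cdot\,,[\Sigma]\rangle$, so the displayed identity shows $[\tau]\cup C^n=0$ iff $\langle[\tau],[\mathfrak p^{-1}(\pt)]\rangle=0$, which is precisely the claimed equality of kernels. For (ii), since $\mathrm{PD}$ is an isomorphism and $\langle c^n,[M]\rangle\neq0$, identity \eqref{e:pd} gives $C^n\neq0\iff[\mathfrak p^{-1}(\pt)]\neq0$, and the latter is equivalent to $e=0$ by \eqref{e:eppt}. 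The only genuinely delicate points here are bookkeeping: fixing the orientation conventions so that $\mathfrak p_*$ pairs correctly with $[\mathfrak p^{-1}(\pt)]$, and recording that the fibre integral of a closed form is constant on the connected base; I expect no substantial obstacle.
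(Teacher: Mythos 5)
Your proposal is correct and follows essentially the same route as the paper: the key identity $\langle[\tau]\cup C^n,[\Sigma]\rangle=\langle[\tau],[\mathfrak p^{-1}(\pt)]\rangle\,\langle c^n,[M]\rangle$ via Fubini and the nondegeneracy of the Poincar\'e pairing, from which \eqref{e:pd}, (i) and (ii) all follow exactly as in the text. The only cosmetic difference is in \eqref{e:eppt}: you read off the equivalence from exactness of the degree-one Gysin segment $H^1_\dR(\Sigma)\xrightarrow{\mathfrak p_*}H^0_\dR(M)\xrightarrow{\cup e}H^2_\dR(M)$, whereas the paper works with the Poincar\'e-dualised degree-$2n$ segment and the identification $\mathrm{PD}(\mathfrak p^*[\mu])=[\mathfrak p^{-1}(\pt)]$ — these are the same argument up to duality.
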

\begin{proof}
The proof of \eqref{e:eppt} relies on the Gysin sequence and of its Poincar\'e dual
\[
\xymatrix{ H^{2n-2}_{\op{dR}}(M)\ar[r]^-{\cup e} \ar[d]_{\mathrm{PD}}& H_{\op{dR}}^{2n}(M)\ar[d]^{\mathrm{PD}} \ar[r]^{\mathfrak{p}^*}\ar[d]^{\mathrm{PD}} & H^{2n}_{\op{dR}}(\Sigma) \ar[r]^{\mathfrak{p}_*}\ar[d]^{\mathrm{PD}} & H^{2n-1}(M) \ar[d]^{\mathrm{PD}},\\
 H_2(M;\R)\ar[r]^{\cap e} & H_0(M;\R) \ar[r]^{\mathrm{PD}(\mathfrak{p}^*)} & H_1(\Sigma;\R)\ar[r]& H_1(M;\R)}
\]
where $\mathrm{PD}$ denotes Poincar\'e duality and $\mathfrak{p}_*$ stands for the integration along fibres.  We claim that $\mathrm{PD}(\mathfrak{p}^*)$ sends the class of a point $[\pt]$ to $[\mathfrak{p}^{-1}(\pt)]$. To this purpose, let $\mu\in\Xi^{2n}(M)$ be such that $\mathrm{PD}([\mu])=[\pt]$, so that $\int_M\mu=1$. Going around the central square in the diagram above, the claim follows, if we can show that $\mathrm{PD}(\mathfrak p^*[\mu])=[\mathfrak{p}^{-1}(\pt)]$. This last equality is true since for all $\tau\in \Xi^1(\Sigma)$ we have
\[
\langle [\tau],[\mathfrak{p}^{-1}(\pt)]\rangle=\mathfrak p_*[\tau]=\mathfrak p_*[\tau]\cdot\int_M\mu=\int_M\mathfrak p_*[\tau]\cdot\mu\stackrel{(\star)}{=}\int_\Sigma\tau\wedge \mathfrak p^*\mu=\langle[\tau]\cup\mathfrak p^*[\mu],[\Sigma]\rangle,
\]
where in $(\star)$ we have used Fubini's Theorem. Therefore, $[\mathfrak p^{-1}(\pt)]\neq0$ if and only if the map $\cap e$ is equal to zero. This happens if and only if $\langle e, H_2(M;\R)\rangle=0$ as one sees identifying $H_0(M;\R)$ with $\R$. Finally, from the universal coefficient theorem, $\langle e, H_2(M;\R)\rangle=0$ if and only if $e=0$.

Let us show \eqref{e:pd}. If $\om\in \Xi^2_c(M)$ and $\tau\in \Xi^1(\Sigma)$ is arbitrary, we have
\begin{equation*}
\langle c^{n},[M]\rangle\cdot\langle[\tau] ,[\mathfrak{p}^{-1}(\pt)]\rangle=\int_M\mathfrak{p}_*[\tau]\cdot \om^{n}=\int_\Sigma\tau\wedge \mathfrak p^*\om^{n}=\langle [\tau]\cup C^{n},[\Sigma]\rangle.
\end{equation*}
Let us assume that $\langle c^{n},[M]\rangle\neq0$. Looking again at the last equation, we readily see that (i) in \eqref{e:cne} holds. The equivalence in (ii) stems from a combination of \eqref{e:pd} and \eqref{e:eppt}
\[
C^{n}\neq0\quad\iff\quad \mathrm{PD}(C^{n})\neq0\quad\iff\quad [\mathfrak{p}^{-1}(\pt)]\neq0\quad\iff\quad e=0.\qedhere
\]
\end{proof}
We now show that a flat bundle can always be pulled back to a trivial one.
\begin{lem}\label{l:3state1}
Let $\mathfrak p:\Sigma\to M$ be an oriented $S^1$-bundle. The class $e\in H^2_\dR(M)$ vanishes if and only if there exists a finite cyclic cover $\pi:M^\veee\to M$ such that the pull-back bundle $\mathfrak p^\veee:\Sigma^\veee\to M^\veee$ of $\mathfrak p$ by $\pi$ is trivial. In this case, the order of $e_\Z$ in $H^2(M;\Z)$ equals the minimum degree of a cover with the properties above.
\end{lem}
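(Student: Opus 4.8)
The plan is to pass to the integral Euler class $e_\Z\in H^2(M;\Z)$ and turn the statement into pure algebraic topology. Recall from Remark \ref{r:psiee} that pulling $\mathfrak p$ back along a map $\pi\colon M^\veee\to M$ yields a bundle whose minus Euler class is $\pi^*e_\Z$, and that an oriented $S^1$-bundle is trivial exactly when its minus Euler class vanishes. Since $M$ is connected (being the base of a bundle with connected total space $\Sigma$) and, by the universal coefficient theorem recalled above, $e=0$ if and only if $e_\Z$ is torsion, the lemma is equivalent to two assertions: $e_\Z$ is torsion if and only if some finite cyclic cover $\pi$ satisfies $\pi^*e_\Z=0$; and, in that case, the least degree of such a cover equals the order of $e_\Z$.

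For the implication that a trivialising cover forces $e=0$, together with the lower bound on the degree, I would use the transfer homomorphism $\mathrm{tr}\colon H^*(M^\veee;\Z)\to H^*(M;\Z)$ of a degree-$d$ cover, characterised by $\mathrm{tr}\circ\pi^*=d\cdot\id$. If $\pi^*e_\Z=0$ then $d\,e_\Z=\mathrm{tr}(\pi^*e_\Z)=0$, so $e_\Z$ is torsion, hence $e=0$; moreover its order $k$ divides $d$, so every cover (cyclic or not) that trivialises the pull-back bundle has degree at least $k$.

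For the converse and for exhibiting a cover of optimal degree, suppose $e=0$ and let $k$ be the order of $e_\Z$. Applying the Bockstein long exact sequence of $0\to\Z\xrightarrow{k}\Z\to\Z/k\to0$, the relation $k\,e_\Z=0$ shows that $e_\Z=\beta(\xi)$ for some $\xi\in H^1(M;\Z/k)=\mathrm{Hom}(\pi_1(M),\Z/k)$. Let $\pi\colon M^\veee\to M$ be the connected cyclic cover determined by $\ker\xi$, whose degree is the order of $\mathrm{im}(\xi)$ and hence divides $k$. Since $\pi_*\pi_1(M^\veee)=\ker\xi$, the class $\pi^*\xi$ is the zero homomorphism, so naturality of the Bockstein gives $\pi^*e_\Z=\beta(\pi^*\xi)=0$ and the pull-back bundle is trivial. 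Comparing with the transfer bound, this cover has degree both dividing and at least $k$, so its degree is exactly $k$, and no cyclic cover of smaller degree can trivialise the bundle; therefore the minimal degree equals the order of $e_\Z$.

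The conceptual heart of the argument is the identification of the torsion class $e_\Z$ with the Bockstein of a mod-$k$ class $\xi$, whose associated cyclic cover is precisely the one trivialising the bundle; the verification $\pi^*\xi=0$ is then merely the remark that restricting a homomorphism to its own kernel gives zero. I expect the only delicate point to be book-keeping the degree: one must combine the divisibility coming from $\xi$ with the transfer identity $\mathrm{tr}\circ\pi^*=d\cdot\id$ to pin the optimal degree down to exactly $k$, rather than merely to a divisor or multiple of it.
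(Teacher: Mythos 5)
Your proof is correct, and while the ``only if'' direction and the lower bound on the degree are handled exactly as in the paper (via the transfer identity $\mathrm{tr}\circ\pi^*=d\cdot\mathrm{id}$ applied to $e_\Z$), your construction of the trivialising cover is genuinely different. The paper argues geometrically: given $k\,e_\Z=0$, it quotients $\Sigma$ by the $\tfrac{\Z}{k\Z}$ sub-action of a free circle action in $\mathfrak U(\mathfrak p)$, observes that the resulting bundle $\mathfrak p_k:\Sigma_k\to M$ has minus Euler class $k\,e_\Z=0$ and hence a section, and takes $M^\veee$ to be a connected component of the preimage of that section in $\Sigma$ --- so the cover is realised concretely as a multi-section sitting inside $\Sigma$, which also hands over the bundle map $\Pi$ and the tautological section of $\mathfrak p^\veee$ needed elsewhere. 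You instead stay entirely in cohomology: the Bockstein sequence of $0\to\Z\xrightarrow{k}\Z\to\Z/k\to 0$ writes $e_\Z=\beta(\xi)$ for some $\xi\in H^1(M;\Z/k)\cong\mathrm{Hom}(\pi_1(M),\Z/k)$, and the cyclic cover classified by $\ker\xi$ kills $\xi$, hence kills $e_\Z$ by naturality of $\beta$. Your route is shorter and identifies the cover intrinsically; it also shows, once combined with the transfer bound, that $\xi$ is automatically surjective, so the constructed cover has degree exactly $k$ rather than merely a divisor of $k$ (the paper reaches the same conclusion by the same divisibility squeeze). What your argument does not produce explicitly is the lift $\Pi:\Sigma^\veee\to\Sigma$ and its section, but these follow from triviality of $\mathfrak p^\veee$ once $\pi^*e_\Z=0$ is known, so nothing is missing.
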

\begin{proof}
We preliminarily observe that if $\pi:M^\veee\to M$ is a finite cover of degree $k$ and the bundle $\mathfrak p^\veee:\Sigma^\veee\to M^\veee$ is the pull-back of $\mathfrak p$ through $\pi$ with minus the Euler class $e_\Z^\veee$, then by Remark \ref{r:psiee} there holds
\[
\pi_*e_\Z^\veee=k\cdot e_\Z,
\]
where $\pi_*:H^2(M^\veee;\Z)\to H^2(M;\Z)$ is the transfer map (see \cite[Chapter 3.G]{Hat02}). Therefore, if we suppose that $\mathfrak p^\veee$ is trivial, we deduce that $e_\Z^\veee=0$, and hence, $k\cdot e_\Z=0$. In particular, the order of $e_\Z$ divides $k$.
 
Conversely, let $k$ be a positive integer such that $k\cdot e_\Z=0$ and take $\mathfrak{u}\in\mathfrak{U}(\mathfrak p)$. Let $\tfrac{\Z}{k\Z}\to S^1$ be the canonical homomorphism $j\to j/k$, and we denote by $\mathfrak{u}_k$ the free $\tfrac{\Z}{k\Z}$-action on $\Sigma$ obtained combining this homomorphism with $\mathfrak{u}$. Let $\Sigma_k=\Sigma/\mathfrak{u}_k$ be the quotient by this action and denote by $\Pi_k:\Sigma\to \Sigma_k$ the associated quotient map. The natural map $\mathfrak p_k:\Sigma_k\to M$ is an oriented $S^1$-bundle. Let $(e_\Z)_k\in H^2(M;\Z)$ denote minus the Euler class of $\mathfrak p_k$. Since $S^1/\tfrac{\Z}{k\Z}\cong S^1$ through the map $\phi\mapsto k\cdot \phi$, we see that $(e_\Z)_k=k\cdot e_\Z=0$. Therefore, we have a section $\mathfrak s_k:M\to \Sigma_k$ for $\mathfrak p_k$ and we take a connected component $M^\veee\subset \Sigma$ of $\Pi_k^{-1}(\mathfrak s_k(M))$. If $\mathfrak i:M^\veee\to\Sigma$ is the inclusion map, we define $\pi:M^\veee\to M$ as $\pi:=\mathfrak p\circ \mathfrak i$. The map $\pi$ is a covering map, whose deck transformation group is given by the sub-action of $\mathfrak{u}_k$ that leaves $M^\veee$ invariant. The deck transformation group is isomorphic to $\tfrac{\Z}{j\Z}$, where $j$ divides $k$, since it is a subgroup of the cyclic group $\tfrac{\Z}{j\Z}$. Let $\Pi:\Sigma^\veee\to\Sigma$ be a covering map lifting $\pi$. We sum up the construction above in the commutative diagram
\[
\xymatrix{
\Sigma^\veee\ar[d]_{\mathfrak p^\veee}\ar[r]^{\Pi}&\Sigma\ar[d]_{\mathfrak p}\ar[r]^{\Pi_k}&\Sigma_k\ar[dl]_{\mathfrak p_k}.\\
M^\veee\ar@{-->}@/^2pc/[u]^{\mathfrak s^\veee} \ar[ur]^{\mathfrak i}\ar[r]^{\pi}&M\ar@{-->}@/_1pc/[ur]_{\mathfrak s_k}
}
\]
We construct a section $\mathfrak s^\veee:M^\veee\to\Sigma^\veee$ tautologically, as follows. Let us consider an arbitrary $q^\veee\in M^\veee$. By definition of pull-back bundle, the $\mathfrak p^\veee$-fibre over $q^\veee$ is identified through $\Pi$ with the $\mathfrak p$-fibre over $\pi(q^\veee)$. Since $\mathfrak i(q^\veee)\in\Sigma$ belongs to the $\mathfrak p$-fibre over $\pi(q^\veee)$, we can define $\mathfrak s^\veee(q^\veee)$ through the equation
\[
\Pi(\mathfrak s^\veee(q^\veee))=\mathfrak i(q^\veee).
\]
This proves the existence of a cover as in the statement of the lemma and shows that the degree $k^\veee$ is less than or equal to the order of $e_\Z$. 
\end{proof}

If $\mathfrak p\in\mathfrak P(\Sigma)$ and $\{\Psi_r:\Sigma\to\Sigma\}$ is an isotopy with $\Psi_0=\id_\Sigma$, then $\{\mathfrak p_r:=\Psi_r^*\mathfrak p\}$ yields a path in $\mathfrak P(\Sigma)$. The next lemma shows that all paths in $\mathfrak P(\Sigma)$ arise in this way.
\begin{lem}\label{l:triv}
The following two statements hold:
\begin{enumerate}[(i)]
\item Every oriented $S^1$-bundle $\mathfrak p_0\in\mathfrak P(\Sigma)$ has a $C^1$-neighbourhood $\mathcal W$ and a continuous map $(\mathfrak p\in\mathcal W)\mapsto (\Psi_{\mathfrak p}\in\mathrm{Diff}(\Sigma))$ such that
\[
\bullet\ \ \Psi_{\mathfrak p_0}=\id_\Sigma,\qquad\qquad \bullet\ \ \Psi_{\mathfrak p}^*\mathfrak p=\mathfrak p_0,\quad \forall\,\mathfrak p\in\mathcal W.
\]
\item If $\{\mathfrak p_r:\Sigma\to M_r\}$ is a path in $\mathfrak P(\Sigma)$, there exists an isotopy $\{\Psi_r\}$ of $\Sigma$ such that
\[
\qquad\bullet\ \ \Psi_{0}=\id_\Sigma,\ \qquad\qquad \bullet\ \ \Psi_r^*\mathfrak p_r=\mathfrak p_0,\quad \forall\,r\in[0,1].
\]
\end{enumerate}
\end{lem}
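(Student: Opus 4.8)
The plan is to translate the statement about $S^1$-bundles into one about free $S^1$-actions via the surjection $\mathfrak U(\Sigma)\to\mathfrak P(\Sigma)$ of \eqref{eq:S1-action_bundle}, and then to produce the required diffeomorphisms by \emph{conjugating} nearby circle actions rather than by deforming their orbit foliations. The reason for this detour is the main conceptual obstacle: the set $\mathfrak P(\Sigma)$ is \emph{not} convex in any naive sense. An oriented line field $C^1$-close to $\ker\di\mathfrak p_0$ need not integrate to a foliation by circles at all, so one cannot interpolate between two bundles inside $\mathfrak P(\Sigma)$ and run a Moser-type argument on the line fields directly.

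For part (i), I would first fix $\mathfrak u_0\in\mathfrak U(\mathfrak p_0)$ and average the auxiliary metric over the flow of its fundamental field $V_0$ to obtain a metric $g_0$ for which every $\mathfrak u_0(s)$ is an isometry. Taking the $g_0$-unit positively oriented generator of $\ker\di\mathfrak p$ and rescaling it by its period gives a continuous local section $\mathfrak p\mapsto\mathfrak u_{\mathfrak p}$ of $\mathfrak U(\Sigma)\to\mathfrak P(\Sigma)$ over a $C^1$-neighbourhood $\mathcal W$ of $\mathfrak p_0$, with $\mathfrak u_{\mathfrak p_0}=\mathfrak u_0$; write $\mathfrak u:=\mathfrak u_{\mathfrak p}$. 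The key device is the $g_0$-Riemannian centre of mass: for $\mathfrak u$ sufficiently $C^0$-close to $\mathfrak u_0$ the set $\{\mathfrak u_0(-s)\,\mathfrak u(s)\,x:s\in S^1\}$ lies in a geodesically convex ball around $x$, so its barycentre $h(x)$ is well defined, depends smoothly on $x$ and continuously on $\mathfrak p$, and equals $x$ when $\mathfrak u=\mathfrak u_0$. A one-line change of variable $s\mapsto s+t$ together with the $g_0$-invariance of $\mathfrak u_0$ shows $h\circ\mathfrak u(t)=\mathfrak u_0(t)\circ h$ for all $t$, i.e.\ $h$ intertwines the two actions and hence maps $\mathfrak u$-orbits to $\mathfrak u_0$-orbits, so that $h^*\mathfrak p_0=\mathfrak p$. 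Since $h$ is $C^1$-close to $\id_\Sigma$, it is a diffeomorphism, and setting $\Psi_{\mathfrak p}:=h^{-1}$ yields $\Psi_{\mathfrak p_0}=\id_\Sigma$ and $\Psi_{\mathfrak p}^*\mathfrak p=\mathfrak p_0$, as required. This is the content of Weinstein's argument in \cite{Wei74}.

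For part (ii), I would deduce it from (i) by a compactness/patching argument rather than reprove it from scratch. Given a path $\{\mathfrak p_r\}$, apply (i) at each basepoint to get local trivialisations; by compactness of $[0,1]$ choose $0=r_0<\dots<r_N=1$ so that $\mathfrak p_r$ stays in the $(i)$-neighbourhood of $\mathfrak p_{r_i}$ for $r\in[r_i,r_{i+1}]$, obtaining continuous families $\Phi^{(i)}_r$ with $\Phi^{(i)}_{r_i}=\id_\Sigma$ and $(\Phi^{(i)}_r)^*\mathfrak p_r=\mathfrak p_{r_i}$. Composing inductively, $\Psi_r:=\Phi^{(i)}_r\circ\Psi_{r_i}$ on $[r_i,r_{i+1}]$ with $\Psi_0=\id_\Sigma$, one checks $\Psi_r^*\mathfrak p_r=\mathfrak p_0$ throughout and that the pieces match continuously at the nodes; a standard reparametrisation near the nodes upgrades this to a smooth isotopy. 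Alternatively, (ii) admits a direct Moser proof: lift $\{\mathfrak p_r\}$ to a smooth path of generators $\{V_r\}$, note that differentiating the identity $\phi^1_{V_r}=\id_\Sigma$ forces $\int_{S^1}(\mathfrak u_r(s))^*\dot V_r\,\di s=0$, solve the cohomological equation $[V_r,Z_r]=\dot V_r$ via a continuous homotopy operator for the circle action, and integrate $Z_r$ to an isotopy, which then satisfies the stronger relation $\Psi_r^*V_r=V_0$.

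I expect the main difficulty to lie in the two technical inputs for part (i): the well-definedness and smooth dependence of the Riemannian centre of mass (uniqueness inside the convexity radius of $g_0$), and the uniform estimate showing that $h$ is $C^1$-close to $\id_\Sigma$, so that it is genuinely a diffeomorphism and depends continuously on $\mathfrak p$ in the quotient topology of $\mathfrak P(\Sigma)$.
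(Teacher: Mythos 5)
Your proof is correct, but for part (i) it follows a genuinely different route from the paper's. You lift the problem to the space of free $S^1$-actions via a continuous local section $\mathfrak p\mapsto\mathfrak u_{\mathfrak p}$ and then conjugate nearby actions by the Grove--Karcher/Weinstein centre of mass on $\Sigma$ with respect to an $\mathfrak u_0$-invariant metric; the equivariance identity $h\circ\mathfrak u(t)=\mathfrak u_0(t)\circ h$ you derive is correct and actually gives something stronger than required, namely a conjugation of the parametrised actions rather than merely a diffeomorphism matching the orbit foliations. The paper instead works directly with the bundles: it fixes a connection $\eta\in\mathcal K(\mathfrak p_0)$ and a metric on the base $M$, averages $\exp_q^{-1}\circ\,\mathfrak p_0$ over the $\mathfrak p$-fibres to produce a section $S:W\to\ta M$ whose zero set defines a new projection $\mathfrak p'$ equivalent to $\mathfrak p$ (using that transversality, being a submersion and being a diffeomorphism are $C^1$-open conditions), and then builds $\Phi_{\mathfrak p}$ by $\eta$-parallel transport along the radial geodesics from $\mathfrak p_0(z)$ to $\mathfrak p'(z)$. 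Both are averaging arguments; yours buys equivariance and avoids transversality language at the cost of the two technical inputs you correctly identify (smooth dependence of the barycentre and the uniform $C^1$-estimate on $h$, plus the $C^1$-continuity of the period function needed for the section $\mathfrak p\mapsto\mathfrak u_{\mathfrak p}$), while the paper's construction stays at the level of fibres and needs no invariant metric on $\Sigma$. For part (ii) your subdivision-and-patching argument is exactly the paper's one-line proof spelled out, and your alternative Moser-type argument via the cohomological equation $[V_r,Z_r]=\dot V_r$ is a valid bonus.
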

\begin{proof}
Pick an arbitrary connection $\eta\in\mathcal K(\mathfrak p_0)$ and an arbitrary Riemannian metric on $M$ with injectivity radius $\rho_\inj$ and distance function $\dist:M\times M\to[0,\infty)$. We define the open subset of $M\times\Sigma$
\[W:=\Big\{(q,z)\in M\times\Sigma\ \Big|\ \dist(q,\mathfrak p_0(z))<\rho_\inj/2\Big\}\] 
and let $\mathfrak q_M:W\to M$ and $\mathfrak z_\Sigma:W\to\Sigma$ be the projections on the two factors. We define $\mathcal W$ as the space of all oriented $S^1$-bundles $\mathfrak p$ such that the following two properties hold: 
\begin{itemize}
\item For every $z_1,z_2$ in $\Sigma$ belonging to the same $\mathfrak p$-fibre, we have $\dist(\mathfrak p_0(z_1),\mathfrak p_0(z_2))<\rho_\inj/2$; 
\item For every $z\in \Sigma$, the disc $D_z\subset \Sigma$ is transverse to the orbits of $\mathfrak p$. Here, $D_z$ is the union of all the $\eta$-horizontal lifts through $z$ of the geodesic rays in $M$ emanating from $\mathfrak p_0(z)$ and with length $\rho_\inj$.
\end{itemize}
For every $\mathfrak p\in \mathcal W$, we construct a map of fibre bundles over $M$ given by
\[
S:W\to \ta M,\qquad S(q,z):=\big(q,v(q,z)\big), \quad v(q,z):=\int_{S^1}\exp_q^{-1}\circ\,\mathfrak p_0\circ\gamma_z(t)\di t,
\]
where $\gamma_z:S^1\to \Sigma$ is the oriented $\mathfrak p$-fibre passing through $z$. By definition, $S$ is constant along the $\mathfrak p$-fibres. Let $0_M\subset \ta M$ be the zero section and set $G:=S^{-1}(0_M)$. We claim that
\begin{itemize}
\item $S$ is transverse to $0_M$,
\item $\mathfrak p_M:=\mathfrak q_M|_{G}:G\to M$ is an $S^1$-bundle map,
\item $\Psi_\Sigma:=\mathfrak z_\Sigma|_{G}:G\to \Sigma$ is a diffeomorphism.
\end{itemize}
This is clear if $\mathfrak p=\mathfrak p_0$, since in this case $S(q,z)=\exp_q^{-1}(\mathfrak p_0(z))$ and $G=\{(q,z)\ |\ \mathfrak p_0(z)=q\}$. Therefore, up to shrinking $\mathcal W$, it also holds for $\mathfrak p\in \mathcal W$, as $S$ depends continuously on $\mathfrak p$ and being transverse, or a submersion or a diffeomorphism is a $C^1$-open condition \cite[Chapter 2]{Hir94}. Thus, we get an $S^1$-bundle $\mathfrak p':=\mathfrak p_M\circ \Psi_\Sigma^{-1}:\Sigma\to M$, which is equivalent to $\mathfrak p$. We now define a map $\Phi_{\mathfrak p}:\Sigma\to\Sigma$ such that $\Phi_{\mathfrak p}^*\mathfrak p_0=\mathfrak p$. For every $z\in\Sigma$, $\Phi_{\mathfrak p}(z)$ is the parallel transport with respect to $\eta$ along the radial geodesic on $M$ connecting $\mathfrak p_0(z)$ with $\mathfrak p'(z)$.  The dependence of $\Phi_{\mathfrak p}$ from $\mathfrak p$ is continuous and clearly $\Phi_{\mathfrak p_0}=\id_\Sigma$. As being a diffeomorphism is a $C^1$-open condition, we have $\Phi_{\mathfrak p}\in\mathrm{Diff}(\Sigma)$ and (i) is proved by setting $\Psi_{\mathfrak p}=\Phi_{\mathfrak p}^{-1}$.

For part (ii), we break the given path into short paths and apply the first part.
\end{proof}

\section{Weakly Zoll pairs and Zoll odd-symplectic forms}\label{sec:weak_Zoll}

\subsection{Definitions and first properties}\label{ss:weakzoll}
\begin{dfn}\label{d:weaklyzoll}
A couple $(\mathfrak p,c)$ is called a {\bf weakly Zoll pair} if $\mathfrak p:\Sigma\to M$ is an element of $\mathfrak P(\Sigma)$ and $c\in H^2_\dR(M)$. We say that $\Omega\in\Xi^2(\Sigma)$ is \textbf{associated} with $(\mathfrak p,c)$, if $\Omega=\mathfrak p^*\om$ for some $\om\in\Xi^2_c(M)$. We denote by $\mathfrak Z(\Sigma)$ the set of weakly Zoll pairs and by $\mathfrak Z_C(\Sigma)$ the subset of those $(\mathfrak p,c)\in \mathfrak Z(\Sigma)$ with $\mathfrak p^*c=C\in H^2_\dR(\Sigma)$.
\end{dfn}

Let $(\mathfrak p_0,c_0)\in\mathfrak Z_C(\Sigma)$ with $\mathfrak p_0:\Sigma\to M_0$ and take any $\om_0\in\Xi^2_{c_0}(M_0)$. We consider the volume functionals $\Vol:\Omega^1(\Sigma)\to\R$ and $\Fvol:\Xi^2_C(\Sigma)\to\R$ defined in Section \ref{c:vol} with respect to $\Omega_0:=\mathfrak p_0^*\om_0\in\Xi^2_C(\Sigma)$. We use $\Fvol$ to get a volume on $\mathfrak Z_C(\Sigma)$ denoted with the same name: 
\begin{equation*}\label{eq:vol_weakzoll}
\Fvol:\mathfrak Z_C(\Sigma)\to \R,\qquad \Fvol(\mathfrak p,c):=\Fvol(\mathfrak p^*\om),\quad \om\in\Xi_c^2(M).
\end{equation*}

\begin{lem}\label{l:weakzollvol}
The following three statements hold.
\begin{enumerate}[(i)]
\item The volume functional $\Fvol:\mathfrak Z_C(\Sigma)\to \R$ is well-defined.
\item For all $\zeta\in\Omega^1(M_0)$, $\mathfrak p_0^*\zeta$ is $\mathfrak p_0^*\om_0$-normalised, namely 
\begin{equation*}
\Vol(\mathfrak p_0^*\zeta)=0. 
\end{equation*}
\item The functions $\Fvol:\Xi^2_C(\Sigma)\to \R$, $\Fvol:\mathfrak Z_C(\Sigma)\to\R$ depend only on the pair $(\mathfrak p_0,c_0)$ and not on the chosen $\om_0\in\Xi^2_{c_0}(M_0)$.
\end{enumerate}
\end{lem}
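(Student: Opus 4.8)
The plan is to reduce all three statements to a single \emph{dimensional} observation: since $M_0$ (and likewise every base $M$) has dimension $2n$, any $(2n+1)$-form pulled back from it vanishes identically. I would treat the statements in the order (ii), (i), (iii), since (ii) is the technical core and the other two follow from it. Throughout I keep the two cases of Section \ref{c:vol} in mind: when $C^n=0$ (Case~1) the functional $\Fvol$ descends from $\Vol$ so that $\Fvol(\Omega_\alpha)=\Vol(\alpha)$, whereas when $C^n\neq 0$ (Case~2) one has $\Fvol\equiv 0$ on $\Xi^2_C(\Sigma)$ and most claims become trivial.

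For (ii), I would differentiate $r\mapsto\Vol(r\,\mathfrak p_0^*\zeta)$ using the characterisation $\di\Vol=\vol$ of Lemma \ref{l:volform}, obtaining $\tfrac{\di}{\di r}\Vol(r\,\mathfrak p_0^*\zeta)=\int_\Sigma \mathfrak p_0^*\zeta\wedge\Omega_{r\mathfrak p_0^*\zeta}^{\,n}$. Because $\Omega_0=\mathfrak p_0^*\om_0$, one has $\Omega_{r\mathfrak p_0^*\zeta}=\mathfrak p_0^*(\om_0+r\,\di\zeta)$, so the integrand equals $\mathfrak p_0^*\big(\zeta\wedge(\om_0+r\,\di\zeta)^{n}\big)$, the pullback of a $(2n+1)$-form on $M_0$, hence zero. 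Since $\Vol(0)=0$, this gives $\Vol(\mathfrak p_0^*\zeta)=0$, which is exactly the normalisation condition in both cases.

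For (i), in Case~2 there is nothing to prove. In Case~1, given $\om,\om'\in\Xi^2_c(M)$ with $\om'-\om=\di\zeta$, I would join them by the path $\om_s:=\om+s\,\di\zeta$ inside $\Xi^2_c(M)$, lift it to $\mathfrak p^*\om_s=\Omega_0+\di\alpha_s$ with the choice $\alpha_s=\alpha_0+s\,\mathfrak p^*\zeta$, and compute $\tfrac{\di}{\di s}\Fvol(\mathfrak p^*\om_s)=\vol_{\alpha_s}\cdot\mathfrak p^*\zeta=\int_\Sigma\mathfrak p^*(\zeta\wedge\om_s^{\,n})=0$ by the same dimensional vanishing. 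Hence $\Fvol(\mathfrak p^*\om)$ does not depend on the representative $\om$, so $\Fvol:\mathfrak Z_C(\Sigma)\to\R$ is well-defined. For (iii), let $\om_0'\in\Xi^2_{c_0}(M_0)$ be a second choice with reference $\Omega_0'=\mathfrak p_0^*\om_0'$ and associated $\Vol',\Fvol'$. Writing $\om_0'-\om_0=\di\zeta$ gives $\Omega_0'=\Omega_0+\di(\mathfrak p_0^*\zeta)$, so Lemma \ref{l:volcomp} with $\alpha'=\mathfrak p_0^*\zeta$ yields $\Vol(\alpha)=\Vol(\mathfrak p_0^*\zeta)+\Vol'(\alpha-\mathfrak p_0^*\zeta)$; by (ii) the first term vanishes. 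Since any $\Omega\in\Xi^2_C(\Sigma)$ may be written $\Omega=\Omega_0+\di\alpha=\Omega_0'+\di(\alpha-\mathfrak p_0^*\zeta)$, this reads $\Fvol(\Omega)=\Fvol'(\Omega)$ in Case~1 (and both sides vanish in Case~2), giving the independence of $\Fvol$ on $\Xi^2_C(\Sigma)$, and therefore on $\mathfrak Z_C(\Sigma)$, from the auxiliary $\om_0$.

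The individual computations are short, so I do not expect a genuine obstacle; the only point requiring care is the bookkeeping in (iii), namely matching the primitive $\alpha$ with respect to the two reference forms so that the change-of-reference identity of Lemma \ref{l:volcomp} combines cleanly with (ii). The dimensional vanishing of $\mathfrak p^*\big(\zeta\wedge\om^{\,n}\big)$ is the common engine driving all three parts.
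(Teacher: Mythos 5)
Your proposal is correct and follows essentially the same route as the paper: the common engine in both is the vanishing of $\int_\Sigma\mathfrak p^*\zeta\wedge\mathfrak p^*(\om+r\,\di\zeta)^n$ because it is the integral of a $(2n+1)$-form pulled back from the $2n$-dimensional base, and item (iii) is obtained in both cases by combining Lemma \ref{l:volcomp} with item (ii). The only difference is organisational (the paper packages (i) and (ii) as consequences of a single preliminary identity, while you prove (ii) first and repeat the computation for (i)), which does not change the substance.
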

\begin{proof}
All three items will stem out a preliminary result. Let $(\mathfrak p,c)\in\mathfrak Z_C(\Sigma)$ with $\om\in\Xi^2_c(M)$ and pick $\alpha_\om\in\Omega^1(\Sigma)$ such that $\mathfrak p_0^*\om_0+\di\alpha_\om=\mathfrak p^*\om$. 
We claim that
\begin{equation}\label{e:volaomega}
\Vol(\alpha_\om+\mathfrak p^*\zeta)=\Vol(\alpha_\om),\qquad\forall\,\zeta\in\Omega^1(M).
\end{equation}
Indeed, if we set $\alpha_r:=\alpha_\om+r\mathfrak p^*\zeta$ so that $\mathfrak p_0^*\om_0+\di\alpha_r=\mathfrak p^*(\om+r\di\zeta)$ and $\dot\alpha_r=\mathfrak p^*\zeta$, then
\[
\Vol(\alpha_\om+\mathfrak p^*\zeta)-\Vol(\alpha_\om)=\int_0^1\vol_{\alpha_r}\cdot \dot\alpha_r\di r=\int_0^1\Big(\int_\Sigma \mathfrak p^*\zeta\wedge\mathfrak p^*(\om+r\di\zeta)^n\Big)\di r=0.
\]
If $\om'$ is another form in $\Xi^2_c(M)$, then there is $\zeta\in\Omega^1(M)$ with $\om'-\om=\di\zeta$ and \eqref{e:volaomega} implies
\[
\Fvol(\mathfrak p^*\om')=\Fvol(\mathfrak p^*\om),
\]
which establishes item (i). Choosing $(\mathfrak p,c)=(\mathfrak p_0,c_0)$ and $\alpha=0$ in \eqref{e:volaomega}, we get item (ii). For item (iii), we take another form $\om_0'\in\Xi^2_{c_0}(M_0)$ and write $\om_0'-\om_0=\di\zeta_0$ for some $\zeta_0\in\Omega^1(M_0)$.  Applying Lemma \ref{l:volcomp} with $\Omega_0=\mathfrak p_0^*\om_0$ and $\Omega_0'=\mathfrak p^*_0\om_0'$ together with item (ii), we deduce
\begin{equation*}
\Vol(\alpha)=\Vol'(\alpha-\mathfrak p_0^*\zeta_0),\qquad \forall\,\alpha\in\Omega^1(\Sigma),
\end{equation*}
where $\Vol'$ is the volume functional associated with $\Omega_0'$. 
This implies $\Fvol(\Omega)=\Fvol'(\Omega)$ for all $\Omega\in\Xi^2_C(\Sigma)$.
\end{proof}
We have a canonical projection
\begin{equation*}\label{def:mathfrakP}
\mathfrak P:\mathfrak Z(\Sigma)\to\mathfrak P(\Sigma),\qquad \mathfrak P(\mathfrak p,c)=\mathfrak p.
\end{equation*}
Let us fix a class $C\in H^2_\dR(\Sigma)$ and a connected component \label{def:mathfrakP0}$\mathfrak P^0(\Sigma)$ of $\mathfrak P(\Sigma)$. We define
\begin{equation*}\label{e:mathfrakzoc}
\mathfrak Z_C^0(\Sigma):= \mathfrak P^{-1}(\mathfrak P^0(\Sigma))\cap\mathfrak Z_C(\Sigma).
\end{equation*}
We consider the restriction of $\mathfrak P$ on this set
\begin{equation*}\label{e:mathfrakP0C}
\mathfrak P^0_C:\mathfrak Z_C^0(\Sigma)\to\mathfrak P^0(\Sigma).
\end{equation*}
By \eqref{e:gysin}, for every $(\mathfrak p,c)\in\mathfrak Z_C^0(\Sigma)$, we have a surjective map
\begin{equation*}
\R\to (\mathfrak P^0_C)^{-1}(\mathfrak p),\qquad A\mapsto (\mathfrak p,Ae+c),
\end{equation*}
where $e$ is minus the real Euler class of $\mathfrak p$. Finally, we define the evaluation map
\begin{equation*}\label{eq:evaluation_map}
\ev:\mathfrak Z^0_C(\Sigma)\to\R,\qquad \ev(\mathfrak p,c):=\langle c^n,[M]\rangle.
\end{equation*}
\begin{dfn}\label{d:nondegenerate}
We say that $\mathfrak Z^0_C(\Sigma)$ is \textbf{non-degenerate}, if the map $\ev:\mathfrak Z^0_C(\Sigma)\to\R$ is non-zero, namely if there exists $(\mathfrak p,c)\in \mathfrak Z^0_C(\Sigma)$ with $\ev(\mathfrak p,c)\neq0$.
\end{dfn}
\begin{lem}\label{l:nondegenerate}
Let $\mathfrak p_0:\Sigma\to M_0$ be an element in $\mathfrak P^0(\Sigma)$, and let $e_0\in H^2_\dR(M_0)$ be minus the real Euler class of $\mathfrak p_0$. For every $\mathfrak p:\Sigma\to M$ in $\mathfrak P^0(\Sigma)$, there exists a diffeomorphism $\Psi:\Sigma\to\Sigma$ isotopic to $\id_\Sigma$ such that $\Psi^*\mathfrak p=\mathfrak p_0$. If $\psi:M_0\to M$ is the quotient map of $\Psi$ (see \eqref{eq:commutative_diagram2}), then, for every $(\mathfrak p,c)\in (\mathfrak P_C^0)^{-1}(\mathfrak p)$ and $A\in\R$, there holds 
\begin{align*}
(i)&\hspace{-60pt}&\psi^*(Ae+c)&=Ae_0+\psi^*c,\\
(ii)&\hspace{-60pt}&\ev(\mathfrak p,Ae+c)&=\ev(\mathfrak p_0,Ae_0+\psi^*c),\\
(iii)&\hspace{-60pt}& \Fvol(\mathfrak p,Ae+c)&=\Fvol(\mathfrak p_0,Ae_0+\psi^*c).
\end{align*}
\end{lem}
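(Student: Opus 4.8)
The plan is to reduce all three identities to the single geometric input, supplied by Lemma~\ref{l:triv}(ii), that $\mathfrak p_0$ and $\mathfrak p$ can be intertwined by a diffeomorphism isotopic to the identity, and then to exploit naturality of the Euler class, of cup products, and of the volume. First I would produce $\Psi$: since $\mathfrak p_0$ and $\mathfrak p$ lie in the same connected component $\mathfrak P^0(\Sigma)$, I choose a path $\{\mathfrak p_r\}$ in $\mathfrak P(\Sigma)$ from $\mathfrak p_0$ to $\mathfrak p_1=\mathfrak p$ and apply Lemma~\ref{l:triv}(ii) to obtain an isotopy $\{\Psi_r\}$ with $\Psi_0=\id_\Sigma$ and $\Psi_r^*\mathfrak p_r=\mathfrak p_0$; setting $\Psi:=\Psi_1$ gives $\Psi^*\mathfrak p=\mathfrak p_0$ with $\Psi$ isotopic to $\id_\Sigma$. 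As in Definition~\ref{def:bundle}, $\Psi$ is then a bundle isomorphism from $\mathfrak p_0$ to $\mathfrak p$ with quotient diffeomorphism $\psi:M_0\to M$ satisfying $\mathfrak p\circ\Psi=\psi\circ\mathfrak p_0$. Because $\Psi$ is isotopic to the identity it preserves the orientation of $\Sigma$ and sends oriented fibres to oriented fibres, so $\psi$ preserves the induced orientations on the bases; in particular $\psi$ has degree one and $\psi_*[M_0]=[M]$.

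For identity~(i) I would apply Remark~\ref{r:psiee} to the bundle map $\Pi=\Psi$ with quotient $\pi=\psi$ and $\mathfrak p^\veee=\Psi^*\mathfrak p=\mathfrak p_0$. This yields $(e_0)_\Z=\psi^*e_\Z$ in integral cohomology, and passing to real coefficients (which commutes with $\psi^*$) gives $\psi^*e=e_0$. Linearity of $\psi^*$ then produces $\psi^*(Ae+c)=A\psi^*e+\psi^*c=Ae_0+\psi^*c$, which is~(i).

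Identity~(ii) follows by combining~(i) with the facts that $\psi^*$ is a ring homomorphism and $\psi$ has degree one. Writing $\ev(\mathfrak p,Ae+c)=\langle(Ae+c)^n,[M]\rangle$ and using $(Ae_0+\psi^*c)^n=\psi^*\big((Ae+c)^n\big)$ together with the projection formula, I get $\langle(Ae_0+\psi^*c)^n,[M_0]\rangle=\langle(Ae+c)^n,\psi_*[M_0]\rangle=\langle(Ae+c)^n,[M]\rangle$, i.e.\ $\ev(\mathfrak p_0,Ae_0+\psi^*c)=\ev(\mathfrak p,Ae+c)$.

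Finally, for identity~(iii) I would unwind the definition of the volume of a weakly Zoll pair and invoke the isotopy-invariance of $\Fvol$ from Proposition~\ref{prp:vol}. Picking any $\om\in\Xi^2_{Ae+c}(M)$, identity~(i) gives $\psi^*\om\in\Xi^2_{Ae_0+\psi^*c}(M_0)$, and the relation $\mathfrak p\circ\Psi=\psi\circ\mathfrak p_0$ yields $\mathfrak p_0^*\psi^*\om=\Psi^*(\mathfrak p^*\om)$. Hence
\[
\Fvol(\mathfrak p_0,Ae_0+\psi^*c)=\Fvol(\mathfrak p_0^*\psi^*\om)=\Fvol(\Psi^*(\mathfrak p^*\om))=\Fvol(\mathfrak p^*\om)=\Fvol(\mathfrak p,Ae+c).
\]
Along the way I should check that both pairs genuinely lie in $\mathfrak Z^0_C(\Sigma)$, i.e.\ that $\mathfrak p^*(Ae+c)=C=\mathfrak p_0^*(Ae_0+\psi^*c)$; this is immediate from $\mathfrak p^*e=0$ (the Gysin relation~\eqref{e:gysin}), $\mathfrak p^*c=C$, and $\Psi^*C=C$. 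The only genuinely delicate point is the orientation bookkeeping guaranteeing $\deg\psi=1$; beyond that the statement is a formal consequence of naturality, so I do not anticipate a substantial obstacle.
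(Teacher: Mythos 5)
Your proposal is correct and follows essentially the same route as the paper: existence of $\Psi$ via Lemma \ref{l:triv}.(ii), item (i) from the naturality of the Euler class (Remark \ref{r:psiee}), item (ii) from $\psi_*[M_0]=[M]$ and the projection formula, and item (iii) by pulling back a representative $\om$ and invoking the isotopy-invariance of $\Fvol$ from Proposition \ref{prp:vol}. The extra verification that both pairs lie in $\mathfrak Z^0_C(\Sigma)$ is a harmless addition; nothing is missing.
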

\begin{proof}
Since $\mathfrak P^0(\Sigma)$ is connected, the existence of a diffeomorphism $\Psi$ as in the statement follows from Lemma \ref{l:triv}.(ii). By Re, we see that $\psi^*e=e_0$, which immediately implies item (i). For item (ii), we observe that $[M]=\psi_*[M_0]$ since $\psi$ is an orientation-preserving diffeomorphism, and compute
\begin{equation*}
\ev(\mathfrak p,Ae+c)=\langle (Ae+c)^n,\psi_*[M_0]\rangle=\langle \psi^*(Ae+c)^n,[M_0]\rangle=\ev(\mathfrak p_0,Ae_0+\psi^*c). 
\end{equation*}
For the last relation, we take any $\om_A\in\Xi^2(M)$ such that $[\om_A]=Ae+c$. Then $\Omega_A:=\mathfrak p^*\om_A$ is associated with $(\mathfrak p,Ae+c)$. The form $\Psi^*\Omega_A\in \Xi^2_C(\Sigma)$ is associated with $(\mathfrak p_0,Ae_0+\psi^*c)$. Indeed, $\Psi^*\Omega_A=\Psi^*\mathfrak p^*\om_A=\mathfrak p_0^*\psi^*\om_A$ and $[\psi^*\om_A]=\psi^*(Ae+c)=Ae_0+\psi^*c$. Thus, from Proposition \ref{prp:vol}, we derive
\[
\Fvol(\mathfrak p,Ae+c)=\Fvol(\Omega_A)=\Fvol(\Psi^*\Omega_A)=\Fvol(\mathfrak p_0, Ae_0+\psi^*c).\qedhere
\]
\end{proof}
\begin{cor}\label{c:weaklyfour}
If $\mathfrak Z_C^0(\Sigma)$ is non-empty, the four statements below hold.
\begin{enumerate}[(i)]
\item The real Euler class of some bundle in $\mathfrak P^0(\Sigma)$ vanishes, if and only if the real Euler class of every element of $\mathfrak P^0(\Sigma)$ vanishes.
\item The set $\mathfrak Z^0_C(\Sigma)$ is non-degenerate, if and only if, for every $\mathfrak p\in\mathfrak P^0(\Sigma)$ there exists a pair $(\mathfrak p,c)\in\mathfrak Z^0_C(\Sigma)$ such that $\ev(\mathfrak p,c)\neq0$.
\item If the real Euler class of the bundles in $\mathfrak P^0(\Sigma)$ vanishes, then $\mathfrak P^0_C$ is a diffeomorphism and $\ev:\mathfrak Z^0_C(\Sigma)\to\R$ is a constant map.
\item If the real Euler class of the bundles in $\mathfrak P^0(\Sigma)$ does not vanish, then $\mathfrak P^0_C$ has the structure of an affine $\R$-bundle. The $\R$-action on some $(\mathfrak p,c)\in\mathfrak Z^0_C(\Sigma)$ is given by
\begin{equation*}
A\cdot(\mathfrak p,c)=(\mathfrak p,Ae+c),\qquad\forall\,A\in\R.
\end{equation*}
\end{enumerate}
\end{cor}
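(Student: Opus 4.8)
The plan is to derive all four assertions from the two ``transport'' results proved above: Lemma~\ref{l:nondegenerate}, which compares pairs lying over different bundles of $\mathfrak{P}^0(\Sigma)$ by means of a diffeomorphism isotopic to $\id_\Sigma$, and Lemma~\ref{l:triv}, which trivialises $\mathfrak{P}(\Sigma)$ locally and along paths. The structural fact used throughout is the Gysin relation \eqref{e:gysin}: for any $\mathfrak{p}:\Sigma\to M$ in $\mathfrak{P}^0(\Sigma)$, the set of classes $c\in H^2_\dR(M)$ with $\mathfrak{p}^*c=C$ is, whenever non-empty, a coset of $\ker\mathfrak{p}^*=\R\cdot e$. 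Hence the fibre $(\mathfrak{P}^0_C)^{-1}(\mathfrak{p})$ is a single point if $e=0$ and an affine line if $e\neq0$, and the candidate action $A\cdot(\mathfrak{p},c)=(\mathfrak{p},Ae+c)$ is exactly translation along this coset.

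For part (i), I would fix $\mathfrak{p}_0,\mathfrak{p}\in\mathfrak{P}^0(\Sigma)$ and apply Lemma~\ref{l:nondegenerate} to obtain a diffeomorphism isotopic to $\id_\Sigma$ whose quotient map $\psi:M_0\to M$ satisfies $\psi^*e=e_0$. Since $\psi$ is a diffeomorphism, $\psi^*$ is an isomorphism in degree two, so $e=0$ if and only if $e_0=0$; connectedness of $\mathfrak{P}^0(\Sigma)$ then gives the equivalence. For part (ii), the implication from ``for every $\mathfrak{p}$'' to non-degeneracy is immediate from Definition~\ref{d:nondegenerate}. For the converse, I take a pair $(\mathfrak{p}_1,c_1)\in\mathfrak{Z}^0_C(\Sigma)$ with $\ev(\mathfrak{p}_1,c_1)\neq0$ and an arbitrary $\mathfrak{p}$, and apply Lemma~\ref{l:nondegenerate} with $\mathfrak{p}$ as reference and $\mathfrak{p}_1$ as target, producing $\psi:M\to M_1$. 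Using the square relating the diffeomorphism $\Psi$ and $\psi$ together with the fact that $\Psi$ acts trivially on cohomology, one checks $\mathfrak{p}^*(\psi^*c_1)=\Psi^*\mathfrak{p}_1^*c_1=\Psi^*C=C$, so $(\mathfrak{p},\psi^*c_1)\in\mathfrak{Z}^0_C(\Sigma)$; and Lemma~\ref{l:nondegenerate}(ii) with $A=0$ gives $\ev(\mathfrak{p},\psi^*c_1)=\ev(\mathfrak{p}_1,c_1)\neq0$.

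For parts (iii) and (iv) I would first record that every fibre of $\mathfrak{P}^0_C$ is non-empty: transporting a fixed $(\mathfrak{p}_0,c_0)$ by the diffeomorphism of Lemma~\ref{l:nondegenerate} and pushing $c_0$ forward along the quotient diffeomorphism yields a pair over any prescribed $\mathfrak{p}$, by the same cohomological computation as in (ii). If $e=0$, the model $\R\cdot e$ is trivial, so each fibre is a single point and $\mathfrak{P}^0_C$ is a bijection whose inverse depends continuously (indeed smoothly) on $\mathfrak{p}$ by Lemma~\ref{l:triv}(i); this gives the asserted diffeomorphism. Constancy of $\ev$ then follows from Lemma~\ref{l:nondegenerate}(ii): writing $c_0,c$ for the unique classes over $\mathfrak{p}_0,\mathfrak{p}$, one has $\psi^*c=c_0$ since both $\psi^*c$ and $c_0$ pull back to $C$ under $\mathfrak{p}_0^*$ and the fibre over $\mathfrak{p}_0$ is a single point, whence $\ev(\mathfrak{p},c)=\ev(\mathfrak{p}_0,\psi^*c)=\ev(\mathfrak{p}_0,c_0)$.

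Finally, if $e\neq0$, each fibre is an affine line on which $A\cdot(\mathfrak{p},c)=(\mathfrak{p},Ae+c)$ is a free and transitive $\R$-action, since $A\mapsto Ae$ parametrises $\R\cdot e$ by \eqref{e:gysin}. What remains, and what I expect to be the main obstacle, is producing local trivialisations realising this affine $\R$-bundle structure. Here I would invoke Lemma~\ref{l:triv}(i) to obtain, near any $\mathfrak{p}_0$, a continuous family $\mathfrak{p}\mapsto\Psi_{\mathfrak{p}}$ with $\Psi_{\mathfrak{p}}^*\mathfrak{p}=\mathfrak{p}_0$ and quotient maps $\psi_{\mathfrak{p}}$; pushing a fixed $c_0$ forward gives a continuous local section $\mathfrak{p}\mapsto(\mathfrak{p},(\psi_{\mathfrak{p}}^{-1})^*c_0)$ of $\mathfrak{P}^0_C$, and composing with the action yields the chart $(\mathfrak{p},A)\mapsto(\mathfrak{p},Ae+(\psi_{\mathfrak{p}}^{-1})^*c_0)$. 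The delicate point is the continuity and affine compatibility of this chart, namely that $\mathfrak{p}\mapsto e$ and $\mathfrak{p}\mapsto(\psi_{\mathfrak{p}}^{-1})^*c_0$ vary continuously, so that the resulting map is a genuine bundle trivialisation intertwining the $\R$-actions.
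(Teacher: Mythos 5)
Your proposal is correct and follows essentially the same route as the paper: items (i) and (ii) and the constancy of $\ev$ are read off from Lemma~\ref{l:nondegenerate}, the fibre structure of $\mathfrak P^0_C$ comes from the Gysin relation \eqref{e:gysin}, and the local trivialisation in (iv) is built exactly as in the paper by pushing $c_0$ forward along the quotient maps of the diffeomorphisms $\Psi_{\mathfrak p}^{-1}$ supplied by Lemma~\ref{l:triv}(i). The ``delicate point'' you flag at the end is handled in the paper simply by the continuity of $\mathfrak p\mapsto\Psi_{\mathfrak p}$ asserted in Lemma~\ref{l:triv}(i), together with the fact that $e$ is locally constant along $\mathfrak P^0(\Sigma)$ by part (i), so no further argument is needed.
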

\begin{proof}
Items (i) and (ii) are direct consequences of items (i) and (ii) in Lemma \ref{l:nondegenerate}. 

To prove item (iii), let us assume that bundles in $\mathfrak P^0(\Sigma)$ have vanishing real Euler class. By \eqref{e:gysin}, the $\mathfrak P^0_C$-fibres are sets containing only one element. This shows that $\mathfrak P^0_C$ is a diffeomorphism. This together with item (ii) in Lemma \ref{l:nondegenerate} yields that $\ev$ is constant. 

We suppose that the real Euler class does not vanish and prove item (iv). By \eqref{e:gysin}, the $\mathfrak P^0_C$-fibres are in bijection with $\R$ through the maps $A\mapsto (\mathfrak p,Ae+c)$. We construct an explicit local trivialisation of the bundle structure.  Lemma \ref{l:triv}.(i) yields a neighbourhood $\mathcal W$ of  $\mathfrak p_0\in\mathfrak P^0(\Sigma)$ and a map $\mathfrak p\mapsto\Psi_{\mathfrak p}$ from $\mathcal W$ to a neighbourhood of $\id_\Sigma$ inside $\mathrm{Diff}(\Sigma)$ such that $\Psi_{\mathfrak p_0}=\id_\Sigma$ and $\Psi_{\mathfrak p}^*\mathfrak p=\mathfrak p_0$. We define $\Phi_{\mathfrak p}:=\Psi_{\mathfrak p}^{-1}$ and let $\phi_{\mathfrak p}:M\to M_0$ be the quotient map. If $(\mathfrak p_0,c_0)\in (\mathfrak P_C^0)^{-1}(\mathfrak p_0)$, then the map
\[
\mathcal W\times \R\to (\mathfrak P_C^0)^{-1}(\mathcal W),\qquad (\mathfrak p,A)=(\mathfrak p,Ae+\phi_{\mathfrak p}^*c_0)
\]
provides a local trivialisation around $\mathfrak p_0$.
\end{proof}
\begin{dfn}\label{d:zollodd}
A two-form $\Omega$ on $\Sigma$ is called {\bf Zoll}, if it is odd-symplectic and there exists $\mathfrak p_\Omega:\Sigma\to M_\Omega$ in $\mathfrak P(\Sigma)$ such that the oriented $\mathfrak p_\Omega$-fibres are positively tangent to $\ker\Omega$. In this case, we say that $\mathfrak p_\Omega$, which is determined up to equivalence, is the bundle \textbf{associated} with $\Omega$. We write $\ZZ(\Sigma)$ for the set of Zoll (odd-symplectic) forms on $\Sigma$ and $\ZZ_C(\Sigma)$ for the subset of those forms with class $C\in H^2_\dR(\Sigma)$. 
\end{dfn}
If $\Omega$ is Zoll, then the form $\Omega$ descends to the base manifold $M_\Omega$, since $\di\Omega=0$. Namely, there exists $\om_\Omega\in\Omega^2(M_\Omega)$ such that 
\[
\Omega=\mathfrak p_\Omega^*\om_\Omega.
\] 
The form $\om_\Omega$ is closed as well, since $\mathfrak{p}^*_\Omega$ is injective. Furthermore, $\om_\Omega$ is symplectic since $\Omega$ is odd-symplectic and the orientations induced by $\om_\Omega$ and by $\mathfrak p_\Omega$ on $M_\Omega$ coincide. We have a natural inclusion
\[
\ZZ(\Sigma)\to\mathfrak Z(\Sigma),\qquad \Omega\mapsto (\mathfrak p_\Omega,[\om_\Omega]).
\]
As $\om_\Omega$ is positive symplectic, we deduce that
\begin{equation*}
\ev(\mathfrak p_{\Omega},[\om_\Omega])>0.
\end{equation*}
\begin{rmk}\label{r:zollnondeg}
The inequality above implies that for $\Omega\in\mathcal Z(\Sigma)$ the component $\mathfrak Z^0_C(\Sigma)$ which $(\pm\mathfrak p_\Omega,[\om_\Omega])$ is belonging to, is non-degenerate.
\end{rmk} 
\begin{rmk}\label{r:ZolldiffZoll}
By \cite{BW58}, Zoll forms with vanishing cohomology class are just the exterior differentials of Zoll contact forms. Indeed, if $\Omega=\mathfrak p_\Omega^*\om_\Omega$ is exact, then the cohomology class of $\om_\Omega$ is non-zero, as $\om_\Omega$ is symplectic, and lies in the kernel of the map $\mathfrak p^*_\Omega$. In view of \eqref{e:gysin}, this means that there exists $T\in\R\setminus\{0\}$ such that $[\tfrac1T\omega_\Omega]=e$. Then, there exists a connection $\eta\in\mathcal K(\mathfrak p_\Omega)$ with $\kappa_{\eta}=\tfrac1T\omega_\Omega$ and $\alpha:= T\eta$ is a Zoll contact form with $\di\alpha=\Omega$. The orientations $\mathfrak o_\Sigma$ and $\mathfrak o_\alpha$ coincide exactly when $T>0$.
\end{rmk}

Before moving further, it is worthwhile to briefly discuss stability properties of Zoll forms. Let $\{\Omega_r\}$ be a path in $\ZZ_C(\Sigma)$ with the corresponding path of associated bundles $\{\mathfrak p_r\}$. If the real Euler class of the bundles vanishes, we aim at finding an isotopy $\{\Psi_r\}$ of $\Sigma$, such that
\[
\Psi_r^*\Omega_r=\Omega_0.
\]
If the real Euler class of the bundles does not vanish, we aim at finding an isotopy $\{\Psi_r\}$ of $\Sigma$, a path of real numbers $\{A_r\}$ with $A_0=0$, and $\eta_0\in\mathcal K(\mathfrak p_0)$ such that
\[
\Psi_r^*\Omega_r=\Omega_0+A_r\di\eta_0.
\]
In this last case, it seems unlikely that all paths admit such an expression. That this happens if $C=0$ is a result of Weinstein \cite{Wei74}. The stability for $e=0$ is reminiscent of \cite{Gin87}. 
\begin{prp}\label{p:wei}
Let $\{\Omega_r\}$ be a path in $\ZZ_C(\Sigma)$ with the path of associated bundles $\{\mathfrak p_r\}$.
\begin{itemize}
\item If $C=0$, there is $\eta_0\in\mathcal K(\mathfrak p_0)$, an isotopy $\{\Psi_r\}$ of $\Sigma$ and real numbers $\{T_r\}$ such that
\[
\Psi_r^*\Omega_r=T_r\di\eta_0=\Omega_0+(T_r-T_0)\di\eta_0.
\]
\item If $e=0$, there is an isotopy $\{\Psi_r\}$ of $\Sigma$ such that $\Psi_r^*\Omega_r=\Omega_0$.
\end{itemize}
\end{prp}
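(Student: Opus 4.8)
The plan is to first straighten out the moving bundle $\mathfrak p_r$ and thereby reduce the whole problem to a Moser-type deformation on a single fixed base. Since $\{\mathfrak p_r\}$ is a path in $\mathfrak P(\Sigma)$, Lemma \ref{l:triv}.(ii) produces an isotopy $\{\Phi_r\}$ of $\Sigma$ with $\Phi_0=\id_\Sigma$ and $\Phi_r^*\mathfrak p_r=\mathfrak p_0$; geometrically $\Phi_r$ carries the $\mathfrak p_0$-fibres onto the $\mathfrak p_r$-fibres and covers a diffeomorphism $\psi_r\colon M_0\to M_r$. Writing each Zoll form as $\Omega_r=\mathfrak p_r^*\om_r$ with $\om_r$ symplectic (Definition \ref{d:zollodd}), I would compute
\[
\Phi_r^*\Omega_r=(\mathfrak p_r\circ\Phi_r)^*\om_r=\mathfrak p_0^*\tilde\om_r,\qquad \tilde\om_r:=\psi_r^*\om_r,
\]
a path of symplectic forms on $M_0$ (pullbacks of symplectic forms by diffeomorphisms) pulled back through the \emph{single} bundle $\mathfrak p_0$. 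Because $\Phi_r$ is isotopic to the identity, it is trivial on cohomology, so $\mathfrak p_0^*[\tilde\om_r]=[\Omega_r]=C$ for all $r$.

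The two cases now separate through the Gysin relation \eqref{e:gysin}, which identifies $\ker\mathfrak p_0^*=\R\cdot e$. If $e=0$, then $\mathfrak p_0^*$ is injective on $H^2_\dR(M_0)$, so $[\tilde\om_r]$ is forced to be the constant class $c_0$ with $\mathfrak p_0^*c_0=C$, and $\{\tilde\om_r\}$ is a path of cohomologous symplectic forms; the parametrised Moser theorem then gives an isotopy $\{\phi_r\}$ of $M_0$ with $\phi_0=\id$ and $\phi_r^*\tilde\om_r=\tilde\om_0$. If instead $C=0$, then $[\tilde\om_r]\in\R\cdot e$; since a symplectic class on a closed manifold is nonzero, we get $e\neq0$ and may write $[\tilde\om_r]=T_r e$ with $r\mapsto T_r$ smooth and, by continuity ($T_r\neq0$), of constant sign. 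Here I would first fix the connection: as $\tfrac1{T_0}\tilde\om_0$ is a symplectic form representing $e$, the existence statement for connection forms in Section \ref{sec:oriented} yields $\eta_0\in\mathcal K(\mathfrak p_0)$ with $\kappa_{\eta_0}=\tfrac1{T_0}\tilde\om_0$, so $\di\eta_0=\mathfrak p_0^*\kappa_{\eta_0}$ and $\Omega_0=T_0\,\di\eta_0$. One cannot Moser-deform $\tilde\om_r$ directly onto $T_r\kappa_{\eta_0}$ because the class drifts with $r$; the fix is to rescale, setting $\hat\om_r:=\tfrac{T_0}{T_r}\tilde\om_r$. These are symplectic (positive multiples of $\tilde\om_r$) and share the fixed class $T_0e=[\tilde\om_0]$, so parametrised Moser provides $\{\phi_r\}$ with $\phi_r^*\hat\om_r=\tilde\om_0$, that is $\phi_r^*\tilde\om_r=T_r\kappa_{\eta_0}$.

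In both cases the last step is to lift $\{\phi_r\}$ to $\Sigma$ and compose. Taking the horizontal lift of the generating vector field of $\{\phi_r\}$ with respect to a connection (the chosen $\eta_0$ when $C=0$) and integrating gives a bundle automorphism $\{\Theta_r\}$ of $\mathfrak p_0$ with $\Theta_0=\id$ and $\mathfrak p_0\circ\Theta_r=\phi_r\circ\mathfrak p_0$, whence $\Theta_r^*\mathfrak p_0^*=\mathfrak p_0^*\phi_r^*$. Setting $\Psi_r:=\Phi_r\circ\Theta_r$, which is again an isotopy starting at the identity, I would conclude when $e=0$ that
\[
\Psi_r^*\Omega_r=\Theta_r^*\mathfrak p_0^*\tilde\om_r=\mathfrak p_0^*\tilde\om_0=\Omega_0,
\]
and when $C=0$ that
\[
\Psi_r^*\Omega_r=\mathfrak p_0^*(T_r\kappa_{\eta_0})=T_r\,\di\eta_0=\Omega_0+(T_r-T_0)\,\di\eta_0,
\]
using $\Omega_0=T_0\,\di\eta_0$.

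I expect the main obstacle to be the $C=0$ case, and specifically the interplay between the reparametrisation $T_r$ and the Moser deformation: one must choose $\eta_0$ so that $\kappa_{\eta_0}$ is \emph{exactly} the symplectic form $\tfrac1{T_0}\tilde\om_0$ (not merely some closed representative of $e$), and one must rescale by $\tfrac{T_0}{T_r}$ to keep the cohomology class constant along the Moser homotopy, checking that $T_r$ never vanishes so that $\hat\om_r$ stays symplectic. By contrast, the case $e=0$ and the bundle-lifting step are comparatively routine, resting respectively on injectivity of $\mathfrak p_0^*$ via \eqref{e:gysin} and on the standard horizontal-lift construction.
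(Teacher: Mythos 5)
Your argument is correct, and it shares only its first step with the paper's proof: both begin by invoking Lemma \ref{l:triv}.(ii) to straighten the moving bundles so that every $\Omega_r$ becomes $\mathfrak p_0^*\tilde\om_r$ for a path of symplectic forms on the fixed base. After that the routes diverge. The paper stays on the total space $\Sigma$ and runs its own stability result, Proposition \ref{p:Moser}, in the two regimes of Remark \ref{r:stabil}: for $C=0$ it writes $\Omega_r=\di(T_r\eta_r)$ with $\eta_r=\eta_0+\mathfrak p^*\zeta_r$ (citing \cite{KN63}) and applies Gray stability to the contact forms $T_r\eta_r$, then uses the ODE \eqref{e:stab2} to identify the correction term as $H_r=T_r-T_0$; for $e=0$ it applies Proposition \ref{p:Moser} with $\sigma_r\equiv\eta_0$ and again uses \eqref{e:stab2} to see $H_r\equiv 0$. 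You instead descend to the base: you apply the classical parametrised Moser theorem to the symplectic forms $\tilde\om_r$ (after the rescaling $\hat\om_r=\tfrac{T_0}{T_r}\tilde\om_r$ in the $C=0$ case, which is exactly the right device to freeze the drifting class $T_re$), and then lift the resulting base isotopy horizontally with respect to a connection. What the paper's route buys is uniformity and economy — it reuses machinery already built for H-forms, and the identification of $H_r$ with $T_r-T_0$ drops out of a formula it needs elsewhere. What your route buys is transparency and a lower entry cost: only the standard symplectic Moser theorem plus the horizontal-lift construction are needed, the origin of the numbers $T_r$ as the coefficients of $[\tilde\om_r]$ in $\R\cdot e$ is explicit, and the normalisation $\kappa_{\eta_0}=\tfrac1{T_0}\tilde\om_0$ (so that $\Omega_0=T_0\,\di\eta_0$) is exactly the point you correctly flag as the delicate one. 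The two constructions are in fact closely related — in the $e=0$ case the vector field produced by Proposition \ref{p:Moser} with $\sigma_r\equiv\eta_0$ is precisely the horizontal lift of the base Moser field — but your presentation is a genuinely independent and complete proof.
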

\begin{proof}
By Lemma \ref{l:triv}, we can suppose in both cases that $\{\Omega_r=\mathfrak p_0^*\om_r\}$, where $\{\om_r\}$ is a path of symplectic forms on $M$. If $C=0$, \cite{KN63} yield $\{\eta_r\}\subset\mathcal K(\mathfrak p_0)$ and non-zero real numbers $\{T_r\}$ such that $\Omega_r=\di(T_r\eta_r)$ and $\eta_r=\eta_0+\mathfrak p^*\zeta_r$, where $\{\zeta_r\}$ are one-forms on $M$. Setting $\alpha_r:=T_r\eta_r$ and applying the Gray stability theorem from Remark \ref{r:stabil}.(a), we see that there exists an isotopy $\{\Psi_r\}$ and a path $\{H_r:\Sigma\to\R\}$ with $H_0=0$ such that
\[
\Psi_r^*\alpha_r=(1+\tfrac{1}{T_0}H_r)\alpha_0.
\]
Since $\dot\alpha_r=\dot T_r\eta_r+T_r\mathfrak p^*\dot\zeta_r$, equation \eqref{e:stab2} implies $\dot H_r=\dot T_r$ and hence $H_r=T_r-T_0$, which readily implies the desired formula.

If $e=0$, then $\om_r=\om_0+\di\zeta_r$ due to \eqref{e:gysin}. By Remark \ref{r:stabil}.(b) and Proposition \ref{p:Moser} with $\alpha_0=0$, it follows that $\Psi_r^*\Omega_r=\Omega_0+\di(H_r\eta_0)$, for some paths $\{\Psi_r\}$ and $\{H_r\}$ as above. Again by equation \eqref{e:stab2}, we conclude that $H_r=0$ for all $r$.
\end{proof}
We are now ready to classify Zoll odd-symplectic forms on a three-dimensional manifold, as promised in the introduction.

\subsection{Classification of Zoll odd-symplectic forms in dimension three}
\subsubsection*{Proof of Proposition \ref{p:cla}}
Let $\Sigma$ have dimension three, let $b_\Sigma$ be the rank of the free part of $H_1(\Sigma;\Z)$, and let  $|H_1^\mathrm{tor}(\Sigma;\Z)|$ denote the cardinality of its torsion subgroup. Let $\Omega\in\ZZ_C(\Sigma)$ be a Zoll form with cohomology class $C$ and let $\mathfrak p=\mathfrak p_\Omega$ its associated bundle. If $C=0$, we know from Remark \ref{r:ZolldiffZoll} that $\Omega$ is the differential of a Zoll contact form. In particular, $\Sigma$ is the total space of a non-trivial oriented $S^1$-bundle, $b_\Sigma$ is even and we already treated this case in \cite[Proposition 1.2]{BK19a}. 

Suppose that $C\neq 0$. In this case, $e=0$ by equivalence (ii) in \eqref{e:cne} . Since $M$ is a surface this implies that $e_\Z=0$, and hence $\mathfrak p$ is trivial. Therefore, $\Sigma\cong M\times S^1$ and we see that
\begin{equation}\label{e:genus}
b_\Sigma=1+2\,\mathrm{genus}(M),\qquad |H_1^\mathrm{tor}(\Sigma;\Z)|=1.
\end{equation}
In particular, $b_\Sigma$ is odd. Let $\Omega'\in\ZZ_{C'}(\Sigma)$ be another Zoll form with class $C'$. Since $b_\Sigma$ is odd, then $C'\neq0$ and $\mathfrak p':\Sigma\to M'$ is the trivial bundle. We write $\Omega=\mathfrak{p}^*\om$ and $\Omega'=\mathfrak{p}'^*\om'$, where $\om$ and $\om'$ are symplectic forms on $M$ and $M'$ respectively. From \eqref{e:genus}, $M$ and $M'$ have the same genus, so that there is a diffeomorphism $\psi:M\to M'$. Since $\omega$ and $\omega'$ are symplectic on $M$ and $M'$ and $H^2_{\mathrm{dR}}(M;\R)\cong \R\cong H^2_{\mathrm{dR}}(M';\R)$, by Moser's trick, we can assume that 
\[
\psi^*\omega'=T \omega,\qquad \text{for some }T>0.
\] 
As both $\mathfrak p$ and $\mathfrak p'$ are trivial bundles, it is immediate to find a diffeomorphism
\begin{equation}\label{e:fsigmasigma}
\Psi:\Sigma\to\Sigma
\end{equation}
lifting $\psi$ and such that $\Psi^*\mathfrak p'=\mathfrak p$. Therefore, $f$ preserves the orientation of $\Sigma$ and $\Psi^*\Omega'=T\Omega$.
\medskip

We now want to describe the connected components of the space of Zoll forms on $\Sigma$ with fixed cohomology class. In order to do so, we first determine the connected components of the space of oriented $S^1$-bundles $\mathfrak P(\Sigma)$ with the help of classical results in low-dimensional topology. As a preliminary observation, we point out that if $\mathfrak p\in\mathfrak P(\Sigma)$, then the class $[\mathfrak p^{-1}(\pt)]_\Z\in H_1(\Sigma;\Z)$ is primitive, since its intersection number with a global section of $\mathfrak p$ is equal to $1$. We distinguish three cases: $M=S^2$, $M=\T^2$, and $\mathrm{genus}(M)\geq2$.
\medskip

\noindent{\bf Case 1:} $\Sigma\cong S^2\times S^1$. We regard $S^2$ as the unit sphere in $\R^3$. We recall that the mapping class group of orientation-preserving diffeomorphisms of $S^2\times S^1$ is given by
\[
\mathrm{MCG}(S^2\times S^1)\cong\tfrac{\Z}{2\Z}\,[\Psi_1]\oplus \tfrac{\Z}{2\Z}\,[\Psi_2].
\] 
Here, the generators $\Psi_1,\Psi_2:S^2\times S^1\to S^2\times S^1$ are given by
\[
\Psi_1(q,\phi):=(-q,-\phi),\qquad \Psi_2(q,\phi):=(\rho_\phi(q),\phi),\qquad \forall\, (q,\phi)\in S^2\times S^1,
\] 
where $\rho_\phi:S^2\to S^2$ is the rotation of angle $2\pi \phi$ around the north pole. We consider the standard projection $\mathfrak p_+:S^1\times S^2\to S^2$ along $S^1$ and we define $\mathfrak p_-:=\Psi_1^*\mathfrak p_+$. 

We claim that $\mathfrak P(S^2\times S^1)$ has four connected components containing respectively $\mathfrak p_+$, $\Psi_2^*\mathfrak p_+$, $\mathfrak p_-$ and $\Psi_2^*\mathfrak p_-$. To this purpose, we observe that $[\mathfrak p_+^{-1}(\pt)]_\Z=[(\Psi_2^*\mathfrak p_+)^{-1}(\pt)]_\Z$ and $[\mathfrak p_-^{-1}(\pt)]_\Z=[(\Psi_2^*\mathfrak p_-)^{-1}(\pt)]_\Z$ are distinct and yield the two primitive homology classes in $H_1(S^2\times S^1;\Z)\cong\Z$. Therefore, we just need to show that $\mathfrak p_+$, $\Psi_2^*\mathfrak p_+$ are not in the same connected component. If, by contradiction, there were a path in $\mathfrak P(S^2\times S^1)$ from $\mathfrak p_+$ to $\Psi_2^*\mathfrak p_+$, then by Lemma \ref{l:triv}.(ii) there would exist a diffeomorphism $\Psi_2':S^2\times S^1\to S^2\times S^1$ isotopic to $\Psi_2$ such that $\mathfrak p_+=(\Psi_2')^*\mathfrak p_+$. This forces $\Psi_2'$ to be of the form
\[
\Psi_2'(q,\phi)=(\psi(q),\phi'(x,\phi))
\] 
where $\phi\mapsto \phi'(q,\phi)$ is an orientation-preserving diffeomorphism of $S^1$, for all $q\in S^2$, and $\psi$ is an orientation-preserving diffeomorphism of $S^2$. However, every orientation-preserving diffeomorphisms of $S^2$ is isotopic to the identity and the set $\mathrm{Diff}_+(S^1)$ of orientation-preserving diffeomorphisms of $S^1$ is homotopy equivalent to $S^1$, so that $\pi_2(\mathrm{Diff}_+(S^1))=0$. Thus, the map $\Psi_2'$ would be isotopic to the identity, which is impossible, as it is isotopic to $\Psi_2$. 

Let $\mathfrak p$ be an arbitrary element in $\mathfrak P(S^2\times S^1)$. We have either $[\mathfrak p^{-1}(\mathrm{pt})]_\Z=[\mathfrak p^{-1}_+(\mathrm{pt})]_\Z$ or $[\mathfrak p^{-1}(\mathrm{pt})]_\Z=[\mathfrak p^{-1}_-(\mathrm{pt})]_\Z$. We have seen in \eqref{e:fsigmasigma} that there exists an isomorphism of oriented $S^1$-bundles $\Psi:S^2\times S^1\to S^2\times S^1$ preserving the orientation and such that $\Psi^*\mathfrak p_+=\mathfrak p$. Therefore, if $[\mathfrak p^{-1}(\mathrm{pt})]_\Z=[\mathfrak p^{-1}_+(\mathrm{pt})]_\Z$, then $\Psi$ is either isotopic to $\id_{S^2\times S^1}$ or to $\Psi_2$ and $\mathfrak p$ is either homotopic to $\mathfrak p_+$ or to $\Psi_2^*\mathfrak p_+$; if $[\mathfrak p^{-1}(\mathrm{pt})]_\Z=[\mathfrak p^{-1}_-(\mathrm{pt})]_\Z$, then $\Psi$ is either isotopic to $\Psi_1$ or to $\Psi_1\circ \Psi_2$ and $\mathfrak p$ is either homotopic to $\mathfrak p_-$ or to $\Psi_2^*\mathfrak p_-$. 
\medskip

\noindent{\bf Case 2:} $\Sigma\cong\T^2\times S^1=\T^3$. To every orientation-preserving diffeomorphism $\Psi$ on $\T^3$, we can associate the induced map in homology 
\[
H_1(\Psi):H_1(\T^3;\Z)\to H_1(\T^3;\Z).
\] 
As $H_1(\T^3;\Z)\cong\Z^3$, we can identify $H_1(\Psi)$ with an element of $\mathrm{SL}(3;\Z)$. Conversely, every $A\in\mathrm{SL}(3;\Z)$ gives an orientation-preserving diffeomorphism $\Psi_A:\T^3\to\T^3$ with $H_1(\Psi_A)=A$. Moreover, the mapping class group is computed explicitly through the isomorphism
\begin{equation}\label{eq:mcg_T3}
\mathrm{MCG}(\T^3)\to\mathrm{SL}(3;\Z),\qquad \Psi\mapsto H_1(\Psi).
\end{equation}
Let $\mathfrak p_0:\ta^3\cong \T^2\times S^1\to\T^2$ be the standard projection along $S^1$. For every primitive class ${\mathfrak h}\in H_1(\T^3;\Z)$, there exists $A_{\mathfrak h}\in\mathrm{SL}(3;\Z)$ with $A_{\mathfrak h}\cdot\mathfrak h=[\mathfrak p^{-1}(\pt)]_\Z$, so that the fibres of the oriented $S^1$-bundle 
\[
\mathfrak p_{\mathfrak h}:=\Psi_{A_{\mathfrak h}}^*\mathfrak p_0
\] 
lie in the homology class ${\mathfrak h}$. The map $A_{\mathfrak h}$ is not unique. However, if $A_{\mathfrak h}'$ is another such map, then $\Psi_{A_{\mathfrak h}'}^*\mathfrak p_0$ and $\Psi_{A_{\mathfrak h}}^*\mathfrak p_0$ are equivalent bundles, and up to equivalence, $\mathfrak p_{\mathfrak h}$ depends only on $\mathfrak h$. 

On the other hand, if $\mathfrak p\in\mathfrak P(\T^3)$, we claim that $\mathfrak p_{[\mathfrak p^{-1}(\pt)]_\Z}$ and $\mathfrak p$ are isotopic. Indeed, let $\Psi:\T^3\to\T^3$ be an orientation-preserving diffeomorphism with $\Psi^*\mathfrak p_0=\mathfrak p$, as in \eqref{e:fsigmasigma}. The equality $H_1(\Psi)([\mathfrak p^{-1}(\pt)]_\Z)=[\mathfrak p^{-1}_0(\pt)]_\Z$ implies that $H_1(\Psi)=A_{[\mathfrak p^{-1}(\pt)]_\Z}$ so that there holds $\mathfrak p_{[\mathfrak p^{-1}(\pt)]_\Z}=\Psi_{H_1(\Psi)}^*\mathfrak p_0$. Moreover, since $\Psi$ and $\Psi_{H_1(\Psi)}$ are isotopic due to \eqref{eq:mcg_T3}, $\Psi_{H_1(\Psi)}^*\mathfrak p_0$ is homotopic to $\mathfrak p$ and the claim is proven.
\medskip

\noindent{\bf Case 3:} $\Sigma\cong M\times S^1$ with $\mathrm{genus}(M)\geq2$. Let $\mathfrak p_+:M\times S^1\to M$ be the oriented $S^1$-bundles given by the standard projection and set $\mathfrak p_-:=-\mathfrak p_+$. The bundles $\mathfrak p_+$ and $\mathfrak p_-$ are not homotopic since the homotopy classes of their fibres are different. If $\mathfrak p$ is any element of $\mathfrak P(M\times S^1)$, by \cite[Satz (5.5)]{Wal67}, there exists a diffeomorphism $\Psi: M\times S^1\to M\times S^1$ isotopic to the identity such that either $\Psi^*\mathfrak p=\mathfrak p_+$ or $\Psi^*\mathfrak p=\mathfrak p_-$. Thus, $\mathfrak p$ is either isotopic to $\mathfrak p_-$ or to $\mathfrak p_+$. 
\medskip

This finishes the description of the connected components of $\mathfrak P(\Sigma)$ in the three cases. To determine the connected components of the space of Zoll forms with fixed cohomology class, we consider the map assigning to each Zoll odd-symplectic form its associated bundle
\[
\mathfrak P_{\ZZ}:\ZZ(\Sigma)\to\mathfrak P(\Sigma),\qquad \Omega\mapsto \mathfrak p_{\Omega}
\]
If $\mathfrak p=\mathfrak P_{\ZZ}(\Omega)$, then $\mathrm{PD}([\Omega])$ is a positive multiple of $[\mathfrak p^{-1}(\pt)]$ by \eqref{e:pd}. Moreover, the map $\mathfrak P_{\ZZ}$ is surjective, since the quotient manifold of any bundle is an orientable surface, and therefore, possesses a positive symplectic form. In particular, if $\mathfrak p\in\mathfrak P(\Sigma)$ and $C\in H^2_\dR(\Sigma)$ are such that $\mathrm{PD}(C)$ is a positive multiple of $[\mathfrak p^{-1}(\pt)]$, then there exists $\Omega\in\ZZ_C(\Sigma)\subset\ZZ(\Sigma)$ such that $\mathfrak P_{\ZZ}(\Omega)=\mathfrak p$. This shows that
\begin{itemize}
\item if $M= S^2$ or $\T^2$, then $\ZZ_C(\Sigma)$ is not empty, for all $C\neq 0$;
\item if $M$ has higher genus, then $\ZZ_C(\Sigma)$ is not empty if and only if $C$ is a non-zero element in $\mathfrak p_+^*\big(H^2_\dR(M)\big)$.
\end{itemize} 
We fix now some $C\in H^2_\dR(\Sigma)$ for which $\ZZ_C(\Sigma)$ is non-empty and consider $\mathfrak P_{\ZZ,C}:=\mathfrak P_{\ZZ}|_{\ZZ_C(\Sigma)}$. As its non-empty fibres are convex, the connected components of $\ZZ_C(\Sigma)$ correspond through $\mathfrak P_{\ZZ,C}$ to the connected components of $\mathfrak P_{\ZZ,C}(\ZZ_C(\Sigma))$. The latter set is the union of those connected components of $\mathfrak P(\Sigma)$, whose elements $\mathfrak p$ satisfy $\mathrm{PD}(C)=a[\mathfrak p^{-1}(\pt)]$ for some $a>0$. By the discussion above, this shows that:
\begin{itemize}
\item If $M$ is the two-sphere, then the set $\mathfrak P_{\ZZ,C}(\ZZ_C(\Sigma))$ has two connected components since $[\mathfrak p_+^{-1}(\pt)]=[(\Psi_2^*\mathfrak p_+)^{-1}(\pt)]$ and $[\mathfrak p_-^{-1}(\pt)]=[(\Psi_2^*\mathfrak p_-)^{-1}(\pt)]$;
\item If $M$ has positive genus, then the set $\mathfrak P_{\ZZ,C}$ is connected.
\end{itemize} 
The proof of the proposition is completed.\qed

\section{Action of closed two-forms}\label{s:acfun}
\subsection{The action form}\label{ss:defact}
\begin{dfn}
If $\Omega$ is a closed two-form on $\Sigma$, an embedded one-periodic curve $\gamma:S^1\to \Sigma$ with $\dot\gamma\in\ker\Omega$ is said to be a {\bf closed characteristics} of $\Omega$. We write $\mathcal X(\Omega)$ for the set of closed characteristics of $\Omega$, up to orientation-preserving reparametrisations of $S^1$.
\end{dfn}
Fix a free-homotopy class ${\mathfrak h}\in [S^1,\Sigma]$ and let $\Lambda_{\mathfrak h}(\Sigma)$ be the space of one-periodic curves in $\Sigma$ with  class $\mathfrak h$. 
In what follows, given a pair $(C,\mathfrak h)$, we shall study a variational principle on $\Lambda_{\mathfrak h}(\Sigma)$, which detects, for all $\Omega\in\Xi^2_C(\Sigma)$, the elements of $\mathcal X(\Omega)$ belonging to $\Lambda_{\mathfrak h}(\Sigma)$. To this purpose, we define the action form $\mathfrak{a}=\mathfrak{a}(\Omega) \in\Omega^1(\Lambda_{\mathfrak h}(\Sigma))$  by
\[
\mathfrak{a}_\gamma(\xi):=\int_{S^1}\Omega(\xi(t),\dot\gamma(t))\,\di t,\qquad\forall\,\gamma\in\Lambda_{\mathfrak h}(\Sigma),\ \forall\, \xi\in \mathrm T_\gamma\Lambda_{\mathfrak h}(\Sigma).
\]
Any $C^1$-path $\{\gamma_r\}\subset\Lambda_{\mathfrak h}(\Sigma)$ can also be interpreted as a cylinder $\Gamma:[0,1]\times S^1\to \Sigma$ with $\Gamma(r,t)=\gamma_r(t)$, so that
\begin{equation}\label{action_form_cylinder}
\int_0^1\{\gamma_r\}^*\mathfrak a(\Omega)=\int_{[0,1]\times S^1}\Gamma^*\Omega.
\end{equation}
Furthermore, we have the following classical observation.
\begin{lem}\label{l:var}
The action form $\mathfrak a(\Omega)$ is closed. An embedded periodic curve $\gamma\in\Lambda_{\mathfrak h}(\Sigma)$ is a closed characteristic of $\Omega$ if and only if $\mathfrak{a}_\gamma(\Omega)=0$.\qed
\end{lem}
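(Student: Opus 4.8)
The plan is to treat the two assertions separately, both being instances of the standard theory of the transgression of a closed form to the free loop space. For the closedness of $\mathfrak a(\Omega)$, the cleanest route uses the identity \eqref{action_form_cylinder} together with Stokes' theorem. To show that a one-form on $\Lambda_{\mathfrak h}(\Sigma)$ is closed it suffices to check that its integral over the boundary of every smooth square vanishes, i.e.\ that for every smooth map $G\colon[0,1]^2\to\Lambda_{\mathfrak h}(\Sigma)$ one has $\int_{\partial[0,1]^2}G^*\mathfrak a(\Omega)=0$. Interpreting $G$ as a map $\widehat G\colon[0,1]^2\times S^1\to\Sigma$ and applying \eqref{action_form_cylinder} edge by edge (with the induced boundary orientations), the left-hand side equals $\int_{\partial([0,1]^2)\times S^1}\widehat G^*\Omega$. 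Since $S^1$ is closed, $\partial([0,1]^2)\times S^1=\partial\big([0,1]^2\times S^1\big)$, so Stokes' theorem rewrites this as $\int_{[0,1]^2\times S^1}\widehat G^*(\di\Omega)=0$, using $\di\Omega=0$. Equivalently, one may phrase this as the fact that fibre integration over the closed fibre $S^1$ of the evaluation map commutes with $\di$, whence $\di\,\mathfrak a(\Omega)=\int_{S^1}\ev^*(\di\Omega)=0$.

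For the variational characterisation I would argue pointwise and then invoke the fundamental lemma of the calculus of variations. Recall that $\ta_\gamma\Lambda_{\mathfrak h}(\Sigma)$ is the space of smooth sections $\xi$ of $\gamma^*\ta\Sigma$, and rewrite the integrand as $\Omega(\xi(t),\dot\gamma(t))=-\big(\iota_{\dot\gamma(t)}\Omega\big)(\xi(t))$. If $\gamma$ is a closed characteristic, then $\dot\gamma(t)\in\ker\Omega$ for every $t$, so by the definition of $\ker\Omega$ we have $\iota_{\dot\gamma(t)}\Omega=0$ and the integrand vanishes identically; hence $\mathfrak a_\gamma=0$. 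Conversely, assume $\mathfrak a_\gamma(\xi)=0$ for every section $\xi$. The one-form $t\mapsto\iota_{\dot\gamma(t)}\Omega$ along $\gamma$ then integrates to zero against every smooth test section $\xi$, so the fundamental lemma forces $\iota_{\dot\gamma(t)}\Omega=0$ for all $t$, that is, $\dot\gamma(t)\in\ker\Omega$, and $\gamma$ is a closed characteristic.

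I do not expect a genuine obstacle here, as the statement is a classical observation; the only points requiring a word of care are book-keeping ones. First, in the closedness argument one must track the boundary orientations so that the four edge contributions assemble exactly into $\int_{\partial([0,1]^2\times S^1)}\widehat G^*\Omega$; alternatively, justifying the commutation of $\di$ with integration over the closed fibre $S^1$ (no boundary term appears precisely because $\partial S^1=\varnothing$). Second, in the converse direction of the characterisation one uses that the admissible variations $\xi$ exhaust all smooth sections of $\gamma^*\ta\Sigma$, so that for each fixed $t_0$ and each $v\in\ta_{\gamma(t_0)}\Sigma$ there is a section with prescribed value; embeddedness of $\gamma$, which is built into the definition of a closed characteristic, guarantees that this freedom is unobstructed and that $\mathcal X(\Omega)$ is the set singled out by the equation $\mathfrak a_\gamma(\Omega)=0$.
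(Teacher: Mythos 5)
Your proof is correct. The paper itself offers no argument for this lemma — it is stated as a ``classical observation'' with the proof omitted — and your two steps (closedness of $\mathfrak a(\Omega)$ via \eqref{action_form_cylinder} and Stokes' theorem over $\partial\big([0,1]^2\times S^1\big)=\partial\big([0,1]^2\big)\times S^1$, and the characterisation of closed characteristics via $\Omega(\xi,\dot\gamma)=-(\iota_{\dot\gamma}\Omega)(\xi)$ together with the fundamental lemma of the calculus of variations) are exactly the standard argument the authors are invoking; the only cosmetic remark is that the richness of admissible variations $\xi$ holds for any smooth loop, embeddedness entering only because it is part of the definition of $\mathcal X(\Omega)$.
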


If $\Omega$ and $\Omega'$ are forms in $\Xi^2_C(\Sigma)$ with $\Omega'-\Omega=\di\alpha$, then there holds
\[
\mathfrak a(\Omega')-\mathfrak a(\Omega)=\di \mathcal B_\alpha,\qquad \mathcal B_\alpha(\gamma):=\int_{S^1}\gamma^*\alpha.
\]
Thus, it makes sense to define $\mathfrak a(C):=[\mathfrak{a}(\Omega)]\in H^1_{\op{dR}}(\Lambda_{\mathfrak h}(\Sigma))$. We have a homomorphism 
\begin{equation}\label{e:C=0_tau=0}
H^2_{\dR}(\Sigma)\to H^1_{\op{dR}}(\Lambda_{\mathfrak h}(\Sigma)),\qquad C\mapsto \mathfrak{a}(C).
\end{equation}

If $\mathfrak{a}(\Omega)$ admits a primitive functional, then the zeros of $\mathfrak{a}(\Omega)$ are critical points of the primitive. In the next lemma, we give a criterion ensuring that $\mathfrak a(\Omega)$ is exact on $\Lambda_{\mathfrak h}(\Sigma)$, in the case that there exists an oriented $S^1$-bundle with fibres in ${\mathfrak h}$. Below, we regard classes in $H^2_{\dR}(M)$ as real homomorphisms on $\pi_2(M)$ through the canonical map $\pi_2(M)\to H_2(M;\Z)$.

\begin{lem}\label{l:exact}
Let $C\in H^2_\dR(\Sigma)$ and $(\mathfrak p,c)\in\mathfrak Z_C(\Sigma)$ such that $\mathfrak p:\Sigma\to M$ has minus the real Euler class $e\in H^2_\dR(M)$ and the oriented $\mathfrak p$-fibres have class ${\mathfrak h}\in[S^1,\Sigma]$. There holds
\[
\mathfrak{a}(C)=0\;\ \mathrm{in}\ H^1_{\mathrm{dR}}(\Lambda_{\mathfrak h}(\Sigma))\qquad \Longleftrightarrow\qquad \ker e|_{\pi_2(M)}\subseteq\ker c|_{\pi_2(M)}.
\]
\end{lem}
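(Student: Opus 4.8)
The plan is to test the closed one-form $\mathfrak{a}(\Omega)$ (closed by Lemma \ref{l:var}) against loops in $\Lambda_{\mathfrak h}(\Sigma)$: since $\mathfrak{a}(C)=[\mathfrak{a}(\Omega)]$, we have $\mathfrak{a}(C)=0$ in $H^1_{\mathrm{dR}}(\Lambda_{\mathfrak h}(\Sigma))$ if and only if the period $P(\ell):=\int_\ell \mathfrak{a}(\Omega)$ vanishes for every loop $\ell=\{\gamma_s\}_{s\in S^1}$ in $\Lambda_{\mathfrak h}(\Sigma)$. Because $\mathfrak{a}(C)$ depends only on $C$, I would work with the representative $\Omega=\mathfrak{p}^*\om$ where $[\om]=c$. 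Regarding $\ell$ as a torus $T:S^1\times S^1\to\Sigma$, $T(s,t)=\gamma_s(t)$, formula \eqref{action_form_cylinder} gives $P(\ell)=\int_{T^2}T^*\mathfrak{p}^*\om=\langle c,(\mathfrak{p}\circ T)_*[T^2]\rangle$, so everything is controlled by the homology class $(\mathfrak{p}\circ T)_*[T^2]\in H_2(M;\R)$.

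Next I analyze this class. For each fixed $s$ the loop $t\mapsto \mathfrak{p}(\gamma_s(t))$ lies in $\mathfrak{p}_*\mathfrak{h}=0\in\pi_1(M)$, hence is contractible, so $\mathfrak{p}\circ T$ has all its $t$-circles null-homotopic. A torus map with this property has spherical fundamental class: passing to the adjoint loop $S^1\to L_0M$ in the component of contractible loops, and using that the homology class of the traced torus defines a homomorphism $\pi_1(L_0M)\to H_2(M;\R)$ which kills the constant-loop section and restricts to the Hurewicz map $h$ on $\pi_1(\Omega_0M)\cong\pi_2(M)$, one obtains $(\mathfrak{p}\circ T)_*[T^2]=h(A)$ for some $A\in\pi_2(M)$. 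Crucially, by \eqref{e:gysin} we have $\mathfrak{p}^*e=0$, so
\[
\langle e,h(A)\rangle=\langle e,(\mathfrak{p}\circ T)_*[T^2]\rangle=\langle T^*\mathfrak{p}^*e,[T^2]\rangle=0,
\]
that is $A\in\ker e|_{\pi_2(M)}$. Hence $P(\ell)=\langle c,h(A)\rangle=c(A)$ with $A\in\ker e|_{\pi_2(M)}$. This already yields the implication $\Leftarrow$: if $\ker e|_{\pi_2(M)}\subseteq\ker c|_{\pi_2(M)}$, then $c(A)=0$, every period vanishes, and $\mathfrak{a}(C)=0$.

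For the converse I must realize an arbitrary element of $\ker e|_{\pi_2(M)}$ as such an $A$. From the homotopy exact sequence of the bundle $S^1\to\Sigma\xrightarrow{\mathfrak{p}}M$, together with $\pi_2(S^1)=0$ and the fact that the connecting homomorphism $\partial:\pi_2(M)\to\pi_1(S^1)\cong\Z$ is, up to sign, the evaluation of $e_\Z$, one gets $\mathrm{im}\big(\mathfrak{p}_*:\pi_2(\Sigma)\to\pi_2(M)\big)=\ker\partial=\ker e|_{\pi_2(M)}$. So, assuming $\ker e|_{\pi_2(M)}\not\subseteq\ker c|_{\pi_2(M)}$, I pick $A_0\in\ker e|_{\pi_2(M)}$ with $c(A_0)\neq0$ and lift it to $B_0\in\pi_2(\Sigma)$ with $\mathfrak{p}_*B_0=A_0$. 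Representing $B_0$ as an element of $\pi_1(\Omega_{\mathfrak h}(\Sigma))\cong\pi_2(\Sigma)$ and including it into $\Lambda_{\mathfrak h}(\Sigma)$ through the evaluation fibration produces a loop $\ell_0$ whose associated torus satisfies $(\mathfrak{p}\circ T)_*[T^2]=h(A_0)$, whence $P(\ell_0)=c(A_0)\neq0$ and $\mathfrak{a}(C)\neq0$.

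The hard part is the second paragraph: identifying the torus period with the evaluation of $c$ on a single spherical class $A\in\pi_2(M)$, and in particular verifying that the motion of the base point $\gamma_s(0)$ (the $\pi_1(M)$-direction of the loop in $L_0M$) contributes nothing to the homology class of the torus. This is precisely what the transgression homomorphism $\pi_1(L_0M)\to H_2(M;\R)$ and the splitting of $\ev:L_0M\to M$ by constant loops take care of. The identity $\mathfrak{p}^*e=0$ coming from \eqref{e:gysin} is then exactly the ingredient that turns the geometric lifting constraint (that $\mathfrak{p}\circ T$ extends to a map into the total space $\Sigma$) into the algebraic constraint $A\in\ker e|_{\pi_2(M)}$.
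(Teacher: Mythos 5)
Your proof is correct, but it takes a noticeably different route from the paper's in both halves of the argument. The common skeleton is the identity $\int_{\ell}\mathfrak a(\Omega)=\langle c,(\mathfrak p\circ T)_*[T^2]\rangle$ together with the claim that the classes $(\mathfrak p\circ T)_*[T^2]$ realised by loops $\ell$ in $\Lambda_{\mathfrak h}(\Sigma)$ are exactly the Hurewicz images of $\ker e|_{\pi_2(M)}$. Where you differ: the paper first homotopes the loop $\Gamma$ so that $\Gamma(0)=\Gamma(1)$ is an oriented $\mathfrak p$-fibre; then $\mathfrak p\circ\Gamma$ collapses a boundary circle to a point and is \emph{literally} a sphere $\bar\Gamma:S^2\to M$, so no loop-space homotopy theory is needed. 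You instead keep an arbitrary loop and invoke the transgression homomorphism $\pi_1(L_0M)\to H_2(M;\R)$ plus the splitting of the evaluation fibration by constant loops to see that the torus class is spherical; this works, but it is the more delicate part of your write-up (as you acknowledge), and the paper's normalisation trick makes it unnecessary. For the constraint $\langle e,[\bar\Gamma]\rangle=0$ you use $\mathfrak p^*e=0$ from the Gysin sequence, which is arguably slicker than the paper's computation with a connection form $\eta$ and Stokes. For surjectivity onto $\ker e|_{\pi_2(M)}$ the paper argues geometrically: given $\upsilon:S^2\to M$ with $\langle e,[\upsilon]\rangle=0$, the pulled-back bundle over $S^2$ admits a section $\Upsilon$, and sweeping $\Upsilon$ with the $S^1$-action $\mathfrak u$ produces an explicit loop of fibres-in-class-$\mathfrak h$ projecting to $\upsilon$. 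You instead use the homotopy exact sequence of $S^1\to\Sigma\to M$ to identify $\mathrm{im}(\mathfrak p_*:\pi_2(\Sigma)\to\pi_2(M))$ with $\ker\partial=\ker e_\Z|_{\pi_2(M)}=\ker e|_{\pi_2(M)}$, lift $A_0$ to $B_0\in\pi_2(\Sigma)$, and feed $B_0$ through $\pi_1(\Omega_{\mathfrak h}(\Sigma))\cong\pi_2(\Sigma)\hookrightarrow\pi_1(\Lambda_{\mathfrak h}(\Sigma))$. Both constructions are valid; the paper's is self-contained and explicit, yours trades the explicit lift for standard fibration algebra and makes the role of the Euler class as the obstruction $\partial$ more transparent.
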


\begin{proof}
Let $\om$ be any element in $\Xi^2_c(M)$ and define $\Omega:=\mathfrak p^*\om$. The cohomology class $\mathfrak{a}(C)$ is trivial if and only if its integral over any one-periodic curve $\Gamma:S^1\to \Lambda_{\mathfrak h}(\Sigma)$ vanishes. Choosing any oriented fibre $\gamma:S^1\to \Sigma$ such that $[\gamma]=\mathfrak h$, we may assume that $\Gamma(0)=\Gamma(1)=\gamma$, up to homotopy, since $\mathfrak{a}$ is closed and therefore the integral of $\mathfrak{a}$ depends only on the homotopy class of $\Gamma$. In view of \eqref{action_form_cylinder}, $\mathfrak{a}(C)\in H^1_{\dR}(\Lambda_{\mathfrak h}(\Sigma))$ is trivial if and only if, for any such $\Gamma$,
\[
\int_{[0,1]\times S^1}\Gamma^*\Omega=0.
\]
As $\mathfrak p\circ\gamma$ is a constant curve, we think of $\bar\Gamma:=\mathfrak p\circ\Gamma$ as a map from $S^2$ into $M$ with homotopy class $[\bar \Gamma]\in\pi_2(M)$. We have
\[
\int_{[0,1]\times S^1}\Gamma^*\Omega=\int_{S^2}\bar\Gamma^*\om=\langle c,[\bar\Gamma]\rangle.
\]
Hence, the lemma follows if we show that the map $\Gamma\mapsto[\bar\Gamma]$ is onto $\ker e|_{\pi_2(M)}$. To see that the image $[\bar\Gamma]$ is indeed in $\ker e|_{\pi_2(M)}$, we compute
\[
\langle e,[\bar\Gamma]\rangle=\int_{S^2}\bar\Gamma^*\kappa_\eta=\int_{[0,1]\times S^1}\Gamma^*(\di\eta)=\int_{S^1}\gamma^*\eta-\int_{S^1}\gamma^*\eta=0,
\]
where $\eta\in\mathcal K(\mathfrak p)$ is any connection for $\mathfrak p$. 

Then, we show that for any $\upsilon:S^2\to M$ with $\langle e,[\upsilon]\rangle=0$, there is $\Gamma:[0,1]\times S^1\to \Sigma$ with $\Gamma(0,\cdot)=\Gamma(1,\cdot)=\gamma$ such that $\bar\Gamma=\mathfrak p\circ\Gamma=\upsilon$. By the naturality of the Euler class, the restriction of $\mathfrak p$ over $\upsilon$ admits a global section $\Upsilon:S^2\to \Sigma$. It satisfies $\mathfrak p\circ\Upsilon=\upsilon$. Using the quotient map $[0,1]\times S^1\to S^2$, we lift $\Upsilon$ to a map $\Upsilon':[0,1]\times S^1\to \Sigma$. Up to modifying $\upsilon$ within its homotopy class, we can assume that $\Upsilon'(0,\cdot)=\Upsilon'(1,\cdot)=\gamma(0)$. Finally, we define
\[
\Gamma:[0,1]\times S^1\to \Sigma,\qquad \Gamma(s,t)=\mathfrak{u}(t,\Upsilon'(s,t)),
\]
where $\mathfrak{u}\in \mathfrak{U}(\mathfrak p)$. It follows that $\Gamma(0,\cdot)=\Gamma(1,\cdot)=\gamma$ and $\bar\Gamma=\bar{\Upsilon}'=\bar\Upsilon=\upsilon$, as required. 
\end{proof}
\begin{rmk}
The condition $\mathfrak a(C)=0$ in $H^1_\dR(\Lambda_{\mathfrak h}(\Sigma))$ depends only on $C$ and $\mathfrak h$. Therefore, the condition $\ker e|_{\pi_2(M)}\subseteq\ker c|_{\pi_2(M)}$ is also independent of the chosen pair $(\mathfrak p,c)\in\mathfrak Z_C(\Sigma)$ with the property that the $\mathfrak p$-fibres are in the class $\mathfrak h$. This last statement can also be seen directly combining Remark \ref{r:psiee}, equation \eqref{e:gysin}, and item (i) in Lemma \ref{l:nondegenerate}.
\end{rmk}

\subsection{The action functional on a covering space}\label{ss:defactfun}
In view of Lemma \ref{l:exact} we cannot expect the existence of a primitive functional of the action form $\mathfrak{a}$ in general. One standard way to resolve this problem is to find a primitive functional on a suitable covering space of $\Lambda_{\mathfrak h}(\Sigma)$. We define a natural covering space in \eqref{e:lambdatilde} below. However, it will have the small disadvantage that in some cases the primitive functional depends on the choice of a one-form $\alpha$ such that $\Omega=\Omega_0+\di\alpha$, where $\Omega_0$ is a reference two-form in the same cohomology class of $\Omega$. We overcome this nuisance through the notion of non-degeneracy, which we introduced in Definition \ref{d:nondegenerate}.
\medskip

As before, let $C$ be a class in $H^2_\dR(\Sigma)$ and $\mathfrak P^0(\Sigma)$ be a connected component of $\mathfrak P(\Sigma)$. We write $\mathfrak h\in[S^1,\Sigma]$ for the class of the oriented fibres of any bundle in $\mathfrak P^0(\Sigma)$. For the rest of this section, we work under the assumption that
\begin{equation}\label{e:nd}
\mathfrak Z_C^0(\Sigma)=\mathfrak P^{-1}(\mathfrak P_0(\Sigma))\cap\mathfrak Z_C(\Sigma)\ \ \text{is non-empty and non-degenerate},
\end{equation}
where $\mathfrak P:\mathfrak Z(\Sigma)\to\mathfrak P(\Sigma)$ is the standard projection. We fix a {\bf reference pair} $(\mathfrak p_0,c_0)\in\mathfrak Z_C^0(\Sigma)$ with $\mathfrak p_0:\Sigma\to M_0$ and denote by $e_0\in H^2_{\dR}(M_0)$ minus the real Euler class of $\mathfrak p_0$. From \eqref{e:jmathpsi}, we have an embedding
\begin{equation}\label{eq:fiber_loop}
\jmath_{\mathfrak p_0}:\Sigma\to\Lambda_{\mathfrak h}(\Sigma),
\end{equation}
where $\jmath_{\mathfrak p_0}(z)$ is a parametrisation of the oriented  $\mathfrak p_0$-fibre  passing through $z\in\Sigma$. By definition, there holds $\jmath_{\mathfrak p_0}(z)\in\mathcal X(\Omega_0)$. We consider the following covering space of $\Lambda_{\mathfrak h}(\Sigma)$:
\begin{equation}\label{e:lambdatilde}
\widetilde{\Lambda}_{\mathfrak h}(\Sigma):=\Big\{\{\gamma_r\}\ \Big|\ \gamma_0\in\jmath_{\mathfrak p_0}(\Sigma),\ \gamma_r\in \Lambda_{\mathfrak h}(\Sigma),\ \forall\, r\in[0,1] \Big\}\, \Big/\!\sim\,,
\end{equation}
where $\{\gamma_r^0\}\sim\{\gamma_r^1\}$, if there is a homotopy $\{\gamma_r^s\}_{s\in[0,1]}$ such that
\begin{equation}\label{e:admhom}
\gamma_0^s\in \jmath_{\mathfrak p_0}(\Sigma),\quad \gamma_1^0=\gamma_1^s=\gamma_{1}^1,\quad\forall\, s\in[0,1].
\end{equation}
We denote the elements of $\widetilde{\Lambda}_{\mathfrak h}(\Sigma)$ as $[\gamma_r]$ so that the covering map is given by 
\[
\widetilde{\Lambda}_{\mathfrak h}(\Sigma)\to{\Lambda}_{\mathfrak h}(\Sigma),\qquad [\gamma_r]\mapsto\gamma_1.
\]

We further choose some $\omega_0\in\Xi^2_{c_0}(M_0)$  and set as reference form 
\[
\Omega_0:=\mathfrak p_0^*\om_0\in\Xi^2_C(\Sigma).
\] 
Let $\Vol:\Omega^1(\Sigma)\to\R$ and $\Fvol:\Xi^2_C(\Sigma)\to\R$ be the volume functionals associated with $\Omega_0$. As observed in Lemma \ref{l:weakzollvol}, $\Fvol$ depends only on $(\mathfrak p_0,\om_0)$ but not on $\om_0$.

Let $\alpha\in\Omega^1(\Sigma)$ and recall the notation $\Omega_\alpha=\Omega_0+\di\alpha\in\Xi^2_C(\Sigma)$. We define the action
\begin{equation}\label{e:actiontilde1}
\widetilde{\mathcal A}_\alpha:\widetilde \Lambda_{\mathfrak h}(\Sigma)\to\R\,,\qquad\widetilde{\mathcal A}_\alpha\big([\gamma_r]\big):=\int_{S^1}\gamma_0^*\alpha+\int_{[0,1]\x S^1}\Gamma^*\Omega_\alpha,
\end{equation}
where, as before, $\Gamma:[0,1]\times S^1\to \Sigma$ is the cylinder associated with $\{\gamma_r\}$. 

The action is well-defined as one sees by integrating $0=\di\Omega_\alpha$ over a homotopy satisfying \eqref{e:admhom} and then applying Stokes' Theorem. Decomposing $\Omega_\alpha=\Omega_0+\di\alpha$ in the second integrand above and using Stokes' Theorem, we can rewrite $\widetilde{\mathcal A}_\alpha$ as
\begin{equation}\label{e:actiontilde2}
\widetilde{\mathcal A}_\alpha\big([\gamma_r]\big)= \int_{S^1}\gamma_1^*\alpha +\int_{D^2}\bar\Gamma^*\omega_0,
\end{equation}
where $\bar\Gamma=\mathfrak p_0\circ\Gamma:D^2\to M_0$. A straightforward computation shows that
\[
\di_{[\gamma_r]}\widetilde{\mathcal A}_\alpha\big([\xi_r]\big)=\mathfrak{a}_{\gamma_1}\big(\xi_1\big),\qquad\forall\,[\gamma_r]\in\widetilde{\Lambda}_{\mathfrak h}(\Sigma),\ \forall\,[\xi_r]\in \ta_{[\gamma_r]}\widetilde{\Lambda}_{\mathfrak h}(\Sigma),
\]
where $\mathfrak a=\mathfrak a(\Omega_\alpha)$. Hence, Lemma \ref{l:var} can be rephrased as follows.

\begin{cor}\label{c:crit}
Let $[\gamma_r]\in \widetilde\Lambda_{\mathfrak h}(\Sigma)$ with $\gamma_1\in \Lambda_{\mathfrak h}(\Sigma)$ embedded. Then $[\gamma_r]$ is a critical point of $\widetilde{\mathcal A}_\alpha$ if and only if $\gamma_1\in\mathcal X(\Omega_\alpha)$.\qed
\end{cor}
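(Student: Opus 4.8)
The plan is to read off the statement directly from the variational identity displayed just before the corollary, namely
\[
\di_{[\gamma_r]}\widetilde{\mathcal A}_\alpha\big([\xi_r]\big)=\mathfrak{a}_{\gamma_1}\big(\xi_1\big),\qquad \mathfrak a=\mathfrak a(\Omega_\alpha),
\]
combined with the characterisation of zeros of the action form in Lemma \ref{l:var}. First I would recall that the projection $\widetilde{\Lambda}_{\mathfrak h}(\Sigma)\to\Lambda_{\mathfrak h}(\Sigma)$, $[\gamma_r]\mapsto\gamma_1$, is a covering map, so its differential at $[\gamma_r]$ is a linear isomorphism onto $\ta_{\gamma_1}\Lambda_{\mathfrak h}(\Sigma)$. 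Consequently, as $[\xi_r]$ ranges over the whole tangent space $\ta_{[\gamma_r]}\widetilde{\Lambda}_{\mathfrak h}(\Sigma)$, the endpoint variation $\xi_1$ ranges over all of $\ta_{\gamma_1}\Lambda_{\mathfrak h}(\Sigma)$.

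With this observation the equivalence is immediate. By definition, $[\gamma_r]$ is a critical point of $\widetilde{\mathcal A}_\alpha$ precisely when $\di_{[\gamma_r]}\widetilde{\mathcal A}_\alpha$ vanishes identically, that is, when $\mathfrak{a}_{\gamma_1}(\xi_1)=0$ for every $[\xi_r]$. By the surjectivity of $[\xi_r]\mapsto\xi_1$ just noted, this holds if and only if $\mathfrak{a}_{\gamma_1}=0$ as an element of $\ta_{\gamma_1}^*\Lambda_{\mathfrak h}(\Sigma)$, i.e.\ $\gamma_1$ is a zero of the action form $\mathfrak a(\Omega_\alpha)$. Finally, since $\gamma_1$ is assumed embedded, Lemma \ref{l:var} applies and tells us that $\gamma_1$ is a zero of $\mathfrak a(\Omega_\alpha)$ exactly when $\gamma_1\in\mathcal X(\Omega_\alpha)$, which is the assertion.

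There is essentially no analytic obstacle here: the content of the corollary is already packaged into the preceding differential formula and into Lemma \ref{l:var}. The only two points deserving a word of care are the passage from "$\mathfrak{a}_{\gamma_1}(\xi_1)=0$ for all admissible $[\xi_r]$" to "$\mathfrak{a}_{\gamma_1}=0$", which relies on the covering-map structure of $\widetilde{\Lambda}_{\mathfrak h}(\Sigma)$ rather than on any special feature of the admissible homotopies in \eqref{e:admhom}, and the role of the embeddedness hypothesis, which is exactly what is needed to invoke Lemma \ref{l:var} and identify zeros of $\mathfrak a$ with closed characteristics.
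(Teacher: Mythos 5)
Your argument is correct and is exactly the one the paper intends: the displayed identity $\di_{[\gamma_r]}\widetilde{\mathcal A}_\alpha([\xi_r])=\mathfrak{a}_{\gamma_1}(\xi_1)$ together with the covering-map structure identifies critical points of $\widetilde{\mathcal A}_\alpha$ with zeros of $\mathfrak a(\Omega_\alpha)$ at $\gamma_1$, and Lemma \ref{l:var} (which is where the embeddedness hypothesis enters) converts this into $\gamma_1\in\mathcal X(\Omega_\alpha)$. The paper records the corollary with no further proof precisely because it is this immediate rephrasing, so your write-up simply makes the same reasoning explicit.
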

If $\alpha$ and $\alpha'$ are one-forms such that $\Omega_\alpha=\Omega=\Omega_{\alpha'}$, then $\tau:=\alpha'-\alpha$ is closed and we have
\begin{equation}\label{eq:action_diff}
\widetilde{\mathcal A}_{\alpha'}=\widetilde{\mathcal A}_\alpha+\langle [\tau],[\mathfrak p^{-1}(\pt)]\rangle.\vspace{8pt}
\end{equation}

\noindent\textbf{Case 1:~$e_0\neq0$.}
\medskip

\noindent By Lemma \ref{l:ecc}, we have $[\mathfrak p^{-1}(\pt)]=0$. The action functional depends only on the two-form $\Omega\in\Xi^2_C(\Sigma)$ due to \eqref{eq:action_diff}. Therefore, for any $\alpha\in\Omega^1(\Sigma)$ with $\Omega=\Omega_\alpha$, we can set
\begin{equation*}\label{def:mathcalaomega}
\widetilde{\mathcal A}_\Omega:=\widetilde{\mathcal A}_\alpha.
\end{equation*}
In this situation, the non-degeneracy \eqref{e:nd} of $\mathfrak Z_C^0(\Sigma)$ is not needed to associate the action functional with elements in $\Xi^2_C(\Sigma)$, as opposed to the next case.
\vspace{10pt}

\noindent\textbf{Case 2:~$e_0=0$.}\label{page}
\medskip

\noindent Here, the action functionals of $\alpha$ and $\alpha'$ might be different. Nevertheless, as $\mathfrak Z_C^0(\Sigma)$ is non-degenerate, we have $\langle c_0^n,[M_0]\rangle=\ev(\mathfrak p_0,c_0)\neq 0$. Thus, by \eqref{e:wd2} and \eqref{e:pd}, we can write
\[
\langle [\tau],[\mathfrak p^{-1}(\pt)]\rangle=\frac{\langle [\tau]\cup C^{n},[\Sigma]\rangle}{\langle c^n_0,[M_0]\rangle}=\frac{\Vol(\alpha')-\Vol(\alpha)}{\langle c_0^n,[M_0]\rangle},
\]
so that if $\alpha$ and $\alpha'$ have the same volume, they also have the same action by \eqref{eq:action_diff}. We set
\begin{equation}\label{eq:action_normalized_form}
\widetilde{\mathcal A}_\Omega:=\widetilde{\mathcal A}_\alpha
\end{equation}
for a normalised $\alpha\in\Omega^1(\Sigma)$, i.e.~$\Vol(\alpha)=0$, with $\Omega=\Omega_\alpha$.

\begin{rmk}\label{r:ch2}
This remark is parallel to Remark \ref{rmk:contact_hamiltonian}.
For contact forms and Hamiltonian systems, the action functional recovers the following well-known formulae.
\begin{itemize}
\item(Contact forms) Let $\Omega_0=0$ and $\alpha\in\Omega^{1}(\Sigma)$ be a (possibly contact) one-form. As $C=0$, we have that $e_0\neq0$ by Lemma \ref{l:ecc}.(ii). Thus, this is a special instance of Case 1. In fact, $\mathfrak a(C)=0$ due to  \eqref{e:C=0_tau=0}, and the function $\mathcal A_\alpha:\Lambda_{\mathfrak h}(\Sigma)\to\R$, given by
\[
\mathcal A_{\alpha}(\gamma)=\int_{S^1}\gamma^*\alpha,
\]
is the unique primitive of $\mathfrak a$ such that $\widetilde{\mathcal A}_{\di \alpha}([\gamma_r])=\mathcal A_{\alpha}(\gamma_1)$, for every $[\gamma_r]\in\widetilde{\Lambda}_{\mathfrak h}(\Sigma)$. 
\item(Hamiltonian systems) Let $\mathfrak p_0$ be trivial, namely $\Sigma=M_0\x S^1$, so that this is a special instance of Case 2. Assume $\Omega_0=\mathfrak p_0^*\om_0$, where $\om_0$ is a symplectic form on $M_0$, and $\alpha=H\di t$, where $t$ is the angular coordinate on $S^1$. If $\gamma_1(t)=(q_1(t),t)$, then we get the classical Hamiltonian action functional
\[
\widetilde{\mathcal A}_{H\di t}([\gamma_r]):=\int_{S^1} H(q_1(t),t)\,\di t +\int_{D^2}\bar\Gamma^*\om_0
\]
on the space of contractible curves with capping disc. Furthermore, the condition $\ker e_0|_{\pi_2(M_0)}\subseteq\ker c_0|_{\pi_2(M_0)}$ in Lemma \ref{l:exact} means that $\om_0$ is symplectically aspherical.
\end{itemize}
\end{rmk}
Next, we study the relation between the actions with respect to two different reference weakly Zoll pairs.
\begin{prp}\label{p:concatenationaction}
Let $(\mathfrak p_0',c_0')$ be another element in $\mathfrak Z_C^0(\Sigma)$. We write $\widetilde\Lambda_{\mathfrak h}'(\Sigma)$ for the associated covering space of $\Lambda_{\mathfrak h}(\Sigma)$ and $\widetilde{\Lambda}_{\mathfrak h}(\mathfrak p_0,\mathfrak p_0')\subset \widetilde\Lambda_{\mathfrak h}(\Sigma)$ for the set of elements $[\delta_r]$ such that $\delta_1\in\jmath_{\mathfrak p_0'}(\Sigma)$. We pick $\om_0'\in \Xi^2_{c_0'}(M_0')$ and set $\Omega_0':=(\mathfrak p_0')^*\om_0'$. We choose $\alpha'\in\Omega^1(\Sigma)$ such that
\[
\Omega_0'=\Omega_0+\di\alpha',\qquad \alpha'\ \text{is $\Omega_0$-normalised}.
\]
For any $\Omega\in\Xi^2_C(\Sigma)$, we take $\alpha\in\Omega^1(\Sigma)$ such that
\[
\Omega=\Omega_0+\di\alpha,\qquad \ \  \alpha\ \text{is $\Omega_0$-normalised}.
\]
Then, there holds $\Omega=\Omega_0'+\di(\alpha-\alpha')$ and the one-form $\alpha-\alpha'$ is $\Omega_0'$-normalised. Moreover, if we denote by $\widetilde{\mathcal A}'_\Omega$ the action of $\Omega$ with respect to $\Omega_0'$, then
\begin{equation}\label{e:concid}
\widetilde{\mathcal A}_\Omega\big([\{\delta_r\}\#\{\gamma_r\}]\big)=\widetilde{\mathcal A}_{\Omega_0'}\big([\delta_r]\big)+\widetilde{\mathcal A}_{\Omega}'\big([\gamma_r]\big),
\end{equation}  
for every $[\delta_r]\in\widetilde\Lambda_{\mathfrak h}(\mathfrak p_0,\mathfrak p_0')$ and $[\gamma_r]\in\widetilde\Lambda_{\mathfrak h}'(\Sigma)$. Here, the concatenation is made by choosing any representative $\{\gamma_r\}$ of $[\gamma_r]$ with $\gamma_0=\delta_1$.
\end{prp}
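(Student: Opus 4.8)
The plan is to dispatch the two structural claims first and then verify the additivity identity \eqref{e:concid} by a cylinder-splitting computation followed by Stokes' theorem. The identity $\Omega=\Omega_0'+\di(\alpha-\alpha')$ is immediate from $\Omega_0'=\Omega_0+\di\alpha'$ and $\Omega=\Omega_0+\di\alpha$. For the normalisation claim I would invoke Lemma \ref{l:volcomp} with the two reference forms $\Omega_0$ and $\Omega_0'$: writing $\Vol'$ for the volume associated with $\Omega_0'$, it gives $\Vol(\alpha)=\Vol(\alpha')+\Vol'(\alpha-\alpha')$. If $C^n=0$ then every one-form is normalised by Definition \ref{d:normalized} and there is nothing to check; if $C^n\neq0$, then both $\alpha$ and $\alpha'$ are $\Omega_0$-normalised, so $\Vol(\alpha)=\Vol(\alpha')=0$, whence $\Vol'(\alpha-\alpha')=0$, i.e. $\alpha-\alpha'$ is $\Omega_0'$-normalised, as asserted.

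With these at hand I would unwind the three action functionals through their defining normalised primitives. Since $\alpha$ is $\Omega_0$-normalised, $\widetilde{\mathcal A}_\Omega=\widetilde{\mathcal A}_\alpha$ by \eqref{eq:action_normalized_form}; since $\alpha'$ is $\Omega_0$-normalised with $\Omega_0'=\Omega_0+\di\alpha'$, we get $\widetilde{\mathcal A}_{\Omega_0'}=\widetilde{\mathcal A}_{\alpha'}$; and since $\alpha-\alpha'$ is $\Omega_0'$-normalised, the $\Omega_0'$-action satisfies $\widetilde{\mathcal A}'_\Omega=\widetilde{\mathcal A}'_{\alpha-\alpha'}$. (In the remaining case $e_0\neq0$, which by Lemma \ref{l:ecc} is exactly $C^n=0$, the point class $[\mathfrak p^{-1}(\pt)]$ vanishes, so all these identifications hold for any primitive and the normalisation bookkeeping is vacuous.)

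The core computation then uses the definition \eqref{e:actiontilde1}. Let $\Gamma_\delta$ and $\Gamma_\gamma$ be the cylinders traced by $\{\delta_r\}$ and $\{\gamma_r\}$; the cylinder of the concatenation $\{\delta_r\}\#\{\gamma_r\}$ is their juxtaposition, so its $\Omega$-integral splits as $\int_{\Gamma_\delta}\Omega+\int_{\Gamma_\gamma}\Omega$. Abbreviating $\beta:=\alpha-\alpha'$ and using $\Omega_\alpha=\Omega$, $\Omega_{\alpha'}=\Omega_0'$ and $\Omega-\Omega_0'=\di\beta$, the two $\int_{\Gamma_\gamma}\Omega$ terms cancel and the difference between the left and right sides of \eqref{e:concid} collapses to
\[
\int_{S^1}\delta_0^*\beta-\int_{S^1}\gamma_0^*\beta+\int_{\Gamma_\delta}\di\beta.
\]
Applying Stokes' theorem with the convention $\partial\big([0,1]\times S^1\big)=\{1\}\times S^1-\{0\}\times S^1$ turns the last term into $\int_{S^1}\delta_1^*\beta-\int_{S^1}\delta_0^*\beta$, so the whole expression reduces to $\int_{S^1}\delta_1^*\beta-\int_{S^1}\gamma_0^*\beta$, which vanishes because the representative was chosen with $\gamma_0=\delta_1$. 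This establishes \eqref{e:concid}.

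The computation itself is routine; the one place that demands genuine care is matching each action with the correct normalised primitive, since when $e_0=0$ the functional $\widetilde{\mathcal A}'_\Omega$ truly depends on the choice of primitive and only the normalised representative reproduces $\widetilde{\mathcal A}'_{\alpha-\alpha'}$ through \eqref{eq:action_normalized_form}. This is precisely what the normalisation claim of the first paragraph secures. One should also keep the two cases mutually consistent by noting that both pairs lie in the same component $\mathfrak P^0(\Sigma)$, so that $e_0=0\iff e_0'=0$ by Corollary \ref{c:weaklyfour}(i), and verify that the orientation convention for the cylinder boundary is used uniformly throughout.
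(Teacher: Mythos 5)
Your proposal is correct and follows essentially the same route as the paper: the normalisation claim via Lemma \ref{l:volcomp}, then a direct expansion of the three actions from their defining formulas and a single application of Stokes' theorem together with the matching condition $\gamma_0=\delta_1$. The only cosmetic difference is that you apply Stokes on the $\delta$-cylinder while the paper applies it on the $\gamma$-cylinder after passing to the form \eqref{e:actiontilde2}; the substance is identical.
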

\begin{proof}
The one-form $\alpha-\alpha'$ is $\Omega'_0$-normalised thanks to Lemma \ref{l:volcomp}. Let $\Gamma,\Delta:[0,1]\times S^1\to\Sigma$ be the cylinders traced by the paths $\{\gamma_r\}$ and $\{\delta_r\}$, respectively. Let us show equation \eqref{e:concid} using \eqref{e:actiontilde1} and \eqref{e:actiontilde2}:
\begin{align*}
\widetilde{\mathcal A}_{\Omega_0'}\big([\delta_r]\big)+\widetilde{\mathcal A}_{\Omega}'\big([\gamma_r]\big)&=\int_{S^1}\delta_1^*\alpha'+\int_{[0,1]\times S^1}\Delta^*\Omega_0+\int_{S^1}\gamma_0^*(\alpha-\alpha')+\int_{[0,1]\times S^1}\Gamma^*\Omega\\
&=\int_{[0,1]\times S^1}\Delta^*\Omega_0+\int_{S^1}(\gamma_1)^*\alpha+\int_{[0,1]\times S^1}\Gamma^*\Omega_0+\int_{[0,1]\times S^1}\Gamma^*(\di\alpha)\\
&=\int_{[0,1]\times S^1}(\Delta\#\Gamma)^*\Omega_0+\int_{S^1}\gamma_0^*\alpha+\int_{S^1}\gamma_1^*\alpha-\int_{S^1}\gamma_0^*\alpha\\
&=\widetilde {\mathcal A}_\Omega\big([\{\delta_r\}\#\{\gamma_r\}]\big).\qedhere
\end{align*}
\end{proof}
\begin{cor}\label{c:indepomega0}
The action $\widetilde{\mathcal A}_{\Omega}$ depends only on the fixed reference pair $(\mathfrak p_0,c_0)\in\mathfrak Z_C^0(\Sigma)$, and not on the specific choice of $\om_0\in\Xi^2_{c_0}(M_0)$.
\end{cor}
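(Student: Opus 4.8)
The plan is to read off the statement directly from Proposition \ref{p:concatenationaction}, applied in the special case where the second reference pair is taken to coincide with the first, $(\mathfrak p_0',c_0')=(\mathfrak p_0,c_0)$, but with a possibly different representative $\om_0'\in\Xi^2_{c_0}(M_0)$ of the same class $c_0$. Writing $\widetilde{\mathcal A}_\Omega$ for the action built from $\om_0$ and $\widetilde{\mathcal A}_\Omega'$ for the action built from $\om_0'$, the claim is exactly that these two functionals on $\widetilde\Lambda_{\mathfrak h}(\Sigma)$ agree. Nothing in the proof of Proposition \ref{p:concatenationaction} uses $\mathfrak p_0'\neq\mathfrak p_0$, so the concatenation identity \eqref{e:concid} stays valid in this degenerate case. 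Since $\om_0'-\om_0=\di\zeta_0$ for some $\zeta_0\in\Omega^1(M_0)$, I would choose the normalised one-form required by the proposition to be $\alpha':=\mathfrak p_0^*\zeta_0$; then $\Omega_0'=\Omega_0+\di(\mathfrak p_0^*\zeta_0)$, and $\alpha'$ is $\Omega_0$-normalised by Lemma \ref{l:weakzollvol}(ii).

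The key step is to verify that the boundary correction $\widetilde{\mathcal A}_{\Omega_0'}([\delta_r])$ vanishes on a constant path at a fibre. Because $\mathfrak p_0'=\mathfrak p_0$, the embeddings agree, $\jmath_{\mathfrak p_0'}=\jmath_{\mathfrak p_0}$, the covering spaces $\widetilde\Lambda_{\mathfrak h}'(\Sigma)$ and $\widetilde\Lambda_{\mathfrak h}(\Sigma)$ coincide, and $\widetilde\Lambda_{\mathfrak h}(\mathfrak p_0,\mathfrak p_0)$ contains every constant path at an oriented $\mathfrak p_0$-fibre. Taking $[\delta_r]$ to be the constant path at such a fibre $\gamma_0$ and evaluating $\widetilde{\mathcal A}_{\Omega_0'}$ through its defining formula \eqref{e:actiontilde1} with $\alpha=\alpha'=\mathfrak p_0^*\zeta_0$, both terms drop out: the associated cylinder $\Delta$ is constant in the $r$-direction, so $\int_{[0,1]\times S^1}\Delta^*\Omega_0'=0$, while $\int_{S^1}\delta_0^*(\mathfrak p_0^*\zeta_0)=\int_{S^1}(\mathfrak p_0\circ\delta_0)^*\zeta_0=0$ since $\mathfrak p_0\circ\delta_0$ is a constant curve in $M_0$.

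To finish, for an arbitrary $[\gamma_r]\in\widetilde\Lambda_{\mathfrak h}'(\Sigma)$ I would pick a representative $\{\gamma_r\}$ whose initial loop $\gamma_0$ is a $\mathfrak p_0$-fibre (as is automatic from the definition of the covering space), and let $\{\delta_r\}$ be the constant path at this $\gamma_0$. The concatenation is then trivial, $[\{\delta_r\}\#\{\gamma_r\}]=[\gamma_r]$, and \eqref{e:concid} collapses to $\widetilde{\mathcal A}_\Omega([\gamma_r])=\widetilde{\mathcal A}_\Omega'([\gamma_r])$, which is the assertion. I do not expect a genuine obstacle: the only delicate point is the bookkeeping of the normalisation in the case $e_0=0$, which is precisely what makes $\widetilde{\mathcal A}_\Omega$ well defined and is already subsumed into Proposition \ref{p:concatenationaction} through the standing non-degeneracy hypothesis \eqref{e:nd}; once $\alpha'=\mathfrak p_0^*\zeta_0$ is recognised as normalised, everything reduces to the vanishing computed above.
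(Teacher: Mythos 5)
Your proposal is correct and follows essentially the same route as the paper: both apply the concatenation identity of Proposition \ref{p:concatenationaction} in the degenerate case $(\mathfrak p_0',c_0')=(\mathfrak p_0,c_0)$ with $[\delta_r]$ the constant path at a fibre, identify the normalised correction as $\mathfrak p_0^*\zeta$ via Lemma \ref{l:weakzollvol}(ii), and conclude from $\int_{S^1}\gamma_0^*(\mathfrak p_0^*\zeta)=0$ because $\mathfrak p_0\circ\gamma_0$ is constant. Your version merely spells out the vanishing of the degenerate cylinder term, which the paper leaves implicit.
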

\begin{proof}
Let $\om_0'$ be another element in $\Xi^2_{c_0}(M_0)$. By \eqref{e:concid} with $(\mathfrak p_0',c_0')=(\mathfrak p_0,c_0)$ and $[\delta_r]=[\gamma_0]$, it suffices to show that $\widetilde{\mathcal A}_{\mathfrak p_0^*\om_0'}([\gamma_0])=0$. To this purpose, let $\zeta\in\Omega^1(M_0)$ be such that $\om_0'-\om_0=\di\zeta$. By Lemma \ref{l:weakzollvol}.(ii), $\mathfrak p_0^*\zeta$ is $\mathfrak p_0^*\om_0$-normalised and we can use it to compute $\widetilde{\mathcal A}_{\mathfrak p_0^*\om_0'}$:
\[
\widetilde{\mathcal A}_{\mathfrak p_0^*\om_0'}([\gamma_0])=\int_{S^1}\gamma_0^*(\mathfrak p^*_0\zeta)=0.\qedhere
\]
\end{proof}\subsection{The action is invariant under pull-back and isotopies}\label{ss:acinvariant}
In this subsection, we prove two invariance results for the action. For the first one, we consider an additional connected oriented closed manifold \label{dfn:veeaction}$\Sigma^\veee$ of dimension $2n+1$. We suppose that there are a bundle $\mathfrak p_0^\veee:\Sigma^\veee\to M_0^\veee$ in $\mathfrak P^0(\Sigma^\veee)$ and a bundle map $\Pi:\Sigma^\veee\to \Sigma$ with $\mathfrak p_0^\veee=\Pi^*\mathfrak p_0$. Let $\mathfrak P^0(\Sigma^\veee)$ be the connected component of $\mathfrak P(\Sigma^\veee)$ containing $\mathfrak p_0^\veee$. Let $\pi_0:M_0^\veee\to M_0$ be the map fitting into the first commutative diagram in \eqref{eq:commutative_diagram2}. If we set  
\[
c_0^\veee:=\pi_0^*c_0\in H^2_\dR(M_0^\veee),\qquad C^\veee:=\Pi^*C\in H^2_\dR(\Sigma^\veee),
\]
then $(\mathfrak p_0^\veee,c_0^\veee)\in\mathfrak Z^0_{C^\veee}(\Sigma^\veee):=(\mathfrak P^\veee)^{-1}(\mathfrak P^0(\Sigma^\veee))\cap \mathfrak Z_{C^\veee}(\Sigma^\veee)$. In particular, $\mathfrak Z_{C^\veee}^0(\Sigma^\veee)$ is non-empty. We write $\mathfrak P_{C^\veee}^0:\mathfrak Z_{C^\veee}^0(\Sigma^\veee)\to\mathfrak P^0(\Sigma^\veee)$ for the projection and $\ev^\veee:\mathfrak Z_{C^\veee}^0(\Sigma^\veee)\to\R$ for the evaluation. Furthermore, we abbreviate 
\[
\om_0^\veee:=\pi_0^*\om_0\in\Xi^2_{c_0^\veee}(M_0^\veee),\qquad \Omega_0^\veee:=(\mathfrak p_0^\veee)^*\om_0^\veee=\Pi^*\Omega_0\in\Xi^2_{C^\veee}(\Sigma^\veee),
\] 
so that $\Omega_0^\veee$ is associated with the weakly Zoll pair $(\mathfrak p_0^\veee,c_0^\veee)$. We note that
\begin{equation}\label{e:fcae}
\bullet\quad \pi_0^*e_0=e_0^\veee,\qquad\qquad \bullet\quad \pi_0^*(Ae_0+c_0)=Ae^\veee_0+c_0^\veee,\quad\forall\, A\in \R,
\end{equation}
where $e_0^\veee$ is minus the real Euler class of $\mathfrak p_0^\veee$. If $\mathfrak h^\veee$ denotes the free-homotopy class of oriented fibres of elements in $\mathfrak P^0(\Sigma^\veee)$, we write $\widetilde{\mathcal A}^\veee_{\alpha^\veee}:\widetilde \Lambda_{\mathfrak h^\veee}(\Sigma^\veee)\to\R$ for the action of $\alpha^\veee\in\Omega^1(\Sigma^\veee)$ with respect to $\Omega_0^\veee$. The bundle map $\Pi$ yields a map between the spaces 
\[
\Pi_*:\Lambda_{\mathfrak h^\veee}(\Sigma^\veee)\to\Lambda_{\mathfrak h}(\Sigma),\qquad \Pi_*\gamma^\veee:=\Pi\circ\gamma^\veee
\]
and between their covering spaces
\[
\widetilde \Pi_*:\widetilde{\Lambda}_{\mathfrak h^\veee}(\Sigma^\veee)\to\widetilde{\Lambda}_{\mathfrak h}(\Sigma),\qquad \widetilde \Pi_*[\gamma_r^\veee]:=[\Pi\circ \gamma_r^\veee].
\] 
\begin{prp}\label{p:pbaction}
The set $\mathfrak Z_{C^\veee}^0(\Sigma^\veee)$ is non-degenerate if and only if $\deg\pi_0\neq 0$. In this case, $\widetilde{\mathcal A}^\veee_{\Omega^\veee}:\widetilde \Lambda_{\mathfrak h^\veee}(\Sigma^\veee)\to\R$ is well-defined for all $\Omega^\veee\in\Xi^2_{C^\veee}(\Sigma^\veee)$ and there holds
\[
\widetilde{\mathcal A}^\veee_{\Pi^*\Omega}=\widetilde{\mathcal A}_{\Omega}\circ \widetilde \Pi_*,\qquad \forall\,\Omega\in\Xi^2_C(\Sigma).
\]
\end{prp}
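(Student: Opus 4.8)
The proof splits naturally into the non-degeneracy equivalence and the pull-back identity, and I would treat them in that order.

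\emph{Non-degeneracy.} The plan is to test non-degeneracy on a single fibre of the projection $\mathfrak P^0_{C}$. By \eqref{e:gysin}, the pairs of $\mathfrak Z^0_{C^\veee}(\Sigma^\veee)$ lying over $\mathfrak p_0^\veee$ are precisely $(\mathfrak p_0^\veee,\,c_0^\veee+Ae_0^\veee)$ for $A\in\R$, and the corresponding statement holds over $\mathfrak p_0$ for $\Sigma$. Combining Definition \ref{d:nondegenerate} with Corollary \ref{c:weaklyfour}.(ii) shows that $\mathfrak Z^0_{C^\veee}(\Sigma^\veee)$ is non-degenerate if and only if $\ev^\veee(\mathfrak p_0^\veee,\,c_0^\veee+Ae_0^\veee)\neq0$ for some $A$, and likewise for $\Sigma$. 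Using the second relation in \eqref{e:fcae}, i.e. $c_0^\veee+Ae_0^\veee=\pi_0^*(c_0+Ae_0)$, and the defining property $\langle\pi_0^*\mu,[M_0^\veee]\rangle=(\deg\pi_0)\langle\mu,[M_0]\rangle$ of the degree, I would compute
\[
\ev^\veee(\mathfrak p_0^\veee,\,c_0^\veee+Ae_0^\veee)=\big\langle \pi_0^*\big((c_0+Ae_0)^n\big),[M_0^\veee]\big\rangle=(\deg\pi_0)\cdot\ev(\mathfrak p_0,\,c_0+Ae_0).
\]
Since $\mathfrak Z^0_C(\Sigma)$ is non-degenerate by the standing assumption \eqref{e:nd}, the factor $\ev(\mathfrak p_0,c_0+Ae_0)$ is non-zero for some $A$; hence the left-hand side is non-zero for some $A$ exactly when $\deg\pi_0\neq0$, giving both implications simultaneously.

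\emph{The naturality identity.} Assuming now $\deg\pi_0\neq0$, the first consequence is that $\mathfrak Z^0_{C^\veee}(\Sigma^\veee)$ is non-degenerate, so the construction of Section \ref{ss:defactfun} applies word for word to $\Sigma^\veee$ and yields a well-defined functional $\widetilde{\mathcal A}^\veee_{\Omega^\veee}$ for all $\Omega^\veee\in\Xi^2_{C^\veee}(\Sigma^\veee)$. The core of the formula is an identity at the level of one-forms. For $\alpha\in\Omega^1(\Sigma)$ one has $\Pi^*\Omega_\alpha=\Omega^\veee_{\Pi^*\alpha}$, and for $[\gamma^\veee_r]\in\widetilde\Lambda_{\mathfrak h^\veee}(\Sigma^\veee)$ with cylinder $\Gamma^\veee$, the functoriality relations $(\Pi\circ\gamma_0^\veee)^*\alpha=(\gamma_0^\veee)^*\Pi^*\alpha$ and $(\Pi\circ\Gamma^\veee)^*\Omega_\alpha=(\Gamma^\veee)^*\Pi^*\Omega_\alpha$ applied to the two integrals in \eqref{e:actiontilde1} give
\[
\widetilde{\mathcal A}_\alpha\big(\widetilde\Pi_*[\gamma^\veee_r]\big)=\int_{S^1}(\gamma_0^\veee)^*(\Pi^*\alpha)+\int_{[0,1]\times S^1}(\Gamma^\veee)^*\Omega^\veee_{\Pi^*\alpha}=\widetilde{\mathcal A}^\veee_{\Pi^*\alpha}\big([\gamma^\veee_r]\big).
\]
Here I use that $\Pi$ sends oriented $\mathfrak p_0^\veee$-fibres to oriented $\mathfrak p_0$-fibres, so $\widetilde\Pi_*$ is the well-defined covering-space map introduced above and respects the basepoint condition $\gamma_0\in\jmath_{\mathfrak p_0}(\Sigma)$. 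Thus $\widetilde{\mathcal A}^\veee_{\Pi^*\alpha}=\widetilde{\mathcal A}_\alpha\circ\widetilde\Pi_*$ for every $\alpha$.

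\emph{Passing to two-forms.} To obtain the stated identity for $\Omega$, I would pick $\alpha$ normalised (with respect to $\Omega_0$) with $\Omega_\alpha=\Omega$, so $\widetilde{\mathcal A}_\Omega=\widetilde{\mathcal A}_\alpha$, and show that $\Pi^*\alpha$ is then normalised with respect to $\Omega_0^\veee$; granting this, $\widetilde{\mathcal A}^\veee_{\Pi^*\Omega}=\widetilde{\mathcal A}^\veee_{\Pi^*\alpha}$ and the previous identity concludes. For the normalisation claim, observe $(C^\veee)^n=\Pi^*(C^n)$, so $(C^\veee)^n\neq0$ forces $C^n\neq0$; in that case $\alpha$ normalised means $\Vol(\alpha)=0$, and Proposition \ref{p:volinvcov} gives $\Vol^\veee(\Pi^*\alpha)=(\deg\Pi)\Vol(\alpha)=0$, while if $(C^\veee)^n=0$ there is nothing to verify. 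The only subtle steps are these two bookkeeping points—matching the two normalisation conventions across $\Pi$ via the implication $(C^\veee)^n\neq0\Rightarrow C^n\neq0$ together with Proposition \ref{p:volinvcov}, and reducing the degeneracy test to the reference fibre via Corollary \ref{c:weaklyfour}.(ii); the naturality identity itself is purely formal.
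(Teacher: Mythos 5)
Your proposal is correct and follows essentially the same route as the paper: the non-degeneracy equivalence is obtained from the relation $\ev^\veee(\mathfrak p_0^\veee,\pi_0^*c)=\deg\pi_0\cdot\ev(\mathfrak p_0,c)$ together with the surjectivity of $A\mapsto(\mathfrak p_0^\veee,c_0^\veee+Ae_0^\veee)$ onto the fibre over the reference bundle, and the naturality identity is the same direct computation with a normalised primitive $\alpha$, whose pull-back is seen to be normalised via Proposition \ref{p:volinvcov}. Your extra bookkeeping on the case $(C^\veee)^n=0$ versus $(C^\veee)^n\neq0$ is a slightly more explicit version of what the paper leaves implicit, but the argument is the same.
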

\begin{proof}
The map $(\mathfrak P_C^0)^{-1}(\mathfrak p_0)\to(\mathfrak P_{C^\veee}^0)^{-1}(\mathfrak p_0^\veee)$ given by $(\mathfrak p_0,c)\mapsto(\mathfrak p_0^\veee,\pi_0^*c)$ is well-defined. By \eqref{e:gysin} and \eqref{e:fcae}, it is also surjective. Since $\mathfrak Z_{C}^0(\Sigma)$ is non-degenerate and there holds \begin{equation}\label{e:c'm0'}
\ev^\veee(\mathfrak p_0^\veee,\pi_0^*c)=\deg \pi_0\cdot\ev(\mathfrak p_0,c),\qquad \forall\,(\mathfrak p_0,c)\in (\mathfrak P_C^0)^{-1}(\mathfrak p_0),
\end{equation}
we see that $\mathfrak Z_{C^\veee}^0(\Sigma^\veee)$ is non-degenerate exactly when $\deg\pi_0$ is non-zero.

Let $\Omega=\Omega_0+\di \alpha\in\Xi^2_C(\Sigma)$, where $\alpha\in\Omega^1(\Sigma)$ is $\Omega_0$-normalised. Then, $\Pi^*\Omega=\Omega_0^\veee+\di(\Pi^*\alpha)$ and $\Pi^*\alpha$ is $\Omega_0^\veee$-normalised by Proposition \ref{p:volinvcov}. For all $[\gamma_r^\veee]\in\widetilde\Lambda_{\mathfrak h^\veee}(\Sigma)$, we have
\begin{align*}
\widetilde{\mathcal A}^\veee_{\Pi^*\Omega}\big([\gamma^\veee_r]\big)=\int_{S^1}(\gamma_0^\veee)^*\Pi^*\alpha+\int_{[0,1]\times S^1}(\Gamma^\veee)^*\Pi^*\Omega&=\int_{S^1}(\Pi\circ\gamma_0^\veee)^*\alpha+\int_{[0,1]\times S^1}(\Pi\circ\Gamma^\veee)^*\Omega\\
&=\widetilde{\mathcal A}_{\Omega}\big(\widetilde \Pi_*[\gamma^\veee_r]\big).\qedhere
\end{align*}
\end{proof}
For the second invariance result, we consider a diffeomorphism $\Psi:\Sigma\to \Sigma$ isotopic to $\id_\Sigma$. We define
\[
\Psi^*:\Lambda_{\mathfrak h}(\Sigma)\to\Lambda_{\mathfrak h}(\Sigma),\qquad \Psi^*\gamma:=(\Psi^{-1})_*\gamma=\Psi^{-1}\circ\gamma.
\] 
Let  $\{\Psi_r\}$ be an isotopy of diffeomorphisms of $\Sigma$ with $\Psi_0=\id_\Sigma$ and $\Psi_1=\Psi$. We denote by $[\Psi_r]$ the homotopy class with fixed end-points of $\{\Psi_r\}$ in the space of isotopies. Given such a homotopy class, we define
\[
[\Psi_r]^*:\widetilde{\Lambda}_{\mathfrak h}(\Sigma)\to\widetilde{\Lambda}_{\mathfrak h}(\Sigma),\qquad [\Psi_r]^*[\gamma_r]:=\big[\Psi_r^{-1}\circ\gamma_r\big].
\]

\begin{prp}\label{prp:action_inv_isotopy}
For every $\Omega\in\Xi^2_C(\Sigma)$ and every homotopy class $[\Psi_r]$ as above, there holds
\[
\widetilde{\mathcal A}_{\Psi^*_1\Omega}\circ [\Psi_r]^*=\widetilde{\mathcal A}_\Omega.
\]
\end{prp}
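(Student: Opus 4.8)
The plan is to prove that the difference
\[
D:=\widetilde{\mathcal A}_{\Psi^*_1\Omega}\circ [\Psi_r]^*-\widetilde{\mathcal A}_\Omega\colon\widetilde\Lambda_{\mathfrak h}(\Sigma)\to\R
\]
vanishes identically, by first showing $\di D=0$ and then evaluating $D$ at a convenient point of each connected component. To set up, fix a representative $\{\Psi_r\}$ of $[\Psi_r]$ together with the path $\{\theta_r\}$ of Section \ref{s:voliso} satisfying $\dot\theta_r=\Psi_r^*(\iota_{X_r}\Omega_0)$, where $X_r$ generates $\{\Psi_r\}$, and write $\Omega=\Omega_0+\di\alpha$ with $\alpha$ normalised. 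Since $\Psi_1^*\Omega=\Omega_0+\di(\theta_1+\Psi_1^*\alpha)=\Omega_0+\di(\wh\Psi_1^*\alpha)$ and $\wh\Psi_1^*\alpha$ is again normalised by Proposition \ref{prp:vol}, we have $\widetilde{\mathcal A}_{\Psi^*_1\Omega}=\widetilde{\mathcal A}_{\wh\Psi_1^*\alpha}$, so $D$ can be computed with this explicit primitive.

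For the equality of differentials, the key input is the naturality of the action form: for a diffeomorphism $\Phi$ and the induced map $\Phi^*\gamma=\Phi^{-1}\circ\gamma$ on $\Lambda_{\mathfrak h}(\Sigma)$, a direct chain-rule computation using $D\Phi\cdot\partial_t(\Phi^{-1}\circ\gamma)=\dot\gamma$ and $D\Phi\cdot D\Phi^{-1}=\id$ yields $(\Phi^*)^*\mathfrak a(\Phi^*\Omega)=\mathfrak a(\Omega)$. The covering projection $[\gamma_r]\mapsto\gamma_1$ intertwines $[\Psi_r]^*$ with $\Psi_1^*$, and $\di\widetilde{\mathcal A}_\beta$ equals the pullback of $\mathfrak a(\Omega_\beta)$ under this projection (the computation preceding Corollary \ref{c:crit}). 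Combining these facts gives $\di D=0$, so $D$ is constant on each connected component of $\widetilde\Lambda_{\mathfrak h}(\Sigma)$.

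I then reduce to base points: every class $[\gamma_r]$ is joined inside $\widetilde\Lambda_{\mathfrak h}(\Sigma)$ to the constant-path class $[\gamma_0]$, with $\gamma_0\in\jmath_{\mathfrak p_0}(\Sigma)$ an oriented fibre, through the homotopy $s\mapsto[\gamma_{rs}]_r$; hence it suffices to check $D([\gamma_0])=0$ for every fibre $\gamma_0$. On such a point $\widetilde{\mathcal A}_\Omega([\gamma_0])=\int_{S^1}\gamma_0^*\alpha$, the cylinder being constant. For the loop $\delta_r:=\Psi_r^{-1}\circ\gamma_0$ formula \eqref{e:actiontilde2}, together with $\delta_1^*\Psi_1^*\alpha=\gamma_0^*\alpha$, shows that the $\alpha$-terms cancel and
\[
D([\gamma_0])=\int_{S^1}\delta_1^*\theta_1+\int_{D^2}\bar\Delta^*\om_0,\qquad \bar\Delta:=\mathfrak p_0\circ\Delta,\quad\Delta(r,t):=\delta_r(t),
\]
a quantity independent of $\alpha$ (it is exactly the value of $D$ for the reference form $\Omega_0$).

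The computational heart, and the step I expect to be the main obstacle, is to prove that this last expression vanishes. I would introduce
\[
\phi(\sigma):=\int_{S^1}(\Psi_\sigma^{-1}\circ\gamma_0)^*\theta_\sigma+\int_{[0,\sigma]\times S^1}\Delta^*\Omega_0,
\]
observe $\phi(0)=0$, and show $\phi'\equiv0$. Differentiating, the area term contributes $\int_{S^1}\delta_\sigma^*(\iota_{V_\sigma}\Omega_0)$, where $V_\sigma$ is the variation field of $\sigma\mapsto\delta_\sigma$, while the boundary term, via $\dot\theta_\sigma=\Psi_\sigma^*(\iota_{X_\sigma}\Omega_0)$, Cartan's formula and Stokes' theorem on $S^1$, contributes $\int_{S^1}\gamma_0^*(\iota_{X_\sigma}\Omega_0)+\int_{S^1}\delta_\sigma^*\big(\iota_{V_\sigma}(\Psi_\sigma^*\Omega_0-\Omega_0)\big)$. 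These combine to $\int_{S^1}\gamma_0^*(\iota_{X_\sigma}\Omega_0)+\int_{S^1}\delta_\sigma^*(\iota_{V_\sigma}\Psi_\sigma^*\Omega_0)$, and the crucial identity $D\Psi_\sigma\cdot V_\sigma=-X_\sigma\circ\gamma_0$ (from differentiating $\Psi_\sigma\circ\Psi_\sigma^{-1}=\id$) together with $D\Psi_\sigma\cdot\partial_t\delta_\sigma=\dot\gamma_0$ turns the second integral into $-\int_{S^1}\gamma_0^*(\iota_{X_\sigma}\Omega_0)$, whence $\phi'(\sigma)=0$. Tracking precisely the interplay between $\theta_\sigma$, the generator $X_\sigma$, and the variation field $V_\sigma$ is the delicate point; everything else in the argument is formal.
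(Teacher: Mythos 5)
Your argument is correct, but it is organised quite differently from the paper's, so a comparison is worthwhile. The paper proves the identity in one stroke: setting $\Psi^s_r:=\Psi_{rs}$, it differentiates $s\mapsto\widetilde{\mathcal A}_{\Psi_s^*\Omega}\big([\Psi^s_r]^*[\gamma_r]\big)$ at an \emph{arbitrary} $[\gamma_r]$, splitting the derivative into the variation of the functional (through the primitive $\theta_s+\Psi_s^*\alpha$, via \eqref{e:actiontilde2}) and the variation of the argument (paid for by the action form $\mathfrak a(\Psi_s^*\Omega)$), and cancels the two contributions using $\di\Psi_s\cdot\tfrac{\p}{\p s}(\Psi_s^{-1}\circ\gamma_1)=-X_s\circ\gamma_1$. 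You instead modularise: the naturality $(\Psi_1^*)^*\mathfrak a(\Psi_1^*\Omega)=\mathfrak a(\Omega)$, combined with the fact that $\di\widetilde{\mathcal A}$ is the pull-back of $\mathfrak a$ under the covering projection, gives $\di D=0$ for free; the homotopy $s\mapsto[\gamma_{rs}]$ reduces the locally constant $D$ to the fibre classes $[\gamma_0]$, where the $\alpha$-terms cancel exactly; and your $\phi(\sigma)$-computation --- which I have checked and which does close up, since $\dot\theta_\sigma=\Psi_\sigma^*(\iota_{X_\sigma}\Omega_0)$, $\di\theta_\sigma=\Psi_\sigma^*\Omega_0-\Omega_0$ and $\di\Psi_\sigma\cdot V_\sigma=-X_\sigma\circ\gamma_0$ make $\phi'\equiv 0$ --- is precisely the paper's cancellation specialised to $\gamma_r\equiv\gamma_0$ and $\alpha=0$. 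What your route buys is a clean separation of the formal part (everything depending on $\Omega$) from the single non-formal input, namely the vanishing of $\int_{S^1}\delta_1^*\theta_1+\int_{D^2}\bar\Delta^*\om_0$, i.e.\ the statement for the reference form on fibres; the cost is the extra set-up (local constancy on the covering space and the connectivity reduction), which the paper's single computation avoids. One small point worth making explicit in your write-up: when $e_0=0$ the functional $\widetilde{\mathcal A}_{\Psi_1^*\Omega}$ is only defined through a \emph{normalised} primitive, so the identification $\widetilde{\mathcal A}_{\Psi_1^*\Omega}=\widetilde{\mathcal A}_{\theta_1+\Psi_1^*\alpha}$ genuinely requires $\Vol(\theta_1+\Psi_1^*\alpha)=0$, which you correctly obtain from Proposition \ref{prp:vol} together with the normalisation of $\theta_1$ established in Section \ref{s:voliso}.
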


\begin{proof}
We observe preliminarily that if $X_r$ and $Y_r$ are the time-dependent vector fields generating $\Psi_r$ and $\Psi_r^{-1}$, we have the relation 
\[
-X_r=\di \Psi_r(Y_r)\circ \Psi_r^{-1}.
\]
Let $\alpha$ be a normalised one-form such that $\Omega=\Omega_0+\di\alpha$ and let $\{\theta_r\}$ be the path of normalised one-forms introduced in Section \ref{s:voliso} such that 
\[
\Psi_r^*\Omega=\Omega_0+\di(\theta_r+\Psi_r^*\alpha),\qquad \dot\theta_r=\Psi_r^*(\iota_{X_r}\Omega_0),\quad\theta_0=0.
\]
For $s\in[0,1]$, we define the truncation $\{\Psi_r^s:=\Psi_{rs}\}$ and we write $\mathfrak a^s:=\mathfrak a(\Psi_s^*\Omega)$. For every $[\gamma_r]\in\widetilde{\Lambda}_{\mathfrak h}(\Sigma)$, we compute
\[
\begin{split}
\frac{\di}{\di s}\Big(\widetilde{\mathcal{A}}_{\Psi_s^*\Omega}\big([\Psi_r^s]^*[\gamma_r]\big)\Big)&=\Big(\frac{\di}{\di s}\widetilde{\mathcal{A}}_{\Psi_s^*\Omega}\Big)\big([\Psi_r^s]^*[\gamma_r]\big)+\di_{[\Psi_{rs}^*\gamma_r]}\widetilde{\mathcal{A}}_{\Psi_s^*\Omega}\cdot\frac{\di}{\di s}[\Psi_{rs}^*\gamma_r]
\\
&=\int_{S^1}(\Psi_s^*\gamma_1)^*\Big(\tfrac{\di}{\di s}(\theta_s+\Psi_s^*\alpha)\Big)\,\di t+\mathfrak{a}^s_{\Psi_s^*\gamma_1}\cdot\frac{\di}{\di s}\Psi_s^*\gamma_1\\
&=\int_{S^1}\gamma_1^*\Big(\iota_{X_s}\Omega+\di\big(\alpha(X_s)\big)\Big)\,\di t+\int_{S^1}(\Psi_s^*\Omega)\Big(\tfrac{\p}{\p s}\Psi_s^*\gamma_1,\tfrac{\p}{\p t}\Psi_s^*\gamma_1\Big)\di t\\
&=\int_{S^1}\gamma_1^*(\iota_{X_s}\Omega)\,\di t+\int_{S^1}\Omega\Big(\di \Psi_s\cdot\tfrac{\p}{\p s}\big(\Psi_s^{-1}\circ\gamma_1\big),\di \Psi_s\cdot\tfrac{\p}{\p t}\big(\Psi_s^{-1}\circ\gamma_1\big)\Big)\di t\\
&=\int_{S^1}\Omega(X_s\circ\gamma_1,\dot\gamma_1)\,\di t+\int_{S^1}\Omega(-X_s\circ\gamma_1,\dot\gamma_1)\,\di t\\
&=0,
\end{split}
\]
where in the second equality we used \eqref{e:actiontilde2} to compute $\tfrac{\di}{\di s}\widetilde{\mathcal A}_{\Psi_s^*\Omega}$. Since $\widetilde{\mathcal A}_{\Psi_0^*\Omega}\circ [\Psi_0]^*=\widetilde{\mathcal A}_{\Omega}$, the proof is completed.
\end{proof}

\subsection{The action of weakly Zoll pairs} \label{ss:actionweak}

Based on the action functional we have studied, we define the action functional on the space of weakly Zoll pairs:
\begin{equation}\label{e:defaczoll}
\mathcal A:\mathfrak Z_C^0(\Sigma)\to \R,\qquad\mathcal A(\mathfrak p,c):=\widetilde{\mathcal A}_{\mathfrak p^*\om}\big([\jmath_{\mathfrak p_r}(z_0)]\big)
\end{equation}
where $z_0$ is a point in $\Sigma$, $\om\in\Xi^2_c(M)$, and $\{\mathfrak p_r\}$ is a path in $\mathfrak P^0(\Sigma)$ starting at the reference bundle $\mathfrak p_0$ and ending at $\mathfrak p_1=\mathfrak p$. It will turn out that this action is well-defined without the need to pass to a covering space, even when the condition in Lemma \ref{l:exact} is not met. This fact is a striking consequence of the non-degeneracy \eqref{e:nd} of $\mathfrak Z_C^0(\Sigma)$. A key role will be played by the following polynomial.
\begin{dfn}\label{d:polp}
Let $Q:\R\to\R$ be the auxiliary polynomial
\[
Q(A):=\ev(\mathfrak p_0,Ae_0+c_0)=\langle(Ae_0+c_0)^n,[M_0]\rangle,\qquad\forall\,A\in\R.
\]
The \textbf{Zoll polynomial} $P:\R\to\R$ of the pair $(\mathfrak p_0,c_0)$ is given through
\[
P(0)=0,\qquad \frac{\di P}{\di A} = Q.
\]
\end{dfn}
\begin{rmk}
The Zoll polynomial is non-constant by Corollary \ref{c:weaklyfour}.(ii) since we have assumed that $\mathfrak Z_C^0(\Sigma)$ is non-degenerate. Furthermore, we have the explicit formula
\begin{equation*}
P(A)=\int_0^AQ(A')\,\di A'=\sum_{j=0}^{n}\frac{1}{j+1}\binom{n}{j}\langle  e^j_0\cup c^{n-j}_0,[M_0]\rangle A^{j+1},
\end{equation*}
which reduces to
\begin{equation}\label{eq:P_dim3}
P(A)=\langle e_0,[M_0]\rangle \frac{A^2}{2}+\langle c_0,[M_0]\rangle A,\quad \text{for}\ \ n=1.
\end{equation}
\end{rmk}
\begin{rmk}\label{rmk:P_simple_form}
This remark is parallel to Remark \ref{rmk:contact_hamiltonian} and \ref{r:ch2}. The Zoll polynomial has a simple form when any of $c_0$ and $e_0$ vanishes.
\begin{itemize}
\item Let us assume that $C=0$ and take $c_0=0$, which is relevant to the study of Zoll contact forms. We have
\[
P(A)=\langle e_0^{n},[M_0]\rangle \frac{A^{n+1}}{n+1}.
\]
\item Let us assume that $e_0=0$, which is relevant to the study of Hamiltonian systems on $M$ (in which case $\mathfrak p_0$ is trivial and $\om_0$ is a symplectic form). We have
\[
P(A)=\langle c_0^{n},[M_0]\rangle A.
\]
\end{itemize}
\end{rmk}

\begin{thm}\label{thm:mainaction}
The functional $\mathcal A:\mathfrak Z_C^0(\Sigma)\to \R$ does not depend on any choice involved.
Moreover, for any $(\mathfrak p,c)\in \mathfrak Z_C^0(\Sigma)$, there holds
\begin{equation}\label{eq:vanishing_action}
{\mathcal A}(\mathfrak p,c)=0,\quad \text{if}\ \ e_0=0\ \text{ or } \ (\mathfrak p,c)=(\mathfrak p_0,c_0),
\end{equation}
and 
\begin{equation}\label{eq:zoll_identity}
\Fvol(\mathfrak p,c)=P\big(\mathcal A(\mathfrak p,c)\big).
\end{equation}
If $\Psi:\Sigma\to\Sigma$ is isotopic to $\id_\Sigma$ satisfying $\Psi^*\mathfrak p=\mathfrak p_0$ (which exists by Lemma \ref{l:triv}) and $\psi:M_0\to M$ is its quotient map, then 
\begin{equation}\label{eq:pullback_c1}
\psi^*c={\mathcal A}(\mathfrak p,c)e_0+c_0.
\end{equation}
\end{thm}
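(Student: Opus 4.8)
The plan is to reduce all computations to the reference bundle $\mathfrak p_0$ via the isotopy invariance of the action, where a form pulled back from $M_0$ along a path of fibres can be integrated explicitly. Throughout I distinguish the two cases $e_0=0$ and $e_0\neq0$, which by \eqref{e:cne} correspond to $C^n\neq0$ and $C^n=0$.

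As a preliminary computation, I would record a volume identity on the reference bundle. Fixing a connection $\eta_0\in\mathcal K(\mathfrak p_0)$ with $\di\eta_0=\mathfrak p_0^*\kappa_0$ and $[\kappa_0]=e_0$, and setting $\omega_A:=\omega_0+A\kappa_0$ so that $\mathfrak p_0^*\omega_A=\Omega_{A\eta_0}$ and $[\omega_A]=Ae_0+c_0$, the relation $\di\Vol=\vol$ together with integration along the $\mathfrak p_0$-fibres (on which $\eta_0$ integrates to $1$) gives
\begin{equation*}
\frac{\di}{\di A}\Vol(A\eta_0)=\int_\Sigma\eta_0\wedge\mathfrak p_0^*(\omega_A^n)=\big\langle(Ae_0+c_0)^n,[M_0]\big\rangle=Q(A)=\frac{\di P}{\di A}(A).
\end{equation*}
Since both sides vanish at $A=0$, this yields the key identity $\Fvol(\mathfrak p_0,Ae_0+c_0)=P(A)$ whenever $e_0\neq0$.

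Next I would carry out the reduction. Given data $(z_0,\omega,\{\mathfrak p_r\})$ defining $\mathcal A(\mathfrak p,c)$, Lemma \ref{l:triv}.(ii) supplies an isotopy $\{\Psi_r\}$ with $\Psi_0=\id_\Sigma$ and $\Psi_r^*\mathfrak p_r=\mathfrak p_0$; write $\Psi:=\Psi_1$, let $\psi:M_0\to M$ be its quotient map, and set $\Omega:=\mathfrak p^*\omega$. The curves $\delta_r:=\Psi_r^{-1}\circ\jmath_{\mathfrak p_r}(z_0)$ are oriented $\mathfrak p_0$-fibres with $[\Psi_r]^*[\jmath_{\mathfrak p_r}(z_0)]=[\delta_r]$, so Proposition \ref{prp:action_inv_isotopy} gives $\mathcal A(\mathfrak p,c)=\widetilde{\mathcal A}_{\Psi^*\Omega}([\delta_r])$ with $\Psi^*\Omega=\mathfrak p_0^*\psi^*\omega$. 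As $\Psi\simeq\id_\Sigma$, one has $\mathfrak p_0^*(\psi^*c-c_0)=C-C=0$, so by \eqref{e:gysin} there is a unique $A^*$ with $\psi^*c=A^*e_0+c_0$ when $e_0\neq0$ (and $\psi^*c=c_0$ is forced when $e_0=0$). Writing $\psi^*\omega=\omega_0+A^*\kappa_0+\di\zeta$, I would evaluate $\widetilde{\mathcal A}_{\Psi^*\Omega}([\delta_r])$ using $\alpha'=A^*\eta_0+\mathfrak p_0^*\zeta$ in Case 1, and the normalised $\alpha'=\mathfrak p_0^*\zeta$ (normalised by Lemma \ref{l:weakzollvol}.(ii)) in Case 2. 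In the expression \eqref{e:actiontilde1} the cylinder integral vanishes because $\mathfrak p_0\circ\Delta$ has one-dimensional image, and the boundary term collapses to $\int_{S^1}\delta_0^*\alpha'=A^*\int_{S^1}\delta_0^*\eta_0=A^*$. Hence $\mathcal A(\mathfrak p,c)=A^*$ when $e_0\neq0$ and $\mathcal A(\mathfrak p,c)=0$ when $e_0=0$, which already gives \eqref{eq:vanishing_action} and \eqref{eq:pullback_c1}.

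Finally I would establish well-definedness and \eqref{eq:zoll_identity}. When $e_0=0$ nothing is left to check, since $\mathcal A\equiv0$ and $\Fvol\equiv0$, so both sides of \eqref{eq:zoll_identity} vanish. When $e_0\neq0$, the value $A^*$ a priori depends on $\psi$; to eliminate this I would use that $A\mapsto\Fvol(\mathfrak p,Ae+c)$ is independent of all choices, while Lemma \ref{l:nondegenerate}.(iii) and the key identity give $\Fvol(\mathfrak p,Ae+c)=\Fvol(\mathfrak p_0,Ae_0+\psi^*c)=P(A+A^*)$. Since $P$ is non-constant (Corollary \ref{c:weaklyfour}.(ii)), the shifted polynomial $A\mapsto P(A+A^*)$ determines $A^*$ uniquely, so $A^*=\mathcal A(\mathfrak p,c)$ is independent of $\psi$, $z_0$, $\omega$ and $\{\mathfrak p_r\}$; setting $A=0$ in the same chain gives $\Fvol(\mathfrak p,c)=P(A^*)=P(\mathcal A(\mathfrak p,c))$, which is \eqref{eq:zoll_identity}. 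The main obstacle is precisely this last step: the geometrically defined action visibly depends on the trivialising isotopy, and only the rigidity coming from the globally well-defined volume, combined with the non-constancy of the Zoll polynomial, forces that dependence to disappear.
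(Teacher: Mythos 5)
Your proposal is correct, and the reduction part (trivialise the path of bundles by an isotopy via Lemma \ref{l:triv}.(ii), apply Proposition \ref{prp:action_inv_isotopy}, compute the action of a path of $\mathfrak p_0$-fibres explicitly to get $\mathcal A(\mathfrak p,c)=A^*$ with $\psi^*c=A^*e_0+c_0$, and compute $\Fvol(\mathfrak p_0,Ae_0+c_0)=P(A)$ by fibre integration) coincides with Steps 2 and 3 of the paper's proof. Where you genuinely diverge is in the well-definedness. The paper splits this into: independence of $z_0$ and $\om$ (its Step 1, via the fact that fibres are critical points of $\widetilde{\mathcal A}$, Corollary \ref{c:crit}, plus Proposition \ref{p:concatenationaction}); and independence of the path $\{\mathfrak p_r\}$ (Steps 4--5), which it gets by iterating $\psi_1$ to obtain $(\psi_1^m)^*c_0=mA_1e_0+c_0$, hence $Q(mA_1)=Q(0)$ for all $m\in\Z$, and deducing $A_1=0$ from the polynomial identity together with $\Fvol(\mathfrak p_0,c_0)=0$. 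You instead subsume all three choices into one rigidity argument: $\Fvol(\mathfrak p,Ae+c)=P(A+A^*)$ for every $A$, the left side is manifestly choice-independent by Lemma \ref{l:weakzollvol}, and a non-constant polynomial is not periodic, so $A^*$ is pinned down. Both arguments exploit the same underlying rigidity (non-degeneracy of $\mathfrak Z_C^0(\Sigma)$ making $P$ non-constant, plus the globally defined volume), but yours is more economical: it dispenses with the variational Step 1 and the iteration trick entirely. The paper's route has the mild advantage of isolating the statement that the action of a loop of bundles based at $\mathfrak p_0$ vanishes, which is reused verbatim in Section \ref{ss:locprim}. One small ordering point: your claim that the reduction ``already gives'' \eqref{eq:vanishing_action} is premature for the case $(\mathfrak p,c)=(\mathfrak p_0,c_0)$ with $e_0\neq0$, since there the reduction only yields $A^*$ for whatever $\psi$ your choices produce; you need the rigidity step first, and then comparison with the trivial choice ($\mathfrak p_r\equiv\mathfrak p_0$, $\Psi_r\equiv\id_\Sigma$, $\psi=\id_{M_0}$) to conclude $A^*=0$. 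This is a matter of presentation, not a gap.
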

\begin{proof} 
To ease the notation, we write $(\mathfrak p_1,c_1)$ instead of $(\mathfrak p,c)$ to denote an arbitrary element of $\mathfrak Z_C^0(\Sigma)$ with $\mathfrak p_1:\Sigma\to M_1$ and $c_1\in H^2_\dR(M_1)$. Moreover, $\{\mathfrak p_r\}$ will indicate any path connecting the reference bundle $\mathfrak p_0$ with $\mathfrak p_1$. We divide the proof in five steps.
\bigskip

\noindent\underline{Step 1}. \textit{For any path $\{\mathfrak p_r\}$, the  action value $\widetilde{\mathcal A}_{\mathfrak p_1^*\om_1}\big([\jmath_{\mathfrak p_r}(z_0)]\big)$ does not depend on the choice of $z_0\in\Sigma$ and $\om_1\in\Xi^2_{c_1}(M_1)$.}
\medskip

Let $z_1$ be another point in $\Sigma$, and let $\{z_s\}$ be a path between $z_0$ and $z_1$ with $s\in[0,1]$. We claim that $\widetilde {\mathcal A}_{\mathfrak p_1^*\om_1}\big([\jmath_{\mathfrak p_r}(z_s)]\big)$ does not depend on $s$. Indeed, since $\jmath_{\mathfrak p_1}(z_s)\in\mathcal X({\mathfrak p_1^*\om_1})$ for all $s\in[0,1]$, we see that $\di_{[\jmath_{\mathfrak p_r}(z_s)]}\widetilde{\mathcal A}_{\mathfrak p_1^*\om_1}=0
$, by Corollary \ref{c:crit}, which in turn implies
\[
\frac{\di}{\di s}\widetilde {\mathcal A}_{\mathfrak p_1^*\om_1}\big([\jmath_{\mathfrak p_r}(z_s)]\big)=\di_{[\jmath_{\mathfrak p_r}(z_s)]} \widetilde {\mathcal A}_{\mathfrak p_1^*\om_1}\cdot\frac{\di}{\di s}[\jmath_{\mathfrak p_r}(z_s)]=0.
\]
This shows the independence of the action value from $z_0$.

Next, we take another $\om_1'\in\Xi_{c_1}^2(M_1)$ such that $\om_1'-\om_1=\di\zeta$ for some one-form $\zeta$ on $M_1$. By Lemma \ref{l:weakzollvol}.(ii), $\mathfrak p_1^*\zeta$ is normalised with respect to $\mathfrak p_1^*\om_1$. Applying equation \eqref{e:concid} with $\Omega=\mathfrak p_1^*\om_1'$, $\Omega_0'=\mathfrak p_1^*\om_1$, $\{\delta_r\}=\{\jmath_{\mathfrak p_r}(z_0)\}$ and $\{\gamma_r\}=\{\gamma_0\}$ a constant path, we find
\[
\widetilde{\mathcal A}_{\mathfrak p_1^*\om_1'}\big([\jmath_{\mathfrak p_r}(z_0)]\big)-\widetilde{\mathcal A}_{\mathfrak p_1^*\om_1}\big([\jmath_{\mathfrak p_r}(z_0)]\big)=\widetilde{\mathcal A}_{\mathfrak p_1^*\om_1'}'\big([\gamma_0]\big)=\int_{S^1}\gamma_0^*(\mathfrak p_1^*\zeta)=0.
\] 
Hence, the action depends on the cohomology class $c_1$, not on the representative.
\bigskip

\noindent\underline{Step 2}. \textit{Let $\{\mathfrak p_r\}$ be a path and $\{\Psi_r\}$ an isotopy of diffeomorphisms given by Lemma \ref{l:triv}.(ii) such that $\Psi_r^*\mathfrak p_r=\mathfrak p_0$. If $\psi_1:M_0\to M_1$ is the quotient map of $\Psi_1$, then 
\begin{equation*}
(i)\quad \psi_1^*c_1=\widetilde{\mathcal A}_{\mathfrak p_1^*\om_1}\big([\jmath_{\mathfrak p_r}(z_0)]\big)e_0+c_0,\qquad (ii)\ \ \widetilde{\mathcal A}_{\mathfrak p_1^*\om_1}\big([\jmath_{\mathfrak p_r}(z_0)]\big)=0,\quad \text{if}\ \ e_0=0.
\end{equation*}}
Setting $\gamma_1:=\jmath_{\mathfrak p_0}\circ \Psi_1^{-1}(z_0)$ and using Proposition \ref{prp:action_inv_isotopy}, we compute
\begin{equation}\label{eq:isotopy_action}
\begin{split}
\widetilde{\mathcal A}_{\mathfrak p_1^*\om_1}\big([\jmath_{\mathfrak p_r}(z_0)]\big)=\widetilde{\mathcal A}_{\mathfrak p_1^*\om_1}\big([\Psi_r^{-1}]^*[\jmath_{\mathfrak p_0}\circ \Psi_r^{-1}(z_0)]\big)&=\widetilde{\mathcal A}_{\Psi_1^*\mathfrak p_1^*\om_1}\big([\jmath_{\mathfrak p_0}\circ \Psi_r^{-1}(z_0)]\big)\\
&=\widetilde{\mathcal A}_{\Psi_1^*\mathfrak p_1^*\om_1}\big([\gamma_1]\big)
\end{split}
\end{equation}
where the last equality follows from $[\jmath_{\mathfrak p_0}\circ \Psi_r^{-1}(z_0)]=[\gamma_1]$ (see \eqref{e:lambdatilde}).  We observe that $\Psi_1^*\mathfrak p_1^*\om_1=\mathfrak p_0^*\psi_1^*\om_1$ and thus $(\mathfrak p_0,\psi_1^*c_1)\in \mathfrak Z_C^0(\Sigma)$. By items (iii) and (iv) in Corollary \ref{c:weaklyfour}, there exists $A_1\in\R$ such that 
\begin{equation}\label{eq:psi_1_pullbak}
\psi_1^*c_1=A_1e_0+c_0.
\end{equation}
If $e_0\neq0$, $A_1$ is uniquely defined by this property. If $e_0=0$, we simply impose $A_1:=0$. Equation \eqref{eq:psi_1_pullbak} implies that there are $\eta\in\mathcal K(\mathfrak p_0)$ and $\zeta\in\Omega^1(M_0)$ such that 
\begin{equation*}
\Psi_1^*\mathfrak p_1^*\om_1=\mathfrak p_0^*\om_0+\di(A_1\eta+\mathfrak p^*_0\zeta),\qquad \mathfrak p_0^*\kappa=\di\eta
\end{equation*}
for some $\kappa\in\Xi^2_{e_0}(M_0)$. Since $A_1=0$, if $e_0=0$, the one-form $A_1\eta+\mathfrak p_0^*\zeta$ is $\mathfrak p_0^*\om_0$-normalised by Lemma \ref{l:weakzollvol}.(ii) and we conclude with \eqref{eq:isotopy_action} that
\[
\widetilde{\mathcal A}_{\mathfrak p_1^*\om_1}\big([\jmath_{\mathfrak p_r}(z_0)]\big)=\widetilde{\mathcal A}_{\Psi_1^*\mathfrak p_1^*\om_1}\big([\gamma_1]\big)=\int_{S^1}\gamma_1^*(A_1\eta+\mathfrak p^*_0\zeta)=A_1.
\]
This proves both items in Step 2.
\bigskip

\noindent\underline{Step 3}. \textit{For any path $\{\mathfrak p_r\}$, there holds $\Fvol(\mathfrak p_1,c_1)=P\big(\widetilde{\mathcal A}_{\mathfrak p_1^*\om_1}\big([\jmath_{\mathfrak p_r}(z_0)]\big)\big)$.}
\medskip

From item (iii) in Lemma \ref{l:nondegenerate} with $A=0$, we have
\[
\Fvol(\mathfrak p_1,c_1)=\Fvol(\mathfrak p_0,\psi_1^*c_1)=\Vol(A_1\eta+\mathfrak p_0^*\zeta),
\]
where $A_1=\widetilde{\mathcal A}_{\mathfrak p_1^*\om_1}\big([\jmath_{\mathfrak p_r}(z_0)]\big)$, $\eta$ and $\zeta$ are from the previous step. 
Using the definition of the volume \eqref{e:defvol} and Fubini's Theorem, we compute
\begin{align*}
\Vol(A_1\eta+\mathfrak p_0^*\zeta)&=\int_0^1\Big(\int_\Sigma (A_1\eta+\mathfrak p_0^*\zeta)\wedge\mathfrak p_0^*\big(rA_1 \kappa+\om_0\big)^n\Big)\di r\\
&=\int_0^1\Big(\int_{M_0} \big((\mathfrak p_0)_*(A_1\eta+\mathfrak p_0^*\zeta)\big)\wedge\big(rA_1 \kappa+\om_0\big)^n\Big)\di r\\
&=\int_0^1A_1\big\langle (rA_1e_0+c_0)^n,[M_0]\big\rangle\di r\\
&=\int_0^1A_1Q(rA_1)\di r\\
&=P(A_1)-P(0)\\
&=P(A_1).
\end{align*}

\noindent\underline{Step 4}. \textit{If $(\mathfrak p_1,c_1)=(\mathfrak p_0,c_0)$, then, for any path $\{\mathfrak p_r\}$, we have $\widetilde{\mathcal A}_{\mathfrak p_1^*\om_1}\big([\jmath_{\mathfrak p_r}(z_0)]\big)=0$.}
\medskip

Let $\psi_1$ and $A_1=\widetilde{\mathcal A}_{\mathfrak p_1^*\om_1}\big([\jmath_{\mathfrak p_r}(z_0)]\big)$ as in Step 2. Since $\psi_1^*e_0=e_0$, applying $\psi_1$ iteratively to item (i) in Step 2, we obtain 
\[
(\psi^m_1)^*c_0=mA_1e_0+c_0,\qquad \forall\, m\in\Z.
\]
From item (ii) in Lemma \ref{l:nondegenerate} with $A=0$, we deduce 
\begin{align*}
Q(0)=\ev (\mathfrak p_0,c_0)=\ev (\mathfrak p_0,(\psi^m_1)^*c_0)= \ev(\mathfrak p_0,mA_1e_0+c_0)=Q(mA_1),\qquad\forall\, m\in\Z.
\end{align*}
We assume by contradiction that $A_1\neq0$. Since $Q$ is a polynomial, the above identity implies that $Q(A)\equiv Q_0$, where $Q_0\in \R$, so that $P(A)=Q_0A$ for all $A\in\R$. Since $\mathfrak Z_C^0(\Sigma)$ is non-degenerate, the coefficient $Q_0$ is non-zero. This contradicts Step 3:
\[
0=\Fvol(\mathfrak p_0,c_0)=P(A_1)=Q_0A_1.
\]

\noindent\underline{Step 5}. \textit{End of the proof.\\[-2ex]}

To show that the functional $\mathcal A:\mathfrak Z^0_C(\Sigma)\to\R$ is well-defined, it remains to see that the action value $\widetilde{\mathcal A}_{\mathfrak p_1^*\om_1}\big([\jmath_{\mathfrak p_r}(z_0)]\big)$ does not depend on the choice of path $\{\mathfrak p_r\}$. Indeed, if $\{\mathfrak p'_r\}$ is another path with $\mathfrak p'_0=\mathfrak p_0$ and $\mathfrak p_1'=\mathfrak p_1$, the concatenation $\{\mathfrak p'_r\}\#\{\mathfrak p_{1-r}\}$ forms a loop based at $\mathfrak p_0$. Applying Step 4 and the definition of the action \eqref{e:actiontilde2}, we conclude
\[
\widetilde{\mathcal A}_{\mathfrak p_1^*\om_1}\big([\jmath_{\mathfrak p'_r}(z_0)]\big)-\widetilde{\mathcal A}_{\mathfrak p_1^*\om_1}\big([\jmath_{\mathfrak p_r}(z_0)]\big)=\widetilde{\mathcal A}_{\mathfrak p_0^*\om_0}\big([\jmath_{\mathfrak p'_r\#\mathfrak p_{1-r}}(z_0)]\big)=0.
\]

Finally, the identities claimed in the statement of the theorem follow directly from what we have proven. Equation \eqref{eq:vanishing_action} is a consequence of item (ii) in Step 2 and of Step 4. Equation \eqref{eq:zoll_identity} follows from Step 3. Item (i) in Step 2 implies \eqref{eq:pullback_c1}.
\end{proof}

\begin{rmk}
If $e_0=0$, we could also have considered the space of weakly Zoll \textit{one}-forms $\bar{\mathfrak Z}_C^0(\Sigma)$, whose elements are pairs $(\mathfrak p,\alpha)$ with $(\mathfrak p,[\om_\alpha])\in \mathfrak Z_C^0(\Sigma)$, where $\mathfrak p^*\om_\alpha=\Omega_0+\di\alpha$, and $\alpha$ {\it not} necessarily normalised. We have an action functional 
\[
\bar {\mathcal A}:\bar{\mathfrak Z}_C^0(\Sigma)\to\R,\qquad \bar {\mathcal A}(\mathfrak p,\alpha):=\widetilde{\mathcal A}_{\alpha}([\jmath_{\mathfrak p_r}(z_0)]).
\] 
Since $\mathfrak Z_C^0(\Sigma)$ is non-degenerate, from Remark \ref{rmk:P_simple_form} and Theorem \ref{thm:mainaction}, we deduce
\begin{equation*}
\Vol(\alpha)=\langle c_0^{n},[M_0]\rangle \cdot \bar {\mathcal A}(\mathfrak p,\alpha),\qquad \forall\, (\mathfrak p,\alpha)\in \bar{\mathfrak Z}_C^0(\Sigma).
\end{equation*}
\end{rmk}
\begin{rmk}
Let $\Omega\in\ZZ_C(\Sigma)$ be a Zoll odd-symplectic form such that the associated bundle $\mathfrak p_\Omega$ belongs to $\mathfrak P^0(\Sigma)$. If we define
\[
\mathcal A(\Omega):=\mathcal A\big(\mathfrak p_\Omega,[\om_\Omega]\big),\qquad \Omega=\mathfrak p_\Omega^*\om_\Omega,
\]
then Theorem \ref{thm:mainaction} translates into
\[
\Fvol(\Omega)=P\big(\mathcal A(\Omega)\big),\qquad \forall\,\Omega\in\ZZ_C(\Sigma).
\]
\end{rmk}
As we did for the volume and the action, we study how the Zoll polynomial behaves under pull-back and change of pair $(\mathfrak p_0,c_0)$ within $\mathfrak Z_C^0(\Sigma)$.
\begin{prp}\label{p:pbinvact}
Let $\mathfrak p_0^\veee:\Sigma^\veee\to M_0^\veee$ be an oriented $S^1$-bundle with maps $\Pi:\Sigma^\veee\to\Sigma$ and $\pi_0:M_0^\veee\to M_0$ as described in \eqref{eq:commutative_diagram2}. If $P^\veee$ is the Zoll polynomial of the weakly Zoll pair $(\Pi^*\mathfrak p_0,\pi_0^*c_0)=(\mathfrak p_0^\veee,c_0^\veee)$, there holds
\begin{equation*}
P^\veee=(\deg \pi_0)\cdot P.
\end{equation*} 
\end{prp}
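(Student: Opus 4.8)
The plan is to compare the two auxiliary polynomials $Q$ and $Q^\veee$ from Definition \ref{d:polp} and then integrate, since each Zoll polynomial is the antiderivative of its auxiliary polynomial normalised to vanish at $A=0$. First I would write out the definition of $Q^\veee$ for the pulled-back pair: by construction $Q^\veee(A)=\ev^\veee(\mathfrak p_0^\veee,Ae_0^\veee+c_0^\veee)=\langle(Ae_0^\veee+c_0^\veee)^n,[M_0^\veee]\rangle$, where $e_0^\veee$ is minus the real Euler class of $\mathfrak p_0^\veee$. The relations in \eqref{e:fcae} then give $Ae_0^\veee+c_0^\veee=\pi_0^*(Ae_0+c_0)$ for every $A\in\R$, which is exactly the point of recording those pullback formulae.

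Next I would use that $\pi_0^*\colon H^*_\dR(M_0)\to H^*_\dR(M_0^\veee)$ is a ring homomorphism, so it commutes with cup powers, yielding $(Ae_0^\veee+c_0^\veee)^n=\pi_0^*\big((Ae_0+c_0)^n\big)$. The remaining ingredient is the degree formula for top-dimensional classes on the base manifolds: for any $\mu\in H^{2n}_\dR(M_0)$ one has $\langle\pi_0^*\mu,[M_0^\veee]\rangle=(\deg\pi_0)\langle\mu,[M_0]\rangle$. This is the base-manifold analogue of the identity $\int_{\Sigma^\veee}\Pi^*\mu=(\deg\Pi)\int_\Sigma\mu$ used in the proof of Proposition \ref{p:volinvcov}, and it holds for the signed degree regardless of orientation conventions. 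Applying it with $\mu=(Ae_0+c_0)^n$ gives $Q^\veee(A)=(\deg\pi_0)\,Q(A)$ for all $A$.

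Finally, since $P^\veee(0)=0=P(0)$ and $\frac{\di P^\veee}{\di A}=Q^\veee=(\deg\pi_0)Q=\frac{\di}{\di A}\big((\deg\pi_0)P\big)$, the two functions $P^\veee$ and $(\deg\pi_0)P$ share the same derivative and the same value at the origin, hence coincide, which is the claim. I do not expect a genuine obstacle here, as the argument is essentially a naturality computation for the evaluation pairing under pullback; the only points needing a line of justification are the ring-homomorphism property of $\pi_0^*$ and the degree formula on $M_0^\veee\to M_0$ (for which one may note, if convenient, that $\deg\Pi=\deg\pi_0$ because $\Pi$ restricts to an orientation-preserving diffeomorphism on each oriented fibre).
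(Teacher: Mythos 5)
Your argument is correct and is essentially the paper's own proof: the paper likewise derives $Q^\veee=(\deg\pi_0)\cdot Q$ from the pullback identities in \eqref{e:fcae} together with the evaluation formula \eqref{e:c'm0'} (which is exactly your degree formula $\langle\pi_0^*\mu,[M_0^\veee]\rangle=(\deg\pi_0)\langle\mu,[M_0]\rangle$ applied to $(Ae_0+c_0)^n$), and then integrates using $P(0)=P^\veee(0)=0$. You have merely unpacked \eqref{e:c'm0'} explicitly, which is fine.
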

\begin{proof}
From equations \eqref{e:fcae} and \eqref{e:c'm0'}, we see that $Q^\veee=(\deg \pi_0)\cdot Q$, where $Q^\veee$ is the derivative of $P^\veee$. Since $P(0)=0=P^\veee(0)$, the statement follows.
\end{proof}

\begin{prp}\label{p:changepol}
Let $(\mathfrak p_0',c_0')$ be a pair in $\mathfrak Z_C^0(\Sigma)$ with $\mathfrak p_0':\Sigma\to M_0'$. There holds
\[
\langle (c_0')^n,[M_0']\rangle=\ev(\mathfrak p_0',c_0')=\frac{\di P}{\di A}\big(\mathcal A(\mathfrak p_0',c_0')\big).
\]
If $P'$ is the Zoll polynomial associated with $(\mathfrak p_0',c_0')$, then
\begin{equation*}
P'(A)=P\big(A+\mathcal A(\mathfrak p_0',c_0')\big)-P\big(\mathcal A(\mathfrak p_0',c_0')\big),\qquad\forall\,A\in\R.
\end{equation*}
\end{prp}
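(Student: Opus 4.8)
The plan is to reduce everything to the pull-back formula \eqref{eq:pullback_c1} of Theorem \ref{thm:mainaction}, keeping $(\mathfrak p_0,c_0)$ as the reference pair and treating $(\mathfrak p_0',c_0')$ as an arbitrary element of $\mathfrak Z_C^0(\Sigma)$. Write $A_0':=\mathcal A(\mathfrak p_0',c_0')$ for brevity and let $e_0'\in H^2_\dR(M_0')$ be minus the real Euler class of $\mathfrak p_0'$. First I would produce the comparison diffeomorphism: since $\mathfrak p_0'$ lies in the connected component $\mathfrak P^0(\Sigma)$, Lemma \ref{l:nondegenerate} (resting on Lemma \ref{l:triv}.(ii)) yields a diffeomorphism $\Psi:\Sigma\to\Sigma$ isotopic to $\id_\Sigma$ with $\Psi^*\mathfrak p_0'=\mathfrak p_0$; let $\psi:M_0\to M_0'$ be its quotient map. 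Three facts about $\psi$ will be used repeatedly: by \eqref{eq:pullback_c1} applied to $(\mathfrak p,c)=(\mathfrak p_0',c_0')$ we have $\psi^*c_0'=A_0'e_0+c_0$; by Remark \ref{r:psiee} (equivalently the computation $\psi^*e=e_0$ inside the proof of Lemma \ref{l:nondegenerate}) we have $\psi^*e_0'=e_0$; and since $\psi$ is an orientation-preserving diffeomorphism, $[M_0']=\psi_*[M_0]$.

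For the first identity I would simply push the evaluation through $\psi$. Using $[M_0']=\psi_*[M_0]$ and naturality of the cup product,
\[
\ev(\mathfrak p_0',c_0')=\langle (c_0')^n,\psi_*[M_0]\rangle=\langle (\psi^*c_0')^n,[M_0]\rangle=\langle (A_0'e_0+c_0)^n,[M_0]\rangle=Q(A_0'),
\]
and by Definition \ref{d:polp} we have $Q=\tfrac{\di P}{\di A}$, so the right-hand side is exactly $\tfrac{\di P}{\di A}(\mathcal A(\mathfrak p_0',c_0'))$. This is the claimed equality, the first equality sign $\langle(c_0')^n,[M_0']\rangle=\ev(\mathfrak p_0',c_0')$ being the very definition of $\ev$.

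For the second identity the key computation is to express the derivative $Q'=\tfrac{\di P'}{\di A}$ of the Zoll polynomial $P'$ of $(\mathfrak p_0',c_0')$ in terms of $Q$. Writing $Q'(A)=\langle (Ae_0'+c_0')^n,[M_0']\rangle$ and pulling back through $\psi$ exactly as above, the relations $\psi^*e_0'=e_0$ and $\psi^*c_0'=A_0'e_0+c_0$ give
\[
Q'(A)=\langle \big(\psi^*(Ae_0'+c_0')\big)^n,[M_0]\rangle=\langle \big((A+A_0')e_0+c_0\big)^n,[M_0]\rangle=Q(A+A_0').
\]
Then I would compare $P'$ with the function $F(A):=P(A+A_0')-P(A_0')$. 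Both vanish at $A=0$, and $\tfrac{\di F}{\di A}(A)=\tfrac{\di P}{\di A}(A+A_0')=Q(A+A_0')=Q'(A)=\tfrac{\di P'}{\di A}(A)$, so $P'=F$ by the fundamental theorem of calculus, which is precisely the stated formula.

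I do not anticipate a genuine obstacle: once $\Psi$ and its quotient map $\psi$ are in place, the argument is bookkeeping with classes pulled back through $\psi$. The only points demanding care are the compatibility of orientations ensuring $[M_0']=\psi_*[M_0]$ and the sign/normalisation conventions guaranteeing $\psi^*e_0'=e_0$ and $\psi^*c_0'=A_0'e_0+c_0$ simultaneously, but these are exactly the conventions already fixed in Section \ref{sec:oriented} and in Lemma \ref{l:nondegenerate}, so they require no new input here.
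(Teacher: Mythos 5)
Your proof is correct and follows essentially the same route as the paper: the paper also establishes $Q'(A)=Q\big(A+\mathcal A(\mathfrak p_0',c_0')\big)$ from \eqref{eq:pullback_c1}, sets $A=0$ for the first identity, and integrates for the second. The only cosmetic difference is that you unpack the pull-back computation $\ev(\mathfrak p_0',Ae_0'+c_0')=\ev(\mathfrak p_0,Ae_0+\psi^*c_0')$ explicitly, whereas the paper simply cites Lemma \ref{l:nondegenerate}.(ii), whose proof is exactly that computation.
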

\begin{proof}
If $Q'$ denotes the derivative of $P'$, by Lemma \ref{l:nondegenerate}.(ii) and \eqref{eq:pullback_c1}, we have 
\[
Q'(A)=Q\big(A+\mathcal A(\mathfrak p_0',c_0')\big).
\] 
Setting $A=0$, we get the first part of the statement. For the second part, we integrate the above identity and use the normalization $P'(0)=0$:
\[
P'(A)=\int_0^AQ'(A')\di A'=\int_0^AQ\big(A'+\mathcal A(\mathfrak p_0',c_0')\big)\di A'=P\big(A+\mathcal A(\mathfrak p_0',c_0')\big)-P\big(\mathcal A(\mathfrak p_0',c_0')\big).\qedhere
\]
\end{proof}

Combining the result above with the transformation rules for the action and the volume under change of reference weakly Zoll pair, we conclude the following invariance property.

\begin{cor}\label{c:changeofbase}
Let $P'$, $\widetilde {\mathcal A}$, $\Fvol'$ be the Zoll polynomial, the action and the volume associated with another reference pair $(\mathfrak p_0',c_0')\in \mathfrak Z_C^0(\Sigma)$. For every $\Omega\in\Xi^2_C(\Sigma)$, there holds
\begin{equation*}
P'\circ\widetilde {\mathcal A}_\Omega'-\Fvol'(\Omega)=P\circ\widetilde {\mathcal A}_\Omega-\Fvol(\Omega)\qquad\text{on }\ \widetilde{\Lambda}_{\mathfrak h}(\Sigma).
\end{equation*} 
\end{cor}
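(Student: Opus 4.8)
The plan is to assemble the identity from the three transformation rules already at our disposal: the behaviour of the action under a change of reference pair (Proposition \ref{p:concatenationaction}), the behaviour of the Zoll polynomial (Proposition \ref{p:changepol}), and the behaviour of the volume (Lemma \ref{l:volcomp}). Throughout I abbreviate $A_0':=\mathcal A(\mathfrak p_0',c_0')$ and read the two sides of the claimed identity under the concatenation identification of the covering spaces $\widetilde\Lambda_{\mathfrak h}'(\Sigma)$ and $\widetilde\Lambda_{\mathfrak h}(\Sigma)$: given $[\gamma_r]\in\widetilde\Lambda_{\mathfrak h}'(\Sigma)$, I choose a point $z_0\in\gamma_0$ and a path $\{\mathfrak p_r\}$ from $\mathfrak p_0$ to $\mathfrak p_0'$, set $\delta_r:=\jmath_{\mathfrak p_r}(z_0)$ (so that $\delta_1=\jmath_{\mathfrak p_0'}(z_0)=\gamma_0$ and $[\delta_r]\in\widetilde\Lambda_{\mathfrak h}(\mathfrak p_0,\mathfrak p_0')$), and associate to $[\gamma_r]$ the element $[\{\delta_r\}\#\{\gamma_r\}]\in\widetilde\Lambda_{\mathfrak h}(\Sigma)$.

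First I would pin down the action transformation. Applying Proposition \ref{p:concatenationaction} gives
\[
\widetilde{\mathcal A}_\Omega\big([\{\delta_r\}\#\{\gamma_r\}]\big)=\widetilde{\mathcal A}_{\Omega_0'}\big([\delta_r]\big)+\widetilde{\mathcal A}_\Omega'\big([\gamma_r]\big).
\]
By definition \eqref{e:defaczoll}, the term $\widetilde{\mathcal A}_{\Omega_0'}([\delta_r])$ is exactly the action $\mathcal A(\mathfrak p_0',c_0')$ of the pair $(\mathfrak p_0',c_0')$, which is well-defined by Theorem \ref{thm:mainaction}; hence it equals $A_0'$ independently of $z_0$ and of the connecting path. (Conceptually, $\delta_1$ is a $\mathfrak p_0'$-fibre, hence a closed characteristic of $\Omega_0'=(\mathfrak p_0')^*\om_0'$, so $\mathfrak a_{\delta_1}(\Omega_0')=0$ by Lemma \ref{l:var} and $\widetilde{\mathcal A}_{\Omega_0'}$ is critical there.) Thus
\[
\widetilde{\mathcal A}_\Omega\big([\{\delta_r\}\#\{\gamma_r\}]\big)=A_0'+\widetilde{\mathcal A}_\Omega'\big([\gamma_r]\big).
\]

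Then I would substitute this into the two remaining rules and watch the terms cancel. By Proposition \ref{p:changepol}, $P'(A)=P(A+A_0')-P(A_0')$ for all $A\in\R$, so with $A=\widetilde{\mathcal A}_\Omega'([\gamma_r])$ the previous display yields
\[
P'\big(\widetilde{\mathcal A}_\Omega'([\gamma_r])\big)=P\big(\widetilde{\mathcal A}_\Omega([\{\delta_r\}\#\{\gamma_r\}])\big)-P(A_0').
\]
For the volume, Lemma \ref{l:volcomp} (in the form displayed in Case 1 of Section \ref{c:vol}) gives $\Fvol(\Omega)=\Fvol(\Omega_0')+\Fvol'(\Omega)$, while $\Fvol(\Omega_0')=\Fvol(\mathfrak p_0',c_0')=P(A_0')$ by the identity \eqref{eq:zoll_identity} of Theorem \ref{thm:mainaction}, since $\Omega_0'$ is associated with $(\mathfrak p_0',c_0')$. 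Hence $P(A_0')+\Fvol'(\Omega)=\Fvol(\Omega)$, and subtracting $\Fvol'(\Omega)$ from the last display produces precisely $P'\circ\widetilde{\mathcal A}_\Omega'-\Fvol'(\Omega)=P\circ\widetilde{\mathcal A}_\Omega-\Fvol(\Omega)$, as desired.

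Finally, I would record the degenerate book-keeping. When $e_0=0$ one has $C^n\neq0$, so both volumes vanish identically, $A_0'=0$ by \eqref{eq:vanishing_action}, and Proposition \ref{p:changepol} gives $P'=P$; the identity is then automatic, and the volume rule above reduces to the trivial $0=0+0$. The main obstacle is not any single computation but the careful handling of the two distinct covering spaces: one must verify that the connecting action $\widetilde{\mathcal A}_{\Omega_0'}([\delta_r])$ is genuinely choice-independent and equals $A_0'$ — this is where the criticality of $\mathfrak p_0'$-fibres for $\Omega_0'$ together with the well-definedness of $\mathcal A$ from Theorem \ref{thm:mainaction} are essential — and that the cases $e_0=0$ and $e_0\neq0$ are both absorbed by the single cancellation $\Fvol(\Omega_0')=P(A_0')$.
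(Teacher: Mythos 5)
Your proposal is correct and follows essentially the same route as the paper: both decompose the action via Proposition \ref{p:concatenationaction}, the volume via Lemma \ref{l:volcomp}, translate the polynomial via Proposition \ref{p:changepol}, and cancel the residual terms using the identity $\Fvol(\mathfrak p_0',c_0')=P\big(\mathcal A(\mathfrak p_0',c_0')\big)$ from Theorem \ref{thm:mainaction}. Your extra care in spelling out the covering-space identification and the flat case $e_0=0$ is harmless but not needed beyond what these cited results already guarantee.
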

\begin{proof}
We preliminarily observe that
\[
\widetilde {\mathcal A}_\Omega=\mathcal A(\mathfrak p_0',c_0')+\widetilde {\mathcal A}_\Omega'
\]
thanks to Proposition \ref{p:concatenationaction}. We also recall that Lemma \ref{l:volcomp} implies that
\[
\Fvol(\Omega)=\Fvol(\mathfrak p_0',c_0')+\Fvol'(\Omega).
\]
Using these identities together with Proposition \ref{p:changepol} and Theorem \ref{thm:mainaction}, we readily compute
\begin{align*}
P(\widetilde {\mathcal A}_\Omega)-\Fvol(\Omega)&=P(\widetilde {\mathcal A}_\Omega)-\Fvol(\mathfrak p_0',c_0')-\Fvol'(\Omega)\\
&=P\big(\widetilde {\mathcal A}_\Omega'+\mathcal A(\mathfrak p_0',c_0')\big)-\Fvol(\mathfrak p_0',c_0')-\Fvol'(\Omega)\\
&=P'(\widetilde {\mathcal A}_\Omega')+P\big(\mathcal A(\mathfrak p_0',c_0')\big)-\Fvol(\mathfrak p_0',c_0')-\Fvol'(\Omega)\\
&=P'(\widetilde {\mathcal A}_\Omega')-\Fvol'(\Omega).\qedhere
\end{align*}
\end{proof}
\section{A conjectural local systolic-diastolic inequality}\label{sec:conjsys}
The following is our setting to study a systolic-diastolic inequality for odd-symplectic forms. We assume that there exists a Zoll odd-symplectic form $\Omega_*\in\ZZ_C(\Sigma)$, for some $C\in H^2_\dR(\Sigma)$.  We denote the bundle associated with $\Omega_*$ by
\[
\mathfrak p_1:=\mathfrak p_{\Omega_*}:\Sigma\to M_1:=M_{\Omega_*}
\]
and let $\om_*\in\Xi^2(M_1)$ be the symplectic form such that $\Omega_*=\mathfrak p_1^*\om_*$. The space $\mathfrak P^0(\Sigma)$ is the connected component of $\mathfrak p_1$ in $\mathfrak P(\Sigma)$ and $\mathfrak Z_C^0(\Sigma)$ is the corresponding space of weakly Zoll pairs, which is non-empty and non-degenerate thanks to Remark \ref{r:zollnondeg}. Namely, condition \eqref{e:nd} is automatically satisfied and we can use all the results contained in Section \ref{s:acfun}.

We fix a reference weakly Zoll pair $(\mathfrak p_0,c_0)\in \mathfrak Z_C^0(\Sigma)$, where 
\[
\mathfrak p_0:\Sigma\to M_0
\]
and let $\om_0$ be some form in $\Xi^2_{c_0}(M_0)$. Let $\mathfrak h\in[S^1,\Sigma]$ denote the free-homotopy class of the $\mathfrak p_0$-fibres. We define the volume and the action functionals
\[
\Fvol:\Xi^2_C(\Sigma)\to\R,\qquad \mathcal A:\mathfrak Z_C^0(\Sigma)\to\R,\qquad \widetilde{\mathcal A}_\Omega:\widetilde{\Lambda}_{\mathfrak h}(\Sigma)\to\R
\] 
with respect to $(\mathfrak p_0,c_0)$. Due to Proposition \ref{p:changepol}, we have 
\begin{equation}\label{e:pmonotone}
\frac{\di P}{\di A}(A_*)=\langle [\om_*]^{n},[M_1]\rangle>0,\qquad 
A_*:=\mathcal A(\mathfrak p_1,\om_*).
\end{equation}

\begin{dfn}\label{dfn:admissible_cover}
We say that a finite cover $\{B_i\}$ of $M_1$ is \textbf{admissible}, if all the sets $B_i$ and their intersections $B_i\cap B_j$ are homeomorphic to the open Euclidean ball, if non-empty. We write $\{\Sigma_i:=\mathfrak p_1^{-1}(B_i)\}$ for the pull-back cover of $\Sigma$, so that $\Sigma_i\cap \Sigma_j$ retracts to a $\mathfrak p_1$-fibre, if non-empty. We readily see that admissible finite covers exist and we fix one of them throughout the present section.
\end{dfn}
We also consider another oriented $S^1$-bundle \label{dfn:finitecovering}$\mathfrak p_0^\veee:\Sigma^\veee\to M_0^\veee$ with bundle map $\Pi:\Sigma^\veee\to\Sigma$ such that $\mathfrak p_0^\veee=\Pi^*\mathfrak p_0$. We further assume that $\Pi$ is a \textbf{finite covering map}. This is equivalent to asking that the corresponding quotient map $\pi_0:M_0^\veee\to M_0$, which fits into the first commutative diagram in \eqref{eq:commutative_diagram2}, is a finite covering map. Thus, we endow $M_0^\veee$ and $\Sigma^\veee$ with the orientations pulled back by $\pi_0$ and $\Pi$, respectively.

The pull-back form $\Omega_*^\veee:=\Pi^*\Omega_*$ is Zoll odd-symplectic on $\Sigma^\veee$ and defines the oriented $S^1$-bundle $\mathfrak p_1^\veee:=\Pi^*\mathfrak p_1:\Sigma^\veee\to M_1^\veee$. Since $\mathfrak p_0$ and $\mathfrak p_1$ are connected by a path of oriented $S^1$-bundles, the quotient map $\pi_1:M_1^\veee\to M_1$ is also an orientation-preserving finite covering map, which fits into the first commutative diagram in \eqref{eq:commutative_diagram2}. Moreover, $\pi_1$ evenly covers all the sets in $\{B_i\}$, as they are contractible. Therefore, the family $\{B_j^\veee\}$, whose elements are the connected components of the pre-images $\pi_1^{-1}(B_i)$, yields a finite admissible cover of $M_1^\veee$. 

\subsection{A local primitive for the action form}\label{ss:locprim}

We have seen that the action form does not admit a global primitive on $\Lambda_{\mathfrak h}(\Sigma)$ in general. However, we can always find a local primitive on the space of periodic curves that are close to $\jmath_{\mathfrak p_1}(\Sigma)$. Let $\Lambda(\mathfrak p_1)$ be the subset of $\Lambda_{\mathfrak h}(\Sigma)$, whose elements $\gamma$ are contained in some $\Sigma_i$ and are homotopic within $\Sigma_i$ to some $\mathfrak p_1$-fibre. This means that $\gamma$ admits a {\bf short  homotopy} $\{\gamma^\mathrm{short}_r\}$. Namely, such a homotopy has the properties
\[
\gamma^\mathrm{short}_r\subset\Sigma_i,\ \ \forall\, r\in[0,1],\qquad \gamma^\mathrm{short}_0=\jmath_{\mathfrak p_1}(z_i),\ \ \gamma^\mathrm{short}_1=\gamma,
\]
for some $z_i\in\Sigma_i$.  We choose any path $\{\mathfrak p_r\}$ from $\mathfrak p_0$ and $\mathfrak p_1$ in $\mathfrak P^0(\Sigma)$, and consider the map 
\[
\Lambda(\mathfrak p_1)\to \widetilde{\Lambda}_{\mathfrak h}(\Sigma),\qquad \gamma\mapsto 
\big[\{\jmath_{\mathfrak p_r}(z_i)\}\#\{\gamma_r^\mathrm{short}\}\big]
\]
Since the intersection of two elements of the finite admissible cover is contractible, this map does not depend on the particular short homotopy nor on the set $\Sigma_i$. Composing this map with the  functional $\widetilde{\mathcal A}_\Omega$ on $\widetilde{\Lambda}_{\mathfrak h}(\Sigma)$, we define the action functional on $\Lambda(\mathfrak p_1)$:
\begin{equation}\label{e:aclocdef}
\mathcal A_\Omega:\Lambda(\mathfrak p_1)\to\R\,,\qquad \mathcal A_\Omega(\gamma):=\widetilde{\mathcal A}_\Omega\Big(\big[\{\jmath_{\mathfrak p_r}(z_i)\}\#\{\gamma_r^\mathrm{short}\}\big]\Big).
\end{equation}
We note that $\di\mathcal A_\Omega=\mathfrak a(\Omega)$ on $\Lambda(\mathfrak p_1)$. 

\begin{lem}
The action value $\mathcal A_\Omega(\gamma)$ does not depend on the choice of path $\{\mathfrak p_r\}$.
\end{lem}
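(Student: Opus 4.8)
The plan is to show that replacing the chosen path $\{\mathfrak p_r\}$ by a second path $\{\mathfrak p_r'\}$ from $\mathfrak p_0$ to $\mathfrak p_1$ in $\mathfrak P^0(\Sigma)$ leaves the value in \eqref{e:aclocdef} unchanged, by reducing the difference to a fibre homotopy over a \emph{loop} of bundles based at $\mathfrak p_0$, which has already been shown to have vanishing reference action in the proof of Theorem \ref{thm:mainaction}. First I would fix $\gamma\in\Lambda(\mathfrak p_1)$ together with a short homotopy $\{\gamma_r^{\mathrm{short}}\}$ and the point $z_i$ with $\gamma_0^{\mathrm{short}}=\jmath_{\mathfrak p_1}(z_i)$; these data do not involve the path of bundles. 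Writing $\delta_r:=\jmath_{\mathfrak p_r}(z_i)$ and $\delta_r':=\jmath_{\mathfrak p_r'}(z_i)$, both concatenations $\{\delta_r\}\#\{\gamma_r^{\mathrm{short}}\}$ and $\{\delta_r'\}\#\{\gamma_r^{\mathrm{short}}\}$ start at the \emph{same} $\mathfrak p_0$-fibre $\delta_0=\delta_0'=\jmath_{\mathfrak p_0}(z_i)$ and end at $\gamma$, and they have the same end fibre over $\mathfrak p_1$, namely $\delta_1=\delta_1'=\jmath_{\mathfrak p_1}(z_i)$.

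Choosing a normalised $\alpha$ with $\Omega=\Omega_0+\di\alpha$ and using \eqref{e:actiontilde1}, the contribution of the common initial circle $\int_{S^1}\delta_0^*\alpha$ and of the common cylinder traced by $\{\gamma_r^{\mathrm{short}}\}$ cancel in the difference, so that
\begin{equation*}
\mathcal A_\Omega(\gamma)\big|_{\{\mathfrak p_r\}}-\mathcal A_\Omega(\gamma)\big|_{\{\mathfrak p_r'\}}=\int_{\Delta}\Omega-\int_{\Delta'}\Omega,
\end{equation*}
where $\Delta$ and $\Delta'$ are the cylinders traced by $\{\delta_r\}$ and $\{\delta_r'\}$. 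Next I would remove the exact part: since $\Omega-\Omega_0=\di\alpha$, Stokes' Theorem gives $\int_{\Delta}\di\alpha=\int_{\delta_1}\alpha-\int_{\delta_0}\alpha$ and likewise for $\Delta'$, and because $\delta_0=\delta_0'$ and $\delta_1=\delta_1'$ these boundary terms cancel, yielding $\int_\Delta\Omega-\int_{\Delta'}\Omega=\int_\Delta\Omega_0-\int_{\Delta'}\Omega_0$.

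Finally, by \eqref{action_form_cylinder} the right-hand side is the integral of the action form $\mathfrak a(\Omega_0)$ over the loop $\{\delta_r\}\#\{\delta_{1-r}'\}$, which is the fibre homotopy $\jmath_{\{\mathfrak p_r\}\#\{\mathfrak p_{1-r}'\}}(z_i)$ over the bundle loop $\{\mathfrak p_r\}\#\{\mathfrak p_{1-r}'\}$ based at $\mathfrak p_0$. Thus the whole difference equals $\widetilde{\mathcal A}_{\Omega_0}\big([\jmath_{\{\mathfrak p_r\}\#\{\mathfrak p_{1-r}'\}}(z_i)]\big)$, the action with respect to the reference form $\Omega_0=\mathfrak p_0^*\om_0$ of a fibre homotopy over a closed loop of bundles. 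This is precisely the quantity shown to vanish by Step 4 in the proof of Theorem \ref{thm:mainaction} (the case $(\mathfrak p_1,c_1)=(\mathfrak p_0,c_0)$, in which the path of bundles is a loop at $\mathfrak p_0$). Hence the difference is zero and $\mathcal A_\Omega(\gamma)$ is independent of $\{\mathfrak p_r\}$.

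I do not expect a genuine obstacle here: the argument is entirely formal once one is careful with the base points and with the cancellation of the $\di\alpha$ boundary contributions, after which the statement follows directly from the vanishing already established in Theorem \ref{thm:mainaction}. The only mild subtlety is to keep track of which curve serves as the fixed endpoint in the covering space $\widetilde{\Lambda}_{\mathfrak h}(\Sigma)$, so that the two concatenated paths are genuinely compared as elements over the same $\gamma$.
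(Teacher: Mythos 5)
Your proposal is correct and follows essentially the same route as the paper: the paper's one-line proof also reduces the difference of the two action values to $\widetilde{\mathcal A}_{\Omega_0}$ evaluated on the fibre homotopy over the concatenated bundle loop based at $\mathfrak p_0$, which equals $\mathcal A(\mathfrak p_0,c_0)=0$ by \eqref{eq:vanishing_action}. Your version merely spells out the cancellation of the common short-homotopy and initial-circle terms and the removal of the $\di\alpha$ contribution via Stokes, which the paper leaves implicit.
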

\begin{proof}
If $\{\mathfrak p_r'\}$ is another path in $\mathfrak P^0(\Sigma)$ with $\mathfrak p'_0=\mathfrak p_0$ and $\mathfrak p'_1=\mathfrak p_1$, then, by \eqref{e:actiontilde2} and \eqref{eq:vanishing_action},
\[
\widetilde{\mathcal A}_\Omega\Big(\big[\{\jmath_{\mathfrak p_r}(z_i)\}\#\{\gamma_r^\mathrm{short}\}\big]\Big)-\widetilde{\mathcal A}_\Omega\Big(\big[\{\jmath_{\mathfrak p_r'}(z_i)\}\#\{\gamma_r^\mathrm{short}\}\big]\Big)={\mathcal A}(\mathfrak p_0,c_0)=0.\qedhere
\]
\end{proof}

We write 
$
\mathcal X(\Omega;\mathfrak p_1):=\mathcal X(\Omega)\cap\Lambda(\mathfrak p_1)
$
 for the set of closed characteristics of $\Omega$  in $\Lambda(\mathfrak p_1)$. The {\bf systole} over the set $\Lambda(\mathfrak p_1)$ is the functional
\begin{equation}\label{e:amindef}
\mathcal A_{\min}:\Xi^2_C(\Sigma)\to \R\cup\{-\infty,+\infty\},\qquad \mathcal A_{\min}(\Omega):=\inf_{\gamma\in\mathcal X(\Omega;\mathfrak p_1)}\mathcal A_\Omega(\gamma),
\end{equation}
while the {\bf diastole} over the set $\Lambda(\mathfrak p_1)$ is the functional 
\begin{equation}\label{e:amaxdef}
\mathcal A_{\max}:\Xi^2_C(\Sigma)\to \R\cup\{-\infty,+\infty\}\,,\qquad \mathcal A_{\max}(\Omega):=\sup_{\gamma\in\mathcal X(\Omega;\mathfrak p_1)}\mathcal A_\Omega(\gamma).
\end{equation}
\begin{prp}\label{p:pbinvactloc}
Let $\Pi:\Sigma^\veee\to\Sigma$ be a bundle map with $\mathfrak p_0^\veee=\Pi^*\mathfrak p_0$, which is also a finite covering map. Let us consider the oriented $S^1$-bundle $\mathfrak p_1^\veee=\Pi^*\mathfrak p_1:\Sigma^\veee\to M_1^\veee$. Then, $\Pi_*:(\gamma^\veee\in\Lambda(\mathfrak p_1^\veee))\mapsto (\Pi\circ\gamma^\veee\in \Lambda(\mathfrak p_1))$ is a (surjective) finite covering map and there holds
\begin{equation*}
\mathcal A^\veee_{\Pi^*\Omega}=\mathcal A_{\Omega}\circ \Pi_*,\qquad \forall\,\Omega\in\Xi^2_C(\Sigma),
\end{equation*}
where $\mathcal A^\veee_{\Pi^*\Omega}$ is the action functional on $\Lambda(\mathfrak p_1^\veee)$ with reference pair $(\mathfrak p_0^\veee,c_0^\veee)$. 
As a consequence, the restriction map $\Pi_*:\mathcal X(\Pi^*\Omega;\mathfrak p_1^\veee)\rightarrow\mathcal X(\Omega;\mathfrak p_1)$ is also a finite cover and we have
\[
\mathcal A^\veee_{\min}(\Pi^*\Omega)=\mathcal A_{\min}(\Omega),\qquad \mathcal A_{\max}^\veee(\Pi^*\Omega)=\mathcal A_{\max}(\Omega).
\]
\end{prp}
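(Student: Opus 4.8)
The plan is to reduce everything to the pull-back identity for the action on covering spaces already established in Proposition \ref{p:pbaction}, after first clarifying the covering structure of the loop spaces involved. The crucial geometric fact, which comes straight from the set-up of the section, is that since $\pi_1:M_1^\veee\to M_1$ evenly covers each contractible set $B_i$ of the admissible cover, the pull-back set $\Sigma_i=\mathfrak p_1^{-1}(B_i)$ is evenly covered by the sets $\Sigma_j^\veee=(\mathfrak p_1^\veee)^{-1}(B_j^\veee)$ lying over it, and $\Pi|_{\Sigma_j^\veee}:\Sigma_j^\veee\to\Sigma_i$ is a diffeomorphism for each such component.

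First I would prove that $\Pi_*:\Lambda(\mathfrak p_1^\veee)\to\Lambda(\mathfrak p_1)$ is a finite covering map of degree $d=\deg\pi_1=\deg\pi_0$. Given $\gamma\in\Lambda(\mathfrak p_1)$, it is contained in some $\Sigma_i$ and homotopic inside $\Sigma_i$ to a $\mathfrak p_1$-fibre; since $\gamma$ stays inside the evenly covered $\Sigma_i$, each of its lifts must lie entirely in a single sheet $\Sigma_j^\veee$, on which $\Pi$ is a diffeomorphism. Thus $\gamma$ has exactly one lift in each of the $d$ sheets over $\Sigma_i$, each lift being a loop homotopic within $\Sigma_j^\veee$ to a $\mathfrak p_1^\veee$-fibre and hence lying in $\Lambda(\mathfrak p_1^\veee)$. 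The local inverses $(\Pi|_{\Sigma_j^\veee})^{-1}$ furnish local trivialisations, so $\Pi_*$ is a $d$-to-$1$ covering map; in particular it is surjective.

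Next I would verify the identity $\mathcal A^\veee_{\Pi^*\Omega}=\mathcal A_\Omega\circ\Pi_*$, where the freedom in the choice of connecting path established in the lemma preceding the statement is essential. I would take the path $\{\mathfrak p_r^\veee\}$ used to define $\mathcal A^\veee$ to be the pull-back $\{\Pi^*\mathfrak p_r\}$ of a path $\{\mathfrak p_r\}$ from $\mathfrak p_0$ to $\mathfrak p_1$. Then, for $\gamma^\veee\in\Lambda(\mathfrak p_1^\veee)$ contained in $\Sigma_j^\veee$ with short homotopy $\{(\gamma^\veee)_r^{\mathrm{short}}\}$ and basepoint $z_j^\veee$, one checks that $\Pi$ carries the concatenation $\{\jmath_{\mathfrak p_r^\veee}(z_j^\veee)\}\#\{(\gamma^\veee)_r^{\mathrm{short}}\}$ to $\{\jmath_{\mathfrak p_r}(\Pi(z_j^\veee))\}\#\{\Pi\circ(\gamma^\veee)_r^{\mathrm{short}}\}$: indeed $\Pi$ sends $\mathfrak p_r^\veee$-fibres to $\mathfrak p_r$-fibres by definition of the pull-back bundle, and sends the short homotopy in $\Sigma_j^\veee$ to a short homotopy for $\Pi\circ\gamma^\veee$ in $\Sigma_i$. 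In the notation of Proposition \ref{p:pbaction} this reads $\widetilde\Pi_*[\cdots^\veee]=[\cdots]$; since $\Pi$ is a finite covering map we have $\deg\pi_0\geq 1>0$, so $\mathfrak Z^0_{C^\veee}(\Sigma^\veee)$ is non-degenerate and that proposition gives $\mathcal A^\veee_{\Pi^*\Omega}(\gamma^\veee)=\widetilde{\mathcal A}^\veee_{\Pi^*\Omega}([\cdots^\veee])=\widetilde{\mathcal A}_\Omega(\widetilde\Pi_*[\cdots^\veee])=\mathcal A_\Omega(\Pi_*\gamma^\veee)$.

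Finally, the statements about closed characteristics follow formally. As $\Pi$ is a local diffeomorphism, $\ker(\Pi^*\Omega)_z=(\di\Pi_z)^{-1}(\ker\Omega_{\Pi(z)})$, so $\gamma^\veee$ is a closed characteristic of $\Pi^*\Omega$ if and only if $\Pi\circ\gamma^\veee$ is one of $\Omega$ (embeddedness being preserved since each lift lies in a single sheet $\Sigma_j^\veee$, where $\Pi$ is injective). Hence $\mathcal X(\Pi^*\Omega;\mathfrak p_1^\veee)=\Pi_*^{-1}\big(\mathcal X(\Omega;\mathfrak p_1)\big)$, and restricting the covering map $\Pi_*$ to this preimage gives a finite, hence surjective, cover onto $\mathcal X(\Omega;\mathfrak p_1)$. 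Combining surjectivity with the identity $\mathcal A^\veee_{\Pi^*\Omega}=\mathcal A_\Omega\circ\Pi_*$ shows that the infimum and supremum over the two characteristic sets coincide, yielding $\mathcal A^\veee_{\min}(\Pi^*\Omega)=\mathcal A_{\min}(\Omega)$ and $\mathcal A^\veee_{\max}(\Pi^*\Omega)=\mathcal A_{\max}(\Omega)$. I expect the covering-space bookkeeping of the first step — verifying the even covering at the level of loop spaces and the exact count of lifts — to be the main point requiring care, while the second and third steps are largely formal once Proposition \ref{p:pbaction} is in hand.
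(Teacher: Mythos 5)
Your proposal is correct and follows essentially the same route as the paper: the identity $\mathcal A^\veee_{\Pi^*\Omega}=\mathcal A_\Omega\circ\Pi_*$ is deduced from Proposition \ref{p:pbaction} by observing that $\Pi^*$ lifts the connecting path of bundles and that $\Pi$ carries short homotopies in a sheet $\Sigma_j^\veee$ to short homotopies in $\Sigma_i$, and the statements about $\mathcal X$, $\mathcal A_{\min}$ and $\mathcal A_{\max}$ then follow formally. The only difference is one of detail: the paper compresses all of this into one sentence, whereas you spell out the even-covering bookkeeping for $\Pi_*$ on loop spaces (including the sheet count and the preservation of embeddedness), which is a faithful elaboration rather than a different argument.
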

\begin{proof}
The statement follows from Proposition \ref{p:pbaction}, since $\Pi^*$ lifts a path $\{\mathfrak p_r\}$ in $\mathfrak P^0(\Sigma)$ to a path $\{\mathfrak p_r^\veee\}$ in $\mathfrak P^0(\Sigma^\veee)$ and $\Pi$ maps short homotopies for $\mathfrak p_1^\veee$ to short homotopies for $\mathfrak p_1$.
\end{proof}
We recall some results from \cite{Gin87}, ensuring that the systole and the diastole of $\Omega$ are finite, if $\Omega$ is an odd-symplectic form sufficiently close to $\Omega_*$ in the same cohomology class. As usual, we measure distances and norms by fixing Riemannian metrics on the relevant spaces.

\begin{prp}\label{p:ginsys}
There exists a $C^1$-neighbourhood $\mathcal V$ of $\Omega_*$ in $\mathcal S_C(\Sigma)$ such that, for all $\Omega\in\mathcal V$, the following properties hold:
\begin{enumerate}[(i)]
\item There is a constant $C>0$ depending on $\mathcal V$ but not on $\Omega$ such that
\begin{equation*}
\sup_{t\in\R}\Big(\dist\big(q_\gamma(0),q_\gamma(t)\big)\Big)\leq C\Vert\Omega-\Omega_*\Vert_{C^0},\qquad q_\gamma:=\mathfrak p_1\circ\gamma,\qquad \forall\, \gamma\in \mathcal X(\Omega;\mathfrak p_1).
\end{equation*} 
\item The set $\mathcal X(\Omega;\mathfrak p_1)$ is compact and non-empty and the restrictions $\mathcal A_{\min}|_{\mathcal V}:\mathcal V\to\R$, $\mathcal A_{\max}|_{\mathcal V}:\mathcal V\to\R$ are Lipschitz-continuous real functions. 
\item If $\Omega\in\mathcal V$ is a Zoll form, then all its closed characteristics (up to orientation) lie in $\Lambda(\mathfrak p_1)$, i.e.~$\mathcal X(\Omega)=\mathcal X(\Omega;\mathfrak p_1)$, and, recalling the notation $\mathcal A(\Omega)=\mathcal A(\mathfrak p_\Omega,[\om_\Omega])$,
\[
\mathcal A_{\min}(\Omega)=\mathcal A(\Omega)=\mathcal A_{\max}(\Omega).
\]
\end{enumerate}
\end{prp}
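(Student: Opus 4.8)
The plan is to follow the analysis of closed characteristics of odd-symplectic forms near a Zoll one, as developed by Ginzburg \cite{Gin87} and by \'Alvarez-Paiva--Balacheff \cite{APB14}, and to read off the three statements from the local structure that this analysis produces. Since $\Omega_*=\mathfrak p_1^*\om_*$ is Zoll, its closed characteristics are exactly the $\mathfrak p_1$-fibres, i.e.\ the oriented $S^1$-orbits of a free action, and these sweep out a smooth compact manifold $\jmath_{\mathfrak p_1}(\Sigma)\subset\Lambda_{\mathfrak h}(\Sigma)$ of critical points of the (locally defined) action $\mathcal A_{\Omega_*}$. First I would set up the standard fibrewise reduction: working over each base chart $B_i$ of the admissible cover, one trivialises $\Sigma_i\cong B_i\times S^1$ and writes a nearby $\Omega\in\mathcal S_C(\Sigma)$ as a perturbation of $\Omega_*$. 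Because $\Omega$ is odd-symplectic and $C^1$-close to $\Omega_*$, its characteristic line field is a $C^0$-small perturbation of the vertical field $V$, so every characteristic in $\Lambda(\mathfrak p_1)$ projects under $\mathfrak p_1$ to a curve that stays inside one chart; this is exactly the content needed for (i).

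For item (i), the quantitative drift estimate, the key step is to integrate the defining relation $\dot\gamma\in\ker\Omega$ against the connection form $\eta$ and a horizontal frame. Writing $\dot\gamma=V+h$, where $h$ is the horizontal part, the condition $\iota_{\dot\gamma}\Omega=0$ together with the non-degeneracy of $\Omega|_{\ker\eta}$ forces $\|h\|_{C^0}\leq C\|\Omega-\Omega_*\|_{C^0}$ after renormalising the parameter so that $\eta(\dot\gamma)\equiv 1$; here the constant $C$ depends only on the uniform bounds available on the neighbourhood $\mathcal V$ and on the fixed auxiliary metric. Integrating $\dot q_\gamma=\mathrm d\mathfrak p_1(h)$ over $S^1$ then yields the stated bound on $\dist(q_\gamma(0),q_\gamma(t))$. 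I expect this to be the routine part.

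For item (ii), compactness of $\mathcal X(\Omega;\mathfrak p_1)$ follows from (i) together with the fact that a characteristic confined to a single $\Sigma_i$ and homotopic to a fibre is determined, up to reparametrisation, by a solution of a first-order ODE depending continuously on $\Omega$; a limiting argument using Arzel\`a--Ascoli gives closedness, while non-emptiness is the genuinely substantive point. \textbf{This is the main obstacle.} One must produce at least one closed characteristic near a fibre, and for Zoll $\Omega_*$ the fibres form a whole critical \emph{manifold}, so the naive implicit function theorem does not apply directly: the linearised operator has a cokernel coming from the $S^1$-family of fibres. The standard resolution, which I would invoke following \cite[Section III]{Gin87} (see also \cite{Bot80} for the contact case already used in Theorem \ref{thm:contact_sys}), is an equivariant / Lusternik--Schnirelmann argument: the action form $\mathfrak a(\Omega)$ is closed, $\mathcal A_\Omega$ is a genuine function on $\Lambda(\mathfrak p_1)$, and its critical points near the Bott-nondegenerate critical manifold $\jmath_{\mathfrak p_1}(\Sigma)$ persist because the normal Hessian is nondegenerate while the tangential degeneracy is governed by the topology of $\Sigma$. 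Concretely one reduces, by a finite-dimensional (Lyapunov--Schmidt) reduction over the critical manifold, to finding critical points of a function on a perturbation of $\Sigma$ itself, which always exist; the number is bounded below by the cup-length of $\Sigma$. Lipschitz continuity of $\mathcal A_{\min}|_{\mathcal V}$ and $\mathcal A_{\max}|_{\mathcal V}$ then follows because $\mathcal A_\Omega(\gamma)$ depends Lipschitz-continuously on $\Omega$ uniformly over the compact set $\mathcal X(\Omega;\mathfrak p_1)$ (the action integrand is linear in $\Omega$), and taking inf/sup of a uniformly Lipschitz family preserves the Lipschitz constant.

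Finally, for item (iii), suppose $\Omega\in\mathcal V$ is itself Zoll with associated bundle $\mathfrak p_\Omega$. Shrinking $\mathcal V$ if necessary, Lemma \ref{l:triv}.(i) shows $\mathfrak p_\Omega$ lies in the same component $\mathfrak P^0(\Sigma)$ as $\mathfrak p_1$ and is $C^1$-close to it, so every $\mathfrak p_\Omega$-fibre is $C^1$-close to a $\mathfrak p_1$-fibre and hence lies in $\Lambda(\mathfrak p_1)$ with a short homotopy inside one $\Sigma_i$; this gives $\mathcal X(\Omega)=\mathcal X(\Omega;\mathfrak p_1)$. Every such characteristic is an oriented $\mathfrak p_\Omega$-fibre, and by the path-independence established in Theorem \ref{thm:mainaction} together with the very definition \eqref{e:aclocdef} of $\mathcal A_\Omega$ via the reference pair, the action value $\mathcal A_\Omega(\gamma)$ equals $\mathcal A(\mathfrak p_\Omega,[\om_\Omega])=\mathcal A(\Omega)$ for \emph{every} fibre $\gamma$, independently of the chosen fibre (this uses Step 1 of the proof of Theorem \ref{thm:mainaction}, which shows the action on a fibre is independent of the base point). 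Since all characteristics share the same action, the infimum and supremum coincide with $\mathcal A(\Omega)$, giving $\mathcal A_{\min}(\Omega)=\mathcal A(\Omega)=\mathcal A_{\max}(\Omega)$ as claimed.
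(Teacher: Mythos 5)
Your treatment of items (i) and (ii) follows essentially the same route as the paper: the drift estimate comes from comparing the normalised characteristic vector field with the vertical field $V$, and existence, compactness and Lipschitz continuity come from the finite-dimensional (Lyapunov--Schmidt) reduction of \cite[Section III]{Gin87}, which produces a function on $\Sigma$ whose critical points are the characteristics in $\Lambda(\mathfrak p_1)$. One small imprecision: Lipschitz continuity of $\mathcal A_{\min}$ and $\mathcal A_{\max}$ does not follow merely from ``$\mathcal A_\Omega(\gamma)$ is Lipschitz in $\Omega$ uniformly over $\mathcal X(\Omega;\mathfrak p_1)$'', because the set over which you take the inf/sup itself varies with $\Omega$; the correct statement, which your reduction already provides, is that the reduced function $S(\Omega)\in C^1(\Sigma)$ on the \emph{fixed} manifold $\Sigma$ depends Lipschitz-continuously on $\Omega$, and $\min,\max:C^1(\Sigma)\to\R$ are Lipschitz.

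The genuine gap is in item (iii). You assert that if $\Omega\in\mathcal V$ is Zoll then $\mathfrak p_\Omega$ is $C^1$-close to $\mathfrak p_1$, and you cite Lemma \ref{l:triv}.(i) for this; but that lemma takes the closeness of the bundles as \emph{input}, it does not derive it from the closeness of the two-forms. The point is that $\Omega$ being $C^1$-close to $\Omega_*$ only makes the characteristic line field $C^0$-close to the vertical one; it does not a priori prevent the Zoll foliation of $\Omega$ from having fibres of large period that wind many times before closing up, in which case they would neither be close to $\mathfrak p_1$-fibres nor lie in $\Lambda(\mathfrak p_1)$. Ruling this out is precisely the first assertion of (iii), so your argument is circular at this step. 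The paper closes this gap with a connectedness argument: it considers the set $\Sigma(\Omega;\mathfrak p_1)$ of points whose characteristic lies in $\Lambda(\mathfrak p_1)$, shows it is non-empty by item (ii), open because the sets $B_i$ are open and the orbits of a Zoll form vary continuously, and closed because item (i) forces any limit of characteristics in $\Lambda(\mathfrak p_1)$ to project into a set of diameter at most a Lebesgue number of the cover $\{B_i\}$ (after shrinking $\mathcal V$ so that $C\Vert\Omega-\Omega_*\Vert_{C^1}\le\rho_{\mathrm{Leb}}$), hence to remain short-homotopic to a fibre inside a single $\Sigma_i$. Only after this does one know that all $\mathfrak p_\Omega$-fibres are short, so that $\mathfrak p_\Omega$ lies in the neighbourhood of $\mathfrak p_1$ where Lemma \ref{l:triv}.(i) applies, which in turn yields the path $\{\mathfrak q_r\}$ of bundles with short fibres needed to compute $\mathcal A_\Omega(\jmath_{\mathfrak p_\Omega}(z_i))=\mathcal A(\Omega)$. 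Your final step identifying all the action values with $\mathcal A(\Omega)$ via Theorem \ref{thm:mainaction} is correct once this is in place.
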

\begin{proof}
Let $\eta\in \mathcal K(\mathfrak p_1)$, and let $V$ be the vertical vector field with $\eta(V)=1$. If $\Omega$ is odd-symplectic, the equation 
\[
\iota_{X_\Omega}(\eta\wedge\Omega_*^{n})=\Omega^{n}
\]
determines a nowhere vanishing section $X_\Omega\in\ker\Omega$. The periodic orbits of the flow $\Phi^{X_\Omega}$ yield elements in $\mathcal X(\Omega)$. Since $V=X_{\Omega_*}$, we have
\begin{equation*}
\|X_\Omega-V\|_{C^1}\leq C_1\Vert \Omega-\Omega_*\Vert_{C^1}
\end{equation*}
for some constant $C_1>0$. Using this estimate, one can argue as in \cite[Lemma 3.3, Proposition 3.4]{BK19a} to show that there exists a $C^1$-neighbourhood $\mathcal V\subset\mathcal S_C(\Sigma)$ of $\Omega_*$ such that, for all $\Omega\in\mathcal V$ item (i) holds and $T(\gamma)\leq 2$, where $T(\gamma)$ is the period of $\gamma\in\mathcal X(\Omega;\mathfrak p_1)$. 

One can follow \cite[Section III]{Gin87} or \cite[Section 3.2]{APB14} to show that there exists a Lipschitz-continuous map $\mathcal Y:\mathcal V\times\Sigma\to \Lambda(\mathfrak p_1)$ with the following properties. If one defines, for every $\Omega\in\mathcal V$, the function $S(\Omega):\Sigma\to\R$ by $S(\Omega)(z):=\mathcal A_\Omega\big(\mathcal Y(\Omega,z)\big)$ for all $z\in\Sigma$, then one gets a Lipschitz function $S:\mathcal V\to C^1(\Sigma)$ such that
\[
\Crit S(\Omega) \longrightarrow\mathcal X(\Omega;\mathfrak p_1),\qquad z\mapsto \mathcal Y(\Omega,z)
\]
is a one-to-one correspondence. In particular,
\[
\min S(\Omega)=\mathcal A_{\min}(\Omega),\qquad\max S(\Omega)=\mathcal A_{\max}(\Omega).
\] 
Since the functions $\max,\min:C^1(\Sigma)\to\R$ are Lipschitz-continuous, this shows (ii).

To prove (iii), let $\rho_\mathrm{Leb}>0$ be a Lebesgue number for the open cover $\{B_i\}$ of $M_1$ and let $C$ be the constant given in (i). Up to shrinking $\mathcal V$, we can assume that
\begin{equation*}
C\Vert\Omega-\Omega_*\Vert_{C^1}\leq\rho_\mathrm{Leb},\qquad \forall\, \Omega\in\mathcal V.
\end{equation*}
Let now $\Omega\in\mathcal V$ be a Zoll form with associated bundle $\mathfrak p_\Omega$, and consider the set
\[
\Sigma(\Omega;\mathfrak p_1):=\big\{z\in\Sigma\ \big|\ \{t\mapsto\Phi_t^{X_\Omega}(z)\}\in \Lambda(\mathfrak p_1)\big\},
\]
which is non-empty since we proved that $\mathcal X(\Omega;\mathfrak p_1)$ is non-empty. This set is open, as the sets $B_i$ are open. Furthermore, from item (i) and the inequality $C\Vert\Omega-\Omega_*\Vert_{C^1}\leq\rho_\mathrm{Leb}$, it is also closed. Since $\Sigma$ is connected, we deduce $\Sigma(\Omega;\mathfrak p_1)=\Sigma$ as desired.

To prove the equality between the actions in (iii), we observe that, since $\mathfrak p_\Omega$ is $C^1$-close to $\mathfrak p_1$, there exist a path $\{\mathfrak q_r\}$ with $\mathfrak q_0=\mathfrak p_1$, $\mathfrak q_1=\mathfrak p_\Omega$ and some $z_i\in\Sigma_i$ such that all the loops $\jmath_{\mathfrak q_r}(z_i)$ are contained in $\Sigma_i$, and, hence, build a short path. Therefore,
\[
\mathcal A_\Omega(\jmath_{\mathfrak p_\Omega}(z_i))=\widetilde{\mathcal A}_\Omega\Big(\big[\{\jmath_{\mathfrak p_r}(z_i)\}\#\{\jmath_{\mathfrak q_r}(z_i)\}\big]\Big)=\widetilde{\mathcal A}_\Omega\Big(\big[\{\jmath_{(\mathfrak p\#\mathfrak q)_r}(z_i)\}\big]\Big)=\mathcal A(\Omega),
\]
where $\{(\mathfrak p\#\mathfrak q)_r\}=\{\mathfrak p_r\}\#\{\mathfrak q_r\}$. As far as the existence of $\{\mathfrak q_r\}$ is concerned, the first statement in Lemma \ref{l:triv} yields a diffeomorphism $\Psi:\Sigma\to\Sigma$, which is $C^1$-close to $\id_\Sigma$ and satisfies $\Psi^*\mathfrak p_\Omega=\mathfrak p_1$. As a consequence, there exists an isotopy $\{\Psi_r\}$ of diffeomorphisms $C^1$-close to $\id_\Sigma$ with $\Psi_1=\Psi$. The path $\mathfrak q_r:=(\Psi_r^{-1})^*\mathfrak p_0$ has the desired properties.
\end{proof}
It seems appropriate at this point to recall Conjecture \ref{con:lsios} formulated in the Introduction.
\begin{con*}
If $\Omega_*\in \ZZ_C(\Sigma)$ is a Zoll odd-symplectic form, there is a $C^{k-1}$-neighbourhood $\mathcal U$ of $\Omega_*$ in $\mathcal S_C(\Sigma)$, for some $k\geq 2$, such that
\begin{equation*}
P(\mathcal A_{\min}(\Omega))\leq\Fvol(\Omega)\leq P(\mathcal A_{\max}(\Omega)),\qquad \forall\,\Omega\in \mathcal U.
\end{equation*}
The equality holds in any of the two inequalities above if and only if $\Omega$ is Zoll.
\end{con*}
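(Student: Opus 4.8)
The plan is to build on the equality case already established in Theorem \ref{t:oszoll} (equivalently Theorem \ref{thm:mainaction}), which gives $P(\mathcal A(\Omega))=\Fvol(\Omega)$ for every Zoll form, and to reduce the general inequality to a variational comparison over the space of closed characteristics. First I would exploit Corollary \ref{c:changeofbase}, which shows that the functional $P\circ\mathcal A_\Omega-\Fvol(\Omega)$ is independent of the reference pair, to normalise $(\mathfrak p_0,c_0)=(\mathfrak p_{\Omega_*},c_*)$; then \eqref{eq:vanishing_action} gives $A_*=\mathcal A(\mathfrak p_0,c_0)=0$, and by \eqref{e:pmonotone} we have $\tfrac{\di P}{\di A}(0)=\langle[\om_*]^n,[M_1]\rangle>0$, so $P$ is strictly monotone near $0$. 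It therefore suffices to produce one characteristic $\gamma_-$ with $P(\mathcal A_\Omega(\gamma_-))\le\Fvol(\Omega)$ and one $\gamma_+$ with $\Fvol(\Omega)\le P(\mathcal A_\Omega(\gamma_+))$, with either equality forcing $\Omega$ to be Zoll.

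Second, I would reduce to H-forms. Fixing a connection $\eta\in\mathcal K(\mathfrak p_0)$, I would attempt to apply the stability Proposition \ref{p:Moser} with $\sigma_r\equiv\eta$ to obtain a diffeomorphism isotopic to $\id_\Sigma$ carrying $\Omega$ to an H-form $\Omega_0+\di(H\eta)$ with $H$ close to $A_*$. Since $\Fvol$, $\mathcal A_{\min}$ and $\mathcal A_{\max}$ are invariant under such isotopies (Propositions \ref{prp:vol} and \ref{prp:action_inv_isotopy}, together with the construction of $\mathcal A_\Omega$), the inequality for $\Omega$ is equivalent to the one for the H-form. This reduction succeeds precisely when $\eta$ is closed, i.e.\ in the flat case (case (b) of Remark \ref{r:stabil}): there one first pulls back to a trivial bundle via Lemma \ref{l:3state1}, uses the pullback-invariances of Propositions \ref{p:pbinvactloc}, \ref{p:pbinvact} and \ref{p:volinvcov} to transport the inequality, and then applies the Hamiltonian estimate of Proposition \ref{p:hamsys}/\ref{p:qa1}, which is exactly the content of Theorem \ref{t:triv}.

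Third, for an H-form with defining Hamiltonian $H$ I would run the min-max comparison using the function $S(\Omega)\colon\Sigma\to\R$ of Proposition \ref{p:ginsys}(ii), whose critical values are the actions $\mathcal A_\Omega(\gamma)$ of the closed characteristics in $\mathcal X(\Omega;\mathfrak p_1)$, so that $\mathcal A_{\min}(\Omega)=\min S(\Omega)$ and $\mathcal A_{\max}(\Omega)=\max S(\Omega)$. The aim is to show that, after applying the monotone $P^{-1}$, the volume $\Fvol(\Omega)=\Vol(H\eta)$ lies between these extrema: one expands $\Vol(H\eta)$ as an explicit integral over $\Sigma$ of $H$ against powers of $\di\eta$ and $\om_0$ (as in Lemma \ref{l:volform}) and recognises it as a convex average of the values of $H$ along characteristics. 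In the quasi-autonomous case this is exactly what Proposition \ref{p:qaos}/\ref{p:qa1} carries out: the distinguished fibres $\mathfrak p_0^{-1}(q_{\min})$ and $\mathfrak p_0^{-1}(q_{\max})$ are themselves closed characteristics on which $H$ attains its extrema, pinning down the comparison orbits $\gamma_\pm$ and yielding the equality analysis.

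Finally, the hard part is the genuinely general case, $e_0\neq0$ with $H$ not quasi-autonomous. Here the reduction of step two fails, since any connection has $\di\eta=\mathfrak p_0^*\kappa$ with $[\kappa]=e_0\neq0$, so $\eta$ is never closed and condition \eqref{e:dotsigmar} is not met; and even granting an H-form, the holonomy of $\eta$ couples the fibre rotation to the base dynamics, so the closed characteristics no longer project to critical points of a single function on $M_0$. The averaging identity expressing $\Fvol(\Omega)$ as a convex combination of action values then breaks down, and locating the extremal orbits together with the equality case would require a genuinely dynamical input (a Floer-theoretic or loop-space min-max argument) rather than the Moser/Calabi reduction. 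This is exactly why the conjecture is only established here for quasi-autonomous forms and for bundles with vanishing real Euler class, the latter producing the three-dimensional statement of Corollary \ref{c:mainsys} in combination with Theorem \ref{thm:contact_sys}.
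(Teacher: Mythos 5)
The statement you are addressing is Conjecture \ref{con:lsios}, which the paper itself establishes only for quasi-autonomous H-forms (Proposition \ref{p:qa1}) and for bundles with vanishing real Euler class (Theorem \ref{t:triv}, via Lemma \ref{l:3state1}, Corollary \ref{c:Moser2}, and the generating-function reduction to a quasi-autonomous Hamiltonian), and your outline reproduces exactly this route: the normalisation of the reference pair via Corollary \ref{c:changeofbase}, the Moser/Gray reduction to H-forms with a flat connection, the pull-back to a trivial bundle, and the pointwise monotonicity of $\mathcal P$ from Lemma \ref{l:volqa} driving the quasi-autonomous comparison and its equality analysis. Your diagnosis of where the argument breaks down --- $e_0\neq0$ with a non-quasi-autonomous defining Hamiltonian, where condition \eqref{e:dotsigmar} fails for any connection and the averaging identity no longer pins down extremal characteristics --- accurately matches the paper's state of the art, so there is nothing to correct beyond noting that neither your proposal nor the paper proves the full statement, which remains a conjecture.
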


The order of the three terms in the inequality stems from the fact that $P$ is monotone increasing in a neighbourhood of $A_*$ by \eqref{e:pmonotone}.
Theorem \ref{thm:mainaction} and Proposition \ref{p:ginsys}.(iii) imply that if $\Omega$ is Zoll, then both equalities hold.
\begin{cor}
If $\mathcal V\subset \mathcal S_C(\Sigma)$ is the $C^1$-neighbourhood of a Zoll odd-symplectic form $\Omega_*$ given by Theorem \ref{p:ginsys}, then there holds
\begin{equation*}
P(\mathcal A_{\min}(\Omega))=\Fvol(\Omega)= P(\mathcal A_{\max}(\Omega)),\qquad \forall\,\Omega\in \mathcal V\cap\mathcal Z_C(\Sigma).\eqno\qed
\end{equation*}
\end{cor}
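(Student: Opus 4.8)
The plan is to derive both equalities directly from the two ingredients recalled just above the statement: the Zoll identity of Theorem \ref{thm:mainaction} and the collapse of systole and diastole for Zoll forms in Proposition \ref{p:ginsys}.(iii). First I would fix an arbitrary $\Omega \in \mathcal V \cap \mathcal Z_C(\Sigma)$ and verify that its associated bundle $\mathfrak p_\Omega$ lies in the reference component $\mathfrak P^0(\Sigma)$, which is what makes the action $\mathcal A(\Omega) = \mathcal A(\mathfrak p_\Omega, [\om_\Omega])$ well-defined in the sense of the remark following Theorem \ref{thm:mainaction}. This holds because $\Omega$ is $C^1$-close to $\Omega_*$: the nowhere-vanishing section $X_\Omega$ spanning $\ker\Omega$ is then $C^1$-close to $V = X_{\Omega_*}$, and the construction in the proof of Proposition \ref{p:ginsys}.(iii) exhibits a path in $\mathfrak P^0(\Sigma)$ joining $\mathfrak p_1 = \mathfrak p_{\Omega_*}$ to $\mathfrak p_\Omega$.

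With this in place, I would invoke Proposition \ref{p:ginsys}.(iii), which guarantees that for such a Zoll $\Omega$ all closed characteristics already belong to $\Lambda(\mathfrak p_1)$ and the action takes the single value $\mathcal A(\Omega)$ on all of them, so that
\[
\mathcal A_{\min}(\Omega) = \mathcal A(\Omega) = \mathcal A_{\max}(\Omega).
\]
Applying the function $P$ throughout gives $P(\mathcal A_{\min}(\Omega)) = P(\mathcal A(\Omega)) = P(\mathcal A_{\max}(\Omega))$.

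Finally I would substitute the Zoll identity \eqref{eq:zoll_identity}, in the form $\Fvol(\Omega) = P(\mathcal A(\Omega))$, into the middle term to obtain
\[
P(\mathcal A_{\min}(\Omega)) = \Fvol(\Omega) = P(\mathcal A_{\max}(\Omega)),
\]
which is the assertion. The only point requiring any care is the well-definedness of $\mathcal A(\Omega)$, i.e.\ that $\mathfrak p_\Omega \in \mathfrak P^0(\Sigma)$; beyond that there is no genuine obstacle, since the substantive analysis was already carried out in Theorem \ref{thm:mainaction} and Proposition \ref{p:ginsys}.(iii), and the corollary is simply their juxtaposition.
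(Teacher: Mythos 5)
Your proposal is correct and follows exactly the route the paper takes: the corollary is obtained by combining Proposition \ref{p:ginsys}.(iii), which gives $\mathcal A_{\min}(\Omega)=\mathcal A(\Omega)=\mathcal A_{\max}(\Omega)$ for a Zoll form in $\mathcal V$, with the identity $\Fvol(\Omega)=P(\mathcal A(\Omega))$ from Theorem \ref{thm:mainaction}. Your additional check that $\mathfrak p_\Omega$ lies in $\mathfrak P^0(\Sigma)$ is a sensible precaution, but it is already contained in the proof of Proposition \ref{p:ginsys}.(iii) via the path $\{\mathfrak q_r\}$ constructed there.
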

\begin{rmk}\label{r:conje0}
If the real Euler class vanishes, then the inequality in Conjecture 2 becomes
\[
\mathcal A_{\min}(\Omega)\leq 0\leq \mathcal A_{\max}(\Omega),\qquad \forall\,\Omega\in \mathcal U.
\]
Indeed, in this case $\Fvol\equiv 0$, $P(A)=\langle [\om_*]^n,M_1]\rangle A$ and $\langle [\om_*]^n,M_1]\rangle>0$ by \eqref{e:pmonotone}.
\end{rmk}

\begin{rmk}\label{r:indepconj}
The conjecture is independent of the chosen reference pair $(\mathfrak p_0,c_0)\in\mathfrak Z_C^0(\Sigma)$. Indeed, if $(\mathfrak p_0',c_0')\in\mathfrak Z_C^0(\Sigma)$ is another pair with corresponding objects $\Fvol'$, $P'$ and $\mathcal A_\Omega'$, then
\[
P'\circ\mathcal A_\Omega'-\Fvol'(\Omega)=P\circ \mathcal A_\Omega-\Fvol(\Omega)
\]
by Corollary \ref{c:changeofbase}.
\end{rmk}

In the next subsection, we will see that $P$ can be written as the integral over $M_1$ of a function $\mathcal P:M_1\times \R\to\R$ so that the monotonicity of $P$ corresponds to the monotonicity of $\mathcal P$ in the second variable for an interval $\mathcal I$ containing $A_*$. This will enable us to establish the conjecture in some simple cases. Before doing that, we end this subsection by observing that Conjecture \ref{con:lsios} is also consistent with the pull-back operation.
\begin{prp}\label{p:covercon}
Let $\Pi:\Sigma^\veee\to\Sigma$ be a bundle map with $\mathfrak p_0^\veee=\Pi^*\mathfrak p_0$, which is also a finite covering map. If Conjecture \ref{con:lsios} holds for $\Omega_*^\veee=\Pi^*\Omega_*\in\mathcal S_{C^\veee}(\Sigma^\veee)$, then it holds for $\Omega_*\in\mathcal S_C(\Sigma)$. 
\end{prp}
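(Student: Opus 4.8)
The plan is to pull everything back to $\Sigma^\veee$, invoke the conjecture there, and then transport the resulting inequality down to $\Sigma$ using the three scaling relations already established for the volume, the Zoll polynomial and the systole/diastole. To begin, I would set $d:=\deg\Pi$ and note that, since $\Pi$ is a bundle map covering $\pi_0$ and restricts to a diffeomorphism on each fibre, one has $d=\deg\Pi=\deg\pi_0>0$, the sign being forced by the chosen orientations. Because $\Pi$ is a local diffeomorphism, $\Pi^*$ maps $\mathcal S_C(\Sigma)$ into $\mathcal S_{C^\veee}(\Sigma^\veee)$ (as $(\Pi^*\Omega)^n=\Pi^*(\Omega^n)$ is nowhere vanishing and $[\Pi^*\Omega]=\Pi^*C=C^\veee$), it is continuous in the $C^{k-1}$-topology, and it sends $\Omega_*$ to $\Omega_*^\veee$. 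Letting $\mathcal U^\veee$ be the neighbourhood of $\Omega_*^\veee$ furnished by Conjecture~\ref{con:lsios} and $\mathcal V$ the neighbourhood of $\Omega_*$ from Proposition~\ref{p:ginsys}, I would define $\mathcal U:=(\Pi^*)^{-1}(\mathcal U^\veee)\cap\mathcal V$, a $C^{k-1}$-neighbourhood of $\Omega_*$ on which $\mathcal A_{\min}$ and $\mathcal A_{\max}$ are finite.

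The second step is the transfer of the inequality. For $\Omega\in\mathcal U$ we have $\Pi^*\Omega\in\mathcal U^\veee$, so the conjecture upstairs gives
\[
P^\veee\big(\mathcal A^\veee_{\min}(\Pi^*\Omega)\big)\le \Fvol^\veee(\Pi^*\Omega)\le P^\veee\big(\mathcal A^\veee_{\max}(\Pi^*\Omega)\big).
\]
I would then substitute the identities $\mathcal A^\veee_{\min}(\Pi^*\Omega)=\mathcal A_{\min}(\Omega)$ and $\mathcal A^\veee_{\max}(\Pi^*\Omega)=\mathcal A_{\max}(\Omega)$ from Proposition~\ref{p:pbinvactloc}, $\Fvol^\veee(\Pi^*\Omega)=d\,\Fvol(\Omega)$ from Proposition~\ref{p:volinvcov}, and $P^\veee=d\,P$ from Proposition~\ref{p:pbinvact}, turning the display into
\[
d\,P\big(\mathcal A_{\min}(\Omega)\big)\le d\,\Fvol(\Omega)\le d\,P\big(\mathcal A_{\max}(\Omega)\big).
\]
Dividing by $d>0$ yields exactly the inequality of Conjecture~\ref{con:lsios} for $\Omega$.

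For the equality case, the forward direction is immediate: if $\Omega$ is Zoll then both equalities hold by the Corollary following Proposition~\ref{p:ginsys}. For the converse, suppose $P(\mathcal A_{\min}(\Omega))=\Fvol(\Omega)$ (the $\mathcal A_{\max}$ case being identical); multiplying by $d$ and reading the same identities backwards gives $P^\veee(\mathcal A^\veee_{\min}(\Pi^*\Omega))=\Fvol^\veee(\Pi^*\Omega)$, so the equality case upstairs forces $\Pi^*\Omega$ to be Zoll. The crux of the whole proposition is then the implication ``$\Pi^*\Omega$ Zoll $\Rightarrow$ $\Omega$ Zoll'', which I expect to be the main obstacle. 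My plan here is to descend the Zoll bundle $\mathfrak p^\veee_{\Pi^*\Omega}:\Sigma^\veee\to M_{\Pi^*\Omega}$ along the deck group $G$ of $\Pi$. Since $\Pi$ carries each $\mathfrak p_1^\veee$-fibre diffeomorphically onto a $\mathfrak p_1$-fibre and, by Proposition~\ref{p:ginsys}, the characteristics of $\Pi^*\Omega$ and of $\Omega$ are $C^1$-close to those fibres, one checks that $\Pi$ maps each characteristic leaf of $\Pi^*\Omega$ diffeomorphically onto a characteristic leaf of $\Omega$. As $g^*\Pi^*\Omega=\Pi^*\Omega$ for all $g\in G$, the group $G$ preserves the characteristic foliation of $\Pi^*\Omega$, hence acts on the base $M_{\Pi^*\Omega}$; and this action is \emph{free}, for if $g\in G$ fixed a leaf $F$ then $g|_F$ would be a deck transformation of the degree-one covering $\Pi|_F$, hence the identity, whence $g=\mathrm{id}$. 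Therefore $\mathfrak p^\veee_{\Pi^*\Omega}$ descends to an oriented $S^1$-bundle $\Sigma=\Sigma^\veee/G\to M_{\Pi^*\Omega}/G$ whose oriented fibres are precisely the characteristic leaves of $\Omega$, so $\Omega$ is Zoll. The delicate points to verify carefully are exactly that $\Pi$ is degree one on characteristic leaves (so that $G$ acts freely on the base) and that the descended quotient is a smooth manifold; everything else is a formal consequence of the scaling identities.
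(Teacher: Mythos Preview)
Your transfer of the inequality is exactly the paper's argument: pull back, apply Conjecture~\ref{con:lsios} upstairs, and divide by $\deg\Pi=\deg\pi_0>0$ using Propositions~\ref{p:volinvcov}, \ref{p:pbinvact} and \ref{p:pbinvactloc}. (The paper takes $\mathcal U:=(\Pi^*)^{-1}(\mathcal U^\veee)$ with $\mathcal U^\veee\subset\mathcal V^\veee$; your extra intersection with $\mathcal V$ is harmless.)

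The gap is in your descent for the equality case. You write $\Sigma=\Sigma^\veee/G$ for the deck group $G$ of $\Pi$, but this identity holds only when $\Pi$ is a \emph{normal} (Galois) covering, which is nowhere assumed. For a non-normal finite cover one has $|G|<\deg\Pi$ and $\Sigma^\veee/G\to\Sigma$ is itself a nontrivial covering, so the quotient bundle you produce lives on the wrong space. Your intermediate claim---that $\Pi$ restricts to a diffeomorphism on each characteristic leaf of $\Pi^*\Omega$---is correct and is in fact all that is needed, but the conclusion must be drawn directly on $\Sigma$ rather than via $G$.

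The paper's argument does precisely this, using the admissible cover instead of the deck group. By Proposition~\ref{p:ginsys}(iii) every fibre of $\mathfrak p^\veee:=\mathfrak p_{\Pi^*\Omega}$ lies in some $\Sigma_j^\veee$, and $\Pi$ is injective on each $\Sigma_j^\veee$ because the $B_i$ are evenly covered by $\pi_1$. Hence each leaf of $\ker\Omega$ on $\Sigma$ is the diffeomorphic $\Pi$-image of a $\mathfrak p^\veee$-fibre, so it is an embedded circle; thus $\Omega$ is Zoll and the associated bundle $\mathfrak p$ satisfies $\mathfrak p^\veee=\Pi^*\mathfrak p$. Your ``degree one on leaves'' observation already yields this once you drop the deck-group packaging and argue leafwise on $\Sigma$.
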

\begin{proof}
Let us suppose that the conjecture is true for a neighbourhood $\mathcal U^\veee\subset\mathcal V^\veee$ around $\Omega_*^\veee$, where $\mathcal V^\veee$ is given by Proposition \ref{p:ginsys}. Thus, for all $\Omega^\veee\in\mathcal U^\veee$, there holds
\begin{equation}\label{e:p'a}
P^\veee(\mathcal A_{\min}(\Omega^\veee))\leq \Fvol(\Omega^\veee)\leq P^\veee(\mathcal A_{\max}(\Omega^\veee))
\end{equation}
and any of the two equalities holds if and only if $\Omega^\veee$ is Zoll. We claim that the local systolic inequality holds in the neighbourhood $\mathcal U:=(\Pi^*)^{-1}(\mathcal U^\veee)$ of $\Omega_*$. Indeed, let us take an arbitrary $\Omega$ in $\mathcal U$. This means that $\Pi^*\Omega$ belongs to $\mathcal U^\veee$.  Applying Proposition \ref{p:pbinvactloc}, Proposition \ref{p:pbinvact} and Proposition \ref{p:volinvcov}, it follows that
\[
P(\mathcal A_{\min}(\Omega))=P(\mathcal A_{\min}(\Pi^*\Omega))=\frac{1}{\deg \pi_0}P^\veee(\mathcal A_{\min}(\Pi^*\Omega))\leq\frac{1}{\deg \Pi}\Fvol(\Pi^*\Omega)=\Fvol(\Omega),
\]
where we used  $\deg \Pi=\deg \pi_0>0$ and \eqref{e:p'a}. An analogous inequality holds for $\mathcal A_{\max}(\Omega)$.

Let us assume now, for instance, that $P(\mathcal A_{\min}(\Omega))=\Fvol(\Omega)$. From the computation above, this implies $P^\veee(\mathcal A_{\min}(\Pi^*\Omega))=\Fvol(\Pi^*\Omega)$. Since the conjecture is true for $\mathcal U^\veee$, we see that $\Pi^*\Omega$ is Zoll and we call $\mathfrak p^\veee$ the associated bundle. All the fibres of $\mathfrak p^\veee$ lie in some $\Sigma^\veee_{j}$, by Proposition \ref{p:ginsys}. Since $\Pi$ is injective on such sets, it follows that there exists a bundle $\mathfrak p:\Sigma\to M$ such that $\mathfrak p^\veee=\Pi^*\mathfrak p$, which implies that $\Omega$ is a Zoll form with bundle $\mathfrak p$.  
\end{proof}
\subsection{A second look at H-forms}
For the rest of this section, we assume that $\mathfrak p_0=\mathfrak p_1=\mathfrak p_{\Omega_*}$, and take $\{\mathfrak p_r\}$ as the constant path. This does not cause any loss in generality thanks to Remark \ref{r:indepconj}. We fix a free $S^1$-action $\mathfrak u\in\mathfrak U(\mathfrak p_0)$ with generating vector field $V$. Let us take $\eta\in\mathcal K(\mathfrak u)$ with curvature $\kappa\in\Xi^2(M_0)$. Namely, we have $[\kappa]=e_0$ and $\mathfrak p_0^*\kappa=\di\eta$. By Theorem \ref{thm:mainaction}, there is $\om_0\in\Xi^2_{c_0}(M_0)$ such that
\[
\om_*=A_*\kappa+\omega_0.
\]
Moreover, we can exploit our freedom in choosing $\eta$ and $c_0$ to get
\[
\bullet\ \ \om_*=\om_0,\quad \text{if $e_0=0$},\qquad\qquad \bullet\ \ \om_*=A_*\kappa,\quad\text{if $C=0$}.
\]
Indeed, if $e_0=0$, we just take $\eta$ to be a flat connection, i.e.~$\kappa=0$. If $C=0$, we take $c_0=0$ so that $A_*\neq0$ (as $[\om_*^n]\neq 0$) and pick $\eta\in\mathcal K(\mathfrak u)$ with the property that $\kappa=\tfrac{1}{A_*}\om_*$, which is equivalent to $\om_0=0$. We define the function
\[\label{eq:mathcal_Q}
\mathcal Q:M_0\times \R\to\R,\qquad \mathcal Q(\,\cdot\,,A)\om^{n}_*=(A\kappa+\om_0)^{n},\qquad\forall\, A\in \R.
\]  
The function $\mathcal P:M_0\times \R\to \R$ is determined by the properties
\[\label{eq:mathcal_P}
\bullet\ \ \mathcal P(q,0)=0,\quad\forall\, q\in M_0,\qquad\qquad \bullet\ \ \frac{\partial \mathcal P}{\partial A}(q,A)=\mathcal Q(q,A),\quad\forall\,(q,A)\in M_0\times\R.
\]
There holds
\[
 Q(A)=\int_{M_0}\mathcal Q(\,\cdot\,,A)\,\om^{n}_*,\qquad P(A)=\int_{M_0}\mathcal P(\,\cdot\,,A)\om^{n}_*.
\]
From the definition, $\mathcal Q(\,\cdot\,,A_*)\equiv1$ and we define $\mathcal I\subset \R$ to be the \textit{maximal interval} satisfying
\[\label{eq:maximal_interval}
\bullet\ \ A_*\in\mathcal I,\qquad\qquad \bullet\ \ \min_{q\in M_0}\mathcal Q(q,A)>0,\quad \forall\, A\in\mathcal I.
\]
Therefore, $\mathcal P$ is strictly increasing in the second coordinate on $M_0\times \mathcal I$. This fact will be crucially used in Proposition \ref{p:qa1}.
\begin{rmk}\label{rmk:maximal_interval}
If $C=0$, then $\mathcal Q(\cdot,A)=(A/A_*)^{n}$, $\mathcal P(\cdot,A)=\tfrac{A_*}{n+1}(A/A_*)^{n+1}$, which implies $\mathcal I=(0,+\infty)$ or $\mathcal I=(-\infty,0)$ depending on the sign of $A_*$. If $e_0=0$, then $\mathcal Q\equiv1$ and $\mathcal P(\,\cdot\,,A)\equiv A$, which implies $\mathcal I=\R$.
\end{rmk}

We now make use of the notion of H-form from Section \ref{s:odddef}. The H-forms we use below are associated with $\Omega_0$, $\alpha_0=0$ and $\sigma_0=\eta$, the given $S^1$-connection. Namely, we deal with forms of the type 
\[
\Omega_{H\eta}=\Omega_0+\di(H\eta)\in \Xi^2_C(\Sigma),\qquad H:\Sigma\to\R.
\]

\begin{rmk}\label{r:fiber}
Let us describe the fibres of the map $H\mapsto \Omega_{H\eta}$. To this end, we compute
\begin{equation}\label{e:dH2}
\di(H\eta)=\di^{h}H\wedge\eta+H\mathfrak p_0^*\kappa.
\end{equation}
Here $\di^hH$ is the horizontal part of $\di H$:
\[
\di^{h}H:=\di H-\di H(V)\eta.
\]
Let us suppose now that $\di(H\eta)=0$. From \eqref{e:dH2}, we deduce
\[
\di^hH=0,\qquad H\mathfrak p_0^*\kappa=0.
\]
The first relation tells us that $H$ is invariant along curves tangent to $\ker\eta$.

Thus, if $e_0\neq0$, then $H$ is constant, as the holonomy in this case is the whole $S^1$. Evaluating the second relation at a point $q\in M_0$, where $\kappa_q\neq0$, we conclude that $H=0$.

On the other hand, if $e_0=0$, then we can take the quotient $\Pi_k:\Sigma\to\Sigma_k$ by the holonomy of $\eta$, which is a finite subgroup of $S^1$ (see Lemma \ref{l:3state1}). The bundle $\Sigma_k\to M_0$ is trivial and admits an angular function $\phi_k:\Sigma_k\to S^1$. We define $\phi:\Sigma\to S^1$ as $\phi:=\phi_k\circ\Pi_k$. We conclude that for each $H$ with $\di ^h H=0$, there exists $\bar H:S^1\to \R$ such that $H=\bar H\circ\phi$. 
\end{rmk}
In our setting, the volume of an H-form can be computed in terms of the integration operator associated with the free $S^1$-action $\mathfrak u$:
\begin{equation*}\label{dfn:ustar}
\mathfrak u_*:C^0(\Sigma)\to C^0(M_0),\qquad \mathfrak u_*(K)(q):=(\mathfrak p_0)_*(K\eta)(q)=\int_{S^1}K\big(\Phi^V_t(z)\big)\,\di t, \quad\forall\, q\in M_0,
\end{equation*}
where $z$ is any point in $\mathfrak p_0^{-1}(q)$ and $(\mathfrak p_0)_*$ denotes the integration along the $\mathfrak p_0$-fibres. 
\begin{lem}\label{l:volqa}
For a function $H:\Sigma\to\R$, there holds
\[
\Vol(H\eta)=\int_{M_0}\mathfrak u_*\big(\mathcal P(\mathfrak p_0,H)\big)\,\omega_{*}^{n}.
\]
\end{lem}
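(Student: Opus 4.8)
The plan is to start from the definition \eqref{e:defvol} of the volume with $\alpha=H\eta$ and reference form $\Omega_0=\mathfrak p_0^*\om_0$, and to carry out the fibre integration explicitly. First I would substitute the decomposition \eqref{e:dH2}, which gives
\[
\Omega_{rH\eta}=\mathfrak p_0^*\om_0+rH\,\mathfrak p_0^*\kappa+r\,\di^h H\wedge\eta.
\]
The key observation is that when one wedges $\Omega_{rH\eta}^n$ with $H\eta$, every term in the binomial expansion containing the factor $r\,\di^h H\wedge\eta$ is annihilated by $\eta\wedge\eta=0$. Hence
\[
H\eta\wedge\Omega_{rH\eta}^n=H\eta\wedge\big(\mathfrak p_0^*\om_0+rH\,\mathfrak p_0^*\kappa\big)^n.
\]

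The next step is to recognise this $n$-th power as a multiple of the fibre volume. Expanding, and using that $\mathfrak p_0^*$ is multiplicative while $rH$ is a scalar, I would write $\kappa^j\wedge\om_0^{n-j}=f_j\,\om_*^n$ for functions $f_j$ on $M_0$, so that the definition \eqref{eq:mathcal_Q} of $\mathcal Q$ reads $\mathcal Q(q,A)=\sum_j\binom{n}{j}A^jf_j(q)$. Evaluating at $(q,A)=(\mathfrak p_0(z),rH(z))$ then yields the pointwise identity
\[
\big(\mathfrak p_0^*\om_0+rH\,\mathfrak p_0^*\kappa\big)^n=\mathcal Q(\mathfrak p_0,rH)\,\mathfrak p_0^*\om_*^n.
\]
With this in hand the integrand becomes $H\,\mathcal Q(\mathfrak p_0,rH)\,\eta\wedge\mathfrak p_0^*\om_*^n$, and integrating along the $\mathfrak p_0$-fibres by means of $\mathfrak u_*$ as in \eqref{dfn:ustar} (via the projection formula $\int_\Sigma K\eta\wedge\mathfrak p_0^*\om_*^n=\int_{M_0}\mathfrak u_*(K)\,\om_*^n$) gives $\int_\Sigma H\,\mathcal Q(\mathfrak p_0,rH)\,\eta\wedge\mathfrak p_0^*\om_*^n=\int_{M_0}\mathfrak u_*\big(H\,\mathcal Q(\mathfrak p_0,rH)\big)\,\om_*^n$.

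Finally I would put back the outer integral $\int_0^1(\cdot)\,\di r$ from \eqref{e:defvol}, swap it with the linear operation $\mathfrak u_*$ and with $\int_{M_0}$, and recognise the $r$-integral by the fundamental theorem of calculus: since $\tfrac{\di}{\di r}\mathcal P(\mathfrak p_0,rH)=H\,\mathcal Q(\mathfrak p_0,rH)$ and $\mathcal P(\mathfrak p_0,0)=0$ by \eqref{eq:mathcal_P}, one has $\int_0^1 H\,\mathcal Q(\mathfrak p_0,rH)\,\di r=\mathcal P(\mathfrak p_0,H)$. This produces $\Vol(H\eta)=\int_{M_0}\mathfrak u_*\big(\mathcal P(\mathfrak p_0,H)\big)\,\om_*^n$, as claimed. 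The only genuinely delicate point is the middle step: since $H$ lives on $\Sigma$ rather than on $M_0$, one must check that the coefficient produced by the binomial expansion is exactly the composition $\mathcal Q(\mathfrak p_0,rH)$ and not a form pulled back from $M_0$. Once the $f_j$ are introduced this is transparent, so I expect no substantial obstacle beyond this bookkeeping.
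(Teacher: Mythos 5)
Your proposal is correct and follows essentially the same route as the paper's proof: substitute the decomposition \eqref{e:dH2}, kill the cross terms via $\eta\wedge\eta=0$, identify $(\mathfrak p_0^*\om_0+rH\,\mathfrak p_0^*\kappa)^n$ with $\mathcal Q(\mathfrak p_0,rH)\,\mathfrak p_0^*\om_*^n$, swap the $r$-integral with the integral over $\Sigma$, apply the fundamental theorem of calculus to get $\mathcal P$, and finish by fibre integration. The ``delicate point'' you flag about $\mathcal Q(\mathfrak p_0,rH)$ being a genuine composition rather than a pulled-back form is handled exactly as you describe, and poses no issue.
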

\begin{proof}
Recalling the definition of $\Vol$ from \eqref{e:defvol} and using Fubini's Theorem, we compute
\begin{align*}
\Vol(H\eta)=\int_0^1\di r\int_\Sigma H\eta\wedge \Omega_{rH\eta}^{n}&=\int_0^1\di r\int_{\Sigma}H \eta\wedge\Big(\mathfrak p_0^*\om_0+rH\mathfrak p_0^*\kappa\Big)^{n}\\&=\int_0^1\di r\int_{\Sigma}H\eta\wedge \mathcal Q(\mathfrak p_0,rH) \big(\mathfrak p_0^*\om_*\big)^{n}\\
&=\int_\Sigma\Big(\int_0^1H\mathcal Q(\mathfrak p_0,rH)\,\di r\Big)\eta\wedge \big(\mathfrak p_0^*\om_*\big)^{n}\\
&=\int_\Sigma\mathcal P(\mathfrak p_0,H) \eta\wedge \big(\mathfrak p_0^*\om_*\big)^{n}\\
&=\int_{M_0} (\mathfrak p_0)_*\big(\mathcal P(\mathfrak p_0,H) \eta\big)\wedge \om^{n}_*\\
&=\int_{M_0}\mathfrak u_*\big(\mathcal P(\mathfrak p_0,H)\big)\,\om^{n}_*,
\end{align*}
where in the second equality, we have used that $H\eta\wedge\di H\wedge\eta=0$.
\end{proof}
\begin{rmk}
When $e_0=0$, defining the Calabi invariant as the volume of $H\eta$
\[
\CAL_{\om_*}(H):=\Vol(H\eta)=\int_{M_0}\mathfrak u_*(H)\,\om^{n}_*,
\]
recovers the classical definition of the Calabi invariant for the trivial bundle $M_0\x S^1\to M_0$. We say that $H$ is {\bf normalised} with respect to $\om_*$ if its Calabi invariant $\CAL_{\om_*}(H)$ vanishes, which is equivalent to requiring that the one-form $H\eta$ is normalised according to Definition \ref{d:normalized}. When $H=h\circ\phi$, as in Remark \ref{r:fiber}, then $H$ is normalised if and only if the integral of $h$ is zero.
\end{rmk}
\begin{lem}
If a function $H:\Sigma\to \R$ takes values in $\mathcal I$, the H-form $\Omega_{H\eta}$ is odd-symplectic.
\end{lem}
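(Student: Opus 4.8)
The plan is to verify odd-symplecticity through the second criterion in Remark \ref{rmk:equivalent_def_odd_symplectic}: I want to produce a one-form $\sigma$ such that $\sigma\wedge\Omega_{H\eta}^n$ is a volume form, and the natural candidate is the connection form $\eta$ itself. The first step is to record the pointwise shape of $\Omega_{H\eta}$. Using \eqref{e:dH2} one writes
\[
\Omega_{H\eta}=\Omega_0+\di(H\eta)=\mathfrak p_0^*\om_0+H\,\mathfrak p_0^*\kappa+\di^hH\wedge\eta,
\]
where $\di^hH=\di H-\di H(V)\eta$ is the horizontal part of $\di H$.

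The key computation is $\eta\wedge\Omega_{H\eta}^n$. When expanding the $n$-th power, every monomial that retains the factor $\di^hH\wedge\eta$ contributes a second copy of $\eta$, so it is annihilated upon wedging with $\eta$ on the left. Hence only the pullback part survives:
\[
\eta\wedge\Omega_{H\eta}^n=\eta\wedge\big(\mathfrak p_0^*\om_0+H\,\mathfrak p_0^*\kappa\big)^n.
\]
At a point $z\in\Sigma$ with $q=\mathfrak p_0(z)$, the scalar $H(z)$ may be treated as a constant, so the bracket equals $\mathfrak p_0^*\big((\om_0+H(z)\kappa)^n\big)$, and by the defining relation \eqref{eq:mathcal_Q} of $\mathcal Q$ this is $\mathcal Q(q,H(z))\,\mathfrak p_0^*\om_*^n$. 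Therefore
\[
\eta\wedge\Omega_{H\eta}^n=\mathcal Q(\mathfrak p_0,H)\,\eta\wedge\mathfrak p_0^*\om_*^n=\mathcal Q(\mathfrak p_0,H)\,\eta\wedge\Omega_*^n.
\]

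To finish, I would observe that $\eta\wedge\Omega_*^n$ is a volume form on $\Sigma$: indeed $\Omega_*$ is Zoll, hence odd-symplectic, its kernel is spanned by $V$, and $\eta(V)=1$, so $\eta\wedge\Omega_*^n=\eta\wedge\mathfrak p_0^*\om_*^n$ is nowhere zero. Since $H$ takes values in $\mathcal I$, the defining property \eqref{eq:maximal_interval} gives $\mathcal Q(q,H(z))>0$ for every $z$ (taking $q=\mathfrak p_0(z)$), so $\mathcal Q(\mathfrak p_0,H)$ is a strictly positive function and $\eta\wedge\Omega_{H\eta}^n$ is again a volume form. By Remark \ref{rmk:equivalent_def_odd_symplectic}, $\Omega_{H\eta}$ is odd-symplectic. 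The computation is entirely elementary; the only points requiring care are the vanishing of the $\di^hH\wedge\eta$ contribution and the pointwise reading of $\mathcal Q$, since $H$ is a function on $\Sigma$ rather than on $M_0$---so there is no serious obstacle here, the statement being essentially a bookkeeping consequence of the definition of $\mathcal I$.
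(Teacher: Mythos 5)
Your proof is correct and follows essentially the same route as the paper: the same decomposition $\Omega_{H\eta}=(\mathfrak p_0^*\om_0+H\,\mathfrak p_0^*\kappa)+\di^hH\wedge\eta$ from \eqref{e:dH2}, the same pointwise identification of the top power with $\mathcal Q(q,H(z))\,\om_*^n$, and the same appeal to $\mathcal Q>0$ on $\mathcal I$. The only difference is cosmetic: you verify the second criterion of Remark \ref{rmk:equivalent_def_odd_symplectic} by wedging with $\eta$ (which kills the $\di^hH\wedge\eta$ term), whereas the paper verifies the third by restricting to $\ker\eta$ via the horizontal lift $\mathcal L_z$ (which kills the same term); these are equivalent bookkeeping devices.
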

\begin{proof}
To prove the statement, we show that $\Omega_{H\eta}$ is non-degenerate on the hyperplane distribution $\ker\eta$. We compute preliminarily 
\begin{equation*}
\Omega_0+\di(H\eta)=\Omega_0+\di^h H\wedge\eta+H\mathfrak p_0^*\kappa=(\mathfrak p_0^*\om_0+H\mathfrak p_0^*\kappa)+\di^h H\wedge\eta.
\end{equation*}
Let $z\in\Sigma$ be arbitrary, and set $q:=\mathfrak p_0(z)$. We have an inverse $\mathcal L_z:\ta_qM_0\to (\ker\eta)_z$ for the projection $\di\mathfrak p_0:(\ker\eta)_z\to\ta_qM_0$, which we can use to pull back the restriction of forms on $\ker\eta$ to the tangent space of $M_0$:
\begin{align*}
\mathcal L_z^*\Big(\Omega_0+\di(H\eta)\Big)_{\!z}^{n}=\Big(\mathcal L_z^*\big(\Omega_0+\di(H\eta)\big)_z\Big)^{n}&=\Big(\mathcal L_z^*\big(\mathfrak p_0^*\om_0+H\mathfrak p_0^*\kappa\big)_z\Big)^{n}\\
&=\Big(\om_0+H(z)\kappa\Big)_{\!q}^{n}\\
&=\mathcal Q(q,H(z))(\om_{*}^{n})_q,
\end{align*}
where we used $\mathcal L_z^*(\di^h H\wedge\eta)=0$. The last form is non-degenerate since $\mathcal Q(q,H(z))>0$.
\end{proof}
Combining this lemma with the stability property proved in Proposition \ref{p:Moser}, we arrive at the following corollary.
\begin{cor}\label{c:Moser2}
Suppose that $\mathfrak p_0:\Sigma\to M_0$ is an oriented $S^1$-bundle with $e_0=0$ and choose $\eta$ to be a flat connection for $\mathfrak p_0$. Let $\Omega_0=\mathfrak p_0^*\om_0$ be given, where $\om_0$ is some symplectic form on $M_0$. For every sufficiently small $C^2$-neighbourhood $\mathcal U$ of $\Omega_0$ in $\mathcal S_C(\Sigma)$, there exist a $C^2$-neighbourhood $\mathcal D$ of $\id_\Sigma$ in $\mathrm{Diff}(\Sigma)$ and a $C^2$-neighbourhood $\mathcal H$ of the zero function in the space of normalised functions on $\Sigma$ with the following property:
\[
\forall\, \Omega\in\mathcal U, \ \exists\, \Psi\in\mathcal D, \ \exists\, H\in \mathcal H,\qquad \Psi^*\Omega=\Omega_0+\di(H\eta).
\]
\end{cor}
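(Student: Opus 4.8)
The plan is to run the Moser/Gray stability argument of Proposition \ref{p:Moser} along the straight-line path joining $\Omega_0$ to $\Omega$, exploiting the fact that a \emph{flat} connection $\eta$ is a closed one-form, so that we fall exactly into case (b) of Remark \ref{r:stabil} and hypothesis \eqref{e:dotsigmar} is automatic. The output of the Moser trick will be the desired $\Psi$ and $H$, and the two remaining points will be that $H$ is normalised and that the whole construction depends continuously on $\Omega$ in the $C^2$-topology.

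First I would record the relevant features of the present situation. Since $\eta$ is flat, $\di\eta=\mathfrak p_0^*\kappa$ with $\kappa=0$, so $\eta\in\Xi^1(\Sigma)$; moreover $e_0=0$ forces $C^n\neq 0$ by Lemma \ref{l:ecc}, placing us in Case 2 of Section \ref{c:vol}, and $\mathcal I=\R$ by Remark \ref{rmk:maximal_interval}. Because $C^n\neq 0$, every class in $\bar\Omega^1(\Sigma)$ has a normalised representative, and I would fix a continuous choice $\Omega\mapsto\alpha$ of \emph{normalised} primitive with $\Omega=\Omega_0+\di\alpha$ and $\Vol(\alpha)=0$ (e.g.\ the coexact Hodge primitive corrected by the unique multiple of a fixed $\tau$ as in \eqref{e:vanishingcup}); this sends $\Omega_0\mapsto 0$. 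Setting $\alpha_r:=r\alpha$, $\Omega_r:=\Omega_0+\di\alpha_r$ and $\sigma_r:=\eta$, I note that $\Omega_0=\mathfrak p_0^*\om_0$ is non-degenerate on $\ker\eta$ (since $\di\mathfrak p_0$ restricts to an isomorphism $\ker\eta\to TM_0$ and $\om_0$ is symplectic). As this and odd-symplecticity are $C^0$-open conditions, after shrinking $\mathcal U$ the whole segment $\{\Omega_r\}$ lies in $\mathcal S_C(\Sigma)$ and each $\Omega_r$ is non-degenerate on $\ker\eta$, so $\eta\wedge\Omega_r^n$ is a volume form.

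With $\eta$ closed and constant in $r$, this is precisely case (b) of Remark \ref{r:stabil}, so Proposition \ref{p:Moser} applies with $\alpha_0=0$ and $\sigma_0=\eta$, yielding an isotopy $\{\Psi_r\}$ and functions $\{H_r\}$ with $\wh\Psi_1^*\alpha=H_1\eta+\di K_1$. Using the intertwining relation $B_{\Omega_0}\circ\wh\Psi_\theta^*=\Psi^*\circ B_{\Omega_0}$ from \eqref{eq:commutative_diagram1} and $\di^2K_1=0$, I obtain
\[
\Psi_1^*\Omega=\Omega_0+\di\big(\wh\Psi_1^*\alpha\big)=\Omega_0+\di(H_1\eta),
\]
and I set $\Psi:=\Psi_1$, $H:=H_1$. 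That $H$ is normalised follows from the identity $\Vol(\alpha)=\Vol(\alpha_0+H_1\sigma_0)=\Vol(H\eta)$ provided by Proposition \ref{p:Moser}: since $\alpha$ was chosen normalised, $\Vol(H\eta)=0$. Finally, to produce $\mathcal D$ and $\mathcal H$, observe that in the Moser step $X_r$ is determined \emph{algebraically and pointwise} from $\dot\alpha_r=\alpha$ and $\Omega_{\alpha_r}$ through \eqref{e:stab1}, and $H_r$ through \eqref{e:stab2} (no PDE is solved), so $\Omega\mapsto(\Psi_1,H_1)$ is continuous and maps $\Omega_0$ to $(\id_\Sigma,0)$; after shrinking $\mathcal U$, I would take $\mathcal D$ and $\mathcal H$ to be any prescribed small neighbourhoods of $\id_\Sigma$ and $0$ containing the images $\{\Psi_1(\Omega)\}$ and $\{H_1(\Omega)\}$.

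The main obstacle I anticipate is the regularity bookkeeping that upgrades the conclusion from $C^1$ to the asserted $C^2$-topology. Passing from $\Omega\in\mathcal S_C(\Sigma)$ to its normalised primitive $\alpha$ must gain exactly the derivative that $\di$ removes, so that $\alpha$ is $C^2$ (indeed $C^{2,\beta}$); only then are the pointwise-defined $X_r$ and its flow $\Psi_1$ of class $C^2$ and is the output $H=H_1$ genuinely $C^2$-close to $0$. It is here that the order-$(-1)$ smoothing built into the normalised-primitive operator is essential, and where one must check that the composite $\Omega\mapsto(\Psi_1,H_1)$ is continuous between the claimed function spaces rather than merely losing a derivative.
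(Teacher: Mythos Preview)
Your proposal is correct and follows essentially the same route as the paper: choose a $C^2$-small normalised primitive $\alpha$ for $\Omega-\Omega_0$ via Hodge theory, run Proposition \ref{p:Moser} along the straight line $\alpha_r=r\alpha$ with $\sigma_r\equiv\eta$ (case (b) of Remark \ref{r:stabil}, since $\eta$ is closed), and read off the $C^2$-smallness of $\Psi$ and $H$ from the explicit formulae \eqref{e:stab1}--\eqref{e:stab2}. Your regularity concern is exactly the point the paper addresses by invoking standard elliptic estimates for the primitive, and your argument that $H$ is normalised via $\Vol(\alpha)=\Vol(H\eta)$ matches the paper's.
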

\begin{proof}
Let $\Omega\in\mathcal S_C(\Sigma)$ be $C^2$-close to $\Omega_0$. By standard elliptic arguments (see for instance \cite[Chapter 10]{Nic07}), we have $\Omega=\Omega_0+\di\alpha$ for a $C^2$-small normalised one-form $\alpha$. We apply Proposition \ref{p:Moser} with $\alpha_0=0$, $\sigma_r\equiv\eta$ and $\alpha_r=r\alpha$ so that $\Omega_r=\Omega_0+r\di\alpha=\Omega_0+r(\Omega-\Omega_0)$. Indeed, the hypothesis in Remark \ref{r:stabil}.(b) are met, as $\Omega$ is $C^0$-close to $\Omega_0$ and $\sigma_r\equiv\eta$ is closed. Thus, we have the existence of a diffeomorphism $\Psi:\Sigma\to\Sigma$ isotopic to $\id_\Sigma$ and of a normalised function $H:\Sigma\to \R$ such that $\Psi^*\Omega=\Omega_0+\di(H\eta)$. More explicitly, from \eqref{e:stab1} and \eqref{e:stab2}, we see that we have paths $\{\Psi_r\}$ and $\{H_r\}$ with $\Psi_0=\id_\Sigma$ and $H_0=0$ satisfying
\[
X_r\in\ker\eta,\qquad\big(\iota_{X_r}\big(\Omega_0+r(\Omega-\Omega_0)\big)+\alpha\big)\big|_{\ker\eta}=0,\qquad \dot H_r=\alpha(V)\circ \Psi_r,
\]
where $X_r$ is the time-dependent vector field generating $\Psi_r$ and $V$ is the vertical vector field of $\eta$. From these relations, we see that $\Psi_r$ is $C^2$-close to $\id_\Sigma$ and $H_r$ is $C^2$-small.
\end{proof}

\subsection{The local systolic-diastolic inequality for quasi-autonomous H-forms}
We introduce H-forms of special type for which we can prove the systolic-diastolic inequality.
\begin{dfn}\label{d:qa}
We say that an H-form $\Omega_{H\eta}$ is \textbf{quasi-autonomous}, if there are pairs $q_{\min},q_{\max}\in M_0$ and $H_{\min},H_{\max}:\Sigma\to \R$ such that
\begin{align*}
(i)&\quad \Omega_{H_{\min}\eta}=\Omega_{H\eta}=\Omega_{H_{\max}\eta}\\
(ii)&\quad \min_{\Sigma} H_{\min}= H_{\min}(z),\ \ \forall\, z\in\mathfrak p^{-1}_0(q_{\min}),\qquad \max_{\Sigma} H_{\max}= H_{\max}(z),\ \ \forall\,z\in\mathfrak p_0^{-1}(q_{\max}).
\end{align*}
We say that $H:\Sigma\to\R$ is quasi-autonomous, if the form $\Omega_{H\eta}$ is quasi-autonomous.  Thanks to Remark \ref{r:fiber}, we have $H_{\max}=H=H_{\max}$ if $e_0\neq0$. We recover the standard definition of quasi-autonomous Hamiltonians \cite[p.~186]{HZ11} if $\mathfrak p_0$ is trivial.
\end{dfn}

\begin{prp}\label{p:qa1}
Let $\Omega_*\in\ZZ_C(\Sigma)$ be a Zoll odd-symplectic form with bundle $\mathfrak p_{\Omega_*}=\mathfrak p_0$. When $e_0=0$, the reference connection one-form $\eta$ for $\mathfrak p_0$ is assumed to be flat . If $H:\Sigma\to\R$ is quasi-autonomous with values in $\mathcal I$ and satisfies $\mathcal A_{\min}(\Omega_{H\eta})\in\mathcal I$, $\mathcal A_{\max}(\Omega_{H\eta})\in \mathcal I$, then
\[
P(\mathcal A_{\min}(\Omega_{H\eta}))\leq \Fvol(\Omega_{H\eta})\leq P(\mathcal A_{\max}(\Omega_{H\eta})).
\]
Moreover, let us consider the following three conditions:
\begin{enumerate}[{\normalfont (a)}]
\item $H$ is constant when $e_0\neq0$, or factors through the map $\phi$ of Remark \ref{r:fiber} when $e_0=0$.
\item The form $\Omega_{H\eta}$ is Zoll.
\item Any of the two inequalities above is an equality.
\end{enumerate}
We have $(\mathrm a)\Rightarrow(\mathrm b)$ and $(\mathrm a)\Leftrightarrow(\mathrm c)$. The implication $(\mathrm b)\Rightarrow(\mathrm c)$ holds if $\Omega_{H\eta}$ belongs to the neighbourhood $\mathcal V$ given in Proposition \ref{p:ginsys} or if
\[
\mathcal A_{\max}(\Omega_{H\eta})-\mathcal A_{\min}(\Omega_{H\eta})\, <\, \inf\Big\{a>0\ \Big|\ a=\langle\mathfrak a(C),[\gamma_r]\rangle,\ \ \text{for some}\ \ [\gamma_r]\in\pi_1(\Lambda_{\mathfrak h}(\Sigma))\Big\}.
\]
\end{prp}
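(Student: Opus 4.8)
The plan is to extract from the quasi-autonomy hypothesis two distinguished closed characteristics — the $\mathfrak p_0$-fibres over $q_{\min}$ and $q_{\max}$ — and to squeeze $\Fvol(\Omega_{H\eta})$ between their actions using the pointwise potential $\mathcal P$. First I would check that $\jmath_{\mathfrak p_0}(z)$ is a closed characteristic of $\Omega_{H\eta}$ for every $z\in\mathfrak p_0^{-1}(q_{\min})$. Writing $\Omega_{H\eta}=\mathfrak p_0^*\om_0+H\mathfrak p_0^*\kappa+\di^h H\wedge\eta$ as in Remark \ref{r:fiber}, and using that $H_{\min}$ attains its global minimum along the whole fibre $\mathfrak p_0^{-1}(q_{\min})$ so that $\di H_{\min}$, and in particular $\di^h H_{\min}$, vanishes there, one computes $\iota_V\Omega_{H\eta}=-\di^h H_{\min}=0$ on this fibre. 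Hence the vertical direction lies in $\ker\Omega_{H\eta}$, the fibre is a characteristic, and it lies in $\Lambda(\mathfrak p_1)$ because $\mathfrak p_0=\mathfrak p_1$; the same holds over $q_{\max}$ with $H_{\max}$.

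For the main inequality I would separate the regimes $e_0\neq0$ (so $C^n=0$ and $\Fvol=\Vol$) and $e_0=0$ (so $C^n\neq0$ and $\Fvol\equiv0$). When $e_0\neq0$ the map $H\mapsto\Omega_{H\eta}$ is injective by Remark \ref{r:fiber}, so $H_{\min}=H=H_{\max}$, and by \eqref{e:actiontilde1} the action of the $q_{\min}$-fibre is $\int_{S^1}\jmath_{\mathfrak p_0}(z)^*(H\eta)=\min_\Sigma H$ (no normalization is needed in Case 1). Thus $\mathcal A_{\min}(\Omega_{H\eta})\leq\min_\Sigma H$, and since $P$ is strictly increasing on $\mathcal I$ it remains to show $P(\min_\Sigma H)\leq\Fvol(\Omega_{H\eta})$. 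By Lemma \ref{l:volqa}, $\Fvol(\Omega_{H\eta})=\Vol(H\eta)=\int_{M_0}\mathfrak u_*\big(\mathcal P(\mathfrak p_0,H)\big)\om_*^n$ while $P(\min_\Sigma H)=\int_{M_0}\mathcal P(\cdot,\min_\Sigma H)\om_*^n$; the pointwise monotonicity of $\mathcal P(q,\cdot)$ on $\mathcal I$ gives $\mathcal P(q,H)\geq\mathcal P(q,\min_\Sigma H)$, and averaging over each fibre and integrating over $M_0$ yields the bound. The diastole bound is symmetric. When $e_0=0$ the same fibres are characteristics, but their action must be computed with a normalized primitive; using \eqref{eq:action_diff} and \eqref{e:pd} the $q_{\min}$-fibre action equals $\min_\Sigma H_{\min}$ minus the mean of $H_{\min}$, which is $\leq0=\Fvol$, and since $P(A)=\langle c_0^n,[M_0]\rangle A$ with positive leading coefficient by \eqref{e:pmonotone}, the claim reduces to $\mathcal A_{\min}\leq0\leq\mathcal A_{\max}$ as in Remark \ref{r:conje0}.

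For the equality statements, $(\mathrm a)\Rightarrow(\mathrm b)$ is direct: if $H$ is constant then $\Omega_{H\eta}=\mathfrak p_0^*(\om_0+H\kappa)$ is the pull-back of a symplectic form (as $H\in\mathcal I$), hence Zoll; if $H$ factors through $\phi$ then $\di(H\eta)=0$ and $\Omega_{H\eta}=\Omega_*$ is Zoll. For $(\mathrm a)\Leftrightarrow(\mathrm c)$ I would track equality through the chain above. The implication $(\mathrm a)\Rightarrow(\mathrm c)$ holds because a Zoll form with bundle $\mathfrak p_0=\mathfrak p_1$ has all its characteristics equal to $\mathfrak p_0$-fibres, which share one local action value by Step 1 of the proof of Theorem \ref{thm:mainaction}; thus $\mathcal A_{\min}=\mathcal A_{\max}$ and the main inequality collapses. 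Conversely, if $P(\mathcal A_{\min})=\Fvol$ then every inequality above is an equality: when $e_0\neq0$, equality in the averaging estimate together with the \emph{strict} monotonicity of $\mathcal P(q,\cdot)$ forces $H\equiv\min_\Sigma H$ to be constant; when $e_0=0$, equality forces the $q_{\min}$-fibre action to vanish, i.e. $\min_\Sigma H_{\min}$ equals the mean of $H_{\min}$, whence $H_{\min}$ is constant and $H$ factors through $\phi$. The case $P(\mathcal A_{\max})=\Fvol$ is symmetric.

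Finally, $(\mathrm b)\Rightarrow(\mathrm c)$. If $\Omega_{H\eta}\in\mathcal V$ this is immediate from Proposition \ref{p:ginsys}(iii), which yields $\mathcal A_{\min}(\Omega_{H\eta})=\mathcal A(\Omega_{H\eta})=\mathcal A_{\max}(\Omega_{H\eta})$, so the main inequality squeezes $\Fvol$ between equal quantities. Without closeness I would argue as follows: the characteristics of a Zoll $\Omega_{H\eta}$ are the fibres of its associated bundle, a connected family, and $\mathcal A_{\Omega_{H\eta}}$ is locally constant on $\mathcal X(\Omega_{H\eta};\mathfrak p_1)$ since $\di\mathcal A_{\Omega_{H\eta}}=\mathfrak a(\Omega_{H\eta})$ vanishes there. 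Transporting a short lift of one characteristic to another along a path of fibres inside $\widetilde\Lambda_{\mathfrak h}(\Sigma)$ and comparing with the short lift of the target shows that the difference of local action values is a deck-transformation shift, i.e. a period $\langle\mathfrak a(C),[\gamma_r]\rangle$; hence $\mathcal A_{\max}-\mathcal A_{\min}$ is such a period, and the gap hypothesis forbids it from being positive, forcing $\mathcal A_{\min}=\mathcal A_{\max}$ and again equality. I expect this last homotopy-theoretic bookkeeping to be the main obstacle: making precise that two short lifts over characteristics of the \emph{same} Zoll form differ exactly by an element of the period group, and that $\mathcal A_{\max}-\mathcal A_{\min}$ is realized as a single such period rather than merely dominated by the period spectrum.
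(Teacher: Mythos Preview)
Your argument is correct and follows essentially the same route as the paper. A few remarks on the small differences:

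For the main inequality the paper does not split into the two regimes; it runs the single chain
\[
P(\mathcal A_{\min})\leq P(\min H_{\min})=\int_{M_0}\mathfrak u_*\big(\mathcal P(\mathfrak p_0,\min H_{\min})\big)\om_*^n\leq\int_{M_0}\mathfrak u_*\big(\mathcal P(\mathfrak p_0,H_{\min})\big)\om_*^n=\Fvol(\Omega_{H\eta}),
\]
noting that when $e_0=0$ one may replace $H_{\min}$ by $H_{\min}$ minus a constant so that it is normalised, and that this is harmless since $\mathcal I=\R$. Your explicit computation of the correction term via \eqref{eq:action_diff} and \eqref{e:pd} reaches the same conclusion.

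For $(\mathrm a)\Rightarrow(\mathrm c)$ the paper does not pass through Zollness and Step~1 of Theorem~\ref{thm:mainaction}; it simply observes that under $(\mathrm a)$ all inequalities in the displayed chain become equalities (when $e_0\neq0$, $H=H_{\min}$ is constant; when $e_0=0$, the normalised $H_{\min}$ is identically zero). Your detour is valid but slightly longer.

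Your anticipated obstacle in $(\mathrm b)\Rightarrow(\mathrm c)$ is handled exactly as you sketch: the paper concatenates a short homotopy for $\jmath_{\mathfrak p}(z_0)$, the path $s\mapsto\jmath_{\mathfrak p}(z_s)$, and the reverse short homotopy for $\jmath_{\mathfrak p}(z_1)$, implicitly closed up through $\jmath_{\mathfrak p_0}(\Sigma)\subset\Lambda(\mathfrak p_0)$; since $\mathcal A_{\Omega_{H\eta}}$ is globally defined on $\Lambda(\mathfrak p_0)$, the integral of $\mathfrak a(\Omega_{H\eta})$ over the closed loop equals $\pm(\mathcal A_{\max}-\mathcal A_{\min})$, which is then a single period $\langle\mathfrak a(C),[\gamma_r]\rangle$ as required.
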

\begin{proof}
From \eqref{e:dH2}, we see that the oriented $\mathfrak p_0$-fibre over $q_{\min}$ is a closed characteristic for $\Omega_{H\eta}$ with action $\min H_{\min}$. Therefore,
\begin{equation}\label{e:chaindiseq}
\mathcal A_{\min}(\Omega_{H\eta})\leq \min H_{\min}\leq H_{\min}. 
\end{equation}
Since $\mathcal P$ is increasing in the second coordinate $A$ when $A\in\mathcal I$, we have by Lemma \ref{l:volqa}
\begin{equation}\label{e:pamin}
\begin{split}
P(\mathcal A_{\min}(\Omega_{H\eta}))\leq P(\min H_{\min})&= \int_{M_0}\mathfrak u_*\big(\mathcal P(\mathfrak p_0,\min H_{\min})\big)\,\om_*^{n}\\
&\leq \int_{M_0}\mathfrak u_*\big(\mathcal P(\mathfrak p_0,H_{\min})\big)\,\om_*^{n}\\[.5ex]
&= \Fvol(\Omega_{H\eta}).
\end{split}
\end{equation}
In the case of $e_0=0$, this is true since $\mathcal I=\R$ (see Remark \ref{rmk:maximal_interval}) and we can assume that $H_{\min}$ is normalised up to adding a constant. 
The inequality for $\mathcal A_{\max}$ is obtained similarly. 

Next, we show the implications between (a), (b), and (c). Assuming (a), we have
\[
\Omega_{H\eta}=\begin{cases}
\mathfrak p^*_0(\om_0+H\kappa),& \text{if}\ e_0\neq0\\[.5ex]
\mathfrak p^*_0\om,&\text{if}\ e_0=0,
\end{cases}
\]
which is Zoll since the value of $H$ is in $\mathcal I$. This shows  $(\mathrm a)\Rightarrow(\mathrm b)$. Moreover, if $e_0\neq0$, then $H=H_{\min}=H_{\max}$ is constant. Therefore, all the inequalities in \eqref{e:chaindiseq} and \eqref{e:pamin} are actually equalities, and (c) follows.  If $e_0=0$, then $H=\bar H\circ \phi$ and the normalised functions $H_{\min}$ and $H_{\max}$ are both equal to zero. By the same reason as before, (a) implies (c).

We now show that $(\mathrm c)$ implies $(\mathrm a)$. Assume for instance, that $P(\mathcal A_{\min}(\Omega_{H\eta}))= \Fvol(\Omega_{H\eta})$. From inequality \eqref{e:pamin} and the fact that $A\mapsto\mathcal P(\,\cdot\,,A)$ is \textit{strictly} increasing for $A\in\mathcal I$, we have that $H_{\min}(z)=\min H_{\min}$, for all $z\in \Sigma$. Therefore, $H_{\min}$ is constant. If $e_0\neq0$, then $H=H_{\min}$ and the conclusion follows. If $e_0=0$, then $H=H_{\min}+(H-H_{\min})$, and therefore, $H$ factors through the map $\phi$, as $H_{\min}$ is constant and $H-H_{\min}$ factors through $\phi$ by Remark \ref{r:fiber}, since $\Omega_{H\eta}=\Omega_{H_{\min}\eta}$.

Finally, let us assume (b), namely that $\Omega_{H\eta}$ is Zoll. If $\Omega_{H\eta}\in\mathcal V$, we know from Proposition \ref{p:ginsys}, that $\mathcal X(\Omega_{H\eta})\subset \Lambda(\mathfrak p_0)$. Therefore, $\mathcal A_{\min}(\Omega_{H\eta})=\mathcal A_{\max}(\Omega_{H\eta})$ due to Proposition \ref{p:ginsys}.(iii) and (c) follows. On the other hand, let 
\[
\epsilon:=\mathcal A_{\max}(\Omega_{H\eta})-\mathcal A_{\min}(\Omega_{H\eta})\geq 0
\]
and suppose that $\epsilon$ is strictly less than the positive generator of $\langle\mathfrak a(C),\pi_1(\Lambda_{\mathfrak h}(\Sigma))\rangle$. Let $\mathfrak p$ be the bundle associated with the Zoll form $\Omega_{H\eta}$, and let $z_0,z_1\in\Sigma$ be two points such that $\jmath_{\mathfrak p}(z_0),\jmath_{\mathfrak p}(z_1)\in\mathcal X(\Omega_{H\eta};\mathfrak p_0)$ and
\[
\mathcal A_{\Omega_{H\eta}}(\jmath_{\mathfrak p}(z_0))=\mathcal A_{\min}(\Omega_{H\eta}),\qquad \mathcal A_{\Omega_{H\eta}}(\jmath_{\mathfrak p}(z_1))=\mathcal A_{\max}(\Omega_{H\eta}).
\]
If $s\mapsto z_s$ is any path connecting $z_0$ to $z_1$, we obtain an element $[\gamma_r]\in\pi_1(\Lambda_{\mathfrak h}(\Sigma))$ by concatenating a short homotopy for $\jmath_{\mathfrak p}(z_0)$, the path $s\mapsto \jmath_{\mathfrak p}(z_s)$, and a reverse short homotopy for $\jmath_{\mathfrak p}(z_1)$. This loop has the property that
\[
\epsilon=\langle \mathfrak a(C),[\gamma_r]\rangle,
\]
which implies that $\epsilon=0$ and condition (c) follows.
\end{proof}
\begin{rmk}
By Proposition \ref{p:ginsys}, a quasi-autonomous function that is $C^2$-close to the constant $A_*=\mathcal A(\Omega_*)$ satisfies the hypotheses of Proposition \ref{p:qa1}. Moreover for such a function, conditions (a), (b), and (c) above are equivalent since the corresponding odd-symplectic form belongs to $\mathcal V$.
\end{rmk}
Combining the result we have just proven with the theory of generating functions on closed symplectic manifolds, we can give a proof of Theorem \ref{t:triv}, namely of the local systolic-diastolic inequality in the case of $e_0=0$.

\subsubsection*{Proof of Theorem \ref{t:triv}}\label{ss:triv}
Let $\Omega_*$ be a Zoll-odd symplectic form such that the associated $S^1$-bundle $\mathfrak p_{\Omega_*}$ has vanishing real Euler class. Due to Remark \ref{r:indepconj} we may assume that $\mathfrak p_{\Omega_*}=\mathfrak p_0:\Sigma\to M_0$ and $e_0=0$. Let $\om_*$ be the symplectic form on $M_0$ such that $\mathfrak p_0^*\om_*=\Omega_*$. By Lemma \ref{l:3state1}, there exists a trivial oriented $S^1$-bundle $\mathfrak p_0^\veee:\Sigma^\veee\to M_0^\veee$ and a bundle map $\Pi:\Sigma^\veee\to \Sigma$ such that $\mathfrak p_0^\veee=\Pi^*\mathfrak p_0$. Therefore, thanks to Proposition \ref{p:covercon}, it is enough to prove Theorem \ref{t:triv} for trivial bundles. Let $\mathfrak p_0:M_0\times S^1\to M_0$ with angular function $\phi$. Assume that $\Omega$ is $C^2$-close to $\Omega_*$. Due to Corollary \ref{c:Moser2}, there is a diffeomorphism $\Psi:\Sigma\to\Sigma$ such that $\Psi=\Psi_1$, where $\{\Psi_r\}$ is a $C^1$-small isotopy with $\Psi_0=\id_{M_0\x S^1}$, and
\[
\Psi^*\Omega=\Omega_{H\di\phi}
\] 
for a $C^2$-small normalised Hamiltonian $H:M_0\times S^1\to\R$. Thus, $\Fvol(\Omega)=0=\Fvol(\Omega_{H\di\phi})$, namely $\CAL_{\om_*}(H)=0$. Moreover, $\Omega$ and $\Omega_{H\di\phi}$ have the same systole and diastole due to Proposition \ref{prp:action_inv_isotopy}, as $[\Psi_r]^*$ preserves the set of short homotopies. The closed characteristics in $\mathcal X(\Omega_{H\di\phi};\mathfrak p_0)$ are curves $\gamma\in\Lambda(\mathfrak p_0)$ of the type $\gamma(t)=(q(t),t)$, for some loop $q:S^1\to M$ with small capping disc $\widehat q:D^2\to M$. Thanks to \eqref{e:actiontilde2}, the action of these curves recovers the classical Hamiltonian action
\[
\mathcal A_{H\di\phi}(\gamma)=\int_{D^2}\widehat q^{\hspace{2pt}*}\om_*+\int_{S^1}H(q(t),t)\di t
\]

Let $\{\varphi_{t}\}_{t\in[0,1]}$ be the Hamiltonian isotopy on $(M_0,\om_*)$ up to time one generated by $H$. The maps $\varphi_t$ are $C^1$-close to the identity, as $H$ is $C^2$-small. From the theory of generating functions on arbitrary symplectic manifolds (see \cite[Proposition 5.11]{LM95} and \cite[Proposition 9.31]{MS98}), $\{\varphi_{t}\}_{t\in[0,1]}$ is homotopic with fixed endpoints to a $C^1$-small Hamiltonian isotopy $\{\varphi'_t\}_{t\in[0,1]}$ generated by a quasi-autonomous normalised Hamiltonian $H':M_0\times S^1\to\R$, namely $\Fvol(\Omega_{H'\di\phi})=0=\CAL_{\om_*}(H')$. Adapting an argument in \cite{Sch00}, we have an action-preserving bijection
\[
\mathcal X(\Omega_{H\di\phi};\mathfrak p_0)\cong \Fix \varphi_1=\Fix \varphi_1'\cong\mathcal X(\Omega_{H'\di\phi};\mathfrak p_0).
\]
Therefore, $\Omega_{H\di\phi}$ and $\Omega_{H'\di\phi}$ have the same systole and diastole. The local systolic-diastolic inequality holds for $\Omega_{H'\di\phi}$ due to Proposition \ref{p:qa1}, and hence also for $\Omega_{H\di\phi}$ and $\Omega$.\qed

\bibliographystyle{amsalpha}
\bibliography{systolic_bib}
\end{document}